\documentclass[12pt,reqno,a4paper,oneside]{amsart} 
\usepackage{graphicx} 
\usepackage{mathtools}
\usepackage{tikz}
\usepackage{amssymb}
\usepackage{amsmath}
\usepackage{amsthm}
\usepackage{xcolor}
\usepackage{hyperref}
\usepackage{cleveref}
\usepackage{float}
\usepackage{subcaption}
\usepackage[a4paper, margin=2.5cm]{geometry} 
\usepackage{comment}
\usepackage{enumitem}

\usepackage{marginnote}
\usepackage[title]{appendix}


\let\etoolboxforlistloop\forlistloop 
\usepackage{autonum}
\let\forlistloop\etoolboxforlistloop 

\frenchspacing

\newcommand{\red}[1]{\textcolor{red}{#1}}

\definecolor{canard}{RGB}{0,116,128}
\definecolor{leman}{RGB}{0,167,159}
\newcommand{\norm}[1]{\left\lVert#1\right\rVert}

\renewcommand{\tilde}{\widetilde}

\def\intR{\int_{\mathbb{R}^2}}
\def\Uper{U^{\rm per}}
\def\Ulin{U^{\rm lin}}

\def\RR{\mathbb{R}^2}
\def\R{\mathbb{R}}
\def\N{\mathbb{N}}
\def\PP{\mathbb{P}}
\def\Omegaper{\Omega^{\rm per}}
\def\Omegalin{\Omega^{\rm lin}}

\DeclareMathOperator{\curl}{curl}
\DeclareMathOperator{\im}{im}
\DeclareMathOperator{\Div}{div}
\let\div\relax
\DeclareMathOperator{\div}{div}

\let\Re\relax
\DeclareMathOperator{\Re}{Re}
\let\Im\relax
\DeclareMathOperator{\Im}{Im}

\theoremstyle{plain}

\newtheorem{theorem}{Theorem}[section]

\newtheorem{lemma}[theorem]{Lemma}

\newtheorem{corollary}[theorem]{Corollary}
\newtheorem{proposition}[theorem]{Proposition}

\theoremstyle{definition}
\newtheorem{remark}[theorem]{Remark}

\newtheorem*{theorem*}{Theorem}
\newtheorem*{lemma*}{Lemma}

\newcommand{\jap}{\langle \nabla \rangle}
\newcommand{\p}{\partial}


\parskip   2pt plus 0.5pt minus 0.5pt

\title[Non-unique vanishing viscosity solutions]{Non-unique vanishing viscosity solutions to the forced 2D Euler equations}



\setcounter{equation}{0}
\numberwithin{equation}{section}

\author[Albritton]{Dallas Albritton} 
\address[Dallas Albritton]{University of Wisconsin-Madison, Department of Mathematics, 480 Lincoln Dr, Madison, WI 53706, USA}
\email{dalbritton@wisc.edu}

\author[Colombo]{Maria Colombo}
\address[Maria Colombo]{Institute of Mathematics, EPFL SB, Station 8, CH 1015 Lausanne, Switzerland}
\email{maria.colombo@epfl.ch}

\author[Mescolini]{Giulia Mescolini}
\address[Giulia Mescolini]{Institute of Mathematics, EPFL SB, Station 8, CH 1015 Lausanne, Switzerland}
\email{giulia.mescolini@epfl.ch}

\date{\today}

\begin{document}

\begin{abstract}
The forced 2D Euler equations exhibit non-unique solutions with vorticity in $L^p$, $p > 1$, whereas the corresponding Navier-Stokes solutions are unique. We investigate whether the inviscid limit $\nu \to 0^+$ from the forced 2D Navier-Stokes to Euler equations is a selection principle capable of ``resolving" the non-uniqueness. We focus on solutions in a neighborhood of the non-uniqueness scenario discovered by~\cite{vishik2018instability1,vishik2018instability2}; specifically, we incorporate viscosity $\nu$ and consider $O(\varepsilon)$ size perturbations of the initial datum. We discover a \emph{uniqueness threshold} $\varepsilon \sim \nu^{\kappa_{\rm c}}$, below which the vanishing viscosity solution is unique and radial, and at which there are viscous solutions converging to non-unique, non-radial solutions.
\end{abstract}

\maketitle
\setcounter{tocdepth}{1}
\tableofcontents

\section{Introduction}
\label{sec:introduction}

The incompressible Euler equations exhibit non-unique solutions. This is known on the basis of rigorous mathematics~\cite{Scheffer1993,shnirelman1997nonuniqueness,delellis2009euler,isett2018proof,OnsagerAdmissible,Szkelyhidi2011,vishik2018instability1} and supported by numerical evidence~\cite{MR4182316,thalabard2020butterfly}. One therefore seeks to understand whether the non-unique solutions constructed mathematically are somehow ``physical" and in what sense. A particularly high bar is that physically reasonable Euler solutions should arise as vanishing viscosity limits of Navier-Stokes solutions to a well-posed initial value problem. 
One may even hope that vanishing viscosity is a \emph{selection principle} capable of ``resolving" the non-uniqueness, as in scalar conservation laws~\cite{kruvzkov1970first} and the transport equation in the DiPerna-Lions framework~\cite{DiPerna1989}.

In this paper, we investigate this problem for the 2D incompressible Euler equations, driven by a well-chosen radial body force: 
\begin{equation}
\label{eq:euler_1}
\left\lbrace
\begin{aligned}
   \partial_t u + (u\cdot \nabla) u + \nabla p &= \bar{f} \\
\div u &= 0 \, .
\end{aligned}
\right.
\end{equation}
The non-uniqueness scenario we have in mind was introduced by Vishik in~\cite{vishik2018instability1,vishik2018instability2}
and elaborated on in~\cite{albritton2023instability}. We consider the 2D incompressible Navier-Stokes equations
\begin{equation}
\label{eq:NS_forced}
\left\lbrace
\begin{aligned}
   \partial_t u^\nu + (u^\nu \cdot \nabla)u^\nu + \nabla p^\nu &= \nu \Delta u^\nu + \bar{f} \\
\div u^\nu &= 0 
\end{aligned}
\right.
\end{equation}
with the \emph{same} body force $\bar{f}$, which is rough but far into the subcritical regime where~\eqref{eq:NS_forced} is well-posed. 
It is moreover natural to allow for the possibility of some ``observational error" 
in the problem.
In our setting, this will be incorporated into the initial condition\footnote{It is also possible to perturb the force $\bar{f}$; however, we explain in Appendix~\ref{sec:forceperturbations} some ways in which this would trivialize the problem. Additionally, one may wish not to modify the force at all, with a view toward removing the force completely.}
\begin{equation}
    u^\nu|_{t=0} = \bar{u}|_{t=0} + u_0^\nu \, ,
\end{equation}
where $\bar{u}$ is the unique radial solution to~\eqref{eq:euler_1} and $u_0^\nu$ is a perturbation of characteristic size $\varepsilon$ in a certain norm, belonging to the well-posedness regime for both~\eqref{eq:euler_1} and~\eqref{eq:NS_forced}. (Below, $\bar{u}|_{t=0} = 0$, so that $u_0^\nu$ is also the initial datum.) Without $u_0^\nu$, the unique Navier-Stokes solution $u^\nu$ would be radial and converge to the unique radial solution $\bar{u}$. We further suppose that $u_0^\nu$ is $m_0$-fold radially symmetric for a suitable $m_0 \geq 2$. 





Our main conclusion is the existence of a precise threshold $\varepsilon \sim \nu^{\kappa_{\rm c}}$, below which the vanishing viscosity procedure selects a unique, radial solution $\bar{u}$, and at which the procedure produces 
non-unique, non-radial solutions:

\begin{theorem}
    \label{thm:main}
    Let $p \in (2,+\infty)$. There exists a force
    \begin{equation}
    \bar{f} \in L^1_t (L^2 \cap W^{1,p})_x(\R^2 \times (0,1)) \, ,
    \end{equation}
    a norm $\| \cdot \|_{Y_\nu}$ (a dimensionless variant of $\| \cdot \|_{H^{2+s}}$), and a critical exponent $\kappa_{\rm c} > 0$ such that the following hold.
    
    Let $(u_0^\nu)_{\nu \in (0,\nu_0)}$ be a family of initial data and $u^\nu$ be the unique Navier-Stokes solutions with force $\bar f$ and initial data $u_0^\nu$.
    \begin{enumerate}[leftmargin=*]
        \item\label{item:mainthm1} If 
        \begin{equation}
        \| u_0^{\nu} \|_{Y_\nu} = o(\nu^{\kappa_{\rm c}}) \text{ as } \nu \to 0^+ \, ,
        \end{equation}
        then $u^\nu$ converge to the radial solution $\bar{u}$ of \eqref{eq:euler_1} 
        in the inviscid limit:
		\begin{equation}
            \label{eq:convergence1mainthm}
		u^\nu \to \bar{u} \text{ as } \nu \to 0^+ \qquad \text{ in } L^\infty_t (L^2 \cap W^{1,p})_x(\R^2 \times (0,1)) \, .
		\end{equation}

        \item\label{item:parttwotheorem} There exist $c_0,c_1 > 0$ and a particular family of initial data $(u^{\nu}_{\rm in})_{\nu \in (0,\nu_0)}$ with  \begin{equation}
            \| u^{\nu}_{\rm in} \|_{Y_\nu} = c_0 \nu^{\kappa_{\rm c}}
        \end{equation}
        such that if
           $ \| u_0^\nu - u^{\nu}_{\rm in} \|_{Y_\nu} \leq c_1 \nu^{\rm \kappa_{\rm c}}$, then {there exists a subsequence $\nu_k \to 0^+$ such that}
  ${u}^{\nu_k}$ converges to a non-radial solution 
  $u^{\rm E}$ of \eqref{eq:euler_1} (possibly depending on the~sequence): 
        \begin{equation}
            \label{eq:convergence2mainthm}
        {u}^{\nu_k} \to u^{\rm E} \neq \bar{u} \text{ as } k \to +\infty \quad   \text{ in } L^\infty_t (L^2 \cap W^{1,p})_x(\R^2 \times (0,1)) \, . 
        \end{equation}

        
    \end{enumerate}
\end{theorem}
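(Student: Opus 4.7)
The setting is the Vishik--Albritton scenario \cite{vishik2018instability1,vishik2018instability2,albritton2023instability}: $\bar u$ is a self-similar radial Euler solution, and after the usual self-similar change of variables $\xi = x/\sqrt t$, $\tau = \log t$ with matching rescaling of the velocity, $\bar u$ becomes a stationary profile $\bar U(\xi)$. The linearization $L$ of the self-similar Euler equation at $\bar U$, restricted to the $m_0$-fold symmetric subspace, has an unstable real eigenvalue $a > 0$ with eigenfunction $\eta$, while the rest of its spectrum lies in $\{\Re z \leq a - a_{\rm gap}\}$ for some spectral gap $a_{\rm gap} > 0$. I would perform the entire analysis of $U^\nu = u^\nu - \bar u$ in self-similar coordinates, in which the viscous term becomes $\nu \Delta_\xi$; the critical exponent $\kappa_{\rm c}$ then arises by balancing the unstable growth $e^{a\tau}$ against the $\nu$-dependent time at which viscous effects dominate the dynamics in the dimensionless norm $\|\cdot\|_{Y_\nu}$.

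For Part (\ref{item:mainthm1}), I would set up a bootstrap argument in $\|\cdot\|_{Y_\nu}$. The linear semigroup of $L$ grows like $e^{a\tau}$ by the spectral decomposition, and the viscous operator is handled via parabolic energy estimates. Duhamel combined with the algebra property of $H^{2+s}$ closes a bound of the schematic form $\|U^\nu(\tau)\|_{Y_\nu}\leq \bootstrap$, valid while its right-hand side is $\ll 1$. Choosing $\tau_0$ and $M$ as functions of $\nu$ and $\varepsilon = \|u_0^\nu\|_{Y_\nu}$ in the way dictated by the bootstrap, the assumption $\varepsilon = o(\nu^{\kappa_{\rm c}})$ makes the right-hand side vanish uniformly on the full physical interval $(0,1)$, after which viscous dissipation drives $U^\nu\to 0$. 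Converting back to physical variables yields $u^\nu \to \bar u$ in $L^\infty_t (L^2\cap W^{1,p})_x$.

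For Part (\ref{item:parttwotheorem}), I would choose $u_{\rm in}^\nu$ along the unstable eigenfunction $\eta$ with amplitude $c_0 \nu^{\kappa_{\rm c}}$. A refined bootstrap decomposes $U^\nu$ into its projection on $\eta$, which grows like $c_0 \nu^{\kappa_{\rm c}} e^{a\tau}$, and a stable remainder smaller by a factor $e^{-a_{\rm gap}(\tau-\tau_0)}$, as encoded by $\improvedbootstrapU$ and $\improvedfinal$. The unstable projection reaches order one at a fixed physical time $t_*\in(0,1)$ independent of $\nu$. Compactness in the weaker space $L^\infty_t(L^2\cap W^{1,p})_x$ then produces a subsequence $\nu_k\to 0^+$ with $u^{\nu_k}\to u^{\rm E}$, a weak solution of \eqref{eq:euler_1} with initial datum $0$; the quantitative lower bound on the unstable projection passes to the limit, forcing $u^{\rm E}\neq \bar u$, which is necessarily non-radial since $\bar u$ is the only radial solution. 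Robustness under the perturbation $\|u_0^\nu - u_{\rm in}^\nu\|_{Y_\nu} \leq c_1 \nu^{\kappa_{\rm c}}$ falls out of the same bootstrap, as it only modifies the initial amplitudes of both projections by subleading factors.

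The main obstacle is to close the bootstrap \emph{uniformly in $\nu$} in a norm $Y_\nu$ strong enough to control the Euler nonlinearity yet compatible with viscous estimates. Three ingredients must interlock: (i) a quantitative perturbation of the spectral structure of $L$ by $\nu\Delta$, preserving the $e^{a\tau}$ semigroup bound and the spectral gap on the relevant time window; (ii) careful treatment of the nonlinear interactions between the growing unstable mode and the stable remainder, so that the decay $e^{-a_{\rm gap}(\tau-\tau_0)}$ of the remainder is not lost---this is precisely the content of the improved bootstrap; and (iii) passage to the inviscid limit in the forced Euler equation with the rough $L^1_t W^{1,p}$ force, which lies at the edge of the Yudovich--DiPerna--Majda well-posedness theory.
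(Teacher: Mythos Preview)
Your proposal has a genuine structural gap in how viscosity enters the self-similar picture. The change of variables is $\xi = x/t^{1/\alpha}$, $\tau = \log t$ (matching the Euler scaling $L^\alpha = T$), not $\xi = x/\sqrt{t}$. In these variables the viscous term becomes $\nu e^{-\gamma\tau}\Delta_\xi$ with $\gamma = 2/\alpha - 1 > 0$: it \emph{dominates} for $\tau$ below the crossover $\gamma^{-1}\log(1/\nu)$ (equivalently, physical time $t \ll T_\nu := \nu^{1/\gamma}$) and is negligible thereafter. Your ``viscosity becomes $\nu\Delta_\xi$'' misses this dichotomy, and with it the mechanism fixing $\kappa_{\rm c} = a/\gamma$: the critical size is determined by asking that data of size $\varepsilon$, growing like $t^a$ once past the initial layer, reaches $O(1)$ at $t\sim 1$; this forces $\varepsilon \sim T_\nu^a = \nu^{a/\gamma}$.

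The more serious consequence is for Part~(\ref{item:parttwotheorem}): you cannot simply take $u_{\rm in}^\nu$ along $\eta$. The initial datum lives at $t=0$ ($\tau=-\infty$), where the viscous coefficient is infinite, so the viscosity-dominated initial layer deforms the data in an \emph{a priori} uncontrolled way before the Euler instability can act. The paper resolves this in two pieces you do not have. First, it introduces the \emph{modified background} $\tilde u$ (the forced heat solution with force $\bar f$ and zero data) and proves the delicate convergence $\tilde U \to \bar U$ in $H^{2+s}$ as $\tau\to+\infty$; this is what the factor $e^{-\zeta\tau_0}$ in your bootstrap macros actually encodes, and it requires a nontrivial cancellation argument since convergence fails in higher regularity. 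Second, it runs an \emph{approximate controllability argument via backward uniqueness} for the linearization around $\tilde u$ (Theorem~\ref{thm:initiallayer}): one cannot prescribe where the solution lands after the initial layer, but density of the image of the linear flow map (proved by showing the adjoint has trivial kernel) guarantees some $v_0$ whose evolution lands $\delta$-close to the unstable mode at time $T \gg 1$. Only then does the nonlinear bootstrap around $\tilde U$ (not $\bar U$) apply on $[T,\tau_{\max}]$. Without these two ingredients, your Part~(\ref{item:parttwotheorem}) argument does not start.
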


The precise nature of the force $\bar{f}$, the critical exponent
$\kappa_{\rm c}$~\eqref{eq:criticalexponent}, and dimensionless norm~$\| \cdot \|_{Y_\nu}$~\eqref{eq:defdimlessnormynu} are described in Sections~\ref{sec:inviscidnonuniqueness}-\ref{sec:maintheoremrevisited}.

One might consider $\varepsilon \sim \nu^{\rm \kappa_c}$ a \emph{uniqueness threshold} or a \emph{predictability threshold}: It quantifies how much one needs to reduce the observation error to predict a single, deterministic limiting solution $\bar{u}$. Rather than giving a ``yes" or ``no" answer, our result clarifies in a precise way \emph{when} vanishing viscosity is a selection principle, at least in the Vishik scenario. To our knowledge, this result is the first of its type in the PDE theory of incompressible fluids.


By now, there is a well established method, convex integration, to exhibit non-unique weak Euler solutions. Whether \emph{any} of the non-unique solutions exhibited in, e.g.,~\cite{isett2018proof,OnsagerAdmissible,giri20233} can be obtained in the vanishing viscosity limit is an open problem.\footnote{However, non-unique Euler solutions are obtained as inviscid limits of low-regularity (\emph{below} the well-posedness threshold) Navier-Stokes solutions in~\cite{BuckmasterVicolAnnals}.} We solve the analogous problem in the context of Vishik's non-uniqueness scenario~\cite{vishik2018instability1}.

Our methods are based on a precise understanding of the dynamics of~\eqref{eq:euler_1} and~\eqref{eq:NS_forced} near a particular unstable self-similar solution. 
We develop a framework to incorporate a singular perturbation in the context of such solutions. 
The key components are
\begin{itemize}
    \item a precise spectral analysis of the unstable background solution,
    \item an understanding of the regularized but \emph{unperturbed} background,
    \item an approximate controllability argument, based on backward uniqueness, to overcome the viscosity-dominated initial layer, and
    \item a nonlinear instability argument in the regime where the viscosity is negligible.
\end{itemize}
We expect our framework to be broadly applicable to the study of selection principles in the context of self-similarly unstable solutions.

We now describe the picture more precisely. 

\subsection{The inviscid non-uniqueness and the force $\bar{f}$}
\label{sec:inviscidnonuniqueness}


To explain Theorem~\ref{thm:main}, we must recall the non-uniqueness scenario introduced by Vishik in~\cite{vishik2018instability1,vishik2018instability2}. His scenario is centered on a \emph{self-similarly unstable vortex}. 

The Euler equations have a 2-parameter scaling symmetry
\begin{equation}
    \label{eq:scalingsymmetry}
    u_{\ell,\tau}(x,t) := \frac{\ell}{\tau} u \big(\frac{x}{\ell},\frac{t}{\tau} \big) \, , \quad f_{\ell,\tau}(x,t) := \frac{\ell}{\tau^2} f \big(\frac{x}{\ell},\frac{t}{\tau} \big) \, ,  \qquad \forall\ell , \tau > 0 \, ,
\end{equation}
corresponding to the physical dimensions
\begin{equation}
    [x] = L \, , \quad [t] = T \, , \quad [u] = \frac{L}{T} \, , \quad [f] = \frac{L}{T^2} \, .
\end{equation}
We identify a 1-parameter sub-family of~\eqref{eq:scalingsymmetry} by choosing a relationship
\begin{equation}
    \label{eq:relationshipbetweenLandT}
    L^\alpha = T \, ,
\end{equation}
which results in the scaling symmetry
\begin{equation}
    \label{eq:NSscalingsym}
    u_\ell(x,t) := \ell^{1-\alpha} u\big( \frac{x}{\ell}, \frac{t}{\ell^\alpha} \big) \, , \quad f_\ell(x,t) := \ell^{1-2\alpha} f\big( \frac{x}{\ell}, \frac{t}{\ell^\alpha} \big) \, , \qquad \ell > 0 \, .
\end{equation}
It is moreover convenient to consider the vorticity $\omega = \curl u$ and vorticity source $\bar{g} := \curl \bar{f}$. The 2D Euler equations in vorticity formulation are
\begin{equation}
    \p_t \omega + u \cdot \nabla \omega =  \bar{g} \, , \quad u = \nabla^\perp \Delta^{-1} \omega \, .
\end{equation}
The vorticity and its source have dimensions $[\omega] = 1/T$ and $[g] = 1/T^2$, and the associated scaling symmetry is
\begin{equation}
    \label{eq:omegascalingsym}
    \omega_\ell(x,t) := \ell^{-\alpha} \omega\big( \frac{x}{\ell}, \frac{t}{\ell^\alpha} \big) \, , \quad g_{\ell}(x,t) = \ell^{-2\alpha} \big( \frac{x}{\ell}, \frac{t}{\ell^\alpha} \big) \, , \qquad \ell > 0 \, .
\end{equation}

We introduce the self-similarity variables
\begin{equation}
    \xi = \frac{x}{t^{1/\alpha}} \, , \quad \tau = \log t \, ,
\end{equation}
\begin{equation}
    \label{eq:urescalingnotation}
    u(x,t) = t^{1/\alpha-1} U(\xi,\tau) \, , \quad f(x,t) = t^{1/\alpha-2} F(\xi,\tau) \, ,
    \end{equation}
    \begin{equation}
    \omega(x,t) = \frac{1}{t} \Omega(\xi,\tau) \, , \quad g(x,t) = \frac{1}{t^2} G(\xi,\tau) \, ,
\end{equation}
in which the Euler equations become
\begin{equation}
    \label{eq:EulerSSvelocity}
\begin{aligned}
    \p_\tau U + \big(\frac{1}{\alpha} - 1 - \frac{\xi}{\alpha} \cdot \nabla \big) U + U \cdot \nabla U + \nabla P &= F \\
    \div U = 0 \, ,
\end{aligned}
\end{equation}
or, in vorticity formulation,
\begin{equation}
    \label{eq:EulerSS}
    \p_\tau \Omega - \Omega - \frac{\xi}{\alpha} \cdot \nabla \Omega + U \cdot \nabla \Omega = G \, , \quad U = \nabla^\perp \Delta^{-1} \Omega \, .
\end{equation}
$U$ and $\Omega$ are called the velocity and vorticity ``profiles". This change of variables automatically encodes the low regularity features of the problem.

We typically use the convention that lowercase/uppercase variables represent the same function in physical/self-similar variables.


Steady states $\bar{U}$ of~\eqref{eq:EulerSSvelocity} correspond to \emph{self-similar solutions} $\bar{u}(x,t) = t^{1/\alpha-1}\bar{U}(\xi) $ of the Euler equations. In practice, it is difficult to understand self-similar solutions to the unforced Euler equations. However, it is possible to generate self-similar solutions by choosing, for example, a radial vorticity profile $\bar{\Omega}(|\xi|)$ and defining $\bar{g} := \p_t \bar{\omega}$. With the correct choice of profile, the linearized Euler equations around $\bar{U}$ exhibit a growing mode:

\begin{proposition}
\label{prop:lin_part}
    Let $\alpha \in (0,1)$. There exists a radially symmetric $\bar{U} = V(r) e_\theta \in C^\infty_0(\R^2)$ and $m_0 \geq 2$ such that the following holds. The linearized operator
    \begin{equation}
        \label{eq:lssudef}
        L_{\rm ss} : D(L_{\rm ss}) \subset L^2_{m_0,{\rm df}} \to L^2_{m_0,{\rm df}},
    \end{equation}
    where $L^2_{m_0,{\rm df}}$ is the space of $m_0$-fold rotationally symmetric divergence-free vector fields,
    \begin{equation}
        - L_{\rm ss} U := \big(\frac{1}{\alpha} - 1 - \frac{\xi}{\alpha} \cdot \nabla \big) U + \mathbb{P} ( \bar{U} \cdot \nabla U + U \cdot \nabla \bar{U} ),
    \end{equation}
    \begin{equation}
        D(L_{\rm ss}) := \{ U \in L^2_{m_0,{\rm df}} : \big( \bar{U} - \frac{\xi}{\alpha} \big) \cdot \nabla U \in L^2 \},
    \end{equation}
    has an unstable eigenvalue. More specifically, $L_{\rm ss}$ has either
\begin{enumerate}
\item two algebraically simple unstable eigenvalues $\lambda, \bar{\lambda}$ (not identical), or
\item a semisimple eigenvalue $\lambda \in \R$ of multiplicity two.
\end{enumerate}
Moreover,
\begin{equation}
    \sigma(L_{\rm ss}) \setminus \{ \lambda, \bar{\lambda} \} \subset \{ z : \in \mathbb{C} : \Re z \leq 0 \} \, ,
\end{equation}
\begin{equation}
    a := \Re \lambda \geq \frac{4}{\alpha} \, ,
\end{equation}
and the spectral subspace(s) associated to $\lambda$ and $\bar{\lambda}$ are spanned by eigenfunctions $\eta = q(r) e^{im_0 \theta} \in H^{5}(\R^2)$ and $\bar{\eta}$. Finally,
$L_{\rm ss}$ generates a $C^0$-semigroup satisfying
\begin{equation}
    \| e^{\tau L_{\rm ss}} \|_{L^2_{m_0,{\rm df}} \to L^2} \lesssim e^{\tau a} \, \quad \forall \tau \geq 0 \, .
\end{equation}
\end{proposition}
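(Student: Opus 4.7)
The proof follows the instability scheme of Vishik~\cite{vishik2018instability1,vishik2018instability2} as refined in~\cite{albritton2023instability}. Since $\bar U$ is radial, $L_{\rm ss}$ commutes with rotations and hence preserves each angular Fourier sector. I would work directly in the $m_0$-sector: write divergence-free vector fields there via a stream function $\psi = \phi(r) e^{im_0\theta}$, so that the eigenvalue problem $L_{\rm ss} U = \lambda U$ becomes a scalar ODE in $r$ with coefficients built from the profile $V$ and the self-similar drift. The plan is then to (i) build $V$ so that the pure Euler linearization $T_{m_0} U := \mathbb{P}(\bar U \cdot \nabla U + U \cdot \nabla \bar U)$ has a discrete unstable eigenvalue of very large real part, and (ii) view the self-similar drift $(1/\alpha-1) - (\xi/\alpha)\cdot \nabla$ as a perturbation that is compactly supported into an unstable spectral region already carved out by $T_{m_0}$.

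\textbf{Construction of the unstable vortex.} For step (i), I would work in vorticity variables, where the relevant operator for the sector $m_0$ is a Rayleigh-type operator acting on $\omega_{m_0}(r)$. Following Vishik, I would design a smooth, compactly supported radial velocity $V(r) e_\theta$ whose vorticity profile $\zeta(r)$ has a non-monotonic bump chosen so that, for some $m_0 \geq 2$, the associated Rayleigh operator has a discrete unstable eigenvalue with strictly positive real part. This is typically done by first exhibiting a singular (e.g., piecewise-smooth or scale-invariant) profile where the unstable eigenvalue can be found explicitly via matched asymptotics, and then mollifying while tracking the eigenvalue by a continuation/degree argument. Since $T_{m_0}$ is linear and homogeneous of degree one in $\bar U$, replacing $\bar U$ by $A \bar U$ rescales the unstable eigenvalue by $A$; thus by choosing $A$ large we may assume $\Re \lambda_{\rm Euler} \geq 4/\alpha + 1$.

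\textbf{Persistence under the self-similar drift and spectral picture.} The drift $-(\xi/\alpha)\cdot \nabla$ generates the scaling $C^0$-group $(e^{\tau L_0}U)(\xi) = e^{-\tau(1-1/\alpha)} U(e^{-\tau/\alpha}\xi)$ on $L^2_{m_0,\mathrm{df}}$, so $L_{\rm ss} = L_0 + B$ with $B$ bounded (indeed $\bar U \in C^\infty_0$), which immediately gives the $C^0$-semigroup generation. A Weyl-sequence computation, using the compact support of $\bar U$ to localize the perturbation away from infinity, pins the essential spectrum of $L_{\rm ss}$ inside a half-plane $\{\Re z \leq c_\alpha\}$ with $c_\alpha < 0$. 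Hence the part of $\sigma(L_{\rm ss})$ to the right of this line consists of isolated eigenvalues of finite algebraic multiplicity. I would then follow the unstable Euler eigenvalue $\lambda_{\rm Euler}$ as one continuously turns on the drift parameter from $0$ to $1$: by Kato stability of isolated eigenvalues under relatively bounded perturbations, this yields an eigenvalue $\lambda$ of $L_{\rm ss}$ with $\Re \lambda \geq 4/\alpha$. The reality of the operator's coefficients forces $\sigma(L_{\rm ss})$ to be symmetric under complex conjugation, yielding the dichotomy (a conjugate pair versus a semisimple real double eigenvalue) after a generic choice of $A$; algebraic simplicity (resp.\ semisimplicity) is read off from the continuation because $\lambda_{\rm Euler}$ is simple. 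The bootstrap regularity $\eta \in H^5$ follows from $L_{\rm ss} \eta = \lambda \eta$ by iterating elliptic/ODE estimates, since the principal part $(\bar U - \xi/\alpha)\cdot \nabla$ is smooth away from its (radial) characteristic set and the inhomogeneity $\lambda \eta + \bar U \cdot \nabla \eta - \eta \cdot \nabla \bar U$ gains regularity at each step. Finally, the semigroup bound $\|e^{\tau L_{\rm ss}}\|_{L^2\to L^2} \lesssim e^{a\tau}$ follows from a Gearhart--Prüss argument: on the vertical line $\Re z = a + \delta$ one establishes uniform resolvent bounds by combining the spectral information just described with a high-frequency expansion for large $|\Im z|$, available because $B$ is bounded.

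\textbf{Main obstacle.} The analytically delicate step is the construction of $V$ and the verification that the Rayleigh operator in the $m_0$-sector has a genuine discrete unstable eigenvalue of arbitrarily prescribed real part (after scaling) that survives the addition of the self-similar drift. Everything else is driven by rather standard spectral-theoretic and semigroup machinery. This is precisely the place where one relies on the subtle construction in~\cite{vishik2018instability1,vishik2018instability2,albritton2023instability}, and where the parameter $m_0 \geq 2$ is forced on us.
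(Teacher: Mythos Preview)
Your overall strategy matches the paper's: start from Vishik's unstable vortex, rescale $\bar U$ so the Euler eigenvalue dominates, treat the self-similar drift as a perturbation, and close with Gearhart--Pr\"uss. However, the claim ``$L_{\rm ss} = L_0 + B$ with $B$ bounded (indeed $\bar U \in C^\infty_0$)'' is false, and this is not a cosmetic point. The space $L^2_{m_0,\mathrm{df}}$ contains \emph{all} angular modes $m \in m_0\mathbb{Z}$, not just $m = \pm m_0$; on the $m$-th sector the transport part $\bar U \cdot \nabla = \Omega(r)\partial_\theta$ acts as multiplication by $im\Omega(r)$, whose operator norm is $\sim |m|\,\|\Omega\|_{L^\infty}$. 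Hence the Euler linearization is unbounded on $L^2_{m_0,\mathrm{df}}$, and in fact the domains of $L_0$ (which controls $r\partial_r U$) and $L_{\rm ss}$ (which controls $(\bar U - \xi/\alpha)\cdot\nabla U$, involving $\partial_\theta$) do not coincide.

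This gap propagates to the two places you lean on boundedness. First, the perturbation from $L$ to $L_{\rm ss}$ is singular---the domain changes with the drift parameter---so ordinary Kato stability of isolated eigenvalues does not apply; the paper instead works mode-by-mode with the decomposition $L_m^{(\kappa)} = A_m^{(\kappa)} + B_m$ (the drift is absorbed into the principal part, and only $B_m$ is compact) and proves an abstract perturbation result based on strong resolvent convergence composed with the compact piece (Proposition~\ref{pro:mainspectrallemma}, Lemma~\ref{lem:stabilityofboundedinvertibility}). Second, and more seriously, your Gearhart--Pr\"uss step needs resolvent bounds on $\{\Re z = a+\delta\}$ that are \emph{uniform in $m$}, and the paper stresses that these are not granted by the abstract ``anti-self-adjoint plus compact'' structure: the mode-wise bound $\|e^{\tau L_m^{(\kappa)}}\| \lesssim_m e^{\tau a}$ carries an $m$-dependent constant. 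The paper recovers uniformity by estimating the self-similar Rayleigh equation directly (Lemma~\ref{lem:uniformregion1}), using that $\|B_m\| = O(1/|m|)$ to handle $|m|\gg 1$ and the perturbation theory only for finitely many low modes. Your ``high-frequency expansion for large $|\Im z|$, available because $B$ is bounded'' is exactly where this breaks.
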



This statement is essentially due to~\cite{vishik2018instability1,vishik2018instability2} (see~\cite[Appendix A]{albritton2023instability}, with truncation in~\cite{MR4429263} and properties of the eigenfunctions in~\cite{MR4616679}). However, it will be important that we keep the simplicity of the eigenvalue, which was not necessary in previous constructions. We also choose to work in velocity rather than vorticity formulation; this is more difficult for the spectral problem but more convenient for the nonlinear argument. We prove Proposition~\ref{prop:lin_part} in Section~\ref{sec:spectral_pb} and Appendix~\ref{sec:spectralperturbationtheory}, which moreover contains spectral theory for a class of singularly perturbed operators.



In Theorem~\ref{thm:main}, we fix $\alpha \in (0,1)$, a vortex profile $\bar{U}$ as in Proposition~\ref{prop:lin_part} (with corresponding $m_0 \geq 2$) and $\bar{u}$ in the notation~\eqref{eq:urescalingnotation}. Notably, $\bar{u}|_{t=0} = 0$. The force $\bar{f} := \p_t \bar{u} \in L^1_t (L^2 \cap W^{1,p})_x$ on finite time intervals provided $1 \leq p < 2/\alpha$. 

We now explain the connection with non-uniqueness. For any $z \in \mathbb C$, the velocity field 
\begin{equation}
    U^{\rm lin} = e^{\tau \lambda} z \eta + \text{ complex conjugate}
\end{equation}
is a solution of the linearized equations $\p_\tau U^{\rm lin} = L_{\rm ss} U^{\rm lin}$ growing $\sim e^{\tau a}$. 
The next step is to construct a non-linear trajectory which tracks the behavior of $U^{\rm lin}$, namely,
\begin{equation}
    U = \bar{U} + U^{\rm lin} + O(e^{2\tau a}) \text{ as } \tau \to -\infty \, .
\end{equation}
Since
\begin{equation}
    \tau \to -\infty \quad \iff \quad t \to 0^+ \, ,
\end{equation}
$u$ is a solution which separates instantaneously from the background solution $\bar{u}$. The separation rate is $\sim t^a$ when measured in critical norms, e.g., $\| \omega(\cdot,t) \|_{L^{2/\alpha}}$, which is invariant under the rescaling~\eqref{eq:omegascalingsym}.

We may regard $U$ as a trajectory on a two-dimensional unstable manifold of~$\bar{U}$, though the existence of such a manifold 
has not yet been proven. For this reason, while the solution $u^{\rm E}$ in Theorem~\ref{thm:main} is expected to be ``one of Vishik's solutions", the available constructions~\cite{vishik2018instability1,albritton2023instability,castro2024proof} are not precise enough for this claim to make sense. Specifically, one should demonstrate that solutions $\bar{U} + U^{\rm lin} + o(e^{\tau a})$ are determined entirely by the $U^{\rm lin}$ contribution. We will clarify this point in forthcoming work.

\subsection{The viscous regularization and the critical exponent $\kappa_{\rm c}$}
We aim to understand the above scenario with viscosity. In similarity variables, the Navier-Stokes momentum equations become
\begin{equation}
    \label{eq:similarityvariablesintroviscous}
\p_\tau U + \big(\frac{1}{\alpha} - 1 - \frac{\xi}{\alpha} \cdot \nabla \big) U + U \cdot \nabla U + \nabla P = 
     \nu e^{-\gamma \tau} \Delta U + F \, ,
\end{equation}
where\footnote{The regime in which $\gamma < 0$ was investigated for the 2D hypodissipative Navier-Stokes equations in~\cite{MR4616679}.}
\begin{equation}
    \gamma := \frac{2}{\alpha} - 1 > 0 \, .
\end{equation}
Let $T_\nu = \nu^{1/\gamma}$ be the ``crossover time". The equation~\eqref{eq:similarityvariablesintroviscous} suggests that the viscosity is \emph{negligible} when $t \gg T_\nu$ ($\nu e^{\gamma \tau} \gg 1$) but \emph{dominant} when $t \ll T_\nu$. This is consistent with the observation that the viscosity restores uniqueness. A key new difficulty will be to account for the behavior in the viscosity-dominated \emph{initial layer}, which we view in the original variables.\footnote{One could go farther and say that viscosity is negligible in the spatial region $\{ |x| \gg \nu^{\alpha/\gamma} \}$, but this level of detail will not be necessary.}

We would like to construct solutions which ``land" on the unstable trajectory $u + u^{\rm lin} + \cdots$ at some time, say, $100 T_\nu$, after which the solution evolves essentially via the Euler equations. \emph{A priori}, it is not evident that this is possible, as the initial layer could ``conspire" to prevent it from happening.

Later, we will see that the solution \emph{is} substantially modified by the inclusion of heat (it moves away by $O(1)$ in critical norms). However, as a zeroth order heuristic, suppose that the action of the initial layer on initial data is to ``do nothing": It maps initial vorticity $\omega_0(x)$ at time zero to $\bar{\omega}(x,t) + \omega_0(x)$ at time $100 T_\nu$, after which we evolve by the Euler equations. At time $100T_\nu$, we hope the solution will be equal to $\bar{\omega} + \omega^{\rm lin}(x,t)$. Therefore, we should have $\omega_0 \approx \omega^{\rm lin}|_{t = T_\nu}$. Since $\| \omega^{\rm lin}|_{t = 100T_\nu} \|_{L^{2/\alpha}} \sim T_\nu^{a} \sim \nu^{a/\gamma}$, we see that the initial data should be size $\varepsilon \sim \nu^{a/\gamma}$ for the construction to hold:
\begin{equation}
    \label{eq:criticalexponent}
    \kappa_{\rm c} = \frac{a}{\gamma} \, .
\end{equation}
The critical exponent is the ratio between the growth rate $a$ and $\gamma = (2-\alpha)/\alpha$, the relative difference between the viscous and self-similar scaling exponents, which sets $T_\nu$.

The ``do nothing" heuristic is not correct in its description of the solution but is correct about its size. Since $t^a$ is the maximal possible growth rate near $\bar{u}$, if $\varepsilon = o(\nu^{a/\gamma})$, then uniqueness will hold in the inviscid limit.

\subsection{The dimensionless norm $Y_\nu$}
\label{sec:maintheoremrevisited}



We now complete the specification of the objects in Theorem~\ref{thm:main}.

Let $\sigma > 0$ such that the convergence in~\eqref{eq:conv_background} in Lemma~\ref{lemma:conv_background} holds in $L^{2+\sigma}$ 
and $0 < s < \min(2-3\alpha/2,1)$ such that $s = 1+2/(2+\sigma)$. By Sobolev embedding, $H^{2+s} \hookrightarrow W^{2,2+\sigma}$.

Since the scaling $L^\alpha = T$ is relevant for the problem, $\nu$ is a dimensional constant with dimensions $[\nu] = L^2/T = L^{2-\alpha}$. Then
\begin{equation}
L_\nu := \nu^{\frac{1}{2-\alpha}} 
\end{equation}
is a viscous length scale.
Define the dimensionless norm
\begin{equation}
    \label{eq:defdimlessnormynu}
\| u_0^\nu \|_{Y_\nu} := L_\nu^{\alpha+s} \| u_0^\nu \|_{\dot H^{2+s}} + L_\nu^{\alpha-2} \| u_0^\nu \|_{L^2} \, ,
\end{equation}
so that $\| (u_0)_{L_\nu} \|_{Y_\nu} = \| u_0 \|_{Y_1}$ in the scaling notation of~\eqref{eq:NSscalingsym}. 

\textbf{Theorem~\ref{thm:main} holds with the choices of $\alpha$, $\bar{U}$, $\bar{f} := \p_t \bar{u}$, $m_0$, $\kappa_{\rm c}$, and $Y_\nu$ specified in Sections~\ref{sec:inviscidnonuniqueness}-\ref{sec:maintheoremrevisited}.} The exponent $p$ in Theorem~\ref{thm:main} is any $p\in(2,2/\alpha)$.

Moreover, in Case (1) (Rigidity), the convergence in~\eqref{eq:convergence1mainthm} is
\begin{equation}
u^\nu \to \bar{u} \text{ in } L^\infty_t W^{1,q}_x(\R^2 \times (0,T)) \, , \quad \forall q \in [2,2/\alpha) \, , \quad \forall T > 0 \, ,
\end{equation}
\begin{equation}
    u^{\nu} \overset{\ast}{\rightharpoonup} \bar{u} \text{ in } L^\infty_t W^{1,\frac{2}{\alpha}}_x(\R^2 \times (0,T)) \, , \quad \forall T > 0 \, .
\end{equation}
In Case (2) (Non-uniqueness), the convergence in~\eqref{eq:convergence2mainthm} is
\begin{equation}
u^{\nu_k} \to u^{\rm E} \text{ in } L^\infty_t W^{1,q}_x(\R^2 \times (0,1)) \, , \quad \forall q \in [2,2/\alpha) \, ,
\end{equation}
\begin{equation}
    u^{\nu_k} \overset{\ast}{\rightharpoonup} u^{\rm E} \text{ in } L^\infty_t W^{1,\frac{2}{\alpha}}_x(\R^2 \times (0,1)) \, .
\end{equation}
That is, the convergence is strong in supercritical spaces and weak in the critical space.

\subsection{Strategy of the proof}

The proof gives a detailed description of the dynamics of $u^\nu$. However, it will be more convenient to describe the dynamics by normalizing to $\nu=1$ according to the rescaling
\begin{equation}
	\label{eq:rescaling}
u^\nu(x,t) = U_\nu u(x/L_\nu, t/T_\nu) \, , 
\end{equation}
where
\begin{equation}
    \label{eq:LnuTnuUnudef}
L_\nu = \nu^{\frac{1}{2-\alpha}} \, , \quad T_\nu = L_\nu^{\alpha} = \nu^{\frac{\alpha}{2-\alpha}} \, , \quad U_\nu = \frac{L_\nu}{T_\nu} = L_\nu^{1-\alpha} = \nu^{\frac{1-\alpha}{2-\alpha}} \, ,
\end{equation}
since $[\nu] = L^{2-\alpha}$. Then $u$ solves the Navier-Stokes equations
\begin{equation}
\left\lbrace
    \label{eq:normalizedviscosityNS}
\begin{aligned}
\p_t u + u \cdot \nabla u + \nabla p &= \Delta u + \bar{f} \\
\div u &= 0 \\
u|_{t=0} &= u_0 \, ,
\end{aligned}
\right.
\end{equation}
where
\begin{equation}
u_0^\nu(x) = U_\nu u_0 (x/L_\nu) \, .
\end{equation}
It will be more convenient to \emph{define} $u_0$, \emph{describe} $u$, and finally unfold the rescaling~\eqref{eq:rescaling} to recover $u_0^\nu$ and the solution $u^\nu$ afterward. Notably,
\begin{align}
\label{eq:norms_relation}
\| u_0^\nu \|_{\dot H^{2+s}}  =  L_\nu^{- s - \alpha}  & \| u_0 \|_{\dot H^{2+s}}, \quad  \| u_0^\nu \|_{L^p} =  L_\nu^{1 + \frac{2}{p} - \alpha}  \| u_0 \|_{L^p}, \\ &
\| \omega_0^\nu \|_{L^p} = L_\nu^{\frac{2}{p} -\alpha} \| \omega_0 \|_{L^p} \, ,
\end{align}
from which we obtain $\| \omega_0^\nu \|_{L^{2/\alpha}} = \| \omega_0 \|_{L^{2/\alpha}}$ ($\| \cdot \|_{L^{2/\alpha}}$ is a \emph{critical norm} for the vorticity).





From this perspective, the problem of controlling $u^\nu$ until time $O(1)$ becomes a problem of controlling $u$ until (long) time $O(T_\nu^{-1})$. The ``initial layer" lasts until time $O(1)$ in the new variables. 

For the remainder of the introduction, we will focus on the non-uniqueness statement (2), as the corresponding uniqueness statement (1) follows by a similar but simpler analysis. Additionally, we regard the background vortex as \emph{fixed}, and various implied constants may depend on it.

\subsubsection{The modified background}
\label{sec:intromodifiedbackground}
To understand the long-time behavior of \eqref{eq:normalizedviscosityNS}, we first need to understand the following: What happens to the background $\bar u$ itself under viscosity? Hypothetically, if the viscosity were to destabilize $\bar{u}$ in a strong way, then solutions would move away from the inviscid construction, and we would lose control of the non-uniqueness generated by the instability of such background.
 
To quantify this possibility, we consider the solution $\tilde{u}$ to the forced Navier-Stokes equations \eqref{eq:normalizedviscosityNS}
with initial datum $0$. Namely, $\tilde u$ solves the equation of $\bar u$ with the sole addition of the Laplacian term.
We call $\tilde{u}$ the background modified by viscosity, or simply the \emph{modified background}. In our context, $\tilde{u}$ is radial and solves the forced heat equation
$$\partial_t \tilde{u} = \Delta \tilde{u} + \bar{f} \quad \text{ in } \RR \times \R^+ $$
with initial datum $0$.

The whole construction is in a neighborhood of the modified background, so it is crucial to understand $\tilde{u}$ in detail. This is the subject of Section~\ref{sec:modifiedbackground}. The main difficulty is in the long-time behavior of $\tilde{u}$. 
Therefore, the crucial observation is that the modified background asymptotically returns to the inviscid background: 
\begin{equation}
\label{eqn:backgrconv}
    \tilde{U} \to \bar{U} \text{ in } H^{2+s} \text{ as } \tau \to +\infty \, .
\end{equation}
That is, the background $\bar{u}$ is somewhat ``stable" with respect to viscosity. In supercritical regularity, e.g., $\Omega \in L^p$, $p <2/\alpha$, convergence holds due to the scaling, but this is not refined enough for our purposes. A more substantial analysis, involving a further cancellation, is required to establish convergence in $H^{2+s}$, which will be necessary for our method. 
Actually, we expect the convergence \eqref{eqn:backgrconv} to be false in higher regularity, for instance already in $H^3$, since the presence of the self-similar transport term $- (1 + \frac{1}{\alpha} \cdot \nabla)$ in the equation for $\tilde U$ wants to concentrate the solution at the origin, making its derivatives grow.


\subsubsection{The initial layer}

We next consider how perturbations behave as they pass through initial layer. At this level, we rewrite
\begin{equation}
    u = \tilde{u} + {\rm perturbation} \, ,
\end{equation}
where the perturbation satisfies a Navier-Stokes problem with lower order terms and initial data $u_0 = O(\varepsilon)$. The perturbation is therefore accurately described by $\varepsilon v$, where $v$ is a solution to the linearized Navier-Stokes equations~\eqref{eq:initialayer1} around $\tilde{u}$. Generally speaking, it seems too difficult to say anything substantive about $v$ besides estimating its growth. What is important for us is that \emph{the image of the solution map for the linearized problem~\eqref{eq:initialayer1} is dense in $H^{2+s}$}; this might be regarded as a form of approximate controllability. We can therefore choose $v$ such that perturbation is close to $\varepsilon u^{\rm lin}$, a growing solution of the linearized Euler equations, at time~$T$. 



\begin{theorem}[The initial layer]
\label{thm:initiallayer}
Let $s \in (0,1)$ and $\tilde{u} \in L^2_t H^{2+s}_x(\R^2 \times (0,T))$ be a divergence-free vector field. Let $T, \delta > 0$ and $\phi \in H^{2+s}_{\rm df}$ be a desired end state.
\begin{enumerate}[label=(\roman*)]
\item\label{item:initialayer1} There exists $v_0 \in H^{2+s}_{\rm df}$ such that the solution $v \in C([0,T];H^{2+s})$ to the linearized problem
\begin{equation}
\label{eq:initialayer1}
\left\lbrace
\begin{aligned}
    \partial_t v + \mathbb{P}(\tilde{u} \cdot \nabla v + v \cdot \nabla \tilde{u}) &= \Delta v \\
    \div v &= 0 \\
    v|_{t=0} &= v_0
\end{aligned}
\right.
\end{equation}
satisfies
\begin{equation}
    \label{eq:closetounstablemode}
    \| v(\cdot,T) - \phi \|_{H^{2+s}} \leq \delta \, .
\end{equation}
\item\label{item:initialayer2} For all $0 \leq \varepsilon \ll_{\delta,T,\phi} 1$ and $\psi_0 \in H^{2+s}_{\rm df}$ such that $\|\psi_0\|_{H^{2+s}} \ll_{\delta,T,\phi} 1$, the solution $u \in C([0,T];H^{2+s})$ to the perturbed Navier-Stokes equations\footnote{Under the assumptions,~\eqref{eq:initialayer1} and~\eqref{eq:initialayer2} are both uniquely solvable in $C([0,T];H^{2+s})$ -- see Section~\ref{sec:back_uniq}.}
\begin{equation}
\label{eq:initialayer2}
\left\lbrace
\begin{aligned}
    \partial_t u + \mathbb{P}(\tilde{u} \cdot \nabla u + u \cdot \nabla \tilde{u} + u \cdot \nabla u) &= \Delta u \\
    \div u &= 0 \\
    u|_{t=0} &= \varepsilon v_0 + \psi_0
\end{aligned}
\right.
\end{equation}
satisfies for some $C:=C(T, \delta, \phi)$
\begin{equation}
    \| u - \varepsilon v \|_{L^\infty_t H^{2+s}_x(\R^2 \times (0,T))} 
    \leq C(\varepsilon^2 + \|\psi_0\|_{H^{2+s}} )\, .
\end{equation}
\end{enumerate}
\end{theorem}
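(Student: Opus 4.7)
The statement splits into (i) an approximate controllability claim for the linearized problem~\eqref{eq:initialayer1} and (ii) a nonlinear stability estimate around the linear trajectory $\varepsilon v$. I would prove (i) by a Hahn--Banach duality argument that reduces density of range to backward uniqueness for the $L^{2}$-adjoint equation, and (ii) by a bootstrap/Gronwall estimate in $H^{2+s}$.

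\textbf{Part (i), controllability via backward uniqueness.} Let $S_T \colon H^{2+s}_{\rm df} \to H^{2+s}_{\rm df}$ denote the solution operator $v_0 \mapsto v(\cdot,T)$; it is bounded by the well-posedness theory cited in the paper. Hahn--Banach reduces density of $\mathrm{Range}(S_T)$ in $H^{2+s}_{\rm df}$ to injectivity of the adjoint $S_T^{*}$. Integration by parts, using $\div \tilde u = 0$, identifies $S_T^{*} w_T = w(0)$ where $w$ solves the backward adjoint parabolic problem
\begin{equation}
-\partial_t w - \Delta w - \mathbb{P}\bigl( \tilde u \cdot \nabla w - (\nabla \tilde u)^{T} w \bigr) = 0, \quad \div w = 0, \quad w(T) = w_T .
\end{equation}
After the time-reversal $\tilde w(s) := w(T-s)$, this becomes a standard forward parabolic system and the condition $S_T^{*} w_T = 0$ becomes $\tilde w(T) = 0$. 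The required injectivity is therefore exactly \emph{backward uniqueness} for $\tilde w$. In two dimensions the embedding $H^{2+s} \hookrightarrow W^{1,\infty}$ (for $s > 0$) makes the drift $\tilde u$ spatially Lipschitz with $L^{2}_t$ Lipschitz norm and $\nabla \tilde u$ bounded in the same sense, placing the problem well within the scope of the log-convexity / Agmon--Nirenberg framework (or a Carleman argument in the Escauriaza--Seregin--Sverak vein).

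\textbf{Part (ii), nonlinear stability.} With $v$ from part (i), write $u = \varepsilon v + R$, so that $R(0) = \psi_0$ and
\begin{equation}
\partial_t R - \Delta R + \mathbb{P}\bigl( \tilde u \cdot \nabla R + R \cdot \nabla \tilde u + \varepsilon v \cdot \nabla R + \varepsilon R \cdot \nabla v + R \cdot \nabla R + \varepsilon^{2} v \cdot \nabla v \bigr) = 0 .
\end{equation}
Testing at the $H^{2+s}$ level produces, schematically,
\begin{equation}
\tfrac{d}{dt} \| R \|_{H^{2+s}}^{2} \lesssim A(t)\, \| R \|_{H^{2+s}}^{2} + \| R \|_{H^{2+s}}^{3} + \varepsilon^{4} \| v \|_{H^{2+s}}^{4},
\end{equation}
where $A \in L^{1}_t$ is controlled by $\| \tilde u \|_{H^{2+s}} + \varepsilon \| v \|_{H^{2+s}}$. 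Provided $\varepsilon$ and $\| \psi_0 \|_{H^{2+s}}$ are small depending on $T, \delta, \phi$ (through $\| v \|_{L^{\infty}_t H^{2+s}_x}$), a Gronwall/continuity argument absorbs the cubic term and yields $\| R \|_{L^{\infty}_t H^{2+s}_x} \lesssim \varepsilon^{2} + \| \psi_0 \|_{H^{2+s}}$, which is the claimed bound.

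\textbf{Main obstacle.} The delicate step is part (i): establishing backward uniqueness for the adjoint parabolic system with a drift of merely $L^{2}_t H^{2+s}_x$ regularity. One must set up the duality carefully, taking $w_T$ in the dual space $H^{-(2+s)}$ and using parabolic smoothing so that the log-convexity estimate can be applied to $\tilde w$ on $(0,T)$. I expect this is precisely the content of the paper's backward-uniqueness section, while part (ii) is essentially routine once the smallness regime has been calibrated.
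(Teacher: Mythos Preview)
Your overall strategy is correct and matches the paper's: Hahn--Banach plus backward uniqueness for~(i), and an $H^{2+s}$ energy estimate with Gronwall for~(ii).

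The one implementation difference worth flagging is in~(i). You propose to take the $L^{2}$-adjoint directly and work with $w_T \in H^{-(2+s)}$, relying on parabolic smoothing to make the log-convexity estimate applicable on $(0,T)$. The paper instead conjugates the flow by the isomorphism $\langle\nabla\rangle^{2+s} \colon H^{2+s}_{\rm df} \to L^{2}_{\rm df}$: the conjugated unknown $q = \langle\nabla\rangle^{2+s} v$ solves an equation of the form
\[
\partial_t q - \Delta q + \mathbb{P}(\tilde u \cdot \nabla q) + \mathbb{P} A q + \mathbb{P} B q = 0,
\]
where $A,B$ are commutator-type operators that are \emph{bounded} on $L^{2}$ by the Kato--Ponce inequality (this is where $\tilde u \in L^{2}_t H^{2+s}_x$ enters). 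Density of the $H^{2+s}$ flow becomes density of the $L^{2}$ flow for $q$, and the adjoint problem is now posed squarely in $L^{2}$ with bounded zeroth-order perturbations, so the Agmon--Nirenberg/Ghidaglia theorem applies verbatim. This sidesteps the need to track the adjoint in negative Sobolev spaces and to invoke smoothing separately; your route should also close, but the conjugation makes the verification of the backward-uniqueness hypotheses completely explicit. Part~(ii) is carried out in the paper exactly as you outline.
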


This is proved in Section~\ref{sec:back_uniq} by \emph{backward uniqueness} for an adjoint equation (specifically, the $L^2$-adjoint of the equation for the unknown $\langle \nabla \rangle^{2+s} u$). 

The open set of initial data $\tilde{u_0}^\nu$ in Theorem~\ref{thm:main} is, roughly speaking, defined by the property~\eqref{eq:closetounstablemode}, that the linearized solution land close to the growing mode at time $T$. 

\subsubsection{Long-time behavior}
\label{sec:introlongtime}

It remains to control the solution after the initial layer, once the viscosity is negligible. In this setting, we decompose the solution into
\begin{equation}
    U = \tilde{U} + U^{\rm lin} + U^{\rm per}
\end{equation}
where
\begin{equation}
    U^{\rm lin} = e^{(\tau-\tau_0) L_{\rm ss}} P_{\lambda,\bar{\lambda}} (U-\tilde{U})|_{\tau=\tau_0} = O(\varepsilon e^{a \tau}) \, ,
\end{equation}
and $P_{\lambda,\bar{\lambda}}$ is the spectral projection onto the spectral subspaces associated with $\lambda$ and $\bar{\lambda}$.

We aim to control the solution until $U - \bar{U}$ becomes $O(1)$, that is, when the solution exits the linearized regime. Assuming that $U^{\rm per}$ is negligible, this happens at
\begin{equation}
    \tau_{\rm max} = \frac{1}{a} \log \frac{1}{O(1) \varepsilon} \, ,
\end{equation}
that is, when $U^{\rm lin}$ becomes $O(1)$. Our goal is to control $U^{\rm per}$ (more specifically, we keep it smaller than $U^{\rm lin}$) until $\tau_{\rm max}$. 

\begin{theorem}[Long-time behavior]
\label{thm:longtimebehavior}
We adopt the above notation. For every $c \in (0,1)$, there exists $C, \tau_0 > 0$ (depending on $c$) such that the following holds.

Let 
$\Phi \in H^{2+s}$ be a divergence-free initial datum
and $U$ be the solution of Navier-Stokes equations in self-similar variables with force $\bar F$, namely,~\eqref{eq:similarityvariablesintroviscous} with $\nu=1$, and initial condition
\begin{equation}
    U|_{\tau = \tau_0} = \tilde U|_{\tau=\tau_0} + \Phi \, .
\end{equation}
Then
\begin{align}
\label{eq:nonlinear_bound}
    & \| U - \tilde{U} - e^{ (\tau-\tau_0)L_{\rm ss}} P_{\lambda, \bar \lambda} \Phi \|_{W^{2,2+\sigma}} \\ & \quad  \leq e^{a(\tau-\tau_0)} \left( c\| P_{\lambda,\bar{\lambda}} \Phi \|_{W^{2,2+\sigma}} + C \| \Phi -  P_{\lambda,\bar{\lambda}} \Phi\|_{W^{2,2+\sigma}} \right)
\end{align}
whenever
\begin{equation}
\tau \in \left[ \tau_0, \frac{1}{a} \log \frac{1}{C\|\Phi\|_{W^{2,2+\sigma}}} \right]  \, .  
\end{equation}
\end{theorem}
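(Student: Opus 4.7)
The plan is a Duhamel-based bootstrap for the remainder $U^{\rm per} := U - \tilde U - U^{\rm lin}$, where $U^{\rm lin}(\tau) := e^{(\tau - \tau_0) L_{\rm ss}} P_{\lambda, \bar \lambda} \Phi$. Setting $W := U - \tilde U$ and subtracting the equation for $\tilde U$ (which solves Navier-Stokes in self-similar variables with initial datum $0$) from the one for $U$, then splitting the linearization around $\tilde U$ as $L_{\rm ss} + (L_{\rm ss}^{\tilde U} - L_{\rm ss})$ and using that $U^{\rm lin}$ solves $\p_\tau U^{\rm lin} = L_{\rm ss} U^{\rm lin}$, one obtains
\begin{equation*}
\p_\tau U^{\rm per} - L_{\rm ss} U^{\rm per} = e^{-\gamma \tau} \Delta W - \mathbb{P}\bigl((\tilde U - \bar U) \cdot \nabla W + W \cdot \nabla(\tilde U - \bar U)\bigr) - \mathbb{P}(W \cdot \nabla W),
\end{equation*}
with $U^{\rm per}(\tau_0) = (I - P_{\lambda,\bar\lambda})\Phi$ and hence the Duhamel representation $U^{\rm per}(\tau) = e^{(\tau-\tau_0)L_{\rm ss}}(I-P_{\lambda,\bar\lambda})\Phi + \int_{\tau_0}^{\tau} e^{(\tau-r)L_{\rm ss}} R(r) \, dr$. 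Since $P_{\lambda,\bar\lambda}$ is a finite-rank projector onto $H^5$ modes by Proposition \ref{prop:lin_part}, I would upgrade the $L^2$ semigroup estimate there to $W^{2,2+\sigma}$, obtaining the split $\|e^{\tau L_{\rm ss}} P_{\lambda,\bar\lambda}\|_{W^{2,2+\sigma} \to W^{2,2+\sigma}} \lesssim e^{a \tau}$ and $\|e^{\tau L_{\rm ss}} (I - P_{\lambda,\bar\lambda})\|_{W^{2,2+\sigma} \to W^{2,2+\sigma}} \lesssim e^{(a - a_{\rm gap})\tau}$ for some spectral gap $a_{\rm gap} > 0$.

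Next I would run a continuity argument on $M_*(\tau) := e^{-a(\tau-\tau_0)} \|U^{\rm per}(\tau)\|_{W^{2,2+\sigma}}$ with bootstrap hypothesis $M_*(\tau) \leq 2\bigl(c \|P_{\lambda,\bar\lambda} \Phi\|_{W^{2,2+\sigma}} + C \|(I-P_{\lambda,\bar\lambda}) \Phi\|_{W^{2,2+\sigma}}\bigr)$, so that $\|W(r)\|_{W^{2,2+\sigma}} \lesssim e^{a(r-\tau_0)} \|\Phi\|_{W^{2,2+\sigma}}$. The three forcing contributions are then controlled as follows. (a) The background correction: Lemma \ref{lemma:conv_background} gives $\|\tilde U(r) - \bar U\|_{H^{2+s}} \leq \eta(r)$ with $\eta(r) \to 0$, so its Duhamel contribution is $\lesssim \eta(\tau_0) \, e^{a(\tau-\tau_0)}\|\Phi\|$. (b) The quadratic term obeys $\|W \cdot \nabla W\|_{W^{2,2+\sigma}} \lesssim \|W\|_{W^{2,2+\sigma}}^2 \lesssim e^{2a(r-\tau_0)}\|\Phi\|^2$, whose Duhamel integral is $\lesssim e^{2a(\tau-\tau_0)}\|\Phi\|^2 = (e^{a\tau}\|\Phi\|) \cdot e^{a(\tau-\tau_0)}\|\Phi\| \leq C^{-1} e^{a(\tau-\tau_0)}\|\Phi\|$ by the hypothesis $\tau \leq \frac{1}{a}\log \frac{1}{C\|\Phi\|}$. (c) The homogeneous part stays in the stable subspace and is bounded by $e^{(a-a_{\rm gap})(\tau-\tau_0)}\|(I-P_{\lambda,\bar\lambda})\Phi\|$. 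Using $\|\Phi\| \leq \|P_{\lambda,\bar\lambda}\Phi\| + \|(I-P_{\lambda,\bar\lambda})\Phi\|$, choosing $\tau_0$ large enough so that $\eta(\tau_0)$ is small and $C = C(c)$ large enough, the sum of contributions is at most half of the bootstrap bound, and continuity extends the estimate to all of $[\tau_0, \tau_{\rm max}]$.

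The main obstacle will be the singular viscous forcing $e^{-\gamma\tau} \Delta W$: the prefactor is favorable ($\int_{\tau_0}^\infty e^{-\gamma r} dr \lesssim e^{-\gamma \tau_0}$, comparable in role to $\eta(\tau_0)$), but $\Delta$ costs two derivatives, so $\Delta W \notin W^{2,2+\sigma}$ in general and the term cannot be directly fed into the semigroup. I would handle this by absorbing $e^{-\gamma\tau}\Delta$ into the linear evolution itself, invoking the spectral theory for singularly perturbed operators in Appendix \ref{sec:spectralperturbationtheory}: for $\tau_0$ large, the operator $L_{\rm ss} + e^{-\gamma\tau}\Delta$ retains a spectral gap close to that of $L_{\rm ss}$ with perturbed projection $P_{\lambda,\bar\lambda}^\nu$ close to $P_{\lambda,\bar\lambda}$, and the resulting (time-dependent) semigroup inherits both the splitting above and a parabolic smoothing that makes sense of the Laplacian in $W^{2,2+\sigma}$. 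A secondary route is a parallel $H^{2+s}$ energy estimate that absorbs $e^{-\gamma r}\|\Delta W\|$ against the $e^{-\gamma r}\|\nabla^{3+s} W\|_{L^2}^2$ dissipation. Either way, the viscous contribution enters on the same footing as the background correction and is controlled by taking $\tau_0$ large, with all such small corrections collected into the constant $c$.
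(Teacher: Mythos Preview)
Your overall architecture (decompose $U^{\rm per}$, Duhamel, bootstrap on $e^{-a(\tau-\tau_0)}\|U^{\rm per}\|$) is right in spirit, but the execution has two gaps that the paper resolves by a genuinely different route.

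\textbf{First gap: the semigroup is only available in $L^2$.} Proposition~\ref{prop:lin_part} furnishes $\|e^{\tau L_{\rm ss}}\|_{L^2_{m_0,{\rm df}}\to L^2}\lesssim e^{a\tau}$ and nothing more; there is no $W^{2,2+\sigma}\to W^{2,2+\sigma}$ bound. Upgrading is not a formality, because $L_{\rm ss}$ contains the transport term $-\tfrac{\xi}{\alpha}\cdot\nabla$ and $\bar U\cdot\nabla$, which do not commute nicely with derivatives, and there is no parabolic smoothing. The paper therefore runs Duhamel \emph{only} for $\|U^{\rm per}\|_{L^2}$ (Step~1 of Proposition~\ref{prop:bootstrap}) and closes the remaining pieces of the $X$-norm, $\|\Omega^{\rm per}\|_{L^2}$, $\|wr^{-1}\partial_\theta\Omega^{\rm per}\|_{L^p}$, $\|\nabla\Omega^{\rm per}\|_{L^p}$, by direct energy estimates (Steps~2--4). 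Relatedly, the paper never uses a spectral-gap bound $\|e^{\tau L_{\rm ss}}(I-P_{\lambda,\bar\lambda})\|\lesssim e^{(a-a_{\rm gap})\tau}$; it simply applies the full $e^{a\tau}$ estimate to the initial data $(I-P_{\lambda,\bar\lambda})\Phi$ and absorbs this into the constant $C$ multiplying $M$.

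\textbf{Second gap: the viscous term.} Your proposed fix via Appendix~\ref{sec:spectralperturbationtheory} does not apply: that appendix treats a \emph{fixed} singular perturbation $L_\varepsilon=M+K+\varepsilon A$, whereas $e^{-\gamma\tau}\Delta$ has a time-dependent coefficient, so there is no single perturbed operator whose semigroup you can invoke. The paper's hybrid scheme dissolves the obstacle: in the $L^2$ Duhamel step, $\|e^{-\gamma\tau}\Delta U^{\rm per}\|_{L^2}\leq e^{-\gamma\tau}\|\nabla\Omega^{\rm per}\|_{L^2}$ is controlled by the bootstrap on the $X$-norm with the favorable prefactor $e^{-\gamma\tau}$; in the energy steps for $\Omega^{\rm per}$ and $\nabla\Omega^{\rm per}$, the term $e^{-\gamma\tau}\Delta\Omega^{\rm per}$ has a \emph{good sign} and is simply discarded (or bounded by the quantity itself, see~\eqref{eq:laplacian_wrtheta}). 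No smoothing from a perturbed semigroup is needed.

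A third, subtler point: your estimate of the background correction in $W^{2,2+\sigma}$ would require $\tilde U-\bar U$ in a norm stronger than what Lemma~\ref{lemma:conv_background} delivers (the paper explicitly notes that convergence is expected to fail in $W^{2,\infty}$). At the $\nabla\Omega^{\rm per}$ level the offending term is $U^{\rm per}\cdot\nabla\partial_i(\tilde\Omega-\bar\Omega)$; the paper handles it by the Hardy-type inequality~\eqref{eq:hardy} to trade a factor $r^{-1}$ from $U^{\rm per}$ against the weighted norm $\|r\partial_r\nabla(\tilde\Omega-\bar\Omega)\|_{L^p}$ established in Lemma~\ref{lemma:conv_background}. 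This is the reason for the intermediate Step~3 on the weighted angular derivative.
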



The perturbation $U^{\rm per}$ satisfies
\begin{equation}
    \label{eq:uperequationforintro}
\begin{aligned}
\partial_\tau \Uper & -L_{\rm ss} \Uper = - \mathbb{P}((\tilde{U} - \bar{U}) \cdot \nabla \Uper) - \mathbb{P}((\tilde{U} - \bar{U}) \cdot \nabla \Ulin) \\ 
& \quad -\mathbb{P} (\Uper \cdot \nabla (\tilde{U} - \bar{U}) + \Uper \cdot \nabla \Uper) \\ & \quad + \mathbb{P}(\Uper \cdot \nabla \Ulin + \Ulin \cdot \nabla \Uper) +\mathbb{P}(\Ulin \cdot \nabla (\bar{U} - \tilde{U}) - \Ulin \cdot \nabla \Ulin) \\ & \quad + e^{-\tau \gamma} \Delta \Ulin + e^{-\tau \gamma} \Delta \Uper \, ,
\end{aligned}
\end{equation}
which is driven by various sources of error and initialized with the initial condition
\begin{equation}
U^{\rm per}|_{\tau=\tau_0} = \Phi - P_{\lambda,\bar{\lambda}} \Phi = O(\varepsilon \delta) \, .
\end{equation}
The analysis of this problem has the most in common with the nonlinear instability arguments in~\cite{vishik2018instability1,albritton2023instability,MR4616679}, which themselves draw from, e.g.,~\cite{bardosguostrauss,friedlanderstraussvishikearly,friedlandervishik}. However, we must also overcome difficulties unique to our setting. These will discussed further in Section~\ref{sec:nonlinear}. For now, we mention the following:
\begin{itemize}[leftmargin=*]
    \item Velocity is more convenient than vorticity formulation for the (i) ``derivative loss" from $e^{-\tau \gamma} \Delta U^{\rm per}$ in the bootstrap argument, and (ii) low frequencies in the Biot-Savart law.
    \item The semisimplicity (no Jordan blocks) of $\lambda$ is necessary to ensure that the right-hand side does not ignite any contributions which grow faster that $e^{\tau a}$, the growth of $U^{\rm lin}$.
    \item The convergence $\tilde{U} \to \bar{U}$ ensures that terms such as $U^{\rm per} \cdot \nabla (\bar{U} - \tilde{U})$ are negligible. However, the convergence is only in $W^{2,2+\sigma}$ (expected to be false in $W^{2,\infty}$), so the term $U^{\rm per} \cdot \nabla [\nabla (\bar{\Omega} - \tilde{\Omega})]$ in the equation for $\nabla \Omega^{\rm per}$ is not evidently lower order. To overcome this, we ``share" one derivative with $U^{\rm per}$ through the weight $1/r$.
\end{itemize}











\subsubsection{Conclusion}
We now explain how to prove Theorem~\ref{thm:main} or, more specifically, how to exhibit \emph{one} non-unique solution in the inviscid limit. Fix\footnote{One should make a choice, since $\eta$ is only defined up to multiplication by $z \in \mathbb{C} \setminus \{ 0 \}$.}
\begin{equation}
    \label{eq:ulindefforconclusion}
    U^{\rm lin} = \Re (e^{\tau \lambda} \eta(\xi)) \, , \quad  u^{\rm lin} = t^{\frac{1}{\alpha}-1} \Re \left[ t^\lambda \eta \Big( \frac{x}{t^{\frac{1}{\alpha}}}\big)  \right] \, . 
\end{equation}
As discussed above~\eqref{eq:normalizedviscosityNS}, it will be convenient to work in variables $(y,s)$ with normalized viscosity. For $|\varepsilon| \ll 1$, let $w^\varepsilon(y,s)$ be the solution to the Navier-Stokes equations with $\nu=1$:
\begin{equation}
\left\lbrace
\begin{aligned}
    \p_s w^\varepsilon + w^\varepsilon \cdot \nabla w^\varepsilon + \nabla p^\varepsilon &= \Delta w^\varepsilon + \bar{f} \\
    \div w^\varepsilon &= 0 \\
    w^\varepsilon|_{s=0} &= \varepsilon v_0 \, ,
\end{aligned}
\right.
\end{equation}
where $v_0$ in the initial datum in Theorem~\ref{thm:initiallayer}, whose solution to the linearized PDE satisfies $v(\cdot,S) = u^{\rm lin}(\cdot,S) + O(\delta)$ at fixed $S \gg 1$, which we think of as beyond the initial layer.\footnote{For reference, the parameters are chosen in the following order: $c$, $\tau_0$ in Theorem~\ref{thm:longtimebehavior} before $S := T = \log \tau_0$, $\delta$ in Theorem~\ref{thm:initiallayer}.} 
Theorem~\ref{thm:initiallayer} grants us the characterization
\begin{equation}
    w^\varepsilon(\cdot,S) = \tilde{u} + \varepsilon u^{\rm lin} + O(\varepsilon \delta) \, .
\end{equation}



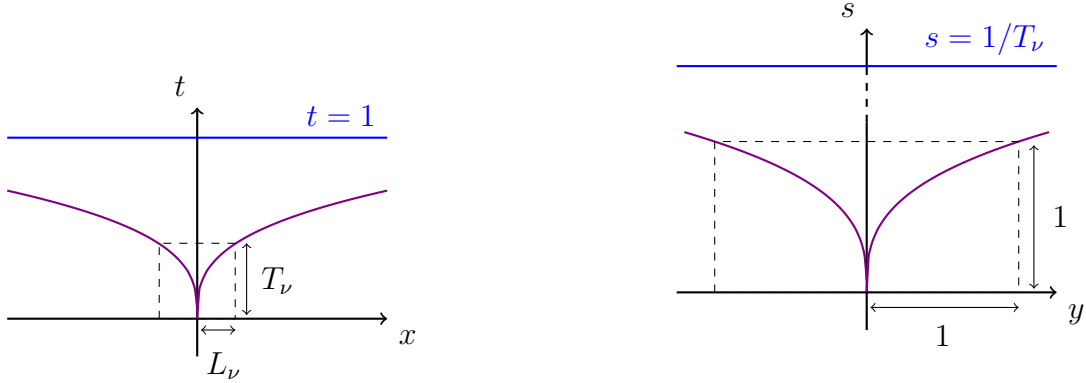
\begin{figure}
    \centering
    \begin{minipage}{0.45\textwidth}
    \vspace{1.4cm}  
    \centering
    \begin{tikzpicture}[scale=1.0]
        \draw[thick,->] (-2.5,0) -- (2.5,0) node[anchor=north west] {$x$}; 
        \draw[thick,->] (0,-0.5) -- (0,2.8) node[anchor=south east] {$t$};  
        \draw[thick,-,blue] (-2.5,2.4) -- (2.5,2.4) node[anchor=south east] {$t=1$};  
        \draw[domain=0:2.5,samples=100,smooth,variable=\x,thick,violet] 
            plot ({\x},{1.4^(2/3)*\x^(1/3)});
        \draw[domain=-2.5:0,samples=100,smooth,variable=\x,thick,violet] 
            plot ({\x},{-1.4^(2/3)*\x^(1/3)});

        \draw[dashed] (-0.5,0) rectangle (0.5,1);

        \node at (0.7,0.5) [anchor=west] {$T_\nu$};

        \node at (0.35,-0.3) [anchor=north] {$L_\nu$};
        \draw[<->] (0.05, -0.15) -- (0.5, -0.15);
        \draw[<->] (0.65, 0.05) -- (0.65, 1);
        
    \end{tikzpicture}
    \end{minipage}
    \hfill
    \begin{minipage}{0.45\textwidth}
    \centering
    \begin{tikzpicture}[scale=1]
        \draw[thick,->] (-2.5,0) -- (2.5,0) node[anchor=north west] {$y$}; 
        \draw[thick,-] (0,-0.5) -- (0,2.25);
        \draw[thick,->] (0,3) -- (0,3.5) node[anchor=south east] {$s$};  
        \draw[thick,dashed] (0,2.25) -- (0,3);
        \draw[thick,-,blue] (-2.5,3) -- (2.5,3) node[anchor=south east] {$s=1/T_\nu$};  
        
        \draw[domain=0:2.4,samples=100,smooth,variable=\x,thick,violet] 
            plot ({\x},{2^(2/3)*\x^(1/3)});
        \draw[domain=-2.4:0,samples=100,smooth,variable=\x,thick,violet] 
            plot ({\x},{-2^(2/3)*\x^(1/3)});

        \draw[dashed] (-2,0) rectangle (2,2);

        \node at (2.3,1) [anchor=west] {$1$};

        \node at (1,-0.3) [anchor=north] {$1$};

        \draw[<->] (0.05, -0.2) -- (2, -0.2);
        \draw[<->] (2.2, 0.05) -- (2.2, 1.95);
    \end{tikzpicture}
    \end{minipage}
        \caption{Visualization of the transformation~\eqref{eq:zoominginbruh}. $u^\nu(x,t)$ (left) solves the Navier-Stokes equations with viscosity $\nu$, and we observe it at time $t=1$. $w(y,s)$ (right) solves the Navier-Stokes equations with unit viscosity, and we observe it until time $s=1/T_\nu$. The purple curve $\{ |\xi| = {\rm const.}\}$ remains unchanged.}
    \label{fig:rescalingfig}
\end{figure}


Subsequently, Theorem~\ref{thm:longtimebehavior} grants control on $w^{\varepsilon}$ for $s \geq S$ in a decomposition
\begin{equation}
    \label{eq:decompositionbeforerescaling}
    w^{\varepsilon} = \tilde{u} + \varepsilon u^{\rm lin} + \underbrace{w^{\rm per,\varepsilon}}_{O(s^a \varepsilon \delta)} \quad \text{ on } [S,s_{\rm max}] \, ,
\end{equation}
where $s_{\rm max}(\varepsilon) = \varepsilon^{-\frac{1}{a}} / O(1)$ and the big Oh is measured in critical spaces. 
Choose $\varepsilon = \nu^{\frac{a}{\gamma}} = T_\nu^a$, so that $s_{\rm max} = T_\nu^{-1}/O(1)$. (Remember: $T_\nu = \nu^{\frac{1}{\gamma}}$.) Then the big Oh in~\eqref{eq:decompositionbeforerescaling} is, more specifically, the estimate
\begin{equation}
    s^{1-\frac{2}{\alpha}} \| w^{\rm per,\varepsilon} \|_{L^2} +  \| \nabla w^{\rm per,\varepsilon} \|_{L^{2/\alpha}} +  s^{1+\frac{s_0}{\alpha}} \| \nabla^2 w^{\rm per,\varepsilon} \|_{L^{2+\sigma}} \lesssim O(s^{a} \varepsilon \delta) \, ,
\end{equation}
for all $s \in [S, T_\nu^{-1} / O(1)]$. (The powers of $s$ can be obtained from~\eqref{eq:defdimlessnormynu}.) 



We now rescale according to~\eqref{eq:rescaling}:
\begin{equation}
    \label{eq:zoominginbruh}
    u^\nu(x,t) := U_\nu w^\varepsilon(x/L_\nu, t/T_\nu) \, ,
\end{equation}
and similarly with $\tilde{u}^\nu$, $u^{\rm lin,\nu}$. This transformation is visualized in \Cref{fig:rescalingfig}. 
The decomposition~\eqref{eq:decompositionbeforerescaling} becomes
\begin{equation}
    \label{eq:decompositiongetsrescaled}
    u^\nu = \tilde{u}^\nu + \varepsilon u^{\rm lin,\nu} + O(t^a \delta) \, , \quad \forall t \in [T_\nu S, 1/O(1)] \, ,
\end{equation}
valid after the initial layer, which is shrinking as $\nu \to 0^+$. The behavior of $u^\nu$ is represented in Figure~\ref{fig:figure_solutions}. The key point is therefore to analyze the convergence of~\eqref{eq:decompositiongetsrescaled} as $\nu \to 0^+$. We have, in various topologies,
\begin{equation}
    \tilde{u}^\nu \to \bar{u}
\end{equation}
\begin{equation}
    \label{eq:differentphasesrule}
    \varepsilon u^{\rm lin,\nu} = T_\nu^a \times t^{\frac{1}{\alpha}-1} \Re \left[ \Big( \frac{t}{T_\nu} \Big)^{a+ib} \eta \Big( \frac{x}{t^{\frac{1}{\alpha}}}\Big)  \right] = u^{\rm lin}
\end{equation}
provided we choose a subsequence $\nu_k \to 0^+$ so that $T_{\nu_k}^{ib} = 1$. This guarantees
\begin{equation}
    u \to u^{\rm E} = \bar{u} + u^{\rm lin} + O(t^a \delta) \text{ on } \R^2 \times (0,1/O(1)) \, .
\end{equation}
By compactness, the solution is a weak solution of the forced Euler equations. Finally, since $\delta \ll 1$, $u^{\rm E}$ is not identically equal to $\bar{u}$.

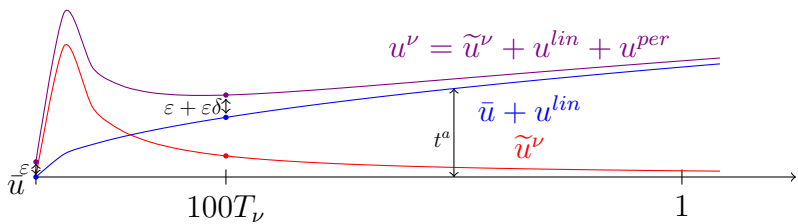
\begin{figure}
    \centering

\begin{tikzpicture}[scale=1]

\draw[->] (0,0) -- (10,0) node[right] {};
\draw (0,-0.1) node[left] {$\bar u$};
\draw (0,0.1) -- (0,-0.1);
\draw (2.5,0.1) -- (2.5,-0.1) node[below] {$100 T_\nu$};
\draw (8.5,0.1) -- (8.5,-0.1) node[below] {$1$};

\draw[domain=0:9,smooth,variable=\x,blue] plot ({\x},{0.5*sqrt(\x)});
\draw (6.5,0.9) node[blue] {$\bar u + u^{lin}$};

\fill[blue] (2.5, {0.5*sqrt(2.5)}) circle (1pt);

\draw[domain=0:9,smooth,variable=\x,red] plot ({\x},{7* (\x/0.1)/(1+(\x/0.1)^2)});
\draw (6.5,0.4) node[red] {$\tilde u^\nu$};

\fill[red] (2.5, {7* (2.5/0.1)/(1+(2.5/0.1)^2)}) circle (1pt);

\draw[domain=0:9,smooth,variable=\x,violet] 
    plot ({\x},{0.2*exp(-\x) + 0.5*sqrt(\x) + 7* (\x/0.1)/(1+(\x/0.1)^2)});
\draw (6.5,1.75) node[violet] {$u^\nu =  \tilde u^\nu + u^{lin} + u^{per} $};

\fill[violet] (2.5, {0.2*exp(-2.5) + 0.5*sqrt(2.5) + 7* (2.5/0.1)/(1+(2.5/0.1)^2)}) circle (1pt);

\fill[violet] (0, {0.2*exp(0) + 0.5*sqrt(0) + 7* (0/0.1)/(1+(0/0.1)^2)}) circle (1pt);

\fill[blue] (0, 0) circle (1pt);

\draw[<->] (2.5, {0.5*sqrt(2.5)+0.05}) -- (2.5, {0.2*exp(-2.5) + 0.5*sqrt(2.5) + 7* (2.5/0.1)/(1+(2.5/0.1)^2)-0.05});

\draw (2.07,0.93) node[black, scale=0.7] {$\varepsilon + \varepsilon \delta$};

\draw[<->] (5.5, 0) -- (5.5, {0.5*sqrt(5.5)});
\draw (5.35, 0.5) node[black, scale=0.7] {$t^a$};

\draw[<->] (0, 0.03) -- (0, 0.175);
\draw (-0.15, 0.1) node[black, scale=0.7] {$\varepsilon$};

\end{tikzpicture}

\caption{Schematic depiction of the temporal evolution of various solutions. The horizontal axis represents $\bar{u}$. The blue curve represents an inviscid non-unique solution, which grows like $t^a$ in critical norms. The red curve represents the unperturbed viscous solution $\tilde{u}^\nu$, which deviates by $O(1)$ in critical norms but converges to $\bar{u}$. The purple curve represents a perturbed viscous solution $u^\nu$, which approximately lands on the inviscid solution after the initial layer.}
\label{fig:figure_solutions}
\end{figure}

\begin{remark}
\begin{enumerate}[leftmargin=*]
    \item When $b \neq 0$, the phase parameter in~\eqref{eq:differentphasesrule} indicates that different subsequences can converge to different non-unique solutions.
    \item The choice $\varepsilon \sim \nu^{\kappa_{\rm c}}$ guarantees that the growth of the perturbation from $O(\varepsilon)$ to $O(1)$ in critical norms happens at time $O(1)$. For $\nu^{\kappa_{\rm c}} \ll \varepsilon \ll 1$, our theory yields growth from $O(\varepsilon)$ to $O(1)$ in times $T$ in the range $T_\nu \ll T \ll 1$. 
    At the bottom of this range, with the choice $\varepsilon = O(1)$, no growth happens in time $O(T_\nu)$.
    \item $c_0$ in Theorem~\ref{thm:main} normalizes the observation time to $T=1$. A time rescaling demonstrates that a smaller (resp. larger) $c_0$ yields a later (resp. earlier) observation time.
\end{enumerate}
\end{remark}





\subsection{Further context and motivation}
\label{sec:furthercontext}





\subsubsection{Weak Euler solutions}
Onsager proposed that weak solutions to the 3D Euler equations which dissipate energy could be relevant to the description of turbulent flows at infinite Reynolds number~\cite{OnsagerStatisticalHydrodynamics1949}. In the past twenty years, many mathematical breakthroughs have revealed that weak Euler solutions exhibit both anomalous dissipation and significant non-uniqueness. The method of convex integration developed in~\cite{delellis2009euler,delellis2013dissipative} led to the resolution of Onsager's conjecture~\cite{isett2018proof} and recently also in~2D \cite{giriradu}. 
Nonetheless, many foundational questions remain~\cite{de2022weak,BuckmasterPhenomenologies2020,EyinkSurvey2024}. 
One such question is to describe the non-unique solutions of the Euler equations which can be obtained in the vanishing viscosity limit from Leray solutions of the Navier-Stokes equations.


In dimension two, since Yudovich's well-posedness theory for 2D Euler with bounded vorticity \cite{yudovich1963non}, there has been intense work to study weak solutions with vorticity in $L^\infty_t L^p_x$ for $p<\infty$.
Existence of such solutions was obtained in a series of results, including \cite{diperna1987concentrations}, \cite{delort1991existence} (see \cite{CriSte21} and the references therein).
Several approaches were designed to understand the (non)uniqueness of such solutions:

On the one hand, we have Vishik's approach based on instability in self-similar variables~\cite{vishik2018instability1,vishik2018instability2}, which successfully answered the uniqueness question for the forced 2D Euler equations, as described in~\Cref{sec:inviscidnonuniqueness}. \\
~\cite{albritton2023instability} contains an exposition of Vishik's work, and two alternative proofs have appeared recently~\cite{castro2024proof}, \cite{dolce2024selfsimilarinstabilityforcednonuniqueness}.
(Currently, the simplest proof would be to combine the regularized piecewise constant vortex in~\cite{castro2024proof} with the Golovkin trick in~\cite{dolce2024selfsimilarinstabilityforcednonuniqueness}.) See also~\cite{castro2025unstable} for the SQG setting. An unforced scenario has been proposed in~\cite{MR4182316} based on numerical experiments which exhibit multiple self-similar solutions. See~\cite{jiasverakillposed,guillodsverak,MR4429263} for related work on non-uniqueness of Leray solutions to the Navier-Stokes equations.

On the other hand, convex integration can also be adapted to the 2D vorticity equations. In \cite{MR4645737}, a convex integration scheme with intermittency was designed to prove non-uniqueness of the 2D Euler equations \eqref{eq:euler_1} with vorticity in the Lorentz space $\omega \in L^\infty_t L^{1,\infty}_x$ and refined in \cite{MR4725248} 
 to reach class Hardy class $\omega \in L^\infty_t H^p_x$, $0 < p < 1$. In \cite{MR4788287}, non-uniqueness was obtained with $L^p$ initial vorticity, in the class of admissible $L^2$ velocities. Due to an inherent obstruction arising from the mechanism used to cancel the error and the Sobolev embedding theorem, the classical convex integration scheme cannot achieve \(  L^\infty_t L^{1}_x \) integrability for the vorticity. 
Very recently, the work \cite{BruColKum24b} introduced a new way to perform convex integration, inspired by a sharp passive scalar result  \cite{BruColKum24a}, not anymore based on highly oscillatory periodic perturbations but using new building blocks based on the Lamb-Chaplygin dipole.

Vanishing viscosity solutions of the 2D Euler equations have been extensively studied since they are known to exhibit more rigid properties compared to generic weak solutions to \eqref{eq:euler_1}, see \cite{MR3551263,constantin2019vorticity,lopes2006weak,lanthaler2021conservation,de2024no,MR3366055} and the references therein. For instance, it is proven that a vanishing viscosity solution with initial vorticity \(\omega_0 \in L^p\), \(p > 1\), automatically conserves energy and is renormalized.  Energy conservation follows from  strong $L^2$ compactness at the velocity level when the vorticity is in $L^p$ and generalizes to other regularizations (for instance, smoothing out the initial datum). 
Our paper gives the first instance where, for the forced 2D Euler equations, vanishing viscosity solutions maintain the non-uniqueness seen in the inviscid case. 
While convex integration gives rise to a huge class of flexible, non-unique solutions, it is an open question to understand which solutions can be seen in the vanishing viscosity limit. 

Finally, there is a rich literature studying inviscid limits to the compressible Euler equations and related conservation laws. 
In this vein, perhaps the most relevant work is the analysis of the inviscid limit to self-similar shock formation in~\cite{Chaturvedi2022}.

\subsubsection{Spontaneous stochasticity}
Theorem~\ref{thm:main} is partially inspired by the notion of \emph{spontaneous stochasticity}, investigated primarily for the flow map \cite{Bernard1998,Gawdzki2000,Eyink2014} and more recently for the velocity field itself~\cite{Mailybaev2016,thalabard2020butterfly}. This concept is moreover connected to predictability in weather and climate~\cite{palmer2014real}.

We focus on the example of the turbulent mixing layer in 2D which emerges from the Kelvin-Helmholtz instability~\cite{thalabard2020butterfly}. Consider a vortex sheet, whose associated velocity is the discontinuous shear profile $(U {\rm sgn}(y),0)$. This is both a singular steady state and a self-similar solution under the scaling with $L = T$, set by the dimensional constant $U$. The authors consider random perturbations of the sheet by characteristic distance $\varepsilon$ and solve the hyperviscous Navier-Stokes equations. When $\varepsilon \sim \nu$, it is observed numerically that the measure of solutions does not collapse to a Dirac on the sheet but remains a genuine measure, capturing turbulent mixing solutions. 
The uniqueness threshold $\varepsilon \sim \nu$ in~\cite{thalabard2020butterfly} is not set by an unstable mode, in contrast to the present work. Speculatively, it is determined by an attractor of the Euler equations which is somehow ``steady" in similarity variables with $L = T$. The separation rate matches that of the non-unique solutions obtained by convex integration in~\cite{Szkelyhidi2011} and saturates the upper bounds in~\cite{kalinin2024scale}.




Our perturbations are not random, nor do we succeed in characterizing the viscous dynamics in a full neighborhood of the self-similar solution. Nonetheless, our approach might be considered a deterministic variant of the spontaneous stochasticity paradigm.



Finally, we highlight the work~\cite{Vasseur2023} on the quantifying predictability in the inviscid limit with boundary. The authors study the inviscid limit to a constant shear flow $(A,0)$ in a 3D channel.\footnote{One way to view this is as a boundary variant of the shear layer in the Kelvin-Helmholtz instability.} Their goal is to estimate the time-scale on which the solution is observable, that is, until the perturbation grows to the size of the background flow. They prove that the energy of perturbations grows at most like $A^3 T$ in the inviscid limit, so that the solution is observable up to time $1/A$. The authors also exhibit non-unique solutions via convex integration which saturate this bound. 


\subsubsection{Selection principle in passive scalars}
The selection principle via vanishing viscosity has been investigated in recent years also for the transport equation with a divergence free vector field. When the vector field has Sobolev regularity, namely in the framework of the DiPerna-Lions theory of regular Lagrangian flows, vanishing viscosity is known to select the unique Lagrangian solution, while distributional solutions can be non-unique (see {\cite{MR4496901}} for a quantitative convergence result and  \cite{mescolini2025vanishing} in the BV setting). 
As soon as the vector field is less regular, for instance $\alpha$-Holder continuous in space,
the Obukhov-Corrsin theory of scalar turbulence expects nonunique solutions of the transport equation in the class $C^\beta$ for any $\beta<1-2\alpha$, lack of selection by vanishing viscosity and anomalous dissipation.
This was proven to actually happen in a series of works, see for instance \cite{MR4381138,MR4662772,armstrong2023anomalous,elgindi2024norm,burczak2023anomalous}.
In recent years, also other regularization mechanisms were considered and lack of selection was proven \cite{MR4037474,MR4396067}.

The aforementioned passive scalar examples were then used in \cite{MR4595604,MR4799447} to
build new examples of solutions to
the forced 3D Navier-Stokes equations with vanishing viscosity, which exhibit lack of selection, anomalous dissipation and which enjoy uniform bounds in up to Onsager-critical spaces. At difference from the current paper, however, such results are inherently 3-dimensional and they require a $\nu$-dependent forcing term. Moreover, in the current paper we exploit the self-similar type structure of the solution to have a more detailed description of the behavior of our solutions and its asymptotic expansion at $0$ along the linearly unstable mode.

\section{Spectral problem}
\label{sec:spectral_pb}

In this section, we prove Proposition~\ref{prop:lin_part}. Compared to previous works \cite{vishik2018instability2,albritton2023instability}, our main contributions will be (i) to analyze the linearized operators in $L^2$ of \emph{velocity}, and (ii) track the simplicity of the eigenvalue. Both are exploited in the nonlinear arguments of Section~\ref{sec:nonlinear}.

The analysis in velocity formulation is more challenging than vorticity formulation: First, one cannot ignore the pressure. Second, the operator does not evidently decompose into ``nearly anti-symmetric plus compact" except when analyzed mode-by-mode. 
Consequently, the ``free" semigroup bound~\eqref{eq:semigroupestimatebadonethough} granted by the abstract spectral theory is not uniform in the Fourier modes! We obtain the semigroup estimate only after estimating the resolvent uniformly-in-$m$. Luckily, these issues were addressed by Gallay and Smets in~\cite{gallay2019linear}, who analyzed the yet more difficult \emph{columnar vortices} (see also~\cite{Gallay2020,dallaswojtekvortexcolumns}).

Our situation is further complicated because we combine these difficulties with a spectral perturbation argument to incorporate a term $\kappa \cdot \nabla_\xi U$ in the self-similarity variables.

\subsection{Unstable vortex in vorticity formulation}
\label{sec:unstable_vortex}
Let
\begin{equation}
\bar{u} = \Omega(r) x^\perp = V(r) e_\theta
\end{equation}
    be a vortex, with corresponding vorticity profile\footnote{$\Omega$ is called $\zeta$ in \cite{albritton2023instability}; we call it $\Omega$ for consistency with \cite{gallay2019linear}.} 
\begin{equation}
\bar{\omega} = W(r) \, .
\end{equation}

In what follows, we suppose $\bar{u} \in C^2$ satisfies a mild growth condition: 
\begin{equation}
	\label{eq:decayconditions}
|\bar{u}| + r|\nabla \bar{u}| + r^2|\nabla^2 \bar{u}| \lesssim r^{1-\alpha} \, ,\quad \text{ for some } \alpha > 0 \, .
\end{equation}


Recall
\begin{equation}
\Omega' = (V/r)' = V'/r - \Omega/r
\end{equation}
\begin{equation}
W = \frac{1}{r} \p_r (rV(r)) = \Omega + V' = r\Omega' + 2\Omega \, .
\end{equation}

We consider the linearized Euler equations in vorticity formulation
\begin{equation}
\p_t \omega + \bar{u} \cdot \nabla \omega + u \cdot \nabla \bar{\omega} = 0 \, , \quad u = \nabla^\perp \Delta^{-1} \omega \, .
\end{equation}
The corresponding linearized operator $\mathcal{L}$ is given by
\begin{equation}
-\mathcal{L} = \bar{u} \cdot \nabla \omega + u \cdot \nabla \bar{\omega} \, .
\end{equation}
On the whole space $\R^2$, it is necessary to be careful with the functional set-up. To ensure that $u$ is well defined,\footnote{This requirement was discussed in 
\cite[Remark 1.0.5 and Lemma 2.4.1]{albritton2023instability}. Alternatively, one can consider the operator on the space $L^Q$, $Q \in (1,2)$, which works even for non-vortex backgrounds. $\mathcal{M}$ below will no be anti-symmetric, but its spectrum will be purely imaginary.} we consider $\mathcal{L}$ as an unbounded operator on the space $L^2_{m_0}$ of $m_0$-fold symmetric ($m_0 \geq 2$) square integrable functions:
\begin{equation}
\mathcal{L} : D(\mathcal{L}) \subset L^2_{m_0} \to L^2_{m_0} \, .
\end{equation}
Its domain is
\begin{equation}
D(\mathcal{L}) = \{ \omega \in L^2_{m_0} : \bar{u} \cdot \nabla \omega \in L^2_{m_0} \} \, .
\end{equation}
We have the decomposition
\begin{equation}
\mathcal{L} = \mathcal{M} + \mathcal{K}
\end{equation}
\begin{equation}
- \mathcal{M} = \bar{u} \cdot \nabla \omega \, , \quad - \mathcal{K} = u \cdot \nabla \bar{\omega} \, ,
\end{equation}
where $\mathcal{M}$ is anti-symmetric and $\mathcal{K}$ is compact. As a consequence, the spectrum consists of essential spectrum on the imaginary axis and isolated eigenvalues of finite multiplicity, whose only accumulation points must be in the essential spectrum.

The space $L^2_{m_0}$ decomposes into a Hilbert sum of invariant subspaces $U_m := \{ \omega \in L^2 : \omega = \omega_m(r) e^{im\theta} \}$ (where $m \in m_0\mathbb{Z}$ is a wavenumber). Therefore, any eigenvalue $\lambda$ must have a corresponding eigenfunction $\omega$ in a subspace $U_m$. Then
\begin{equation}
(\lambda + im\Omega) \omega_m + u^r_m W' = 0 \, .
\end{equation}
(The operator is trivial when $m=0$.) When $|\Re \lambda| > 0$, we have
\begin{equation}
	\label{eq:goodforbootstrapping}
\omega_m + (\lambda + im\Omega)^{-1} u^r_m W' = 0 \, .
\end{equation}


    \begin{theorem}[\cite{vishik2018instability1},\cite{vishik2018instability2}, \cite{albritton2023instability}]
	\label{thm:vishiknotestheorem}
There exists a smooth vortex $\bar{\omega}$ satisfying the condition~\eqref{eq:decayconditions} and the following properties. There exists $m_0 \geq 2$ such that $\mathcal{L} : L^2_{m_0} \to L^2_{m_0}$ has either
\begin{enumerate}
\item two algebraically simple unstable eigenvalues $\lambda, \bar{\lambda}$ (not identical) 
\item a semisimple eigenvalue $\lambda \in \R$ of multiplicity two.
\end{enumerate}
In either case, each of $U_{m_0}$ and $U_{-m_0}$ has a one-dimensional eigenspace, and
\begin{equation}
	\label{eq:myspectrumisonlytwopoints}
\sigma_{m_0}(\mathcal{L}) \cap \{ \Re \lambda > 0 \} = \{ \lambda, \bar{\lambda} \} \, .
\end{equation}
\end{theorem}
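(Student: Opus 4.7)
The plan is to extend the Vishik-Albritton construction of an unstable radial profile by tracking the simplicity of the unstable eigenvalue, which is the new ingredient compared to \cite{vishik2018instability1,vishik2018instability2,albritton2023instability}. I would start from the same family of profiles used there, which supplies a smooth $\bar\omega$ satisfying \eqref{eq:decayconditions} and at least one unstable eigenvalue for some $m_0 \geq 2$; the task is then to add the two new claims (simplicity on each of $U_{\pm m_0}$ and the exhaustion \eqref{eq:myspectrumisonlytwopoints}).

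Since $\mathcal{L}$ commutes with rotations, it is block-diagonal along $L^2_{m_0} = \bigoplus_{k \in \mathbb{Z}} U_{m_0 k}$, and any isolated eigenvalue lives in a single block $U_m$. On $U_m$ I would substitute the Biot-Savart relation $u^r_m = (im/r)\psi_m$ and $\omega_m = (\p_r^2 + r^{-1}\p_r - m^2 r^{-2})\psi_m$ into \eqref{eq:goodforbootstrapping} to obtain a second-order Rayleigh-type ODE for $\psi_m$. For $\Re \lambda > 0$ the factor $\lambda + im\Omega$ does not vanish, so the ODE is regular on $(0,\infty)$; a Frobenius analysis gives a one-dimensional space of solutions regular at $r = 0$ and a one-dimensional space decaying at $r = \infty$, and their intersection is at most one-dimensional. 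This yields geometric multiplicity at most one in $U_m$, and combined with the existence result supplied by \cite{albritton2023instability} gives geometric multiplicity exactly one on each of $U_{\pm m_0}$.

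For \emph{algebraic} simplicity on $U_{m_0}$, I would apply the Fredholm alternative to $\mathcal{L}|_{U_{m_0}}$, which is of the form anti-symmetric plus compact via the decomposition $\mathcal{L} = \mathcal{M} + \mathcal{K}$: a generalized eigenvector $\phi$ solving $(\mathcal{L} - \lambda)\phi = \eta$ exists in $D(\mathcal{L}) \cap U_{m_0}$ if and only if $\la \eta, \eta^\ast \ra = 0$, where $\eta^\ast$ is the eigenfunction of the adjoint $\mathcal{L}^\ast|_{U_{m_0}}$ at eigenvalue $\bar\lambda$, itself governed by an adjoint Rayleigh ODE. Algebraic simplicity therefore reduces to the non-vanishing of this pairing. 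Since $\eta$ and $\eta^\ast$ are defined only implicitly via singular ODEs, I would not attempt to compute the pairing in closed form, but rather exploit the explicit one-parameter family of profiles from \cite{albritton2023instability} together with the spectral perturbation theory developed in Appendix~\ref{sec:spectralperturbationtheory}: parameters producing a Jordan block form a nowhere dense (in fact codimension-one) subset, and selecting a profile off this set preserves the existence of an unstable eigenvalue while forcing $\la \eta, \eta^\ast \ra \neq 0$.

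The dichotomy between the two cases follows from the conjugation symmetry $\omega(r) e^{im\theta} \mapsto \overline{\omega(r)}\, e^{-im\theta}$, which intertwines $\mathcal{L}|_{U_{m_0}}$ with $\mathcal{L}|_{U_{-m_0}}$ and maps an eigenpair $(\lambda, \eta)$ to $(\bar\lambda, \bar\eta)$: if $\Im \lambda \neq 0$ this yields two distinct algebraically simple eigenvalues on $L^2_{m_0}$, while if $\Im \lambda = 0$ it yields a single real eigenvalue with geometric (hence algebraic) multiplicity two, supported in $U_{m_0} \oplus U_{-m_0}$. To finish \eqref{eq:myspectrumisonlytwopoints} I would exclude unstable eigenvalues in $U_{m_0 k}$ for $|k| \geq 2$ at the ODE level: the stabilizing term $m^2/r^2$ in the Rayleigh equation becomes dominant as $|m|$ grows, so the Vishik profile can be tuned so that only the fundamental wavenumber destabilizes. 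The main obstacle is the algebraic simplicity step: the pairing $\la \eta, \eta^\ast \ra$ involves two solutions of singular ODEs with no closed form, so the argument has to proceed through genericity within the admissible family rather than by direct computation.
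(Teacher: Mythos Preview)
The paper does not give a self-contained proof of this theorem; it is quoted as a result of the cited references, so there is no argument in the paper to compare your sketch against. I comment on the sketch directly.

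Your arguments for geometric simplicity on $U_{\pm m_0}$ (via the Frobenius/shooting analysis of the regular Rayleigh ODE when $\Re\lambda>0$), for the conjugation dichotomy between cases (1) and (2), and for the absence of instability at higher harmonics are all along the right lines and consistent with what is done in the references.

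The algebraic simplicity step, however, has a genuine gap. Your genericity argument requires that the pairing $\langle\eta,\eta^\ast\rangle$ not vanish identically across the family of admissible profiles, and you give no reason for this; without it the ``codimension-one'' claim is vacuous. The abstract perturbation machinery in Appendix~\ref{sec:spectralperturbationtheory} \emph{preserves} algebraic simplicity under perturbation once you already have it at some reference point---it does not manufacture simplicity out of nothing, so invoking it here is circular. In the actual constructions of \cite{vishik2018instability2,albritton2023instability}, simplicity is obtained by a different mechanism: the profile depends on a parameter, and as the parameter increases, a single eigenvalue crosses the imaginary axis from the stable to the unstable side. Near this bifurcation the eigenvalue is algebraically simple because it is a small perturbation of a simple object on the neutral spectrum; one then fixes the parameter just past the crossing, where exactly one conjugate pair sits in $\{\Re\lambda>0\}$ and remains simple. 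Replacing your genericity step with this bifurcation argument would repair the outline.
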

(The notation $\sigma_{m_0}(\mathcal{L})$ emphasizes that we consider the spectrum of $\mathcal{L}$ on $m_0$-fold symmetric functions.)


As in~\cite{MR4429263}, we consider the truncated vortex
\begin{equation}
\bar{u}_R = \Omega(r) \chi(r/R) x^\perp \, ,
\end{equation}
where $\chi$ is an appropriate cut-off function. Let $\bar{\omega}_R = W_R(r)$ be the associated vorticity.

\begin{lemma}[Truncation]
	\label{lem:truncatevortex}
Let $\bar{\omega}$ and $m_0$ be as in Theorem~\ref{thm:vishiknotestheorem}. For every $\varepsilon > 0$, whenever $R \gg_\varepsilon 1$, the linearized operator $\mathcal{L}_R : L^2_{m_0} \to L^2_{m_0}$ around the truncated vortex $\bar{\omega}_R$ satisfies the conclusions of Theorem~\ref{thm:vishiknotestheorem} with~\eqref{eq:myspectrumisonlytwopoints} replaced by
\begin{equation}
\label{eq:myspectrumisonlytwopoints2}
\sigma_{m_0}(\mathcal{L}_R) \cap \{ \Re \lambda > \varepsilon \} = \{ \lambda_R, \bar{\lambda_R} \} \, .
\end{equation}
\end{lemma}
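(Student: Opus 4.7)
The plan is to compare $\mathcal{L}_R$ with $\mathcal{L}$ via spectral perturbation theory, mode-by-mode in the decomposition $L^2_{m_0} = \bigoplus_{m \in m_0 \mathbb{Z}} U_m$. Both operators preserve each $U_m$, and on $U_m$ the eigenvalue equation reduces to the compact Fredholm equation~\eqref{eq:goodforbootstrapping}, with $\Omega, W'$ replaced by $\Omega_R, W_R'$ in the truncated case. Since $\bar{u}_R \to \bar{u}$ uniformly on compact sets (and $\bar{\omega}_R \to \bar{\omega}$ in, e.g., $L^2 \cap L^\infty$ against the Biot-Savart kernel) as $R \to \infty$, the natural strategy has two parts: first, show that the finitely many unstable eigenvalues $\lambda, \bar{\lambda}$ of $\mathcal{L}$ persist for $\mathcal{L}_R$ with the same algebraic multiplicities; second, show that no \emph{additional} unstable eigenvalues of $\mathcal{L}_R$ can appear in $\{\Re z > \varepsilon\}$ uniformly in $R$.

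For the persistence part, fix disjoint small contours $\Gamma_\pm$ around $\lambda$ and $\bar{\lambda}$ lying entirely in the resolvent set of $\mathcal{L}$. I would show that for $R \gg 1$ the truncated resolvent $(z - \mathcal{L}_R)^{-1}$ exists and converges to $(z - \mathcal{L})^{-1}$ in operator norm uniformly on $\Gamma_\pm$. This is done via the second resolvent identity
\[
(z - \mathcal{L}_R)^{-1} - (z - \mathcal{L})^{-1} = (z - \mathcal{L})^{-1} (\mathcal{L}_R - \mathcal{L})(z - \mathcal{L}_R)^{-1},
\]
combined with the fact that $\mathcal{L}_R - \mathcal{L}$ is small in the operator topology from $D(\mathcal{L})$ to $L^2_{m_0}$: the difference consists of $(\bar{u}_R - \bar{u}) \cdot \nabla \omega$ and $u \cdot \nabla(\bar{\omega}_R - \bar{\omega})$, both supported essentially in $\{r \gtrsim R\}$ where the profiles coincide. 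Once this convergence is in hand, the Riesz projectors $P_R^\pm = \frac{1}{2\pi i}\oint_{\Gamma_\pm}(z-\mathcal{L}_R)^{-1}\,dz$ converge in norm to $P^\pm$, hence have the same finite rank for large $R$, yielding eigenvalues $\lambda_R, \bar{\lambda}_R$ with preserved algebraic multiplicities and semisimplicity.

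For the absence of extra unstable eigenvalues, I would decompose $\{\Re z > \varepsilon\}$ into a compact region and its complement. On any compact $K \subset \{\Re z > \varepsilon\} \setminus \{\lambda, \bar{\lambda}\}$, uniform resolvent convergence as above shows $K \subset \rho(\mathcal{L}_R)$ for $R$ large. The main obstacle is the non-compact portion $\{\Re z > \varepsilon,\ |\Im z|\ \text{large}\}$, which has to be handled by a \emph{uniform-in-$R$ large-frequency estimate}. Here I would work mode-by-mode: on $U_m$ the eigenvalue equation is $(I + T_{R,m}(z))\omega_m = 0$ with $T_{R,m}(z)\omega_m = (z+im\Omega_R)^{-1} u^r_m W_R'$. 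Using that $|z+im\Omega_R|\geq \Re z\geq \varepsilon$, together with the decay~\eqref{eq:decayconditions} which is preserved under truncation, one obtains $\|T_{R,m}(z)\|_{U_m \to U_m} \to 0$ as $|\Im z|/m$ or $|m|$ grows, uniformly in $R$; this rules out eigenvalues for $|\Im z|$ large and for $|m|$ large. The finitely many remaining modes lie in a compact region where the earlier resolvent convergence applies.

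The hard part is precisely this uniform tail estimate, since the operator $\mathcal{M}_R$ is only anti-symmetric (not self-adjoint with a spectral gap) and the abstract decomposition into $\mathcal{M}_R + \mathcal{K}_R$ does not provide $R$-uniform control of $\mathcal{K}_R$ in operator norm. The key observation that saves us is that the kernel structure $(\lambda+im\Omega_R)^{-1}$ is explicit and inherits good decay uniformly in $R$ from the shared profile of $\Omega_R$ and $\Omega$ on $\{r \leq R/2\}$, plus the compact support of $W_R'$. With these ingredients assembled, the two eigenvalues are located arbitrarily close to $\lambda, \bar{\lambda}$ for $R$ sufficiently large, establishing~\eqref{eq:myspectrumisonlytwopoints2}.
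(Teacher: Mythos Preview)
Your overall mode-by-mode strategy matches the paper's, but you misidentify the crux and thereby overcomplicate the argument. You write that ``the abstract decomposition into $\mathcal{M}_R + \mathcal{K}_R$ does not provide $R$-uniform control of $\mathcal{K}_R$ in operator norm''---but in fact it does, and this is exactly the paper's one-line mechanism. On each $U_m$ the Biot--Savart law gives $\|u^r_m\|_{L^2(r\,dr)} \lesssim |m|^{-1}\|\omega_m\|_{L^2(r\,dr)}$, and since $\|W_R'\|_{L^\infty}$ is bounded \emph{uniformly in $R$} (a consequence of the decay condition~\eqref{eq:decayconditions} and the scaling of the cutoff), one obtains $\|\mathcal{K}_R^{(m)}\|_{L^2\to L^2} \lesssim |m|^{-1}$ uniformly in $R$. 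Since $\mathcal{M}_R^{(m)}$ is anti-symmetric, $\lambda - \mathcal{L}_R^{(m)}$ is invertible whenever $|\Re\lambda| \gg |m|^{-1}$, which kills all modes $|m| \gg \varepsilon^{-1}$ in one stroke---no separate treatment of large $|\Im z|$ is needed. For the finitely many remaining modes, both $\mathcal{L}^{(m)}$ and $\mathcal{L}_R^{(m)}$ are \emph{bounded} operators on $U_m$ (the transport part is multiplication by $im\Omega$ or $im\Omega_R$, with $\Omega,\Omega_R\in L^\infty$), so standard bounded-perturbation theory applies directly; your second-resolvent-identity route is fine here but heavier than necessary.

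A minor slip: you say the differences $(\bar{u}_R-\bar{u})\cdot\nabla\omega$ and $u\cdot\nabla(\bar{\omega}_R-\bar{\omega})$ are supported in $\{r\gtrsim R\}$ ``where the profiles coincide''---the profiles \emph{differ} there and coincide on $\{r\leq R\}$. Also be aware that $\mathcal{L}$ and $\mathcal{L}_R$ have different domains as operators on the full space $L^2_{m_0}$, so your resolvent comparison should be phrased mode-by-mode (where both are bounded) rather than globally.
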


\begin{proof}
Consider $\mathcal{L}_R$ on the invariant subspaces $U_m$, $m \in m_0\mathbb{Z}$. For each individual mode $k \in m\mathbb{Z}$, $\mathcal{L}_R^{(m)}$ is a bounded perturbation of $\mathcal{L}^{(m)}$, and we can apply the standard perturbation theory. The main difficulty is therefore to control the spectrum when $|m| \gg 1$. However, the compact operator $\omega_m \mapsto u^r_m W_R'$ satisfies
\begin{equation}
\| u^r_m W_R' \|_{L^2(r \, dr)} \lesssim \frac{1}{|m|} \| \omega_m \|_{L^2(r \, dr)} \, , \quad \forall \omega_m \in L^2(r \, dr) \, .
\end{equation}
Therefore, $\lambda - \mathcal{L}^{(m)}_R$ is invertible whenever $|\Re \lambda| \gg |m|^{-1}$.
\end{proof}

The analysis of the Rayleigh equation actually yields that there is no unstable eigenvalue for large $|m|$.

\subsection{Velocity formulation}

In this section, we will suppose
\begin{equation}
\Omega \in C^2([0,+\infty)) \, , \quad \Omega'(0) = 0 \, , \quad \| r^2|\Omega| + r^3 |\Omega'| + r^4 |\Omega''| \|_{L^\infty} < +\infty \, .
\end{equation}
(Under these assumptions, the estimates on $P$ and $B_m$ in \cite{gallay2019linear}, Section~3 and~4, remain valid.)


We consider the linearized operator in $L^2$ of velocity:
\begin{equation}
\begin{aligned}
-Lu &:= \bar{u} \cdot \nabla u + u \cdot \nabla \bar{u} + \nabla P \\
&= \mathbb{P} (\bar{u} \cdot \nabla u + u \cdot \nabla \bar{u}) \, ,
\end{aligned}
\end{equation}
where $P = (-\Delta)^{-1} \div \div (\bar{u} \otimes u + u \otimes \bar{u})$.
 Its domain is
\begin{equation}
D(L) := \{ u \in L^2_{\rm df} : \mathbb{P}(\bar{u} \cdot \nabla u) \in L^2_{\rm df} \} \, .
\end{equation}
With this domain, $L : L^2_{\rm df} \to L^2_{\rm df}$ is closed and densely defined.

In polar coordinates,
\begin{equation}
\bar{u} \cdot \nabla u = \Omega \p_\theta u = \Omega \p_\theta u^r e_r + \Omega \p_\theta u^\theta e_\theta + \Omega u^r e_\theta - \Omega u^\theta e_r
\end{equation}
\begin{equation}
u \cdot \nabla \bar{u} = u^r \p_r V e_\theta - \Omega u^\theta e_r \, ,
\end{equation}
and
\begin{equation}
	\label{eq:linearizeder}
-Lu \cdot e_r = \Omega \p_\theta u^r - 2 \Omega u^\theta + \p_r p
\end{equation}
\begin{equation}
	\label{eq:linearizedetheta}
-Lu \cdot e_\theta = \Omega \p_\theta u^\theta + Wu^r + \frac{1}{r} \p_\theta p \, .
\end{equation}
The pressure satisfies
\begin{equation}
- \p_r^* \p_r p - \frac{1}{r^2} \p_\theta^2 p = 2(\p_r^* \Omega) \p_\theta u^r - 2 \p_r^* (\Omega u^\theta) \, ,
\end{equation}
where $\p_r^* = r^{-1} \p_r (r \cdot)$, the formal $L^2(r \, dr)$-adjoint of $- \p_r$, which can be obtained from~\eqref{eq:linearizeder} and~\eqref{eq:linearizedetheta}, or alternatively, is a realization of
\begin{equation}
-\Delta P = \div \div (2u \otimes \bar{u}) = \div( 2\bar{u} \cdot \nabla u) = 2\nabla \bar{u} : (\nabla u)^T \, .
\end{equation}



The operator is split into
\begin{equation}
-Au = \Omega (\p_\theta u^r e_r + \p_\theta u^\theta e_\theta) - r \Omega' u^r e_\theta
\end{equation}
\begin{equation}
-Bu = - 2\Omega u^\theta e_r + 2(r\Omega)' u^r e_\theta + \nabla p \, .
\end{equation}
Both operators preserve the divergence-free condition. (This is the reason for incorporating the term $-r \Omega' u^r e_\theta$ into $A$.)

We can project the operator onto the invariant subspaces
\begin{equation}
\label{eq:def_Xm}
X_m := \{ u \in L^2_{\rm df} : u = u_m e^{im\theta} = (u^r_m(r) e_r + u^\theta_m(r) e_\theta) e^{im\theta} \} \, , \quad m \in \mathbb{Z} \, .
\end{equation}
Each invariant subspace is canonically isomorphic to
\begin{equation}
Y_m := \left\{ (u^r_m(r),u^\theta_m(r)) \in (L^2(\R^+, r \, dr))^2 : \left( \p_r + \frac{1}{r} \right) u^r_m + \frac{im}{r} u^\theta_m = 0 \right\} \, .
\end{equation}
We write $L_m : Y_m \to Y_m$ to denote the `restriction' of the operator $L$ to this invariant subspace, on which it is bounded. For each $u \in Y_m$, we have (omitting the $m$ in $(u^r_m,u^\theta_m)$ when convenient to relax the notation) 
\begin{equation}
(-L_m u)^r = im\Omega u^r - 2\Omega u^\theta + \p_r p
\end{equation}
\begin{equation}
(-L_m u)^\theta = im\Omega u^\theta + W u^r + \frac{im}{r} p \, .
\end{equation}
The decomposition of $L_m$ is (with mild abuse of notation)
\begin{equation}
	\label{eq:Amdef}
- A_m u = \begin{bmatrix}
im\Omega & \\
- r\Omega' & im\Omega
\end{bmatrix} u
\end{equation}
\begin{equation}
	\label{eq:Bmdef}
- B_m u = \begin{bmatrix}
-2\Omega u^\theta + \p_r p \\
2(r\Omega)' u^r + \frac{im}{r} p
\end{bmatrix} \, .
\end{equation}


\begin{proposition}[Proposition 2.2 in~\cite{gallay2019linear}]
	\label{pro:decompositioncompactness}
We have the following properties:
\begin{enumerate}
\item $\sigma(A_m) = \overline{{\rm range}(-im\Omega)}$
\item $B_m$ is compact.
\item $\sigma(L_m) = \sigma_{\rm ess}(L_m) \cup \sigma_{\rm dis}(L_m)$ consists of essential spectrum on the imaginary axis and discrete eigenvalues.
\end{enumerate}
\end{proposition}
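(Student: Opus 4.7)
The plan is to follow the strategy of Gallay and Smets \cite{gallay2019linear}, adapted to the planar (rather than columnar) setting. The three assertions decouple, with the main technical work in (2); parts (1) and (3) are then comparatively formal consequences of the splitting $L_m = A_m + B_m$ together with the explicit pointwise-multiplication structure of $A_m$.

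For (1), on each invariant subspace $Y_m$ the operator $A_m$ from \eqref{eq:Amdef} is a bounded lower-triangular pointwise-in-$r$ multiplication operator, with diagonal $-im\Omega(r)$ and off-diagonal $r\Omega'(r)$; both are bounded on $(0,\infty)$ under the standing decay hypothesis on $\Omega$. For any $\lambda\notin\overline{\mathrm{range}(-im\Omega)}$ the resolvent is given by the explicit formula
\[
(\lambda-A_m)^{-1}\begin{pmatrix} f^r \\ f^\theta\end{pmatrix}=\begin{pmatrix} (\lambda+im\Omega)^{-1}f^r \\ (\lambda+im\Omega)^{-1}f^\theta+(\lambda+im\Omega)^{-2}\,r\Omega'\,f^r\end{pmatrix},
\]
and a direct calculation confirms it preserves the constraint $(\partial_r+r^{-1})u^r+(im/r)u^\theta=0$ defining $Y_m$. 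Conversely, for $\lambda=-im\Omega(r_0)$ with $r_0\in(0,\infty)$, $L^2$-normalized mollifications of $(\mathbf{1}_{|r-r_0|<\epsilon},0)$, projected back onto $Y_m$ to enforce the divergence-free condition, form a Weyl singular sequence for $\lambda-A_m$; taking closure gives $\sigma(A_m)=\overline{\mathrm{range}(-im\Omega)}\subset i\mathbb{R}$.

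For (2), the compactness of $B_m:Y_m\to Y_m$ will combine the gain of regularity from the pressure solver with the spatial decay of the background. On the mode $e^{im\theta}$, the pressure $p$ solves the second-order radial ODE associated with $-\Delta p=2(\partial_r^*\Omega)\partial_\theta u^r-2\partial_r^*(\Omega u^\theta)$, whose source has coefficients vanishing at $r=0$ and $r=\infty$. Following the estimates of \cite[\S3--4]{gallay2019linear}, the map $u\mapsto(\partial_r p,(im/r)p)$ is bounded from $Y_m$ into a Sobolev-type space, while multiplication by $\Omega$ and $(r\Omega)'$ in \eqref{eq:Bmdef} (which decay like $r^{-2}$ and $r^{-3}$) controls the tails both at the origin and at infinity. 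Rellich--Kondrachov on bounded annuli together with the tail bound yields compactness of $B_m$. The planar setting here is strictly easier than the columnar setting of \cite{gallay2019linear}, since there is no axial variable to handle.

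For (3), Weyl's theorem on the stability of essential spectrum under compact perturbations combined with (1)--(2) gives $\sigma_{\rm ess}(L_m)=\sigma_{\rm ess}(A_m)\subset\sigma(A_m)\subset i\mathbb{R}$. On the complement $\sigma(L_m)\setminus\sigma_{\rm ess}(L_m)$, the operator $\lambda-L_m$ is Fredholm of index zero (since $\lambda-A_m$ is invertible and $B_m$ is compact), so every such $\lambda$ is an isolated eigenvalue of finite algebraic multiplicity, i.e., belongs to $\sigma_{\rm dis}(L_m)$. The main obstacle is the compactness step (2), which hinges entirely on the decay hypotheses on $\Omega$ and the explicit mapping properties of the single-mode pressure solver.
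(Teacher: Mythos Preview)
Your proposal is correct and follows essentially the same approach as the paper, which does not reprove the result but simply cites \cite{gallay2019linear} (Proposition 2.2, specialized to the planar case $u^z=0$, $k=0$). Your sketch of the Gallay--Smets argument is accurate in all three parts; the only minor imprecision is in the Weyl-sequence construction for (1), where the candidate $(\mathbf{1}_{|r-r_0|<\epsilon},0)$ does not lie in $Y_m$, but this is easily repaired by building the sequence from stream functions $\psi_m$ localized near $r_0$.
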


The main point is the compactness of $B_m$, which is not entirely obvious. This is Proposition 2.2 in \cite{gallay2019linear}, with $u^z = 0$ and $k=0$.

Proposition~\ref{pro:decompositioncompactness} immediately produces a semigroup estimate
\begin{equation}
	\label{eq:semigroupestimatebadonethough}
\| e^{tL_m} \|_{L^2 \to L^2} \lesssim_{\delta,m} e^{t(a_m+\delta)} \, , \quad \forall \delta > 0 \, , m \in \mathbb{Z} \, ,
\end{equation}
where $a_m := \sup \Re \sigma(L_m)$. The constant in~\eqref{eq:semigroupestimatebadonethough} is not guaranteed to be uniform-in-$m$. At this point, we will often drop the measure $r\,dr$ from the notation.

It will be convenient to introduce notation for the Biot-Savart law:
\begin{equation}
\Delta_m \psi_m := \left( \p_r^2 + \frac{1}{r} \p_r - \frac{m^2}{r^2}\right) \psi_m = \omega_m
\end{equation}
\begin{equation}
u^r_m = -\frac{im}{r} \psi_m \, , \quad u^\theta_m = \p_r \psi_m
\end{equation}
\begin{equation}
\omega_m = - \frac{im}{r} u^r_m + (\p_r + \frac{1}{r}) u^\theta_m \, .
\end{equation}
More succinctly, $\nabla_m = (\p_r,im/r)$, $u_m = \nabla_m^\perp \psi$, and $\omega_m = (-im/r,\p_r^*) \cdot u_m$.

First, we verify that vorticity eigenfunctions beget velocity eigenfunctions, and vice versa.

\begin{lemma}[Equivalence of eigenfunctions]
\label{lemma:equivalence_egfn}
Suppose that $u = u_m(r) e^{im\theta} \in L^2$, $m \geq 2$, is a stable or unstable eigenfunction in velocity formulation. Then $\omega = \curl u \in L^2$.

Conversely, suppose that $\omega = \omega_m(r) e^{im\theta} \in L^2$, $m \geq 2$, is a stable or unstable eigenfunction in vorticity formulation. Then $u_m = \nabla_m^\perp \Delta_m^{-1} \omega_m \in L^2$.
\end{lemma}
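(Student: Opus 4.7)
For the forward direction, my plan is to take the 2D curl of the velocity eigenvalue equation $-Lu = \lambda u$; the pressure drops out, leaving the vorticity identity $\bar u\cdot\nabla\omega + u\cdot\nabla\bar\omega = \lambda\omega$. Since $\bar u = \Omega(r)x^\perp$, $\bar\omega = W(r)$, and $u$ lies in the single angular mode $m$, this collapses to the pointwise algebraic identity $(\lambda+im\Omega(r))\omega_m(r) = -u^r_m(r) W'(r)$, i.e.\ precisely \eqref{eq:goodforbootstrapping}. A stable or unstable eigenvalue has $|\Re\lambda|>0$ because by Proposition~\ref{pro:decompositioncompactness} the essential spectrum is confined to the imaginary axis, so $|\lambda+im\Omega|^{-1}\leq|\Re\lambda|^{-1}$ uniformly in $r$. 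The standing decay hypotheses on $\Omega$ imply $W' = 3\Omega' + r\Omega''\in L^\infty(\mathbb R^+)$, and hence $\omega_m\in L^2(r\,dr)$ follows immediately from $u^r_m\in L^2(r\,dr)$.

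For the converse, I would define $\psi$ and $u_m=\nabla_m^\perp\psi$ via the mode-$m$ Biot--Savart Green's function
\[
\psi(r) = -\frac{r^m}{2m}\int_r^\infty s^{1-m}\omega_m(s)\,ds - \frac{r^{-m}}{2m}\int_0^r s^{m+1}\omega_m(s)\,ds,
\]
and verify $u_m\in L^2(r\,dr)$ by combining local elliptic regularity with a decay bootstrap at infinity. Near the origin, $\omega\in L^2$ places $\psi\in H^2_{\rm loc}$ by Calder\'on--Zygmund; the Biot--Savart formula selects the regular branch in the Frobenius analysis of $\Delta_m\psi=\omega_m$ and gives $\psi=O(r^m)$, so $|u_m|=O(r^{m-1})$, which is integrable against $r\,dr$ near zero for $m\geq 2$. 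At infinity, the eigenfunction identity $(\lambda+im\Omega)\omega_m = -u^r_m W'$ together with $|W'|\lesssim r^{-3}$ and $|\Re\lambda|>0$ yield the pointwise improvement $|\omega_m(r)|\lesssim r^{-3}|u^r_m(r)|$; inserting this decay into the two integrals defining $\psi$ produces $\psi(r)=O(r^{-1})$ and $|u_m(r)|=O(r^{-2})$, which is square-integrable against $r\,dr$ at infinity. A short bootstrap, starting from local smoothness of $u^r_m$ on compact annuli and iterating the pointwise decay identity against the Biot--Savart integrals, is what makes the improvement applicable uniformly on $\{r\geq R_0\}$.

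The main difficulty I anticipate is precisely this bootstrap at infinity: for a generic $\omega_m\in L^2(r\,dr)$ the Biot--Savart stream function can grow slowly (for $\omega_m\sim r^{-\alpha}$ with $1<\alpha<2$, a direct computation gives $\psi\sim r^{2-\alpha}$), so $L^2$ integrability of $\omega$ alone does \emph{not} force $u\in L^2$. The decisive nonlinear input is the eigenvalue equation itself, which converts the smallness of $W'$ at infinity into extra decay of $\omega_m$ and so enforces convergence of the integrals defining $\psi$ at the sharp rate needed to place $u$ in $L^2$.
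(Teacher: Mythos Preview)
Your proposal is correct, but your forward direction differs from the paper's. The paper bootstraps regularity through the operator identity $u_m=(\lambda-A_m)^{-1}B_m u_m$: since $\Re\lambda\neq 0$ the matrix $(\lambda-A_m)^{-1}$ is a bounded smooth multiplier, and the paper shows (following Gallay--Smets) that $B_m$ gains a derivative, via energy estimates on the second-order equation~\eqref{eq:Benestimates} satisfied by the radial component of $B_m u$. This yields $\partial_r u_m,\,r^{-1}u_m\in L^2(r\,dr)$ and hence $\omega_m\in L^2$. Your route---take curl of the eigenvalue equation to land directly on the Rayleigh identity $(\lambda+im\Omega)\omega_m=-u^r_mW'$ and divide---is shorter and avoids the pressure analysis entirely; the paper's route has the advantage of reusing the $B_m$ machinery already needed for the resolvent estimates and the spectral perturbation theory. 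For the converse direction the paper is terse (``utilize the Rayleigh equation~\eqref{eq:goodforbootstrapping}''), and your explicit Green's-function bootstrap is a valid way to fill that in: one pass---$\omega_m\in L^2\Rightarrow |\psi|\lesssim r\Rightarrow |u^r_m|\lesssim 1\Rightarrow |\omega_m|\lesssim r^{-3}$ via Rayleigh $\Rightarrow |\psi|\lesssim r^{-1}$---already suffices, so the ``short bootstrap'' you mention need not be iterated further.
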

\begin{proof}
To go from $L^2$ velocity to $L^2$ vorticity, we must bootstrap regularity. Write
\begin{equation}
u = (\lambda - A_m)^{-1} B_m u_m \, ,
\end{equation}
where $\lambda - A$ is invertible because $\Re \lambda \neq 0$ (check the determinant of $A$ in polar coordinates), and utilize that $B$ gains a derivative. To prove $L^2$ estimates on $B_m$ and its derivative we follow the argument in Lemma 3.7 in \cite{gallay2019linear}; in particular, defining $R_1 \coloneqq 2r (-(r\Omega)'' u_r + im\Omega' u_\theta))$, multiplying by $r^2$ the incompressibility condition on $B_m$ and differentiating with respect
to $r$, we get
\begin{equation}
\label{eq:Benestimates}
-\partial_{rr} B_r - 3r^{-1} \partial_r B_r + (m^2-1)r^{-2} B_r = imr^{-1} R_1.
\end{equation}
We now perform energy estimates on \eqref{eq:Benestimates} to get bounds on $B_r$, multiplying by $r\bar B_r$ and $r^3 \bar B_r$ and integrating by parts. To bound $B_\theta$, we observe that $\partial_r^* B_\theta = im r^{-1} B_r + R_1$, and we obtain the bound
\begin{equation}
\label{eq:B_gallay}
\|\partial_r B_m\|_{L^2} + \|r \partial_r B_m \|_{L^2} + m\| r^{-1} B_m\|_{L^2} \leq C(1+m) \| u_m\|_{L^2}.
\end{equation}
To go from $L^2$ vorticity to $L^2$ velocity, we utilize the Rayleigh equation~\eqref{eq:goodforbootstrapping}.
\end{proof}





\begin{lemma}[Uniform resolvent estimates]
Under either condition
\begin{itemize}
\item[I.]  $|\Re \lambda| \gg \frac{1}{m}$
\item[II.] $|\Im \lambda| \gg m \| \Omega \|_{L^\infty}$
\end{itemize}
the resolvent $R(\lambda,L_m) : L^2 \to L^2$ exists and satisfies
\begin{equation}
\| R(\lambda,L_m) \|_{L^2 \to L^2} \lesssim \| \frac{1}{\lambda + im\Omega} \|_{L^\infty} + \| \frac{1}{\lambda + im\Omega} \|_{L^\infty}^2 \, . 
\end{equation}

\end{lemma}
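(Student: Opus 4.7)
The plan is to decompose $L_m = A_m + B_m$ via~\eqref{eq:Amdef}--\eqref{eq:Bmdef} and invert $\lambda - L_m$ by a Neumann series built from the explicit resolvent of $A_m$. Since $-A_m$ is lower triangular with respect to the splitting into radial and angular components, direct matrix inversion yields
\begin{equation*}
(\lambda - A_m)^{-1} = \begin{bmatrix}
(\lambda + im\Omega)^{-1} & 0 \\
r\Omega'\,(\lambda + im\Omega)^{-2} & (\lambda + im\Omega)^{-1}
\end{bmatrix},
\end{equation*}
interpreted as a multiplication operator on $Y_m$, valid whenever $\lambda \notin \overline{\mathrm{range}(-im\Omega)} = \sigma(A_m)$. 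Since $\|r\Omega'\|_{L^\infty}$ is finite under the standing hypotheses on $\Omega$ (using $\Omega'(0)=0$ near $r=0$ and the decay hypothesis at infinity), this gives
\begin{equation*}
\|(\lambda - A_m)^{-1}\|_{L^2 \to L^2} \lesssim \Big\|\tfrac{1}{\lambda + im\Omega}\Big\|_{L^\infty} + \Big\|\tfrac{1}{\lambda + im\Omega}\Big\|_{L^\infty}^{2},
\end{equation*}
which already has the exact shape of the bound claimed for $R(\lambda,L_m)$. Writing $\lambda - L_m = (\lambda - A_m)(I - (\lambda - A_m)^{-1} B_m)$, it therefore remains to show that $\|(\lambda - A_m)^{-1} B_m\|_{L^2 \to L^2} \le \tfrac{1}{2}$ in each of the two regimes; Neumann inversion then gives $(\lambda - L_m)^{-1} = (I - (\lambda - A_m)^{-1} B_m)^{-1} (\lambda - A_m)^{-1}$ with norm at most twice that of $(\lambda - A_m)^{-1}$.

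Under Condition~II the argument is elementary: $|\lambda + im\Omega| \ge |\Im \lambda| - m\|\Omega\|_{L^\infty} \gtrsim |\Im \lambda|$ gives $\|(\lambda - A_m)^{-1}\| \lesssim |\Im \lambda|^{-1}$, while the elliptic estimates on the pressure $p = -\Delta_m^{-1}[2im\,\partial_r^*\Omega \cdot u^r - 2\,\partial_r^*(\Omega u^\theta)]$ yield a uniform-in-$m$ $L^2$ bound on $B_m$, a byproduct of the analysis carried out in the proof of Lemma~\ref{lemma:equivalence_egfn} (see~\eqref{eq:B_gallay}). The product satisfies $\|(\lambda - A_m)^{-1} B_m\| \lesssim |\Im\lambda|^{-1} \ll 1$, as required, so Neumann inversion succeeds and the resolvent bound is immediate.

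Under Condition~I the naive individual bounds $\|(\lambda - A_m)^{-1}\| \lesssim m + m^2$ and $\|B_m\| \lesssim 1$ are too weak, and one must analyze the composition $(\lambda - A_m)^{-1} B_m$ as a whole. The key mechanism is the pressure-mediated cancellation isolated for columnar vortices in~\cite{gallay2019linear}: the pressure entering $B_m u$ solves a divergence-form elliptic equation whose source carries an explicit factor of $im$, and integration by parts against $(\lambda + im\Omega)^{-1}$, together with the incompressibility identity $\partial_r^* u^r = -(im/r) u^\theta$, transfers this $m$ onto the coefficient $\Omega$ and produces an effective gain of $1/m$ in the composition. This is the velocity-formulation analogue of the Rayleigh-type cancellation $\|u^r W'\|_{L^2} \lesssim |m|^{-1}\|\omega\|_{L^2}$ exploited in Lemma~\ref{lem:truncatevortex}, and it yields $\|(\lambda - A_m)^{-1} B_m\|_{L^2 \to L^2} \lesssim (m|\Re\lambda|)^{-1} \ll 1$ under Condition~I. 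The main obstacle is precisely this cancellation, in which neither factor of the composition is individually small; carrying it out rigorously while retaining the stated form of the final estimate is the technical core of the lemma, and we would adapt the resolvent analysis of~\cite{gallay2019linear} to our planar setting to do so.
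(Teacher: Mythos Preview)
Your Neumann-series strategy on the decomposition $L_m = A_m + B_m$ is a genuinely different route from the paper's. The paper does not try to bound $(\lambda - A_m)^{-1}B_m$ at all; instead it takes the curl of the resolvent problem to obtain the Rayleigh equation
\[
(\lambda + im\Omega)\,\Delta_m\psi_m - \tfrac{im}{r}\psi_m W' = g_m,
\]
divides through by $\lambda + im\Omega$, and performs an energy estimate on $\psi_m$. The only term to absorb is $\tfrac{1}{m}\|\tfrac{1}{\lambda+im\Omega}\|_{L^\infty}\|rW'\|_{L^\infty}$ times the Dirichlet energy, and the $1/m$ factor appears for free because $u^r_m = -im\psi_m/r$. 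Both Conditions~I and~II make this coefficient small, so the two regimes are handled by a single computation. This is exactly the cancellation you allude to in your last paragraph, but in the streamfunction formulation it is completely transparent and requires no appeal to~\cite{gallay2019linear}.

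Your Condition~II argument is morally fine, though the reference to~\eqref{eq:B_gallay} does not actually give a uniform-in-$m$ bound on $\|B_m\|_{L^2\to L^2}$ (that estimate carries a factor $1+m$ and controls weighted derivatives of $B_m$, not $B_m$ itself); you would need to check the pressure estimates separately, and even an $O(m)$ bound suffices under~II. The real gap is Condition~I: you correctly diagnose that the individual bounds $\|(\lambda-A_m)^{-1}\|\lesssim m+m^2$ and $\|B_m\|\lesssim 1$ are useless, but then you do not prove the claimed cancellation $\|(\lambda-A_m)^{-1}B_m\|\lesssim (m|\Re\lambda|)^{-1}$ and instead defer to an adaptation of~\cite{gallay2019linear}. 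Since you yourself call this ``the technical core of the lemma,'' this is not a minor omission. The most natural way to fill it is to pass to the Rayleigh equation, at which point you have converged to the paper's proof; staying purely in the velocity variables and extracting the cancellation directly from the pressure equation is possible but considerably more laborious than what the paper does.
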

\begin{proof}
For this, we will require the Rayleigh equation.
The resolvent problem for $u_m$ can be rewritten as
\begin{equation}
(\lambda + im\Omega) \Delta_m \psi_m - \frac{im}{r} \psi_m W' = g_m \, ,
\end{equation}
where $W$ is the vorticity profile of the vortex, and $g_m$ is curl of $f_m$:
\begin{equation}
g_m = - \frac{im}{r} f^r_m + (\p_r + \frac{1}{r}) f^\theta_m \, .
\end{equation}

$L^2$ estimates on $\nabla_m \psi_m$ are the desired estimates on $u_m$.

We divide through by $\lambda + im\Omega$:
\begin{equation}
	\label{eq:Rayleighequationdivide}
\Delta_m \psi_m = \frac{im}{r} \frac{1}{\lambda+im\Omega} \psi_m W' + \frac{g_m}{\lambda+im\Omega} \, .
\end{equation}
Then we multiply by $\bar{\psi}_m$ and integrate by parts:
\begin{equation}
	\label{eq:energyestimateforpsi}
\begin{aligned}
&\| \p_r \psi_m \|_{L^2}^2 + m^2 \| r^{-1} \psi_m \|_{L^2}^2 \leq \frac{1}{m} \| \frac{1}{\lambda + im\Omega} \|_{L^\infty} \| mr^{-1} \psi_m \|_{L^2}^2 \| rW' \|_{L^\infty} \\
&\quad + \left| \Re \int  \frac{im}{r} f^r_m \frac{\bar{\psi}_m}{\lambda+im\Omega}  \, r \,dr \right| + \left| \Re \int \p_r^* f^\theta_m \frac{\bar{\psi}_m}{\lambda+im\Omega} \, r \, dr \right| \, .
\end{aligned}
\end{equation}
Under conditions I or II, the first term on the right-hand side can be absorbed into the left-hand side. The second term on the right-hand side is estimated by
\begin{equation}	
	\label{eq:estimate1toreplace}
	C \| \frac{1}{\lambda + im\Omega} \|_{L^\infty} \| f^r_m \|_{L^2} \, \| mr^{-1} \bar{\psi}_m \| \, .
\end{equation}
For the third term, we integrate by parts. The relevant estimate is
\begin{align}
	\label{eq:estimate2toreplace}
	& C \| \frac{1}{\lambda + im\Omega} \|_{L^\infty} \| f^\theta_m \|_{L^2} \, \| \p_r \psi \|_{L^2} \\ & \quad + C \| \frac{1}{\lambda + im\Omega} \|_{L^\infty}^2 \| f^\theta_m \|_{L^2} \, \| mr^{-1} \psi_m \|_{L^2} \| r\Omega' \|_{L^\infty} \, .
\end{align}
The above estimates can be converted into a solvability proof.
\end{proof}

The following resolvent estimates will be useful when we perturb to the self-similar problem:
\begin{corollary}
	\label{cor:Linversecorollary}
Under conditions I or II above, we have
\begin{equation}
\| R(\lambda,L_m) (\lambda - A_m) \|_{L^2 \to L^2} \lesssim 1 + \| \frac{1}{\lambda + im\Omega} \|_{L^\infty}^2 \, .
\end{equation}
\end{corollary}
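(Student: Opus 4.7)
The plan is to apply the resolvent identity. Since $L_m = A_m + B_m$, writing $\lambda - L_m = (\lambda - A_m) - B_m$ and composing on the left with $R(\lambda,L_m)$, which exists under conditions I or II by the previous lemma, yields the algebraic identity
\[
R(\lambda, L_m)(\lambda - A_m) \;=\; I + R(\lambda, L_m)\, B_m,
\]
hence by the triangle inequality
\[
\|R(\lambda, L_m)(\lambda - A_m)\|_{L^2 \to L^2} \;\le\; 1 + \|R(\lambda, L_m)\|_{L^2 \to L^2}\,\|B_m\|_{L^2 \to L^2}.
\]

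The first factor is exactly what the preceding lemma bounds, by $\|(\lambda + im\Omega)^{-1}\|_{L^\infty} + \|(\lambda + im\Omega)^{-1}\|_{L^\infty}^{2}$. For the second, I would exploit the pressure-absorbed form of $B_m$: since the pressure $p$ in~\eqref{eq:Bmdef} is determined precisely by the requirement that $B_m u$ be divergence-free, a direct inspection shows
\[
B_m u \;=\; \mathbb{P}\bigl(2\Omega u^\theta\, e_r - 2(r\Omega)'\, u^r\, e_\theta\bigr).
\]
Because $\Omega$ and $(r\Omega)'$ are uniformly bounded by the decay assumption~\eqref{eq:decayconditions}, and the Leray projection $\mathbb{P}$ is bounded on $L^2(\R^2)$, we obtain $\|B_m\|_{L^2 \to L^2} \lesssim \|\Omega\|_{L^\infty} + \|(r\Omega)'\|_{L^\infty}$, uniformly in $m$. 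Combining the two factors and absorbing the linear cross-term via $a \le \tfrac{1}{2}(1 + a^2)$ applied with $a = \|(\lambda + im\Omega)^{-1}\|_{L^\infty}$ yields the claimed bound.

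The main conceptual obstacle is uniformity in $m$ of $\|B_m\|_{L^2 \to L^2}$: $B_m$ contains the non-local pressure gradient $\nabla p$, and a naive Poisson estimate on $p$ produces $m$-dependent constants through the $-m^2/r^2$ term in $\Delta_m$. The pressure-absorbed representation above circumvents this entirely, since the only non-local ingredient that remains is the $L^2$-bounded Leray projector. Everything else reduces to multiplication by bounded functions set by the background vortex, which the assumption~\eqref{eq:decayconditions} ensures is harmless.
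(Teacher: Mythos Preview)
Your proof is correct and takes a genuinely different route from the paper. The paper returns to the Rayleigh energy estimate~\eqref{eq:energyestimateforpsi} and substitutes $f = (\lambda - A_m)h$, exploiting the explicit cancellation of the factor $(\lambda + im\Omega)^{-1}$ against the same factor appearing in $f^r = (\lambda + im\Omega)h^r$ and $f^\theta = (\lambda + im\Omega)h^\theta - r\Omega' h^r$; this replaces the right-hand side terms~\eqref{eq:estimate1toreplace}--\eqref{eq:estimate2toreplace} by ones controlled in terms of $\|h\|_{L^2}$ directly. Your argument instead invokes the algebraic resolvent identity $R(\lambda,L_m)(\lambda - A_m) = I + R(\lambda,L_m)B_m$ together with a uniform-in-$m$ bound $\|B_m\|_{L^2 \to L^2} \lesssim 1$ obtained from the Leray-projection representation $B_m u = \mathbb{P}(2\Omega u^\theta e_r - 2(r\Omega)' u^r e_\theta)$.

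Your route is cleaner and more transparent for this particular statement: it avoids re-entering the Rayleigh equation entirely and makes the uniformity in $m$ manifest through the $L^2$-boundedness of $\mathbb{P}$. The paper's route has the advantage of staying entirely within the Rayleigh-equation framework already set up, which is the same machinery reused for the self-similar perturbation in Lemma~\ref{lem:uniformregion1}; in that sense it is more of a piece with the surrounding arguments. One minor citation quibble: the relevant decay hypothesis in this subsection is the one stated at its opening (the $r^{-2}$ decay of $\Omega$ and its derivatives), not~\eqref{eq:decayconditions} from the vorticity-formulation subsection; both yield $\Omega, (r\Omega)' \in L^\infty$, so this does not affect the argument.
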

\begin{proof}
Let $f = (\lambda - A_m) h$, that is,
\begin{equation}
f^r = (\lambda + im \Omega) h^r \, , \quad f^\theta = (\lambda + im \Omega) h^\theta - r \Omega' h^r \, .
\end{equation}

We substitute $f = A_m h$ into~\eqref{eq:energyestimateforpsi}. Then $\lambda + im\Omega$ terms in the numerator and denominator cancel, and the estimate~\eqref{eq:estimate1toreplace} is replaced by $C \| h^r_m \|_{L^2} \, \| mr^{-1} \bar{\psi}_m \|_{L^2}$, 
while~\eqref{eq:estimate2toreplace} is replaced by
\begin{equation}
\begin{aligned}
&C \| h^\theta_m \|_{L^2}  \, \| \p_r \psi \|_{L^2}  + C \| \frac{1}{\lambda + im\Omega} \|_{L^\infty}  \| h^\theta_m \|_{L^2}  \, \| mr^{-1} \psi_m \|_{L^2}  \\
&\quad + C \| \frac{1}{\lambda + im\Omega} \|_{L^\infty} \| h^r_m \|_{L^2}  \, \| \p_r \psi \|_{L^2}  + C \| \frac{1}{\lambda + im\Omega} \|_{L^\infty}^2 \| h^r_m \|_{L^2}  \, \| mr^{-1} \psi_m \|_{L^2}. 
\end{aligned}
\end{equation}
\end{proof}

\subsection{Self-similar operator}

From now on, we work with a smooth truncated vortex from Lemma~\ref{lem:truncatevortex}.

We now examine
\begin{equation}
-\tilde{L}^{(\kappa)} = \kappa (-1+\frac{1}{\alpha} - \frac{\xi}{\alpha} \cdot \nabla_\xi) - L = - \kappa (1-\frac{2}{\alpha}) + \underbrace{\frac{\kappa}{\alpha} (- 1 - \xi \cdot \nabla_\xi) - L}_{= -L^{(\kappa)}} \, .
\end{equation}
The operator $L^{(\kappa)}$ has a different domain than $L$:
\begin{equation}
D(L^{(\kappa)}) = \{ u \in L^2_{\rm df} : (-\frac{\kappa}{\alpha} \xi \cdot \nabla_\xi + \bar{u} \cdot \nabla) u \in L^2_{\rm df} \} \, .
\end{equation}
We again study the problem mode-by-mode and decompose
\begin{equation}
L_{m}^{(\kappa)} = A_{m}^{(\kappa)} + B_m\, ,
\end{equation}
\begin{equation}
A_m^{(\kappa)} = \frac{\kappa}{\alpha} (1 + \xi \cdot \nabla_\xi) + A_m \, .
\end{equation}
The spectrum of $A_m^{(\kappa)}$ is contained in the imaginary axis, as can be demonstrated via an energy estimate for the resolvent problem. 
It was already demonstrated that $B_m$ is compact.
Then $A_m^{(\kappa)} + B_m$ has the same basic spectral stucture as $A_m + B_m$. That is, the essential spectrum of $L_m^{(\kappa)}$ belongs to the imaginary axis, and the remainder consists of isolated eigenvalues of finite multiplicity whose only possible accumulation points must belong to the essential spectrum.

\begin{proposition}
	\label{pro:spectralperturbation}
Let $\varepsilon > 0$. Let $\bar{\omega}$ be a smooth truncated vortex and $m_0 \geq 2$, as in Lemma~\ref{lem:truncatevortex}, with $\{ \Re \lambda > \varepsilon/2 \}$ in~\eqref{eq:myspectrumisonlytwopoints2}.

Let $0 < \kappa \ll_\varepsilon 1$. Then $L^{(\kappa)} : L^2_{m_0} \to L^2_{m_0}$ has either
\begin{enumerate}
\item two algebraically simple unstable eigenvalues $\lambda, \bar{\lambda}$ (not identical) 
\item a semisimple eigenvalue $\lambda \in \R$ of multiplicity two.
\end{enumerate}
In either case, each of $U_{m_0}$ and $U_{-m_0}$ has a one-dimensional eigenspace, and
\begin{equation}
	\label{eq:myspectrumisonlytwopoints3}
\sigma_{m_0}(L^{(\kappa)}) \cap \{ \Re \lambda > \varepsilon \} = \{ \lambda(\kappa), \bar{\lambda}(\kappa) \} \, .
\end{equation}


The operator $R(\lambda,L_m^{(\kappa)})$ exists and enjoys uniform-in-$m$ $L^2 \to L^2$ estimates in the region
\begin{equation}
( \{ |\Re \lambda| \gg \frac{1}{m} \} \cup \{ |\Im \lambda| \gg m \| \Omega \|_{L^\infty} \} ) \cap \{ \Re \lambda > \varepsilon \} \, .
\end{equation}
\end{proposition}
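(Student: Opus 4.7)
The plan is to work mode-by-mode and treat $L^{(\kappa)}_m = L_m + \frac{\kappa}{\alpha}(1+\xi\cdot\nabla_\xi)$ as a singular perturbation of $L_m$. The perturbation is rotationally invariant and therefore preserves each subspace $X_m$, so $L^{(\kappa)}=\bigoplus_{m\in m_0\mathbb{Z}}L_m^{(\kappa)}$. A short energy estimate shows that $A_m^{(\kappa)}$ retains purely imaginary spectrum while $B_m$ is unaffected, so Proposition~\ref{pro:decompositioncompactness} transfers to $L_m^{(\kappa)}$. To establish persistence of the unstable eigenvalue $\lambda$ on $X_{m_0}$ (the analysis on $X_{-m_0}$ is conjugate), I fix a small closed contour $\Gamma\subset\{\Re z>\varepsilon\}$ encircling $\lambda$ and no other point of $\sigma(L_{m_0})$. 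The Neumann expansion
\begin{equation}
R(z,L_m^{(\kappa)})=R(z,L_m)\bigl[I-\tfrac{\kappa}{\alpha}(1+\xi\cdot\nabla_\xi)R(z,L_m)\bigr]^{-1}
\end{equation}
reduces everything to the uniform relative bound $\|\tfrac{1}{\alpha}(1+\xi\cdot\nabla_\xi)R(z,L_m)\|_{L^2\to L^2}\lesssim 1$ on $\Gamma$. Setting $u=R(z,L_m)f$, the vorticity $\omega=\curl u$ satisfies the Rayleigh equation $(z+im\Omega)\omega+u^r W'=g$ with $\Omega,W'$ compactly supported, so $\omega\in L^2$ (via the vorticity-side analogue of Corollary~\ref{cor:Linversecorollary}), hence $u\in H^1$ by Biot--Savart; a weighted estimate exploiting the compact support of $\bar\omega$ then controls $\xi\cdot\nabla u$ in $L^2$. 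Operator-norm convergence $R(z,L_m^{(\kappa)})\to R(z,L_m)$ on $\Gamma$ gives norm convergence of the Riesz projections $P^{(\kappa)}\to P$, preserving rank (one on each of $X_{\pm m_0}$), so the simple eigenvalue $\lambda$ persists as a simple $\lambda(\kappa)$. The dichotomy (1)--(2) then reflects whether $\lambda(\kappa)\notin\R$ (two simple conjugate eigenvalues) or $\lambda(\kappa)\in\R$ (semisimple of multiplicity two on the full space).

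For the uniform-in-$m$ resolvent estimate, I decompose
\begin{equation}
\lambda - L_m^{(\kappa)} = \bigl[I - \tfrac{\kappa}{\alpha}(1+\xi\cdot\nabla_\xi) R(\lambda, L_m)\bigr](\lambda - L_m).
\end{equation}
Corollary~\ref{cor:Linversecorollary} provides the bound on $R(\lambda,L_m)$, uniformly in $m$ on the stated region, and repeating the decay/regularity argument above yields $\|\tfrac{1}{\alpha}(1+\xi\cdot\nabla_\xi)R(\lambda,L_m)\|_{L^2\to L^2}\lesssim_\varepsilon 1$ uniformly in $m$. Choosing $\kappa\ll_\varepsilon 1$ makes the Neumann series converge and delivers the claimed uniform bound on $R(\lambda,L_m^{(\kappa)})$. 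As a byproduct, this rules out unstable eigenvalues with $\Re\lambda>\varepsilon$ for $|m|$ large, leaving only $\{\lambda(\kappa),\overline{\lambda(\kappa)}\}$ in the right half-plane.

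The main obstacle is that the perturbation $\frac{\kappa}{\alpha}(1+\xi\cdot\nabla_\xi)$ is unbounded relative to $L_m$, so Kato's bounded perturbation theory does not apply directly. The linchpin is the \emph{relative boundedness} estimate $\|\tfrac{1}{\alpha}(1+\xi\cdot\nabla_\xi)R(z,L_m)\|_{L^2\to L^2}<\infty$, and especially its uniformity in $m$, which rests on the compact support of $\bar\omega$, the structure of the Rayleigh equation, and Corollary~\ref{cor:Linversecorollary}. This singular-perturbation analysis is the principal technical content and is carried out in detail in Appendix~\ref{sec:spectralperturbationtheory}.
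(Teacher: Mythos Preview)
Your central claim—that $(1+\xi\cdot\nabla_\xi)R(z,L_m)$ is bounded $L^2\to L^2$—is false, and this is precisely why the perturbation is singular. For $f\in L^2_{\rm df}$ and $u=R(z,L_m)f$, outside the (compact) support of the vortex we have $\bar u=0$, so the resolvent equation reduces to $zu=f$ (no pressure, since $f$ is already divergence-free on each mode). Hence $u=f/z$ for large $r$, and $r\partial_r u=(r\partial_r f)/z$, which is not in $L^2$ for generic $f$. Your attempt to bootstrap via vorticity also fails: the right-hand side of the Rayleigh equation is $g=\curl f$, which for $f\in L^2$ lies only in $H^{-1}$, so $\omega\notin L^2$ in general. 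Consequently the Neumann series you wrote does not converge in operator norm, no matter how small $\kappa$ is, and the decomposition $\lambda-L_m^{(\kappa)}=[I-\tfrac{\kappa}{\alpha}(1+\xi\cdot\nabla_\xi)R(\lambda,L_m)](\lambda-L_m)$ cannot be inverted as you propose.

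The paper avoids this obstruction in two distinct ways. For large $|m|$ (Region~I), it estimates the $\kappa$-Rayleigh equation \emph{directly}: the operator $-\tfrac{\kappa}{\alpha}(2+r\partial_r)+\lambda+im\Omega$ in front of $\Delta_m\psi_m$ is inverted explicitly via the operators $S_{\kappa,m}(\lambda)$ and their adjoints $\tilde S_{\kappa,m}(\lambda)$, whose $\dot H^1_m$ bounds are uniform in $\kappa$ (Lemma~\ref{lem:uniformregion1}). For the finitely many remaining modes $|m|\le M$, the paper compares $A_m^{(\kappa)}$ to $A_m$ and shows only \emph{strong} convergence $R(\lambda,A_m^{(\kappa)})f\to R(\lambda,A_m)f$ (Lemma~\ref{lem:resolventsarecloseAm}(iii)); norm convergence is recovered only after post-composing with the compact operator $B_m$, using that $B_m$ maps into functions with $r\partial_r(B_m u)\in L^2$ (estimate~\eqref{eq:B_gallay}). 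This is exactly the structure formalized in Proposition~\ref{pro:mainspectrallemma}: strong convergence of $R(\lambda,M_k)$ plus norm convergence of $R(\lambda,M_k)K_\infty$. Your Neumann-series argument implicitly requires norm convergence of the full resolvent, which is unavailable.
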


We begin by collecting estimates on $R(\lambda,A^{(\kappa)}_m)$, $R(\lambda,A_m)$, and their difference.

\begin{lemma}
	\label{lem:resolventsarecloseAm}
\begin{enumerate}[label=(\roman*)]
\item We have
\begin{equation}
	\label{eq:resolventestimatesforAm}
\| R(\lambda,A_m) \|_{L^2 \to L^2} \, , \| R(\lambda,A^{(\kappa)}_m) \|_{L^2 \to L^2} \lesssim \frac{1}{|\Re \lambda|} + \frac{1}{|\Re \lambda|^{2}} \, ,
\end{equation}
where $|\Re \lambda| > 0$.
\item Regarding the difference,
\begin{align}
	\label{eq:estimateforthedifference}
\| [R(\lambda,A^{(\kappa)}_m) - R(\lambda,A_m)] f \|_{L^2} & \lesssim \kappa (|\Re \lambda|^{-1} + |\Re \lambda|^{-5} ) \\ & \qquad ((1 + m )\| f \|_{L^2} + \| r \p_r f \|_{L^2}) \, ,
\end{align}
provided that the right-hand side is finite.
\item In particular, for every $m$,
\begin{equation}
R(\lambda,A^{(\kappa)}_m) f \to R(\lambda,A_m) f \text{ as } \kappa \to 0 \, , \quad \forall f \in L^2 \, ,
\end{equation}
uniformly in $\lambda \in \{ |\Re \lambda| > \varepsilon \}$ for every $\varepsilon > 0$.
\item Consequently,
\begin{align}
	\label{eq:estimatesondiffwithBm}
\| [R(\lambda,A^{(\kappa)}_m) - R(\lambda,A_m)] B_m \|_{L^2 \to L^2} & \lesssim (1+m)^2 \kappa \\ &\qquad (|\Re \lambda|^{-1} + |\Re \lambda|^{-5} ) \, .
\end{align}
\end{enumerate}
\end{lemma}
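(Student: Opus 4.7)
The plan is to combine direct energy estimates for (i) with the standard resolvent identity for (ii)--(iv), working mode by mode in $L^2(\R^+, r\,dr)$. The key structural observation is that $A_m^{(\kappa)}$ is a skew-symmetric operator up to a bounded off-diagonal perturbation: in polar coordinates the generator $\frac{1}{\alpha}(1+\xi\cdot\nabla_\xi)$ becomes $\frac{1}{\alpha}(1+r\partial_r)$, which is skew-adjoint on $L^2(r\,dr)$, and multiplication by $im\Omega$ has purely imaginary spectrum. The only non-skew piece is the off-diagonal $-r\Omega'$ in the matrix form~\eqref{eq:Amdef}.

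For (i), I would solve $(\lambda-A_m^{(\kappa)})u = f$ row by row. Pairing the first row with $\bar u^r$ in $L^2(r\,dr)$, the dilation term and $im\Omega$ both contribute zero to the real part, leaving $\Re\lambda\,\|u^r\|_{L^2}^2 \le \|f^r\|_{L^2}\|u^r\|_{L^2}$. The second row is the same scalar equation with forcing $f^\theta + r\Omega' u^r$, and the bound $\|r\Omega'\|_{L^\infty}<\infty$ inherited from~\eqref{eq:decayconditions} then produces the additional factor of $|\Re\lambda|^{-1}$, yielding the announced estimate. The case $\kappa=0$ is identical.

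For (ii), I would apply the resolvent identity
\begin{equation}
R(\lambda, A_m^{(\kappa)}) - R(\lambda, A_m) = \tfrac{\kappa}{\alpha}\, R(\lambda, A_m^{(\kappa)})\,(1+r\partial_r)\, R(\lambda, A_m),
\end{equation}
bound the outer factor by (i), and compute the inner factor explicitly. Since $R(\lambda,A_m)f$ is given in closed form by $u^r = f^r/(\lambda+im\Omega)$ and $u^\theta = f^\theta/(\lambda+im\Omega) + r\Omega' f^r/(\lambda+im\Omega)^2$, a direct pointwise calculation using $\|r\Omega'\|_\infty + \|r^2\Omega''\|_\infty<\infty$ produces
\begin{equation}
\|(1+r\partial_r)R(\lambda,A_m)f\|_{L^2} \lesssim (|\Re\lambda|^{-1} + m|\Re\lambda|^{-3})\bigl((1+m)\|f\|_{L^2} + \|r\partial_r f\|_{L^2}\bigr),
\end{equation}
which, combined with (i), yields (ii). Then (iii) follows from (ii) applied on a dense subspace such as $C^\infty_c$, together with the uniform bound (i). Finally, for (iv) I would apply (ii) with $f = B_m g$ and invoke~\eqref{eq:B_gallay}, which gives $(1+m)\|B_m g\|_{L^2} + \|r\partial_r B_m g\|_{L^2} \lesssim (1+m)^2 \|g\|_{L^2}$, reproducing the stated estimate.

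The main technical obstacle will be the book-keeping in (ii). The Jordan-type off-diagonal $-r\Omega'$ in $A_m$ costs one extra factor of $(\lambda+im\Omega)^{-1}$ when passing from the $u^r$-equation to the $u^\theta$-equation, and each $r$-derivative of $(\lambda+im\Omega)^{-1}$ costs a factor $m/|\Re\lambda|$; once these losses are correctly accounted for, the $|\Re\lambda|^{-5}$ exponent in the target estimate emerges naturally and the argument concludes.
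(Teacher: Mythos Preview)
Your proposal is correct and follows essentially the same route as the paper: energy estimates row by row for (i), the resolvent identity $R(\lambda,A_m^{(\kappa)})-R(\lambda,A_m)=\tfrac{\kappa}{\alpha}R(\lambda,A_m^{(\kappa)})(1+r\partial_r)R(\lambda,A_m)$ together with the explicit inverse~\eqref{eq:identityforinvertingAm} for (ii), density for (iii), and the bound~\eqref{eq:B_gallay} for (iv). One small bookkeeping point: your sketched intermediate inequality for $\|(1+r\partial_r)R(\lambda,A_m)f\|_{L^2}$ double-counts an $m$-factor by forcing the bound into product form; if you keep the estimate additive (the $m$ from differentiating $(\lambda+im\Omega)^{-1}$ lands only on the $\|f\|$ term, not on $\|r\partial_r f\|$, and appears once), you recover exactly the stated $(1+m)\|f\|_{L^2}+\|r\partial_r f\|_{L^2}$ dependence rather than $(1+m)^2$.
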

\begin{proof}
(i) By the definition~\eqref{eq:Amdef} of $A_m$, we have
\begin{equation}
\lambda - A_m = 
\begin{bmatrix}
\lambda + im\Omega & \\
- r\Omega' & \lambda + im\Omega
\end{bmatrix}
\end{equation}
\begin{equation}
	\label{eq:identityforinvertingAm}
 R(\lambda,A_m) = \frac{1}{\lambda + im \Omega} \begin{bmatrix}
1 & \\
\frac{r\Omega'}{\lambda + im \Omega} & 1
\end{bmatrix} \, ,
\end{equation}
which is evidently bounded by (a constant multiple of) $|\Re \lambda|^{-1} + |\Re \lambda|^{-2}$. For $R(\lambda,A_m^{(\kappa)})$, we give instead a proof by energy estimates. Write the resolvent problem as two equations:
\begin{equation}
- \frac{\kappa}{\alpha} (1 + r \p_r) u^r_m + (\lambda + im\Omega) u^r_m = f^r_m
\end{equation}
\begin{equation}
- \frac{\kappa}{\alpha} (1 + r \p_r) u^\theta_m + (\lambda + im\Omega) u^\theta_m = f^\theta_m + r \Omega' u^r_m \, .
\end{equation}
An energy estimate for $u^r_m$ yields $\| u^r_m \|_{L^2} \leq |\Re \lambda|^{-1} \| f^r_m\|_{L^2}$. We plug this into the right-hand side of the equation for $u^\theta_m$, whose energy estimate yields
\begin{equation}
\| u^\theta_m \|_{L^2} \leq |\Re \lambda|^{-1} \| f^\theta_m \|_{L^2} + C |\Re \lambda|^{-2} \| f^r_m \|_{L^2} \, ,
\end{equation}
which verifies~\eqref{eq:resolventestimatesforAm}.

(ii) We express the difference as the solution to the equation
\begin{equation}
	\label{eq:expressingthedifference}
(\lambda - A^{(\kappa)}_m) (u^{(\kappa)}_m - u_m) + \frac{\kappa}{\alpha} (1 + r\p_r) u_m = 0 \, ,
\end{equation}
where $u_m = R(\lambda,A_m) f_m$ and $u^{(\kappa)}_m = R(\lambda,A_m^{(\kappa)}) f_m$. To estimate the error, we compute $r \p_r u_m$ via the identity~\eqref{eq:identityforinvertingAm}:
\begin{equation}
\begin{aligned}
r\p_r u_m &= \frac{im r \Omega'}{(\lambda + im \Omega)^2} \begin{bmatrix}
1 & \\
\frac{r\Omega'}{\lambda + im \Omega} & 1
\end{bmatrix} f_m + \frac{1}{\lambda + im \Omega} \begin{bmatrix} 0
& \\
r \left( \frac{r\Omega'}{\lambda + im \Omega} \right)' & 0
\end{bmatrix} f_m \\
&\quad + R(\lambda,A_m) (r\p_r f_m) \, .
\end{aligned}
\end{equation}
(Notice the factor of $m$ arising in the first two terms.) Estimating this coarsely and plugging back into~\eqref{eq:expressingthedifference} yields~\eqref{eq:estimateforthedifference}.

(iii) This is a consequence of (ii) and approximating $L^2(r \, dr)$ functions $f$ by functions for which additionally $r \p_r f \in L^2(r \, dr)$.

(iv) We apply (ii) with $f=B_mu$, for some $u \in L^2(\RR)$. Thanks to the compactness of $B_m$ from \Cref{pro:decompositioncompactness} and the estimate \eqref{eq:B_gallay}, we can bound 
\begin{equation}
\|B_mu\|_{L^2} +  \|r\partial_r (B_m u)\|_{L^2} \leq (1+m) \|u\|_{L^2},
\end{equation}
which, plugged into \eqref{eq:estimateforthedifference}, proves \eqref{eq:estimatesondiffwithBm}.
\end{proof}

\begin{lemma}[Uniform resolvent estimates: Region I]
	\label{lem:uniformregion1}
The operator $R(\lambda,L_m^{(\kappa)})$ exists in the region $\{ \Re \lambda \gg \frac{1}{|m|} \}$
and satisfies
\begin{equation}
	\label{eq:uniformregionresolvent1}
\| R(\lambda,L_m^{(\kappa)}) \|_{L^2 \to L^2} \lesssim \frac{1}{\Re \lambda} + \frac{1}{(\Re \lambda)^2} \, .
\end{equation}
\end{lemma}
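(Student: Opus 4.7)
My plan is to adapt the paper's proof of the uniform-in-$m$ resolvent estimate for $L_m$ (the $\kappa=0$ case), treating the self-similar transport $\frac{\kappa}{\alpha}(1+\xi\cdot\nabla)$ as a perturbation whose real part vanishes at the velocity level. The key algebraic identity is the $L^2(r\,dr)$-adjoint relation $(r\partial_r)^*=-(2+r\partial_r)$, which gives $\Re\langle(1+r\partial_r)f,f\rangle=0$ (velocity formulation) and $\Re\langle(2+r\partial_r)f,f\rangle=\|f\|^2_{L^2(r\,dr)}$ (vorticity formulation, obtained after $\curl$). In particular, the first-order operator
\begin{equation*}
\mathcal{N}^{(\kappa)}_m \;:=\; (\lambda+im\Omega)-\frac{\kappa}{\alpha}(2+r\partial_r)
\end{equation*}
satisfies $\Re\langle\mathcal{N}^{(\kappa)}_m\omega,\omega\rangle = (\Re\lambda-\kappa/\alpha)\|\omega\|^2$ and is $L^2(r\,dr)$-invertible with $\|[\mathcal{N}^{(\kappa)}_m]^{-1}\|\lesssim 1/\Re\lambda$ in the regime $\Re\lambda\gg\kappa/\alpha$, which together with $\Re\lambda\gg 1/|m|$ is what the implicit ``$\gg$'' in Region I should encode for small $\kappa$.

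I take $\curl$ of the resolvent equation $(\lambda-L_m^{(\kappa)})u_m=f_m$. Using $\curl((1+\xi\cdot\nabla)u)=(2+r\partial_r)\omega$ (a consequence of the scaling computation $\partial_\lambda\curl u_\lambda|_{\lambda=1}=\omega+\xi\cdot\nabla\omega$), one arrives at the modified Rayleigh equation
\begin{equation*}
\mathcal{N}^{(\kappa)}_m\,\Delta_m\psi_m \;-\; \frac{im}{r}\psi_m W' \;=\; g_m,
\end{equation*}
with $u_m=\nabla_m^\perp\psi_m$ and $g_m=\curl f_m$. The only new feature compared to the $\kappa=0$ Rayleigh equation is the replacement of the multiplication $\lambda+im\Omega$ by the first-order operator $\mathcal{N}^{(\kappa)}_m$. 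I would then repeat the proof of~\eqref{eq:energyestimateforpsi} verbatim: test against $\bar{\psi}_m/(\lambda+im\Omega)$ in $L^2(r\,dr)$, integrate the Laplacian by parts, and integrate the $\partial_r^*f^\theta_m$ piece of $g_m$ by parts. All terms of the $\kappa=0$ argument reappear identically, plus a single new contribution
\begin{equation*}
-\frac{\kappa}{\alpha}\int \Delta_m\psi_m \cdot r\partial_r\!\left[\frac{\bar{\psi}_m}{\lambda+im\Omega}\right] r\,dr,
\end{equation*}
extracted via the adjoint identity $\int(2+r\partial_r)h\cdot F\,r\,dr=-\int h\cdot r\partial_r F\,r\,dr$. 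Expanding the $r\partial_r$ produces one piece with $\partial_r\bar{\psi}_m/(\lambda+im\Omega)$ and one with $r\Omega'\bar{\psi}_m/(\lambda+im\Omega)^2$.

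The quantity $\|\Delta_m\psi_m\|_{L^2(r\,dr)}$ that arises after Cauchy--Schwarz on these pieces is reabsorbed via the coercive bound $\|\Delta_m\psi_m\|\lesssim\frac{1}{\Re\lambda}(\|W'\|_\infty\|\nabla_m\psi_m\|+\|g_m\|)$ coming from $[\mathcal{N}^{(\kappa)}_m]^{-1}$ applied to the Rayleigh equation, followed by a final integration by parts to bound $\|g_m\|$ in terms of $\|f_m\|_{L^2}$ and $\|\nabla_m\psi_m\|$ (exactly as in the $\kappa=0$ proof). The resulting net new contribution is $O(\kappa(1+\|(\lambda+im\Omega)^{-1}\|_\infty^2))\|\nabla_m\psi_m\|\,\|f_m\|_{L^2}$, which for small $\kappa$ is absorbable alongside the classical ``bad'' vortex term $\frac{1}{m}\|(\lambda+im\Omega)^{-1}\|_\infty\|rW'\|_\infty\|\nabla_m\psi_m\|^2$ (small in Region I). This yields~\eqref{eq:uniformregionresolvent1} uniformly in $m$ and in small $\kappa$. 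The most delicate step I foresee is exactly this closing-the-loop: the $\kappa$-correction reintroduces the second-derivative norm $\|\Delta_m\psi_m\|$, and one must verify that the reabsorption via $\mathcal{N}^{(\kappa)}_m$-coercivity carries $m$-independent constants, which is the case because the coercivity constant $(\Re\lambda-\kappa/\alpha)^{-1}$ does not depend on $m$.
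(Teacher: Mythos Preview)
Your perturbative scheme has a genuine gap at the closing step. After your integration by parts, the new $\kappa$-contribution is
\[
\frac{\kappa}{\alpha}\int \Delta_m\psi_m \cdot r\partial_r\!\left[\frac{\bar{\psi}_m}{\lambda+im\Omega}\right] r\,dr
= \frac{\kappa}{\alpha}\int \Delta_m\psi_m \cdot\left(\frac{r\partial_r\bar{\psi}_m}{\lambda+im\Omega}-\frac{imr\Omega'\,\bar{\psi}_m}{(\lambda+im\Omega)^2}\right)r\,dr,
\]
and neither factor in a Cauchy--Schwarz pairing is under control. The quantity $\|r\partial_r\psi_m\|_{L^2(r\,dr)}$ carries an $r$-weight that is \emph{not} dominated by $\|\nabla_m\psi_m\|_{L^2}=\|\partial_r\psi_m\|_{L^2}+\|m\psi_m/r\|_{L^2}$; if instead you integrate $\Delta_m$ by parts once more, the $-m^2/r^2$ piece produces $m\cdot\|m\psi_m/r\|\,\|\partial_r\psi_m\|/\Re\lambda$, destroying uniformity in $m$. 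On the other side, your proposed reabsorption $\|\Delta_m\psi_m\|\lesssim(\Re\lambda)^{-1}(\|\nabla_m\psi_m\|+\|g_m\|)$ is useless here because $g_m=-\tfrac{im}{r}f^r_m+\partial_r^*f^\theta_m$ contains $\partial_r f^\theta_m$, so $\|g_m\|_{L^2}$ is simply not bounded by $\|f_m\|_{L^2}$. In the $\kappa=0$ proof the quantity $\|g_m\|_{L^2}$ never appears --- the derivative in $g_m$ is removed by integrating by parts \emph{inside the specific pairing} $\langle g_m,\bar{\psi}_m/(\lambda+im\Omega)\rangle$, not in norm.

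The paper's proof avoids both obstructions by inverting the full first-order operator rather than only its multiplicative part: write $\Delta_m\psi_m=S_{\kappa,m}(\lambda)[\cdots]$ with $S_{\kappa,m}(\lambda)=\bigl(\lambda+im\Omega-\tfrac{\kappa}{\alpha}(2+r\partial_r)\bigr)^{-1}$, test against $\bar{\psi}_m$, and pass $S_{\kappa,m}$ to its $L^2(r\,dr)$-adjoint $\tilde{S}_{\kappa,m}(\lambda)=\bigl(\bar{\lambda}-im\Omega+\tfrac{\kappa}{\alpha}r\partial_r\bigr)^{-1}$ acting on $\psi_m$. The decisive estimate is that $\tilde{S}_{\kappa,m}$ is bounded on the weighted space $\dot H^1_m=\{\psi:\partial_r\psi,\,m\psi/r\in L^2(r\,dr)\}$ with constant $\lesssim(\Re\lambda)^{-1}+(\Re\lambda)^{-2}$, uniformly in $\kappa$ and $m$. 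This holds because the commutators $[r\partial_r,\partial_r]=-\partial_r$ and $[r\partial_r,m/r]=-m/r$ give closed equations for $\partial_r(\tilde{S}_{\kappa,m}\psi)$ and $m(\tilde{S}_{\kappa,m}\psi)/r$, on which a direct energy estimate yields the bound. With this in hand the three pairings $\langle\tfrac{im}{r}W'\psi_m,\tilde{S}_{\kappa,m}\psi_m\rangle$, $\langle\tfrac{im}{r}f^r_m,\tilde{S}_{\kappa,m}\psi_m\rangle$, $\langle f^\theta_m,\partial_r\tilde{S}_{\kappa,m}\psi_m\rangle$ close exactly as in the $\kappa=0$ case, with no second-derivative norms and no stray factors of $r$ or $m$.
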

\begin{proof}
We consider the resolvent problem
\begin{equation}
(\lambda - L_m^{(\kappa)}) u_m = f_m \, ,
\end{equation}
whose corresponding Rayleigh equation is
\begin{equation}
	\label{eq:Rayleighequationselfsim}
\left( -\frac{\kappa}{\alpha} (2 + r\p_r ) + \lambda + im\Omega \right) \Delta_m \psi_m - \frac{im}{r} \psi_m W' = - \frac{im}{r} f^r_m + \p_r^* f^m_\theta \, .
\end{equation}
As in~\eqref{eq:Rayleighequationdivide}, we obtain an equivalent equation by inverting the operator $ -\frac{\kappa}{\alpha} (2 + r\p_r ) + \lambda + im\Omega$ preceding $\Delta_m \psi$:

For $\Re \lambda > \kappa/\alpha$, we consider two operators
\begin{equation}
S_{\kappa,m}(\lambda) := \left( -\frac{\kappa}{\alpha} (2 + r\p_r) + \lambda + im\Omega \right)^{-1} : L^2(r \, dr) \to L^2(r \, dr)
\end{equation}
\begin{equation}
\tilde{S}_{\kappa,m}(\lambda) := \left( \frac{\kappa}{\alpha} r\p_r + \bar{\lambda} - im\Omega \right)^{-1} : L^2(r \, dr) \to L^2(r \, dr) \, .
\end{equation}
These operators may be represented using either of two (equivalent) formulae. First, we have the Laplace transform formula
\begin{equation}
	\label{eq:formulaforS}
S_{\kappa,m}(\lambda) f = \int_0^{+\infty} e^{-\lambda t} u(\cdot,t) \, dt
\end{equation}
where $u$ is the solution to the initial value problem
\begin{equation}
\begin{aligned}
\p_t u -\frac{\kappa}{\alpha} (2 + r\p_r) u + im \Omega u &= 0 \\
u|_{t=0} &= f \, ,
\end{aligned}
\end{equation}
which may be solved by the method of characteristics. Alternatively, we may express $S_{\kappa,m}(\lambda) f$ by solving an ODE in the variable $r$ by Duhamel's formula (this is a rewriting of the $t$-integral in~\eqref{eq:formulaforS} in terms of an $r$-integral). Analogous formulas hold for $\tilde{S}_{\kappa,m}(\lambda)$. None of them is strictly necessary for the analysis below; however,~\eqref{eq:formulaforS} can be used to clarify the relationship
\begin{equation}
S_{\kappa,m}(\lambda)^* = \tilde{S}_{\kappa,m}(\lambda) \, ,
\end{equation}
that is, \emph{$\tilde{S}_{\kappa,m}(\lambda)$ is the $L^2(r \, dr)$-adjoint of $S_{\kappa,m}(\lambda)$.}


When $u = S_{\kappa,m}(\lambda) f$, we have
\begin{equation}
	\label{eq:doanenergyonme}
-\frac{\kappa}{\alpha} (2 + r\p_r) u + (\lambda + im\Omega) u = f \, .
\end{equation}
An energy estimate for~\eqref{eq:doanenergyonme} yields
\begin{equation}
(\Re \lambda - \frac{\kappa}{\alpha}) \| u \|_{L^2(r \, dr)} \leq \| f \|_{L^2(r \, dr)} \, ,
\end{equation}
and similarly for $\tilde{S}_{\kappa,m}(\lambda)$.

The $L^2$ estimates are not the most relevant for our purposes. Rather, when $\Re \lambda > 0$, the operator $\tilde{S}_{\kappa,m}(\lambda)$ is well defined on the spaces $\dot L^2_m$ and $\dot H^1_m$, consisting of locally integrable functions $\psi_m$ on $\R_+$ with finite norm, namely,
\begin{equation}
\| \psi_m \|_{\dot L^2_m} := \| m\psi_m/r \|_{L^2(r \, dr)}
\end{equation}
\begin{equation}
\| \psi_m \|_{\dot H^1_m} := \| \p_r \psi_m \|_{L^2(r \, dr)} + \| \psi_m \|_{\dot L^2_m} \, .
\end{equation}
In particular, we have the estimates
\begin{equation}
	\label{eq:tildeSest}
 \| \tilde{S}_{\kappa,m}(\lambda) f \|_{\dot L^2_m} \lesssim \frac{1}{\Re \lambda}  \| f \|_{\dot L^2_m} \, ,
\end{equation}
\begin{equation}
	\label{eq:tildeSest2}
 \| \tilde{S}_{\kappa,m}(\lambda) f \|_{\dot H^1_m} \lesssim \left( \frac{1}{\Re \lambda} + \frac{1}{(\Re \lambda)^2} \right) \| f \|_{\dot H^1_m} \, ,
\end{equation}
where the implicit constants are independent of $\kappa$ and $m$. (The sign difference in $S$ and $\tilde{S}$ matters these estimates.) To see this, we consider the equation for $u = \tilde{S}_{\kappa,m}(\lambda) f$:
\begin{equation}
\frac{\kappa}{\alpha}  r\p_r u + (\lambda + im\Omega) u = f \, .
\end{equation}
Then
\begin{equation}
	\label{eq:commutator2}
\frac{\kappa}{\alpha} (1 + r\p_r) \frac{mu}{r} + (\lambda + im\Omega) \frac{mu}{r} = \frac{mf}{r} \, ,
\end{equation}
\begin{equation}
	\label{eq:commutator1}
\frac{\kappa}{\alpha} (1 + r\p_r) (\p_r u) + (\lambda + im\Omega) \p_r u + \frac{imu }{r} (r \Omega') = \p_r f
\end{equation}
and energy estimates yield~\eqref{eq:tildeSest} and~\eqref{eq:tildeSest2} (notably, the energy estimate for~\eqref{eq:commutator1} relies on the estimates for~\eqref{eq:commutator2}). Consequently, the operator $S_{\kappa,m}(\lambda) : (\dot H^1_m)^* \to (\dot H^1_m)^*$ is well defined by duality whenever $\Re \lambda > 0$.


We now return to the equation~\eqref{eq:Rayleighequationselfsim}, which we rewrite as
\begin{equation}
\Delta_m \psi_m = im S_{\kappa,m}(\lambda) \left( \frac{W'}{r} \psi_m \right) - im S_{\kappa,m}(\lambda) \frac{f^r_m}{r} + S_{\kappa,m}(\lambda) \p_r^* f^m_\theta \, ,
\end{equation}
valid whenever $\Re \lambda > 0$. The energy estimate yields
\begin{equation}
\| \nabla_m \psi_m \|_{L^2}^2 = \langle \frac{im}{r} W' \psi_m, \tilde{S}_{\kappa,m}(\lambda) \psi_m \rangle - \langle \frac{im}{r} f^r_m, \tilde{S}_{\kappa,m}(\lambda) \psi_m \rangle + \langle f^\theta_m , \p_r \tilde{S}_{\kappa,m}(\lambda) \psi_m \rangle \, .
\end{equation}
Using~\eqref{eq:tildeSest}, we have
\begin{equation}
	\label{eq:iwillbecome}
\| \nabla_m \psi_m \|_{L^2}^2 \lesssim \frac{\| r W' \|_{L^\infty}}{m \Re \lambda} \| \nabla_m \psi_m \|_{L^2}^2 + \left( \frac{1}{\Re \lambda} + \frac{1}{(\Re \lambda)^2} \right) \| f \|_{L^2} \| \nabla_m \psi_m \|_{L^2} \, .
\end{equation}
When $\Re \lambda \gg 1/m$,~\eqref{eq:iwillbecome} becomes the estimate
\begin{equation}
\| \nabla_m \psi_m \|_{L^2} \lesssim \left( \frac{1}{\Re \lambda} + \frac{1}{(\Re \lambda)^2} \right) \| f \|_{L^2} \, .
\end{equation}
Finally, this can be adapted into a solvability proof when $\Re \lambda > 0$.
\end{proof}

\begin{proof}[Proof of Proposition~\ref{pro:spectralperturbation}]
Let $\varepsilon > 0$.

First, choose $M \gg \varepsilon^{-1}$ such that Lemma~\ref{lem:uniformregion1} guarantees the resolvent estimate~\eqref{eq:uniformregionresolvent1} on $R(\lambda,L^{(\kappa)}_m)$ whenever $|m| \geq M$ (see \Cref{fig:spectrum1}).

Second, restrict $\kappa \ll 1$ such that whenever $|m| \leq M$, Corollary~\ref{cor:Linversecorollary} (estimates on $R(\lambda,L_m) (\lambda - A_m)$), Lemma~\ref{lem:resolventsarecloseAm} (specifically, $m$-dependent estimates~\eqref{eq:estimatesondiffwithBm} on $[R(\lambda,A_m^{(\kappa)}) - R(\lambda,A_m)]B_m$), and Lemma~\ref{lem:stabilityofboundedinvertibility} (stability of bounded invertibility) guarantee that the resolvent $R(\lambda,L_m^{(\kappa)})$ satisfies uniform estimates in the regions
\begin{equation}
\{ |\Im \lambda| \gg M \} \cap \{ \Re \lambda \geq \varepsilon \}
\end{equation}
\begin{equation}
\{ \Re \lambda \gg 1 \} \, .
\end{equation}

Finally, we must account for spectrum in the compact region
\begin{equation}
	\label{eq:myregion}
\{ |\Im \lambda| \lesssim M \} \cap \{ \Re \lambda \in [\varepsilon,C] \}
\end{equation}
for the (finitely many) $|m| \leq M$ and sufficiently large $C$ (see \Cref{fig:spectrum2}). For this, we apply Proposition~\ref{pro:mainspectrallemma}, Proposition~\ref{pro:eigenvalueperturbation}, and its Corollary~\ref{cor:notwoevals} (their assumptions are satisfied, due to Lemma~\ref{lem:resolventsarecloseAm}) and further restrict $\kappa \ll 1$. When $|m| \leq M$ and $m \neq \pm m_0$, this guarantees that $L^{(\kappa)}_m$ has no spectrum in the region~\eqref{eq:myregion}. For $m = \pm m_0$, it guarantees the eigenvalue perturbation result advertised in the Proposition~\ref{pro:spectralperturbation}.
\end{proof}

\begin{corollary}[Semigroup estimate]
Let $0 < \kappa \ll 1$. Let $a = \Re \lambda(\kappa)$. The operator $L^{(\kappa)} : D(L^{(\kappa)}) \subset L^2_{m_0} \to L^2_{m_0}$ generates a continuous semigroup which satisfies the estimate
\begin{equation}
\label{eq:semigroup_estimate}
\| e^{\tau L^{(\kappa)}} \|_{L^2_{m_0} \to L^2_{m_0}} \lesssim e^{\tau a} \, , \quad \forall \tau \geq 0 \, .
\end{equation}
\end{corollary}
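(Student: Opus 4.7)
The plan is to invoke the Gearhart--Pr\"uss characterization of the semigroup growth bound on a Hilbert space and then upgrade the resulting $e^{(a+\delta)\tau}$ estimate to the sharp $e^{a\tau}$ via a Riesz projection onto the semisimple unstable eigenspace.

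\textbf{Generation.} On each invariant subspace $X_m$ I write $-L^{(\kappa)}_m$ as the sum of the skew-symmetric piece $(\kappa/\alpha)(1+r\partial_r)+im\Omega$ (on $L^2(r\,dr)$ the formal adjoint of $r\partial_r$ is $-r\partial_r-2$, so $1+r\partial_r$ is skew-symmetric) plus the bounded remainder $-r\Omega' u^r e_\theta + B_m$. Stone's theorem and bounded perturbation give a $C_0$-semigroup on each $X_m$ with exponential type uniform in $m$, and assembling these via the Hilbert-sum decomposition $L^2_{m_0}=\bigoplus_{m\in m_0\mathbb{Z}}X_m$ produces a $C_0$-semigroup on $L^2_{m_0}$.

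\textbf{Spectral projection.} Because $\lambda(\kappa),\bar\lambda(\kappa)$ are isolated eigenvalues of finite algebraic multiplicity by Proposition~\ref{pro:spectralperturbation}, I define the Riesz projection
\[
P := \frac{1}{2\pi i}\oint_{\Gamma} R(z,L^{(\kappa)})\,dz,
\]
with $\Gamma$ a small positively-oriented contour enclosing $\{\lambda(\kappa),\bar\lambda(\kappa)\}$ and nothing else. Then $P$ commutes with the semigroup, and on the finite-dimensional range of $P$ the semisimplicity of the eigenvalues makes $L^{(\kappa)}|_{\mathrm{range}(P)}$ diagonalizable with eigenvalues of real part $a$, yielding $\|e^{\tau L^{(\kappa)}}P\|_{L^2_{m_0}\to L^2_{m_0}}\leq C e^{a\tau}$. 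On the closed invariant complement $\ker P$, the spectrum of $L^{(\kappa)}$ satisfies $\sigma(L^{(\kappa)}|_{\ker P})\subset\{\Re z\leq \varepsilon\}$ with $\varepsilon<a$, in view of~\eqref{eq:myspectrumisonlytwopoints3}.

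\textbf{Gearhart--Pr\"uss on $\ker P$.} To obtain $\|e^{\tau L^{(\kappa)}}(I-P)\|\lesssim e^{\delta\tau}$ for some $\delta\in(\varepsilon,a)$ it suffices to establish $\sup_{\Re z=\delta}\|R(z,L^{(\kappa)}|_{\ker P})\|<\infty$, which by the Hilbert-sum structure reduces to a uniform-in-$m$ bound on $\|R(z,L^{(\kappa)}_m)\|_{L^2\to L^2}$ along the vertical line $\{\Re z=\delta\}$. For $|m|\gg 1/\delta$ this is delivered by Lemma~\ref{lem:uniformregion1}; for $|m|$ bounded and $|\Im z|$ large it follows from condition~II of Proposition~\ref{pro:spectralperturbation}; on the remaining compact set, the absence of spectrum granted by~\eqref{eq:myspectrumisonlytwopoints3} (with $\varepsilon$ small enough) together with continuity of the resolvent yields boundedness. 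Combining the estimates on $\mathrm{range}(P)$ and $\ker P$ produces~\eqref{eq:semigroup_estimate}.

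\textbf{Main obstacle.} The delicate technical ingredient is precisely the uniform-in-$m$ resolvent bound on the line $\{\Re z=\delta\}$ in the intermediate regime of moderate $m$ and moderate $|\Im z|$; verifying this amounts to confirming that the perturbative step in the proof of Proposition~\ref{pro:spectralperturbation} has genuinely excluded all eigenvalues of $L^{(\kappa)}_m$ in the relevant compact region beyond $\lambda(\kappa),\bar\lambda(\kappa)$. Everything else is a textbook application of semigroup theory once this uniform bound is secured.
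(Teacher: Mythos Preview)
Your approach is essentially identical to the paper's: decompose $L^2_{m_0}$ via the Riesz projection $P$ onto $\{\lambda(\kappa),\bar\lambda(\kappa)\}$, use semisimplicity on $\mathrm{ran}\,P$, and invoke Gearhart--Pr\"uss on $\ker P$ via the uniform-in-$m$ resolvent bounds of Proposition~\ref{pro:spectralperturbation}. The paper omits your explicit generation argument and simply cites the uniform resolvent estimates on $\{\Re\lambda\geq\varepsilon\}$ to feed into Gearhart--Pr\"uss, but the substance is the same.
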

\begin{proof}
We decompose $H = {\rm ran} \, P \oplus {\rm ker} \, P$ (not necessarily orthogonal) into the invariant subspaces, where $P$ is the spectral projection onto the eigenvalues $\lambda(\kappa), \bar{\lambda}(\kappa)$. It is sufficient to estimate the semigroup on each subspace. Since the eigenvalues are semisimple (geometric and algebraic multiplicity coincide), we have the advertised semigroup estimate in ${\rm ran} \, P$. The operator on ${\rm ker} \, P$ has uniform resolvent estimates in $\{ \Re \lambda \geq \varepsilon \}$ ($\varepsilon < a$) so we may apply the Gearhardt-Pr{\"u}ss theorem, p. 302, Chapter 5, Theorem 1.11, in \cite{engel2000one}.
\end{proof}

\begin{lemma}[Eigenfunction bootstrapping]
For any eigenvalue $\lambda(\kappa) \in \mathbb C$ of $L^{(\kappa)}$ such that $\Re \lambda(\kappa) > k/\alpha -1$, the corresponding eigenfunction $\eta$ belongs to the space $H^{k+1}(\RR)$.
\end{lemma}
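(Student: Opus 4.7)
The plan is to bootstrap Sobolev regularity of $\eta$ inductively, gaining one derivative at a time via the eigenvalue equation. The key commutator identity is $[|\nabla|^s, \xi\cdot\nabla] = s\,|\nabla|^s$, obtained in Fourier variables by $\mathcal F(\xi\cdot\nabla\,\cdot) = -\zeta\cdot\nabla_\zeta - 2$ (in dimension $2$) together with the homogeneity of $|\zeta|^s$. Applying $|\nabla|^s$ to both sides of $L^{(\kappa)}\eta = \lambda\eta$ and using $L^{(\kappa)} = \tfrac{\kappa}{\alpha}(1+\xi\cdot\nabla) + L$, I would rewrite the result as
\begin{equation*}
\bigl(\lambda - \tfrac{\kappa s}{\alpha} - L^{(\kappa)}\bigr)\, |\nabla|^s \eta \;=\; [|\nabla|^s, L]\,\eta,
\end{equation*}
so that $|\nabla|^s\eta$ satisfies an inhomogeneous resolvent equation for $L^{(\kappa)}$ at a shifted spectral parameter, with source a commutator with smooth compactly supported coefficients (those of the linearized Euler operator around $\bar u$). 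This source is controlled in $L^2$ by lower-regularity Sobolev norms of $\eta$ via Kato--Ponce type estimates.

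To invert the shifted operator I would combine the spectral structure of $L^{(\kappa)}$ from Proposition~\ref{pro:spectralperturbation} (essential spectrum confined to the imaginary axis, at most finitely many discrete eigenvalues in any strip $\{\Re z > \epsilon\}$) with the semigroup bound \eqref{eq:semigroup_estimate}, which forces $\Re\sigma(L^{(\kappa)}) \leq a = \Re\lambda(\kappa)$. The hypothesis $\Re\lambda(\kappa) > k/\alpha - 1$ keeps $\lambda - \kappa s/\alpha$ in the resolvent set of $L^{(\kappa)}$ for every $s \in \{1,\ldots,k+1\}$: it stays off the imaginary axis (where the essential spectrum lives) and avoids the finitely many discrete unstable eigenvalues in the right half-plane. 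The resolvent $R(\lambda - \kappa s/\alpha, L^{(\kappa)}) : L^2 \to L^2$ is then bounded, and
\begin{equation*}
|\nabla|^s \eta \;=\; R\bigl(\lambda - \tfrac{\kappa s}{\alpha},\, L^{(\kappa)}\bigr)\,[|\nabla|^s, L]\,\eta
\end{equation*}
produces $|\nabla|^s\eta \in L^2$. Starting from the a priori membership $\eta \in L^2_{m_0,\mathrm{df}}$, induction on $s$ from $1$ to $k+1$ yields $\eta \in H^{k+1}$.

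The main obstacle is the self-referential character of the Kato--Ponce estimate: the naive bound $\|[|\nabla|^s, L]\eta\|_{L^2} \lesssim \|\eta\|_{H^s}$ puts on the right the very norm one is trying to control. I would resolve this by a standard frequency-truncation argument: define $\eta_N := \mathbf 1_{|\nabla|\leq N}\,\eta$, write down the shifted equation for $|\nabla|^s\eta_N$ (which a priori lies in $H^\infty$), apply the resolvent, and absorb the self-referential term into the left-hand side using the spectral gap guaranteed by the hypothesis on $\Re\lambda$; lower semicontinuity as $N \to \infty$ then closes the inductive step. Equivalently, and perhaps more transparently, one can reduce to a radial ODE by restricting to the invariant subspace $X_{m_0}$: the vorticity profile of $\eta$ satisfies a first-order linear ODE with a regular singular point at $r=0$, and a Frobenius analysis produces an indicial exponent $c_0 = \alpha\lambda/\kappa - 2$ whose real part directly encodes the Sobolev regularity of $\eta$ at the origin via the behavior $\omega \sim r^{c_0}$, recovering the claimed $H^{k+1}$ membership under the stated hypothesis on $\Re\lambda$.
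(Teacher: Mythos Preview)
The paper's proof is a one-line citation to Proposition~2.1 of \cite{MR4616679}, which bootstraps the vorticity $\curl\eta$ into $H^k$; Lemma~\ref{lemma:equivalence_egfn} then yields $\eta \in H^{k+1}$. Your commutator strategy is the correct mechanism, but the velocity-level implementation you write has a real gap at the inversion step.

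You assert $\lambda - \tfrac{\kappa s}{\alpha} \in \rho(L^{(\kappa)})$ for every $s \in \{1,\dots,k+1\}$, yet this is not forced by the hypothesis $\Re\lambda > k/\alpha - 1$: clearing the imaginary axis would require $\Re\lambda > \kappa(k+1)/\alpha$, a different and $\kappa$-dependent threshold; and even off the essential spectrum nothing prevents the shifted point from coinciding with another discrete eigenvalue or from falling into the strip $\{0 < \Re z \le \varepsilon\}$ of Proposition~\ref{pro:spectralperturbation}, about which the spectral picture says nothing. Your frequency-truncation fix does not obviously repair this either, since for the non-normal operator $L^{(\kappa)}$ the resolvent norm is not controlled by the distance to the spectrum, and the commutator $[|\nabla|^s,\bar U\cdot\nabla]\eta$ is genuinely of order $s$ with no evident smallness to absorb.

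The robust route is your second alternative, and it is essentially what the cited reference does: pass to vorticity at the single angular mode $m_0$, where the eigenvalue equation is a first-order radial ODE, and invert only the \emph{transport} part, whose resolvent exists throughout the relevant half-plane with no discrete-spectrum obstruction. The Biot--Savart forcing $u^r_m W'$ is strictly smoother than $\omega_m$, furnishing a clean inductive gain and eliminating the self-reference; the threshold on $\Re\lambda$ then falls out of the indicial exponent at $r=0$, as you anticipate.
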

\begin{proof}
The proof follows by Proposition~2.1 in \cite{MR4616679}, which, under the hypothesis $\Re \lambda > k/\alpha -1$ proves that the unstable eigenfunction for the operator in vorticity formulation $\nabla^\perp \cdot \eta$ belongs to $H^k(\RR)$ and $\eta$, the unstable eigenfunction for the operator in velocity formulation, thanks to the equivalence in \Cref{lemma:equivalence_egfn}, belongs to $H^{k+1}(\RR)$. 
\end{proof}

\begin{remark}
\label{eq:bound_eta}
For an unstable eigenvalue such that $\Re \lambda > 4/\alpha$ as in \Cref{prop:lin_part}, we can prove that $\eta \in W^{4,2+\sigma}(\RR)$, which is the level of integrability required by the nonlinear argument in \Cref{sec:nonlinear}.
\end{remark}

\begin{remark}
\label{rmk:reeta}
The particular structure of the eigenfunction, $\eta(r,\theta) = e^{im\theta}f(r)$, given by the fact that we are projecting the operator onto \eqref{eq:def_Xm} implies that
\begin{equation}
    \|\Re(z\eta)\|_{L^2} = \frac{\sqrt 2}{2}, \quad \forall z \in \mathbb C , |z|=1.
\end{equation}
Indeed, let $z = e^{i\psi}$ for some $\psi \in \mathbb R$ and let $\theta_0 = \pi/2m$; since $e^{im\theta + i \psi} f(r) = i e^{im(\theta-\theta_0) + i \psi} f(r)$, we have 
\begin{align} 
\|\Re(z\eta)\|^2_{L^2} & = \frac{1}{2} \intR \int_{0}^{2\pi} |\Re(e^{im\theta + i \psi} f(r))|^2 rd\theta dr \\ & \qquad + \frac{1}{2} \intR \int_{0}^{2\pi} |\Im(e^{im(\theta-\theta_0) + i \psi} f(r))|^2 rd\theta dr \\ & =  \frac{1}{2} \| \eta\|_{L^2}=  \frac{1}{2},
\end{align}
where the last equality follows from a change of variables in the last integral in $\theta$. This property of $\eta$ is not essential for the rest of the paper, but it simplifies the choice of certain numerical functions in \Cref{sec:final}.
\end{remark}

\subsection{Spectral Projection}
\label{sseq:spectral_projection}
We recall that the spectral projection onto an eigenfunction $\eta$, associated to the eigenvalue $\lambda$, is defined as 
\begin{equation}
    P_\lambda \coloneqq \frac{1}{2\pi i} \int_{\gamma(\lambda)} \, R(\zeta,L_k)d\zeta,
\end{equation}
where $\gamma(\lambda)$ is is a contour in the complex plane that encloses only the eigenvalue $\lambda$ and excludes the rest of the spectrum. We can define analogously $P_{\bar\lambda}$, projection onto the eigenfunction $\bar \eta$ corresponding to the eigenvalue $\bar\lambda$, and introduce the notation $P_{\lambda,\bar\lambda}\coloneqq P_{\lambda}+P_{\bar\lambda}$.

We recall from the spectral results on linear operators in Chapter 3 of \cite{kato2013perturbation} that the spectral projections above defined have the following properties: For any $ c_1,c_2 \in \mathbb C$ and $f,g \in L^2_{m_0}(\RR)$,
\begin{align}
\label{eq:prop_linear}
P_\lambda (c_1f + c_2g) = c_1 P_\lambda f + c_2P_\eta g, \ & P_{\bar\lambda} (c_1f + c_2g) = c_1P_{\bar\lambda} f + c_2P_{\bar\lambda} g, \\ 
\|P_\lambda\|_{L^2 \to L^2} + \|P_{\bar\lambda}\|_{L^2 \to L^2}  & \leq C, \text{ for some $C>0$,}\label{eq:prop_bound}
\\
P_{\lambda} \eta = \eta, \ P_\lambda \circ P_\lambda & = P_\lambda, \ P_\lambda \circ P_{\bar \eta }= 0, \label{eq:prop_projector}\\
P_{\bar\lambda} \bar f & = \overline{P_\lambda f}. \label{pietabar}
\end{align}

\begin{remark}
\label{rmk:petabar}
A consequence of the regularity of the eigenfunction is the following fact, which will be useful in \Cref{sec:nonlinear} and \Cref{sec:final}. For a constant $C: = C(\lambda, \eta)$ we have
\begin{equation}
    \| e^{\tau L_{ss}} P_{\lambda,\bar{\lambda}} \Phi \|_{W^{4,2+\sigma}(\RR)} \leq C \| \Phi \|_{L^{2}(\RR)} \qquad \mbox{for any } \tau\geq 0, \Phi\in L^2_{m_0}(\RR) \, .
\end{equation}
Indeed, we rewrite $P_{\lambda,\bar{\lambda}} \Phi$, which is a real-valued function, as a complex-coefficient combination of $\eta$ and $\bar \eta$, as $P_{\lambda,\bar{\lambda}} \Phi= z \eta + \bar z  \bar\eta $ for some $z \in \mathbb C$.
Applying the spectral projection $P_\lambda$, we find $z \eta = P_\lambda \Phi$. Taking the $L^2$ norm on both sides and recalling that $\|\eta\|_{L^2(\RR)}=1$ and the boundedness of the spectral projection \eqref{eq:prop_bound}, we deduce that 
$| z| = \|  P_\lambda \Phi \|_{L^2(\RR)} \leq C \|  \Phi \|_{L^2(\RR)}.$
Hence,
\begin{align}
    \|e^{\tau L_{ss}} P_{\lambda,\bar{\lambda}} \Phi\|_{W^{4,2+\sigma}(\RR)} & = \|z e^{\lambda \tau} \eta + \bar z e^{\bar \lambda \tau}  \bar\eta \|_{W^{4,2+\sigma}(\RR)}  \\ & \leq C|z| e^{a\tau} \| \eta \|_{W^{4,2+\sigma}(\RR)} \leq C e^{a\tau}  \|  \Phi \|_{L^2(\RR)} \, .
\end{align}
\end{remark}

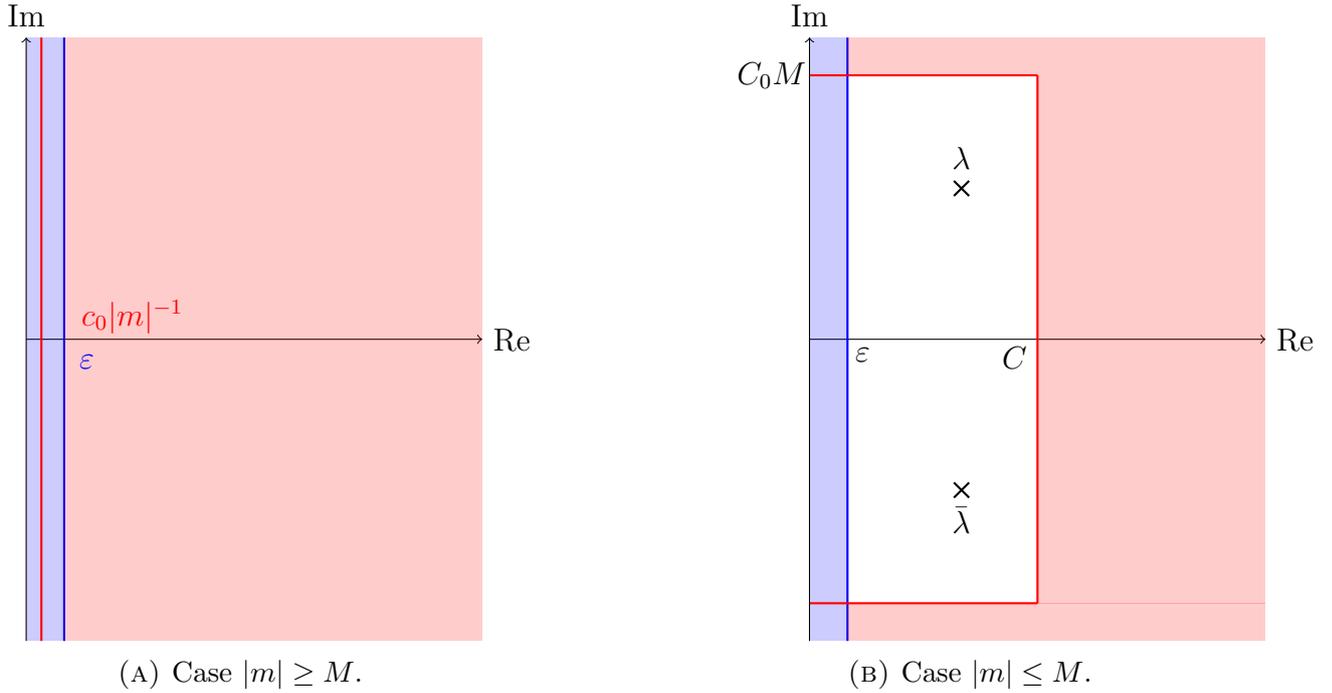
\begin{figure}[H]
    \centering
    \begin{subfigure}{.4\linewidth}
        \centering
        \begin{tikzpicture}
        
        \draw[->] (0,0) -- (6,0) node[right] {$\Re$};
        
        \draw[blue, thick] (0.5,-4) -- (0.5,4);
        
        \fill[blue,opacity=0.2] (0,-4) rectangle (0.5,4);
        
        \draw[red, thick] (0.2,-4) -- (0.2,4);

        \fill[red,opacity=0.2] (0.5,-4) rectangle (6,4);
        
        \draw[->] (0,-4) -- (0,4) node[above] {$\Im$};
        
        \draw (0.8, -0.3) node[blue] {$\varepsilon$};
        \draw (1.4, 0.3) node[red] {$c_0|m|^{-1}$};
        
        \end{tikzpicture}
        \caption{Case $|m|\geq M$.}
        \label{fig:spectrum1}
    \end{subfigure}
    \hfill
    \begin{subfigure}{.4\linewidth}
        \centering
        \begin{tikzpicture}
        
        \draw[->] (0,0) -- (6,0) node[right] {$\Re$};
        
        \draw[blue, thick] (0.5,-4) -- (0.5,4);
        
        \fill[blue,opacity=0.2] (0,-4) rectangle (0.5,4);
        
        \draw[red, thick] (3,-3.5) -- (3,3.5);
        
        \draw[red, thick] (0,-3.5) -- (3,-3.5);
        \draw[red, thick] (0,3.5) -- (3,3.5);
        
        \fill[red,opacity=0.2] (0.5,-4) rectangle (6,-3.5);
        \fill[red,opacity=0.2] (0.5,3.5) rectangle (6,4);
        \fill[red,opacity=0.2] (3,-3.5) rectangle (6,3.5);
        
        \draw[->] (0,-4) -- (0,4) node[above] {$\Im$};
        
        \draw (0.7, -0.2) node[black] {$\varepsilon$};
        
        \draw (2.7, -0.25) node[black] {$C$};
        
        \draw (-0.5, 3.5) node[black] {$C_0M$};
        
        \draw[thick] (1.9,-1.9) -- (2.1,-2.1);
        \draw[thick] (1.9,-2.1) -- (2.1,-1.9);
        \draw (2, -2.4) node[black] {$\bar\lambda$};
        \draw[thick] (1.9,1.9) -- (2.1,2.1);
        \draw[thick] (1.9,2.1) -- (2.1,1.9);
        \draw (2, 2.4) node[black] {$\lambda$};
        
        \end{tikzpicture}
        \caption{Case $|m|\leq M$.}
        \label{fig:spectrum2}
    \end{subfigure}
    \caption{Schematic depiction of the strategy for controlling the resolvent $R(\lambda,L^{(\kappa)}_m$ in $\{ \Re \lambda \geq \varepsilon \}$. In (A) and in the red half-plane in (B), resolvent estimates were proved in the red region by directly estimating the $\kappa$ Rayleigh equation in~\eqref{lem:uniformregion1}. Above and below the box in (B), and inside the box but away from the eigenvalues, resolvent estimates were proved by perturbing the $\kappa=0$ estimates.}
\end{figure}

\section{The modified background}
\label{sec:modifiedbackground}

In this section, we examine the modified background introduced in Section~\ref{sec:intromodifiedbackground}. 

Let $\alpha \in (0,2)$ and consider a divergence-free $\bar{U} \in C^\infty_0(\R^2)$, $\bar{u} = t^{\frac{1}{\alpha}-1} \bar{U}(\xi)$, and
\begin{equation}
    \label{eq:selfsimilarscalingoff}
\bar{f} := \p_t \bar{u} = t^{\frac{1}{\alpha}-2} \bar{F}(\xi,\tau) \, ,
\end{equation}
as in the notation in Section~\ref{sec:inviscidnonuniqueness}. (The instability of $L_{\rm ss}$ is not relevant to this section.)

The modified background $\tilde{u}$ is the solution to
\begin{equation}
\label{eq:lemma1}
\left\lbrace
\begin{aligned}
\partial_t \tilde{u} &= \Delta \tilde{u} + \bar{f} \quad \text{ in } \RR \times \R^+ \\
\tilde{u}|_{t=0} &= 0 \, .
\end{aligned}
\right.
\end{equation}
Since $\tilde{u}$ is radially symmetric, $\tilde{u} \cdot \nabla \tilde{u} = 0$, and the modified background also solves the forced Navier-Stokes equations.

We will investigate both the short-time smoothing and long-time convergence of $\tilde{u}$.




\subsection{Short-time smoothing}
\begin{remark}
\label{rmk:tildes_Lp}
We observe that 
\begin{equation}
\label{eq:utildebounded2/alpha}
    \tilde{u} \in L^\infty(0,T;W^{1,p}(\R^2)) \, , \quad \forall p \in \Big[1,\frac{2}{\alpha}\Big] \, , \quad \forall T > 0 \, .
\end{equation} 
In particular, $\tilde u$ belongs also to the critical space with $p=2/\alpha$.
Indeed, by an  energy estimate and a direct computation from~\eqref{eq:selfsimilarscalingoff}
\begin{equation}
\label{eq:heat_estimate2}
\norm{\tilde{u}(\cdot,t)}_{L^2_x} \leq \int_0^t \norm{\bar{f}(\cdot,s)}_{L^2_x} \, ds = Ct^{2/\alpha-2} \, , \quad \forall t>0 \, .
\end{equation}

By Duhamel's formula we get
\begin{align}
    \|\tilde \omega(\cdot,t)\|_{L^{2/\alpha}_x} & \leq \int_0^t \|e^{(t-s)\Delta}\|_{L^1\to L^2} \| \bar f(\cdot,s)\|_{W^{1,2/(1+\alpha)}_x} ds \\ & \leq C \int_0^t (t-s)^{-1/2} s^{1/\alpha - 1} ds = Ct^{1/\alpha - 1/2}.
\end{align}
\end{remark}

Next, we demonstrate that $\tilde{u}$ is more regular than Vishik's vortex $\bar{u}$, which belongs only to $L^2_t H^{2-\frac{\alpha}{2}-}_x$. In particular, $\tilde{u}$ enjoys some $L^2_t H^{2+}_x$ regularity which will be useful in the initial layer argument in Section~\ref{sec:back_uniq}.

\begin{lemma}[Short-time smoothing]
\label{lemma:usmooth}
Let $\alpha \in (0,4/3)$. Then
\begin{align}
\label{eqn:higherutilde}
    \tilde{u} \in L^2(0,T; H^{2+\beta}(\R^2)) \, , \quad \forall \beta \in [0, 2-3\alpha/2) \, , \quad \forall T > 0.
\end{align}
\end{lemma}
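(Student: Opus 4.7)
The plan is to apply Duhamel's formula together with standard $L^2_t L^2_x$ parabolic maximal regularity for the heat equation, reducing the claim to a direct integrability check on $\|\bar f(\cdot,t)\|_{\dot H^\beta_x}$ obtained from the self-similar scaling of $\bar u$.

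First, by Duhamel, $\tilde u(t) = \int_0^t e^{(t-s)\Delta} \bar f(s)\,ds$. On $\R^2$, parabolic maximal regularity
\begin{equation}
\|\tilde u\|_{L^2(0,T;\dot H^{2+\beta}(\R^2))} \lesssim \|\bar f\|_{L^2(0,T;\dot H^\beta(\R^2))}
\end{equation}
is a direct Fourier-side computation: $|\xi|^{2+\beta}\hat{\tilde u}(\xi,t) = \int_0^t |\xi|^2 e^{-(t-s)|\xi|^2}\,|\xi|^\beta \hat{\bar f}(\xi,s)\,ds$, and the kernel $|\xi|^2 e^{-t|\xi|^2}\mathbf{1}_{t\geq 0}$ is bounded uniformly in $\xi$ both in $L^1_t$ (giving an $L^\infty_t \to L^\infty_t$ convolution bound) and has the right symbol structure to yield the $L^2_t \to L^2_t$ bound via Plancherel in time; Minkowski and Plancherel in $\xi$ then finish the Sobolev estimate (this extends to all real $\beta$).

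Next I would compute the right-hand side. Differentiating the self-similar ansatz $\bar u(x,t) = t^{1/\alpha-1}\bar U(x/t^{1/\alpha})$ in $t$ gives
\begin{equation}
\bar f(x,t) = t^{1/\alpha-2}\,\bar F\bigl(x/t^{1/\alpha}\bigr), \qquad \bar F(\xi) := \bigl(\tfrac{1}{\alpha}-1\bigr)\bar U(\xi) - \tfrac{\xi}{\alpha}\cdot\nabla \bar U(\xi) \in C^\infty_0(\R^2).
\end{equation}
In two spatial dimensions the rescaling identity $\|g(\cdot/\ell)\|_{\dot H^\beta(\R^2)} = \ell^{1-\beta}\|g\|_{\dot H^\beta(\R^2)}$, applied with $\ell = t^{1/\alpha}$, yields
\begin{equation}
\|\bar f(\cdot,t)\|_{\dot H^\beta_x} = t^{2/\alpha - 2 - \beta/\alpha}\,\|\bar F\|_{\dot H^\beta(\R^2)}.
\end{equation}
Squaring and integrating on $(0,T)$,
\begin{equation}
\|\bar f\|_{L^2(0,T;\dot H^\beta_x)}^2 = \|\bar F\|_{\dot H^\beta}^2 \int_0^T t^{4/\alpha - 4 - 2\beta/\alpha}\,dt,
\end{equation}
which is finite precisely when $4/\alpha - 4 - 2\beta/\alpha > -1$, i.e., when $\beta < 2 - 3\alpha/2$. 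The hypothesis $\alpha < 4/3$ is exactly what makes this range nontrivial (i.e., contain $0$).

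Finally, combining with \Cref{rmk:tildes_Lp} (which gives $\tilde u \in L^\infty(0,T;L^2_x) \subset L^2_t L^2_x$) and interpolating between $L^2_t L^2_x$ and $L^2_t \dot H^{2+\beta}_x$ promotes the homogeneous estimate to the full inhomogeneous $H^{2+\beta}$ norm, proving~\eqref{eqn:higherutilde}. There is no serious obstacle: the argument is essentially a scaling count. The sharp value $2 - 3\alpha/2$ is forced by the singularity of $\bar f$ at $t = 0^+$ (the endpoint $\beta = 2 - 3\alpha/2$ gives a logarithmic divergence), so the open interval in the statement is sharp for this strategy, and no cancellation beyond the self-similar scaling is required here.
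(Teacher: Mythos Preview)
Your proof is correct and follows essentially the same approach as the paper: compute $\|\bar f(\cdot,t)\|_{\dot H^\beta}$ via the self-similar scaling, check the $L^2_t$ integrability condition yields $\beta < 2 - 3\alpha/2$, apply maximal regularity for the heat equation, and combine with the $L^2$ bound from \Cref{rmk:tildes_Lp} to pass from homogeneous to inhomogeneous Sobolev regularity. The only cosmetic difference is that you spell out the Fourier-side justification of maximal regularity and comment on sharpness, while the paper simply invokes maximal regularity as a black box.
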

\begin{proof}
Let $\beta \geq 0$. By the self-similar scaling~\eqref{eq:selfsimilarscalingoff}, $\norm{\bar f}_{\dot H^\beta(\RR)} = t^{\frac{2-\beta}{\alpha} -2} \norm{\bar F}_{\dot H^\beta(\RR)}$ belongs to $L^2_t(0,T)$ under the further condition $\beta < 2-3\alpha/2$. When $\alpha \in (0,4/3)$, we have $2-3\alpha/2 > 0$, which ensures that the condition on $\beta$ is non-empty. Finally, by maximal regularity  
for the heat equation, 
we obtain that $\tilde u \in L^2(0,T; \dot H^{2+\beta}(\RR))$. Since $\tilde u \in L^2(0,T;L^2(\RR))$ by~\Cref{rmk:tildes_Lp}, we have $\tilde u \in L^2(0,T; H^{2+\beta}(\RR))$.
 
\end{proof}

\subsection{Long-time convergence $\tilde{U} \to \bar{U}$}



As discussed in Sections~\ref{sec:intromodifiedbackground} and~\ref{sec:introlongtime}, it is crucial to ensure that the profile of the modified background $\tilde{U}$ represents a good approximation to $\bar{U}$ as $\tau \to +\infty$. The underlying intuition is that the viscous term in, say, the vorticity equation
\begin{equation}
    \label{eq:vorticityequationselfsimheat}
    \p_\tau \tilde{\Omega} - (1 + \frac{1}{\alpha} \cdot \nabla) \tilde{\Omega} = e^{-\tau \gamma} \Delta \tilde{\Omega} + \bar{F}
\end{equation}
becomes progressively less significant as time evolves. However, this question is quite sensitive to the mode of convergence. Convergence of the profiles in high regularity 
would be a strong indication that $\tilde{u}$ lives at length scales $\gtrsim t^{\frac{1}{\alpha}}$.

Consider the equation
\begin{equation}
    \label{eq:heatfordifference}
    \p_t (\tilde{\omega} - \bar{\omega}) = \Delta( \tilde{\omega} - \bar{\omega}) + \Delta \bar{\omega} \, .
\end{equation}
If we consider $\Delta$ to be asymptotically negligible, then the main difficulty is that the ``radial Euler equations" $\p_t \omega = 0$ \emph{do not have a stabilizing mechanism}. Any decay must therefore be due to the choice of topology, heat, or specifics of the source $\Delta \bar{\omega}$.

The zeroth order approach is write the equation~\eqref{eq:vorticityequationselfsimheat} for the difference of the profiles and perform an $L^p$ energy estimate. This yields convergence in the supercritical spaces $L^p$, $p < 2/\alpha$. The action of the term $- (1 + \frac{1}{\alpha} \cdot \nabla)$ is to concentrate $\tilde{\Omega}$ at the origin and amplify it: $r^{-\alpha}$ is a steady state. One way to visualize this process is that at time $t$, the source $\Delta \bar{\omega}$ ``leaves behind" a contribution at length scale $\sim t^{\frac{1}{\alpha}}$ which slowly diffuses, and this looks like concentration in the profile variables. The next order approach is build in the diffusion using estimates for the heat semigroup on~\eqref{eq:heatfordifference}. This yields convergence in $L^p$, $p < +\infty$, not good enough for the nonlinear theory. Ultimately, the way out is to exploit that the source $\Delta \bar{\omega}$ is \emph{cancelling itself in the time integration}~\eqref{eq:Icancelmyselflikeaboss}.

\begin{lemma}
\label{lemma:conv_background}

For all $\alpha \in (0,2)$, $\tau \geq 0$, and $p \in [1,+\infty)$, we have
\begin{equation}
\label{eq:conv_background_velocity}
\| \tilde U - \bar U\|_{L^2} \leq C e^{(1-\frac{2}{\alpha})\tau}
\end{equation}
\begin{equation}
\label{eq:conv_background}
\| \nabla (\tilde \Omega - \bar \Omega)\|_{L^p} + \|r\partial_r \nabla (\tilde \Omega - \bar \Omega)\|_{L^p} \leq C e^{\zeta(\alpha,p) \tau} \, ,
\end{equation}
where
\begin{equation}
    \zeta(\alpha,p) = -\frac{3}{2} + \frac{1}{\alpha} - \frac{2}{\alpha p}  + \frac{1}{p} + \frac{\alpha(5p-2)}{6p+2\alpha p-4} \, .
\end{equation}
\end{lemma}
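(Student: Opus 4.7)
The starting point is the equation for $W := \tilde U - \bar U$ in self-similar variables. Since $\bar U$ is a steady state of the inviscid self-similar equations, subtraction yields
\begin{equation*}
\partial_\tau W - \mathcal L_\alpha W = e^{-\gamma \tau}(\Delta W + \Delta \bar U),
\end{equation*}
where $\mathcal L_\alpha U := -(1/\alpha - 1)U + (\xi/\alpha)\cdot \nabla U$ and $\gamma := 2/\alpha - 1 > 0$. The operator $\mathcal L_\alpha$ generates an explicit rescaling semigroup of $L^2$ operator norm $e^{-\gamma \tau}$; this decay is the ultimate source of the convergence in~\eqref{eq:conv_background_velocity}. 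An analogous equation holds in vorticity, with a different transport structure whose $L^p$ semigroup norm decays for $p < 2/\alpha$ and preserves the critical $L^{2/\alpha}$.

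For~\eqref{eq:conv_background_velocity}, I would apply Duhamel's formula to $W$ starting from some $\tau_0$ at which $\|W(\tau_0)\|_{L^2} \lesssim 1$ (available from~\Cref{rmk:tildes_Lp} after unfolding the self-similar scaling). The free term contributes $O(e^{-\gamma(\tau - \tau_0)})$; the $e^{-\gamma \sigma} \Delta W$ term is absorbed in a Gronwall bootstrap using uniform $H^2$-boundedness of $\tilde U$ beyond the viscous crossover (inherited from~\Cref{lemma:usmooth} and dimensional considerations). The source $e^{-\gamma \sigma} \Delta \bar U$, after convolution against the semigroup, superficially contributes $O(\tau\, e^{-\gamma \tau})$; the extraneous logarithm is removed by subtracting the particular solution $W^{*}(\tau) := e^{-\gamma \tau} V$, where $V$ solves the profile equation $(\mathcal L_\alpha + \gamma) V = -\Delta \bar U$ on the divergence-free subspace, available because $-\gamma$ does not lie in the relevant part of the spectrum of $\mathcal L_\alpha$.

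For~\eqref{eq:conv_background} I pass to physical variables: $w_\omega := \tilde \omega - \bar \omega$ solves $\partial_t w_\omega = \Delta w_\omega + \Delta \bar \omega$ with $w_\omega|_{t = 0} = 0$, so
\begin{equation*}
\nabla w_\omega(t) = \int_0^t \nabla e^{(t-s)\Delta}\, \Delta \bar \omega(s)\, ds.
\end{equation*}
Inserting the self-similar form $\bar \omega(x, s) = s^{-1} \bar \Omega(x/s^{1/\alpha})$, using heat-kernel smoothing $\|\nabla^k e^{t\Delta}\|_{L^q \to L^p} \lesssim t^{-(1/q - 1/p) - k/2}$, and optimizing over an intermediate Lebesgue index $q$ in a split of the $s$-integral into small-$s$ (heat-dominated) and large-$s$ (source-dominated) regions, I obtain a physical-variable estimate which, after rescaling to profile variables via $\|F(\cdot, \tau)\|_{L^p} = t^{2/(p\alpha)} \|f(\cdot, t)\|_{L^p}$, yields the rate $e^{\zeta(\alpha, p)\tau}$. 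The weighted piece $r \partial_r \nabla(\tilde \Omega - \bar \Omega)$ is controlled analogously: in $\R^2$ the commutator $[r \partial_r, \Delta] = -2\Delta$ lets $r \partial_r$ commute through the heat semigroup up to a bounded perturbation, and $r \partial_r \bar \omega$ is also smooth and compactly supported in profile.

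The main obstacle will be achieving the sharp rates without logarithmic losses. A direct $L^p$ energy estimate loses a factor of two in the exponent, and a naive Duhamel convolution produces an additional $\tau$ factor. The needed refinement is the \emph{cancellation} alluded to in Section~\ref{sec:introlongtime}: one exploits the time-derivative structure of $\Delta \bar \omega$ through $\Delta e^{(t - s)\Delta} = -\partial_s e^{(t - s)\Delta}$, combined with the self-similar scaling of $\bar \omega$, to reorganize the Duhamel integrand and extract a cancellation that improves the exponent by exactly the amount needed to reach $\zeta(\alpha, p)$. This is the refinement that upgrades the crude supercritical scaling bound (which one obtains for $p < 2/\alpha$ by an $L^p$ energy estimate alone) to the sharp rate required by the nonlinear analysis of~\Cref{sec:nonlinear}.
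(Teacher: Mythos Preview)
Your treatment of \eqref{eq:conv_background_velocity} is substantially more involved than necessary. The paper simply performs a direct $L^2$ energy estimate on the self-similar equation for $D_u = \tilde U - \bar U$: the viscous term $-e^{-\tau\gamma}\Delta D_u$ contributes $+e^{-\tau\gamma}\|\nabla D_u\|^2$ on the left after multiplying by $D_u$ and integrating, so it has a good sign and is dropped. No Duhamel, no bootstrap, no $H^2$-boundedness of $\tilde U$, and no particular-solution subtraction are required.

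The genuine gap is in \eqref{eq:conv_background}. You correctly pass to physical variables and set up the Duhamel integral, but you misidentify the cancellation. The identity $\Delta e^{(t-s)\Delta} = -\partial_s e^{(t-s)\Delta}$ followed by integration by parts in $s$ is circular: the boundary term at $s=0$ vanishes and the bulk becomes $\int_0^t \nabla e^{(t-s)\Delta}\partial_s\bar\omega\,ds = \nabla\tilde\omega$, so you recover the expression you started from. The actual mechanism, used by the paper, is spatial and depends on the \emph{radial symmetry} of $\bar\Omega$. One freezes the heat propagator at time $t$ on the window $[0,t^\beta]$ and changes variables $\tilde s = s^{-1/\alpha}$ to write
\[
\int_0^{t^\beta}\nabla\Delta\bar\omega(x,s)\,ds
= \alpha\int_{t^{-\beta/\alpha}}^{\infty}\tilde s^{\,2}\,\nabla\Delta\bar\Omega(\tilde s x)\,d\tilde s
= -\alpha\int_0^{t^{-\beta/\alpha}}\tilde s^{\,2}\,\nabla\Delta\bar\Omega(\tilde s x)\,d\tilde s,
\]
the last equality because $\int_0^\infty \tilde s^{\,2}\,\nabla\Delta\bar\Omega(\tilde s x)\,d\tilde s$ is proportional to $\int_{\R^2}\nabla\Delta\bar\Omega = 0$. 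This turns a scale integral that is large near $\tilde s = \infty$ into one over $[0,t^{-\beta/\alpha}]$, which is small. The paper's four-piece splitting (with parameter $t^\beta$) is built around this; the error from freezing the propagator is the third piece, and optimizing $\beta$ yields the stated $\zeta(\alpha,p)$. Without this specific cancellation your small-$s$ region cannot be controlled: for $\alpha<2$ the source $\bar\omega$ concentrates at scale $s^{1/\alpha}\ll s^{1/2}$, so heat smoothing is too weak, and a direct check shows that for $p=2$ no choice of intermediate Lebesgue index $q$ or splitting point produces any decay in profile variables. Your $r\partial_r$ commutator idea, on the other hand, is fine and close in spirit to the paper's convolution identity $r\partial_r(f\ast g) = (r\partial_r f)\ast g + f\ast(r\partial_r g) + 2f\ast g$.
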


One could as well prove that the profiles diverge in time in too strong norms due to concentration at the origin imposed by the $- (1 + \frac{1}{\alpha} \cdot \nabla)$ term. 

\begin{remark}
\label{rmk:sigmacond}
In~\Cref{sec:nonlinear}, we require positive constants $\sigma > 0$ and $\zeta \in (0,\gamma)$ depending only on $\alpha$ such that
\begin{align}
\label{eq:ineq_zeta}
\| \tilde U - \bar U \|_{L^2} + \|\nabla (\tilde \Omega - \bar \Omega)\|_{L^2 \cap L^{2+\sigma}} & + \|r\partial_r \nabla (\tilde \Omega - \bar \Omega)\|_{L^2 \cap L^{2+\sigma}} \\ & \qquad  \leq C t^{-\zeta} \, , \quad \forall t \geq 1 \, .
\end{align}
Since~\eqref{eq:conv_background} for $p=2$ decays in time for any $\alpha \in (0,2)$, and since $\zeta(\alpha,p)$ is a continuous function in $(\alpha, p)$, when letting $p=2+\sigma$, we can see that, for any $\alpha > 0$, there exists $\sigma > 0$ such that \eqref{eq:conv_background} decays. 
Moreover, by \eqref{eq:ineq_zeta}, it follows also that
\begin{equation}
    \|\tilde U - \bar U\|_{L^\infty} + \| \nabla(\tilde U - \bar U) \|_{L^\infty} + 
\|\tilde \Omega - \bar \Omega\|_{L^2 \cap L^\infty} \leq  C t^{-\zeta},
\end{equation}
for all $t>0.$ Indeed, via Gagliardo-Nirenberg and Calderón-Zygmund 
 we have:
\begin{align}
    \|\tilde U - \bar U\|_{L^\infty} +
    \|\tilde \Omega - \bar \Omega\|_{L^2} + \|\nabla(\tilde U - \bar U)\|_{L^2}  & \leq C\|\nabla (\tilde \Omega - \bar \Omega) \|^{1/2}_{ L^{2}} \| \tilde U - \bar U\|^{1/2}_{L^2}, 
    \end{align}
\begin{align}
    \|\tilde \Omega - \bar \Omega\|_{L^\infty} + \|\nabla(\tilde U - \bar U)\|_{L^\infty} & \leq C\| \nabla (\tilde \Omega - \bar \Omega) \|^\frac{\sigma}{4+3\sigma}_{ L^{2+\sigma}} \| \nabla (\tilde U - \bar U)\|^\frac{4+2\sigma}{4+3\sigma}_{L^2}.
\end{align}


\end{remark}

\begin{proof}[Proof of Lemma~\ref{lemma:conv_background}]
\textbf{Proof of estimate \eqref{eq:conv_background_velocity}:}
The equation solved by the difference of the profiles $D_u \coloneqq \tilde U - \bar U$ in self-similar variables reads as:
\begin{equation}
\label{eq:diff_vel_ss}
\begin{cases}
\partial_\tau D_u - \left(1-1/\alpha + \xi/\alpha \cdot \nabla \right) D_u - e^{-\tau \gamma}\Delta D_u = e^{-\tau \gamma}\Delta \bar U, \\
\lim_{\tau \to -\infty} D_u(\cdot, \tau) = 0.
\end{cases}
\end{equation}
Performing an energy estimate on \eqref{eq:diff_vel_ss}, we get
\begin{equation}
    \partial_\tau \norm{D_u}_{L^2} + 2\gamma \norm{D_u}_{L^2} \leq C e^{-\tau\gamma},
\end{equation}
which directly gives \eqref{eq:conv_background_velocity} by Gronwall's Lemma.

\textbf{Proof of estimate \eqref{eq:conv_background}, $\| \cdot \|_{L^p}$:} Define $d_\omega \coloneqq \tilde{\omega}- \bar \omega$ and $D_\omega \coloneqq \tilde{\Omega}- \bar \Omega$. By scaling, we have
\begin{equation}
\label{eq:scalingnablaD}
\norm{\nabla D_\omega}_{L^p} + \norm{r\partial_r\nabla D_\omega}_{L^p} = e^{\tau(1+1/\alpha-2/(\alpha p))}\big[ \norm{\nabla d_\omega}_{L^p}+ \norm{r\partial_r\nabla d_\omega}_{L^p} \big].
\end{equation}
and \eqref{eq:conv_background} is equivalent to prove
\begin{equation}
\label{eq:conv_background-new}
\norm{\nabla d_\omega}_{L^p} + \norm{r\partial_r\nabla d_\omega}_{L^p}\leq C  
t^{-5/2 + 1/p + \alpha(5p-2)/(6p+2\alpha p-4)}.
\end{equation}
Observing that $\nabla d_\omega$ solves the heat equation forced by $ \nabla \Delta \bar \omega$
with $0$ initial condition,
we can study the bounds for $\nabla d_\omega$ via Duhamel's formula:
\begin{align}
\label{eq:d_split}
\nabla d_\omega & = \int_0^t e^{(t-s)\Delta} \nabla \Delta \bar \omega ds \\ & = \int_0^1 (e^{(t-s)\Delta} - e^{t\Delta}) \nabla \Delta \bar \omega ds + \int_{t^\beta}^t e^{(t-s)\Delta} \nabla \Delta \bar \omega ds \\ 
& + \int_1^{t^\beta} (e^{(t-s)\Delta} - e^{t\Delta}) \nabla \Delta \bar \omega ds + \int_0^{t^\beta} e^{t\Delta} \nabla \Delta \bar \omega ds.
\end{align}
where $\beta >0$ will be optimized later. It is convenient to split the integral in~\eqref{eq:d_split} into four pieces as above. Indeed the last piece will exploit a cancellation in the integral with respect to $s$; however, to see this cancellation we needed to freeze the time by replacing $e^{(t-s)\Delta}$ with $e^{t\Delta}$, which is a good approximation only if $s$ is up to order $t^\beta$; therefore, we introduce the second and third error term. The first error term is smaller than all the others for large $t$ due to the heat effect.

To proceed with the bounds, we  recall the following estimates obtained by explicit computation on the heat kernel $K(x,t)$, that we need for $i=0,1$, $k =0,1$ and $r=0,1,2,3$: there exists $C>0$ such that
\begin{align}
\label{eq:heatdt}
 \| (\partial_t)^i (r \partial_r)^k ( \nabla )^m e^{(t-\sigma)\Delta}\|_{L^1 \to L^p} = \norm{(\partial_t)^i (r \partial_r)^k (\partial_r)^m K(x,t)}_{L^p} & \leq C t^{-1-i-m/2+1/p}.
\end{align}
Since $\bar \omega$ is self-similar, by a change of variable we have
\begin{align}
    \norm{\nabla \Delta \bar \omega(t)}_{L^p} +\norm{r\partial_r \nabla\Delta \bar \omega(t)}_{L^p}  & = t^{-1-3/\alpha + 2/(\alpha p)} (\norm{\nabla \Delta \bar \Omega}_{L^p}+\norm{r\partial_r \nabla \Delta \bar \Omega}_{L^p}) \\& \leq C t^{-1-3/\alpha + 2/(\alpha p)}.\label{eq:decaybaromega}
\end{align}

The first contribution in~\eqref{eq:d_split} can be dealt with by exploiting the decay of the heat kernel:
\begin{align}
\norm{\int_0^1 (e^{(t-s)\Delta} - e^{t\Delta}) \nabla \Delta \bar \omega ds}_{L^p} & \leq \int_0^1 \big(\norm{e^{(t-s)\Delta} \nabla \Delta}_{L^1 \to L^p}+\norm{ e^{t\Delta} \nabla \Delta}_{L^1 \to L^p} \big) \norm{\bar \omega}_{L^1} ds\\ & \leq Ct^{-1+1/p-3/2} \int_0^1 s^{2/\alpha - 1} ds \\
& \leq Ct^{-5/2 + 1/p}.
\end{align}

For the second contribution, we exploit instead the decay in time of the forcing term in~\eqref{eq:decaybaromega}:
\begin{align}
\norm{\int_{t^\beta}^t e^{(t-s)\Delta} \nabla \Delta \bar \omega ds}_{L^p} & \leq \int_{t^\beta}^t \norm{e^{(t-s)\Delta}}_{L^p \to L^p} \norm{\nabla \Delta \bar \omega}_{L^p} ds \\ & \leq C \int_{t^\beta}^t s^{-3/\alpha - 1 + 2/(\alpha p)} ds \leq C t^{-\beta/\alpha (3-2/p)}. \label{eq:second}
\end{align}

For the third contribution, by~\eqref{eq:heatdt} and  Minkowski's inequality, we first estimate, for $s\in [1,t^\beta]$ {and $t$ large enough to ensure $t^\beta \leq t/2$,}
\begin{align}
\|(e^{(t-s)\Delta} - e^{t\Delta})\nabla \|_{L^1 \to L^p} & = \norm{\nabla(K(\cdot,t-s) - K(\cdot,t))}_{L^p}
 \leq \int_0^s \norm{\partial_{\sigma} \nabla K(x,t-\sigma)}_{L^p} d\sigma \\
& \leq s\sup_{\sigma \in (0,s)} \| \partial_t \nabla e^{(t-\sigma)\Delta}\|_{L^1 \to L^p} \leq Cs t^{- 2 - 1/2 + 1/p} \, .
\label{eq:timederLp}
\end{align}
Hence, we have
\begin{align}
\norm{\int_{1}^{t_\beta} (e^{(t-s)\Delta} - e^{t\Delta}) \nabla \Delta \bar \omega ds}_{L^p} & \leq \int_1^{t^\beta} \norm{(e^{(t-s)\Delta} - e^{t\Delta}) \nabla}_{L^1 \to L^p} \norm{\Delta \bar \omega}_{L^1} ds 
\\
& \leq C t^{- 2 - 1/2 + 1/p} \int_1^{t^\beta} s s^{-1} ds \leq C t^{\beta - 5/2 + 1/p}. \label{eq:third}
\end{align}

Finally, to analyze the fourth contribution, we define
\begin{equation}
\label{def:f}
    f(x) = \int_{0}^{t^\beta} \nabla \Delta \bar \omega \, ds \overset{\; \tilde{s} = s^{-1/\alpha}}{=} \alpha\int_{t^{-\beta/\alpha}}^{\infty} \tilde s^{2} \nabla \Delta \bar \Omega(\tilde sx) \, d\tilde s = - \alpha \int_{0}^{t^{-\beta/\alpha}} \tilde s^2 \nabla \Delta \bar \Omega(\tilde sx) \, d\tilde s \, .
\end{equation}
In the third equality we exploited a cancellation in the integral due to the fact that  $\nabla\Delta\bar{\Omega}$ is a radially symmetric function and hence for every $x\in \R^2$
\begin{align}
    \label{eq:Icancelmyselflikeaboss}
2 \pi \int_{0}^{+\infty} \tilde s^2 \p_r \Delta \bar \Omega(\tilde sx) d\tilde s  = \int_{\mathbb R^2} y \cdot (\nabla \Delta \bar \Omega) (|x|y) \, dy
 = 0.
\end{align}
Hence, since $\norm{\tilde s^2 \nabla \Delta \bar \Omega(\tilde s\cdot)}_{L^p} = \tilde s^{2-2/p} \norm{\nabla \Delta \bar \Omega}_{L^p}$ by scaling, we get that
\begin{equation}
\label{eq:fourth}
\norm{  e^{t\Delta} \int_0^{t^\beta} \nabla \Delta \bar \omega ds}_{L^p} \leq \norm{f}_{L^p}  \leq \alpha \int_{0}^{t^{-\beta/\alpha}} \tilde s^{2-2/p} \norm{\nabla \Delta \bar \Omega}_{L^p} d\tilde s \leq C t^{-\beta/\alpha (3-2/p)}.
\end{equation}
To optimize the choice of $\beta$, we ask the contributions in~\eqref{eq:second} and~\eqref{eq:fourth}, which are of the same magnitude, to be balanced with~\eqref{eq:third}; therefore, we impose
\begin{equation}
-\frac{\beta}{\alpha} \left(3 - \frac{2}{p}\right) = \beta - \frac{5}{2} + \frac{1}{p} \quad \iff \quad \beta = \frac{\alpha(5p-2)}{2(3p+\alpha p -2)}.
\end{equation}
and obtain the decay in time for $\norm{\nabla d_\omega}_{L^p}$  claimed in \eqref{eq:conv_background-new}.

\textbf{Proof of estimate \eqref{eq:conv_background}, $\| r \p_r \cdot \|_{L^p}$:} To bound the second term in the left-hand side of \eqref{eq:conv_background-new}, we apply the $r\partial_r$ operator to the rewriting of $\nabla d_\omega$ in \eqref{eq:d_split}, obtaining
\begin{align}
\label{eqn:rdrdomega}
r\partial_r(\nabla d_\omega) & = \int_0^1 r\partial_r \left((e^{(t-s)\Delta} - e^{t\Delta}) \nabla \Delta \bar \omega\right)   ds \\ & 
+\int_{t^\beta}^t  r\partial_r\left(e^{(t-s)\Delta} \nabla \Delta \bar \omega\right) ds \\ 
& +\int_1^{t^\beta} r\partial_r \left( (e^{(t-s)\Delta} - e^{t\Delta}) \nabla \Delta \bar \omega \right)ds + r\partial_r\left( e^{t\Delta} f\right).
\end{align}
The reason why $r\partial_r(\nabla d_\omega) $ has the same decay in time as $\nabla d_\omega$ relies on the fact that the operator $r \partial_r$ does not affect the decay of the heat kernel and of the self-similar force, as expressed in \eqref{eq:decaybaromega} and \eqref{eq:heatdt} above. For the sake of completeness, we exploit this fact in estimating the four terms in \eqref{eqn:rdrdomega}, which are therefore analogous to the corresponding ones in \eqref{eq:d_split}.

First of all, since the $r\partial_r = x \cdot \nabla$ operator has the following behavior with respect to convolution $r\partial_r (f \ast g) = (r\partial_r f) \ast g + f \ast (r\partial_r g) + 2 f\ast g
$, we have for all $0\leq s<t$:
\begin{equation}
\label{eq:rdrconv}
r\partial_r (e^{(t-s)\Delta} \nabla \Delta \bar \omega) = (\nabla \Delta r\partial_r e^{(t-s)\Delta}) \bar \omega + (e^{(t-s)\Delta} \nabla \Delta) (r\partial_r \bar \omega  + 2 \bar \omega).
\end{equation}
For $s \in[0,1)$, by \eqref{eq:heatdt} all the operators appearing in the previous formula are bounded between $L^1$ and $L^p$ by $ t^{-1+1/p-3/2}$. Hence we use this expression for $s$ and $s=0$,  to bound the first term in the expression for ${r \partial_r \nabla d_\omega}$ in the right-hand side of \eqref{eqn:rdrdomega}
\begin{align} 
 \norm{\int_0^1 r\partial_r \left((e^{(t-s)\Delta} - e^{t\Delta}) \nabla \Delta \bar \omega\right)   ds}_{L^p} 
&\leq 2 \int_0^1 
t^{-1+1/p-3/2} (\norm{r\partial_r \bar \omega}_{L^1} + \norm{\bar \omega}_{L^1}) ds\\
& \leq C t^{-1+1/p-3/2}\int_0^1 s^{-1+2/\alpha} ds \\ & \leq C t^{-5/2+1/p}.
\end{align}

The second term in \eqref{eqn:rdrdomega} can be bounded observing that
\begin{equation}
\label{eq:rdrconv-2}
r\partial_r (e^{(t-s)\Delta} \nabla \Delta \bar \omega) = (r\partial_r e^{(t-s)\Delta}) ( \nabla \Delta\bar \omega) + (e^{(t-s)\Delta} ) (r\partial_r \nabla \Delta\bar \omega+ 2 \nabla \Delta \bar \omega).
\end{equation}
and that all operators, seen from $L^p$ to $L^p$, are bounded by a constant. Hence,
\begin{align}
\norm{\int_{t^\beta}^t  r\partial_r\left(e^{(t-s)\Delta} \nabla \Delta \bar \omega\right) ds}_{L^p}  &\leq \int_{t^\beta}^t  \left(\norm{\nabla \Delta \bar \omega}_{L^p} + \norm{r\partial_r \nabla \Delta \bar \omega}_{L^p}\right) ds \\ & \leq C \int_{t^\beta}^t s^{-1-3/\alpha + 2/(\alpha p)} ds \leq C t^{-\beta/\alpha (3-2/p)}.
\end{align}

For the third term, we have
\begin{align}
    r\partial_r \left( (e^{(t-s)\Delta} - e^{t\Delta}) \nabla \Delta \bar \omega \right) & = (r\partial_r (e^{(t-s)\Delta} - e^{t\Delta}) \nabla ) ( \Delta\bar \omega)  \\ &\quad + ((e^{(t-s)\Delta}  - e^{t\Delta})\nabla ) (r\partial_r \Delta\bar \omega+ 2  \Delta \bar \omega) \, ,
\end{align}
and proceeding as for \eqref{eq:timederLp}, we get a bound for the operators for all $s<t^\beta$:
\begin{align}
\norm{r\partial_r (e^{(t-s)\Delta} - e^{t\Delta})\nabla}_{L^1 \to L^p} \leq s \sup_{\sigma \in (0,s)} \norm{\partial_\sigma r\partial_r  (\nabla e^{(t-\sigma)\Delta})}_{L^1 \to L^p} \leq s t^{-5/2+1/p} \, .
\end{align}
Therefore,
\begin{align}
 \norm{\int_1^{t^\beta} r\partial_r \left( (e^{(t-s)\Delta} - e^{t\Delta}) \nabla \Delta \bar \omega \right)ds}_{L^p} &\leq \int_1^{t^\beta}s t^{-5/2+1/p}  (\norm{\Delta \bar \omega}_{L^1} +\norm{r\partial_r\Delta \bar \omega}_{L^1} )ds \\
&\leq C \int_1^{t^\beta} s t^{-5/2+1/p} s^{-1} ds \\ & \leq C t^\beta t^{-5/2+1/p}.
\end{align}

Finally, we bound the last term; applying the $r\partial_r$ operator to the definition of $f$ in \eqref{def:f}, we have that
\begin{align}
r\partial_r f = \int_0^{t^{-\beta/\alpha}} (-\alpha) \tilde s^2 (r\partial_r \nabla \Delta \bar \Omega) (\tilde s x) d\tilde{s} \, ,
\end{align}
and since by scaling
$\norm{\tilde s^2 r \partial_r \nabla \Delta \bar \Omega(\tilde s x)}_{L^p} = \tilde s^{2-2/p} \norm{r\partial_r \nabla \Delta \bar \Omega(x)}_{L^p}$,  we have
\begin{align}
\norm{r\partial_r (e^{t\Delta} f)}_{L^p} & \leq \norm{r\partial_r e^{t\Delta}}_{L^p \to L^p} \norm{f}_{L^p} + \norm{e^{t\Delta}}_{L^p \to L^p} \norm{r\partial_r f}_{L^p} \\ & \quad + 2 \norm{e^{t\Delta}}_{L^p \to L^p} \norm{f}_{L^p} \\ & 
\leq C t^{-\beta/\alpha(3-2/p)} \, .
\end{align}
Hence, combining the bounds for the four terms, we conclude \eqref{eq:conv_background}.
\end{proof}

\section{Initial layer: Proof of Theorem~\ref{thm:initiallayer}}
\label{sec:back_uniq}

In this section, $\tilde u$ is the modified background as in Section~\ref{sec:modifiedbackground} with $\alpha \in (0,1)$.
 We consider the linearized Navier-Stokes problem around $\tilde{u}$:
\begin{equation}
    \label{eq:lin_eq}
    \left\lbrace
    \begin{aligned}
    \partial_t u + \PP( u \cdot \nabla \tilde{u} + \tilde{u} \cdot \nabla u) &= \Delta u \\
    \Div u &= 0 
    \end{aligned}
    \right.
\quad \text{ in } \RR \times \R^+
\end{equation}
with initial condition
\begin{equation}
    u|_{t=0} = u_0 \in H^{2+s}_{\rm df}
\end{equation}
and $s \in (0,1)$ as in Section~\ref{sec:maintheoremrevisited}.

Since $\tilde{u} \in L^2_t H^{2+s}_x$ on finite time intervals,~\eqref{eq:lin_eq} is well-posed in $C([0,T];H^{2+s}_{\rm df})$, as implicit in the energy estimates contained in the proof of Theorem~\ref{thm:initiallayer}, \ref{item:initialayer2} below. Define the flow map
\begin{equation}
    \Phi^T : H^{2+s}_{\rm df} \to H^{2+s}_{\rm df} \, ,
\end{equation}
associating to a given initial datum the solution to the linearized problem \eqref{eq:lin_eq} at a given time $T>0$. Then Theorem~\ref{thm:initiallayer}, \ref{item:initialayer1} 
is equivalent to show that \emph{$\im(\Phi^T)$ is dense in $H^{2+s}_{\rm df}$}.

To tackle~\eqref{eq:lin_eq} in $H^{2+s}_{\rm df}$, we study the quantity
\begin{equation}
\label{eq:defq}
q \coloneqq \langle\nabla\rangle^{2+s} u 
\end{equation}
in $L^2_{\rm df}$. $\langle\nabla\rangle^{2+s}$ is the Fourier multiplier by $(1+|\xi|^2)^{1+s/2}$. Applying~$\langle\nabla\rangle^{2+s}$ to~\eqref{eq:lin_eq}, we derive the equation solved by $q$:
\begin{equation}
\label{eq:eq_q}
\partial_t q - \Delta q + \langle\nabla\rangle^{2+s} \mathbb{P} (\tilde{u} \cdot \nabla \langle\nabla\rangle^{-2-s}q) + \langle\nabla\rangle^{2+s} \mathbb{P} (\langle\nabla\rangle^{-2-s}q \cdot \nabla \tilde{u}) = 0\ ,
\end{equation}
with initial condition
\begin{equation}
    q|_{t=0} = \langle\nabla\rangle^{2+s} u_0 \, .
\end{equation}
By definition, $u = \langle\nabla\rangle^{-2-s} q$. 
Define the flow map $\tilde \Phi^T : L^2_{\rm df} \to L^2_{\rm df}$ of~\eqref{eq:lin_eq} at time $T$ and observe that
\begin{equation}
\Phi^T  = \langle\nabla\rangle^{-2-s}  \circ \tilde \Phi^T \circ  \langle\nabla\rangle^{2+s} \, .    
\end{equation}
Since $ \langle\nabla\rangle^{2+s}: H^{2+s}_{\rm df} \to L^2_{\rm df}$ is a bounded, linear, invertible operator with bounded inverse, the density of $\im(\Phi^T)$ in $H^{2+s}_{\rm df}$ is equivalent to the density of $\im(\tilde \Phi^T)$ in $L^2_{\rm df}$.

The operator $\langle\nabla\rangle^{2+s}$ commutes with the Leray projector, but not with the multiplication by $\tilde{u}$; hence, we introduce a commutator:
    \begin{equation}
    \langle\nabla\rangle^{2+s} (\tilde{u} \cdot)=[\langle\nabla\rangle^{2+s}, \tilde{u} \cdot] + \tilde{u} \cdot \langle\nabla\rangle^{2+s}.
    \end{equation}
and analogously we define $[\langle\nabla\rangle^{2+s}, \nabla \tilde{u} \cdot]$. (Recall that $(u \cdot \nabla) \tilde{u} = (\nabla \tilde{u}) u$.)
We rewrite \eqref{eq:eq_q} as
\begin{equation}
\label{eq:eq_qprimo}
\partial_t q - \Delta q + \mathbb P (\tilde{u} \cdot \nabla q)+\mathbb P A q+ \mathbb P Bq = 0,
\end{equation} 
where 
\begin{align}
\label{eq:def_AB}
Aq & \coloneqq [\langle\nabla\rangle^{2+s}, \tilde{u} \cdot] \nabla \langle\nabla\rangle^{-2-s}q, \\ Bq & \coloneqq  \langle\nabla\rangle^{2+s} (\langle\nabla\rangle^{-2-s}q \cdot \nabla \tilde{u})= [\langle\nabla\rangle^{2+s}, \nabla \tilde{u} \cdot] \langle\nabla\rangle^{-2-s} q + \nabla \tilde{u} \cdot q.
\end{align}

The proof of the density of $\im (\tilde \Phi^T)$ in $L^2_{\rm df}(\RR)$ crucially relies on a backward uniqueness result for the adjoint formulation of \eqref{eq:eq_q},
\begin{equation}
\label{eq:eq_q_adj}
-\partial_t q - \Delta q -\mathbb P( \tilde{u} \cdot \nabla q ) +\mathbb P A^* q+\mathbb P B^*q = 0 \, ,
\end{equation}
with prescribed final datum $q(\cdot,T)$. 
We time-reverse the solution via
\begin{equation}
    w(x,t) = q(x,T-t)
\end{equation}
to obtain the initial-value problem
\begin{equation}
\label{eq:eq_w_adj}
\left\lbrace
\begin{aligned}
    &\partial_t w - \Delta w - \mathbb P( \tilde{u}(\cdot,T-t) \cdot \nabla w ) +\mathbb P A^*|_{T-t} w +\mathbb P B^*|_{T-t} w = 0\\ 
    &w|_{t=0} = w_0 \in L^2_{\rm df} \, .
\end{aligned}
\right.
\end{equation}

\begin{lemma}[Backward uniqueness for the adjoint equation]
\label{lemma:adj_inj}
If the solution $w \in C([0,T];L^2) \cap L^2(0,T;H^1)$ to~\eqref{eq:eq_w_adj} satisfies $w(\cdot,T) = 0$, then $w \equiv 0$ on $\R^2 \times (0,T)$.
\end{lemma}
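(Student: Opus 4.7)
My plan is to invoke a classical logarithmic convexity argument of Agmon--Nirenberg / Lions--Malgrange type. After the time reversal we have a forward Cauchy problem
\[
\partial_t w = \Delta w + \mathcal{L}(t)w, \qquad \mathcal{L}(t)w := \mathbb{P}\bigl(\tilde u(\cdot,T-t)\cdot\nabla w\bigr) - \mathbb{P}A^*|_{T-t} w - \mathbb{P}B^*|_{T-t} w,
\]
with $\Delta$ self-adjoint and nonpositive, and $\mathcal{L}$ a lower-order perturbation. The scheme is to prove a log-convexity estimate for $g(t):=\|w(t)\|_{L^2}^2$ on every open sub-interval where $g>0$, from which $g(T)=0$ and $w\not\equiv 0$ produce a contradiction.

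The first step is the subordination of $\mathcal{L}$ to $\Delta$. From \Cref{rmk:tildes_Lp} together with $\alpha<1$ and Sobolev embedding $W^{1,2/\alpha}\hookrightarrow L^\infty$, one has $\tilde u\in L^\infty_tL^\infty_x$ on $(0,T)$, so $\|\mathbb{P}(\tilde u\cdot\nabla w)\|_{L^2}\lesssim\|\tilde u\|_{L^\infty}\|\nabla w\|_{L^2}$. For the operators in~\eqref{eq:def_AB}, standard Kato--Ponce commutator estimates give $\|Aw\|_{L^2}+\|Bw\|_{L^2}\lesssim\|\tilde u\|_{H^{2+s}}\|w\|_{L^2}$: the commutator $[\langle\nabla\rangle^{2+s},\tilde u\,\cdot\,]$ has order $1+s$ and is exactly compensated by the $\nabla\langle\nabla\rangle^{-2-s}$ (resp.~$\langle\nabla\rangle^{-2-s}$) that follows, while the $H^{2+s}$ regularity of $\tilde u$ on $\R^2$ controls the norms $\|\nabla\tilde u\|_{L^\infty}$ and $\|\tilde u\|_{H^{2+s}}$ appearing in Kato--Ponce. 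By duality the same bounds hold for $A^*,B^*$. Combined with $\tilde u\in L^2_tH^{2+s}_x$ from \Cref{lemma:usmooth}, this yields a time-weight $M(t):=1+\|\tilde u(\cdot,T-t)\|_{L^\infty}^2+\|\tilde u(\cdot,T-t)\|_{H^{2+s}}^2\in L^1(0,T)$ with $\|\mathcal{L}(t)w\|_{L^2}\le CM(t)^{1/2}\bigl(\|w\|_{L^2}+\|\nabla w\|_{L^2}\bigr)$.

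Given these subordination bounds, the Agmon--Nirenberg method applies. On any interval $I\subset(0,T)$ where $g>0$, $\mathrm{div}\,\tilde u=0$ kills $\langle\tilde u\cdot\nabla w,w\rangle$ and a direct computation shows that
\[
G(t):=g(t)\exp\!\Bigl(\kappa\int_0^tM(s)\,ds\Bigr)
\]
is log-convex on $I$ for a suitable absolute constant $\kappa>0$: the leading log-convexity comes from the pure-heat identity $\dot g^2\le g\,\ddot g$, equivalent to Cauchy--Schwarz $\|\nabla w\|_{L^2}^4=\langle-\Delta w,w\rangle^2\le\|w\|_{L^2}^2\|\Delta w\|_{L^2}^2$, while the exponential weight absorbs the perturbative contributions from $\mathcal{L}$ and the time-dependence of the coefficients. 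If $w\not\equiv 0$ and $g(T)=0$, continuity of $g$ provides a maximal $[t_1,t_2]\subset[0,T]$ with $G(t_1)>0$, $G>0$ on $[t_1,t_2)$, and $G(t_2)=0$; log-convexity on $[t_1,t_2)$ forces $\log G(t')$ to remain bounded below as $t'\to t_2^-$, contradicting $G(t_2)=0$.

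The main obstacle is making the differentiations of $g$ rigorous at the given regularity $w\in C_tL^2_x\cap L^2_tH^1_x$, at which $\ddot g$ need not exist in the classical sense. I would resolve this by approximation: mollify the final datum (equivalently, the initial datum for~\eqref{eq:eq_w_adj}) so that the approximate solutions lie in higher Sobolev spaces via the energy machinery underlying Theorem~\ref{thm:initiallayer}(ii), carry out the log-convexity computation classically for the approximants, and pass to the limit. Alternatively one may appeal directly to the abstract Lions--Malgrange backward uniqueness theorem for evolution equations $\partial_t u+Au=f$ with $A$ self-adjoint coercive and $\|f\|_H\lesssim\|u\|_V+\|u\|_H$, whose hypotheses are verified by the subordination bounds established above.
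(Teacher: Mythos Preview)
Your proposal is correct and follows essentially the same route as the paper: both rely on the Agmon--Nirenberg log-convexity method, with the subordination estimate $\|\mathcal{L}(t)w\|_{L^2}\lesssim \|\tilde u\|_{L^\infty}\|\nabla w\|_{L^2}+\|\tilde u\|_{H^{2+s}}\|w\|_{L^2}$ established via the same Kato--Ponce bounds on $A,B$ (your \Cref{lemma:usmooth} and \Cref{rmk:tildes_Lp} inputs match the paper's \Cref{lemma:Abound}). The paper simply invokes the abstract theorem of Ghidaglia (your ``alternatively'' option) rather than redoing the log-convexity computation, and handles the regularity issue by parabolic smoothing---observing that $w\in C([\varepsilon,T];H^1)\cap L^2(\varepsilon,T;H^2)$ for every $\varepsilon>0$ via $e^{t\Delta}w_0$ smoothing and maximal regularity---instead of your approximation-by-mollification, which is a cosmetic difference.
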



We can prove Lemma~\ref{lemma:adj_inj} 
using the energy methods derived from the Agmon-Nirenberg theory, as in, for example,~\cite{MR0851146}:

\begin{theorem}[\cite{MR0851146}, Theorem 1.1]
\label{thm:ghidaliatheorem}
Let $H,V$ be Hilbert spaces. Suppose that $V$ is separable and densely embedded into $H$. Let $L : V \to V^*$ be bounded, which begets an unbounded operator $L : D(L) \subset H \to H$ with domain $D(L) = \{ \phi \in V : L\phi \in H \}$. Suppose the coercivity property
\begin{equation}
     \exists \lambda \in \R , \eta > 0 \text{ s.t. } \langle A \phi,\phi \rangle + \lambda \| \Phi \|_H^2 \geq \eta \| \phi \|_V^2 \, .
\end{equation}

Let $0<T<+\infty$. Suppose $\phi \in C([0,T]; V) \cap L^2(0,T; D(L))$ satisfies
\begin{align}
\partial_t \phi + L \phi \in H \, , \quad \text{ a.e. in $t$} \\
\| \partial_t \phi + L \phi \|_H \leq n(t) \|\phi\|_V \, , \quad \text{ a.e. in $t$}  \label{eq:bound_ghidaglia}
\end{align}
with $n \in L^2(0,T)$. If $\phi(T)=0$, then $\phi(t)=0$ for all $0\leq t \leq T$.
\end{theorem}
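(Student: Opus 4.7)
The plan is to prove this by the Agmon-Nirenberg log-convexity / frequency function method. Assume for contradiction that $\phi$ is not identically zero on $[0,T]$. By continuity in $H$ there is a maximal half-open interval $[0, t^*) \subset [0,T)$ on which $N(t) := \|\phi(t)\|_H^2 > 0$, with $N(t^*) = 0$. On this interval I would show that $\log N$ is ``almost convex'' in a quantitative sense, which will force a uniform lower bound on $N(t)$ as $t \to t^*$ and yield the contradiction.

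First I would differentiate once. Writing $f := \partial_t \phi + L\phi$ and splitting $L = L_s + L_a$ into its symmetric and antisymmetric parts relative to $H$,
\begin{equation}
\tfrac{1}{2} \tfrac{d}{dt} \log N \;=\; \frac{\langle \partial_t \phi, \phi\rangle_H}{N} \;=\; -\,\frac{\langle L_s \phi, \phi\rangle_H}{N} + \frac{\langle f, \phi\rangle_H}{N} \;=:\; -\Lambda(t) + R(t),
\end{equation}
since $\langle L_a\phi,\phi\rangle_H = 0$. The coercivity hypothesis gives $\Lambda(t) + \lambda \gtrsim \eta \|\phi\|_V^2/\|\phi\|_H^2$, so the remainder is bounded by $|R(t)| \le n(t)\|\phi\|_V/\|\phi\|_H \lesssim n(t)(\Lambda(t) + \lambda)^{1/2}$. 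The second step, which is the heart of the argument, is to derive a differential inequality for the frequency $\Lambda$. Computing $\tfrac{d}{dt}\langle L_s\phi,\phi\rangle = 2\langle L_s\phi,\partial_t\phi\rangle + \langle (\partial_t L_s)\phi,\phi\rangle = -2\|L_s\phi\|_H^2 - 2\langle L_s\phi, L_a\phi\rangle + 2\langle L_s\phi,f\rangle + \langle(\partial_t L_s)\phi,\phi\rangle$ and inserting it into $\Lambda' = (\tfrac{d}{dt}\langle L_s\phi,\phi\rangle)/N - \Lambda \cdot (\log N)'$, one finds after a Cauchy--Schwarz absorption of the $L_s$--$L_a$ cross term a bound of the shape
\begin{equation}
\Lambda'(t) \;\le\; C\bigl( n(t)^2(1+\Lambda(t)) + \|{[L_s,L_a]}\|\cdot(1+\Lambda(t))\bigr).
\end{equation}
Since $n \in L^2(0,T)$ and the commutator term is controlled by the hypotheses, Gr\"onwall yields $\Lambda \in L^1(0,t^*)$, and substituting this and $R \in L^1$ back into the first identity produces $\log N(t) \ge \log N(0) - C$ uniformly for $t \in [0,t^*)$, contradicting $N(t^*) = 0$.

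The main obstacle is the correct handling of the antisymmetric part $L_a$: while $L_a$ drops out of the first identity, the computation of $\Lambda'$ picks up a cross term $\langle L_s\phi, L_a\phi\rangle$ that is not obviously nonnegative. One must either assume enough regularity on $\phi$ (which is why the hypothesis $\phi \in L^2(0,T;D(L))$ is needed, so that $L\phi \in H$ and the computations above are legitimate) and then handle the cross term by a Cauchy--Schwarz absorption into $\|L_s\phi\|_H^2$, or control a commutator $[L_s,L_a]$ in a weaker norm. In our application to Lemma~\ref{lemma:adj_inj}, one takes $H = L^2_{\rm df}$, $V = H^1_{\rm df}$, with $L$ given by $-\Delta$ plus the lower-order terms from \eqref{eq:eq_w_adj}; coercivity follows from the Laplacian, and the function $n(t)$ absorbs the $L^\infty_t$ multipliers coming from $\tilde u$, $\nabla\tilde u$, and the commutators $A,B$, which are finite by the regularity of $\tilde u$ established in Lemma~\ref{lemma:usmooth}.
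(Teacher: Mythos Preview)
The paper does not prove this theorem; it is quoted verbatim from Ghidaglia~\cite{MR0851146} and used as a black box in the proof of Lemma~\ref{lemma:adj_inj}. Your approach---the Agmon--Nirenberg log-convexity/frequency-function method---is exactly the method Ghidaglia employs, so in that sense your sketch matches the cited proof.

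That said, there is a soft spot in your treatment of the antisymmetric part. You correctly identify that the cross term $-2\langle L_s\phi, L_a\phi\rangle/N$ appears in $\Lambda'$ and rewrite it as $-\langle\phi,[L_s,L_a]\phi\rangle/N$, but your bound $\Lambda' \le C\bigl(n^2(1+\Lambda)+\|{[L_s,L_a]}\|(1+\Lambda)\bigr)$ presupposes that the commutator is bounded in an appropriate operator norm (e.g.\ $V\to V^*$), and this is \emph{not} among the stated hypotheses, which only give coercivity and $L:V\to V^*$ bounded. Ghidaglia's original statement carries an additional structural hypothesis on the skew part (or, in the time-dependent version, on $A'(t)$) that closes this; the paper's abbreviated statement suppresses it. In the actual application (Lemma~\ref{lemma:adj_inj}) one takes $L=-\Delta$, which is symmetric, so $L_a=0$ and the issue evaporates---all the transport and lower-order terms are shoved into the right-hand side $f$ and controlled via $n(t)\sim\|\tilde u(\cdot,T-t)\|_{H^{2+s}}+\|\tilde u(\cdot,T-t)\|_{L^\infty}$. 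Your final paragraph gets this right. If you want the argument to stand for general $L$ as written, you should either add the missing hypothesis on $L_a$ or restrict to $L$ symmetric from the outset; the clean identity $\Lambda' = -2\|L\phi-\Lambda\phi\|_H^2/N + 2\langle L\phi-\Lambda\phi,f\rangle/N \le \|f\|_H^2/(2N)$ then gives $\Lambda' \le Cn(t)^2(\Lambda+\lambda)$ directly, with no commutator in sight.
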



To apply this theorem, we will require bounds on the operators $A$ and $B$ in $L^2(\RR)$:
\begin{lemma}
\label{lemma:Abound}
Let $\tilde u \in H^{2+s}(\RR)$ and $A, B$ defined in \eqref{eq:def_AB}. Then there exists $C = C(s) >0$ such that
\begin{equation}
    \norm{A}_{L^2 \to L^2} + \norm{B}_{L^2 \to L^2} \leq C\norm{\tilde u}_{H^{2+s}(\RR)} \, .
\end{equation}
Consequently, the adjoint operators $A^*, B^* : L^2 \to L^2$ satisfy the same estimate.
\end{lemma}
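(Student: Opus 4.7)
The natural tool is a Kato--Ponce commutator estimate in $L^2$, combined with the planar Sobolev embedding $H^{2+s}(\mathbb R^2)\hookrightarrow W^{1,\infty}$, valid for $s>0$, which in particular yields $\|\nabla\tilde u\|_{L^\infty}\lesssim\|\tilde u\|_{H^{2+s}}$. The specific estimate I have in mind is
\[
\|[\langle\nabla\rangle^\sigma,f]g\|_{L^2}\lesssim \|\nabla f\|_{L^\infty}\|g\|_{H^{\sigma-1}}+\|f\|_{H^\sigma}\|g\|_{L^\infty}.
\]
For $A$ I will apply it with $f=\tilde u$, $g=\nabla\langle\nabla\rangle^{-2-s}q$, and $\sigma=2+s$. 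Since $\nabla\langle\nabla\rangle^{-2-s}$ is a Fourier multiplier of order $-1-s$, one has $\|g\|_{H^{1+s}}\lesssim\|q\|_{L^2}$ and, by the above Sobolev embedding applied to $g$, also $\|g\|_{L^\infty}\lesssim\|q\|_{L^2}$; substitution then yields $\|Aq\|_{L^2}\lesssim\|\tilde u\|_{H^{2+s}}\|q\|_{L^2}$.

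For $B$ I will exploit the commutator splitting $Bq=[\langle\nabla\rangle^{2+s},\nabla\tilde u\,\cdot\,]u+\nabla\tilde u\cdot q$ from \eqref{eq:def_AB}, with $u=\langle\nabla\rangle^{-2-s}q\in H^{2+s}$. The drift is bounded directly by H\"older: $\|\nabla\tilde u\cdot q\|_{L^2}\le\|\nabla\tilde u\|_{L^\infty}\|q\|_{L^2}\lesssim\|\tilde u\|_{H^{2+s}}\|q\|_{L^2}$. The commutator is the delicate piece and I anticipate it to be the main obstacle: applied naively with $f=\nabla\tilde u$, the Kato--Ponce estimate would demand $\|\nabla^2\tilde u\|_{L^\infty}$ or $\|\nabla\tilde u\|_{H^{2+s}}$, neither of which is controlled by $\|\tilde u\|_{H^{2+s}}$ when $s\in(0,1)$ in two dimensions. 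To overcome this I plan to refine the estimate via a Bony paraproduct decomposition $\nabla\tilde u\cdot u=T_{\nabla\tilde u}u+T_u\nabla\tilde u+R(\nabla\tilde u,u)$, distributing $\langle\nabla\rangle^{2+s}$ onto the smoother factor in each piece so that the resulting paraproduct commutators can be controlled using only the available regularity $\|\nabla\tilde u\|_{L^\infty\cap H^{1+s}}\lesssim\|\tilde u\|_{H^{2+s}}$ and $\|u\|_{L^\infty\cap H^{2+s}}\lesssim\|q\|_{L^2}$, together with the critical Besov embedding $H^{1+s}(\mathbb R^2)\hookrightarrow B^{s}_{\infty,\infty}$.

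The bounds for the adjoints $A^*,B^*:L^2\to L^2$ then follow immediately by duality on $L^2$, since each shares the operator norm of its untransposed counterpart.
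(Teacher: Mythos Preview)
Your treatment of $A$ is correct and matches the paper's. For $B$, you rightly flag the commutator $[\langle\nabla\rangle^{2+s},\nabla\tilde u\,\cdot\,]h$ (with $h=\langle\nabla\rangle^{-2-s}q$) as the obstacle, but your paraproduct refinement cannot close it --- in fact no argument can, because the bound $\|B\|_{L^2\to L^2}\lesssim\|\tilde u\|_{H^{2+s}}$ is \emph{false}. Boundedness of $Bq=\langle\nabla\rangle^{2+s}(h\cdot\nabla\tilde u)$ on $L^2$ is equivalent to multiplication by $\nabla\tilde u$ being bounded on $H^{2+s}(\R^2)$; since $2+s>1$ this forces $\nabla\tilde u$ to lie locally in $H^{2+s}$, i.e.\ $\tilde u\in H^{3+s}$, one derivative beyond the hypothesis. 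Concretely, take $h$ a fixed Schwartz bump (so $q=\langle\nabla\rangle^{2+s}h$ has $\|q\|_{L^2}\sim 1$) and $\tilde u$ frequency-localized near $M$ with $\|\tilde u\|_{H^{2+s}}\sim 1$: then $\|\langle\nabla\rangle^{2+s}(h\cdot\nabla\tilde u)\|_{L^2}\sim M^{2+s}\cdot M^{-1-s}=M\to\infty$. In your Bony decomposition the breakdown is the piece $\langle\nabla\rangle^{2+s}T_h(\nabla\tilde u)$: here $\nabla\tilde u$ carries the high frequencies, all $2+s$ derivatives land on it, and the subtracted term $T_{\langle\nabla\rangle^{2+s}h}\nabla\tilde u=T_q\nabla\tilde u$ has the derivatives on the \emph{other} factor, so nothing cancels.

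The paper's own argument shares this gap: applying \eqref{eq:li_estimate} with the roles swapped bounds $\langle\nabla\rangle^{2+s}(h\cdot\nabla\tilde u)-h\cdot\langle\nabla\rangle^{2+s}\nabla\tilde u$, not $[\langle\nabla\rangle^{2+s},\nabla\tilde u\,\cdot\,]h$, and the residual $h\cdot\langle\nabla\rangle^{2+s}\nabla\tilde u$ again needs $\tilde u\in H^{3+s}$. What \emph{does} hold under $\tilde u\in H^{2+s}$ is the weaker mapping $B:L^2\to H^{-1}$ (equivalently $B^*:H^1\to L^2$), since $\|Bq\|_{H^{-1}}=\|h\cdot\nabla\tilde u\|_{H^{1+s}}\lesssim\|h\|_{H^{2+s}}\|\tilde u\|_{H^{2+s}}$ by the algebra property of $H^{1+s}(\R^2)$. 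This suffices for the backward-uniqueness application in Lemma~\ref{lemma:adj_inj} (Theorem~\ref{thm:ghidaliatheorem} only needs $\|B^*w\|_{L^2}\lesssim n(t)\|w\|_{H^1}$), and the energy estimate in Theorem~\ref{thm:initiallayer}\,\ref{item:initialayer2} can be repaired by writing $\psi\cdot\nabla\tilde u=\div(\tilde u\otimes\psi)$, integrating by parts, and absorbing $\|\nabla\langle\nabla\rangle^{2+s}\psi\|_{L^2}^2$ into the dissipation.
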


Since~$\tilde{u} \in L^\infty(0,T;L^\infty) \cap L^2(0,T; H^{2+s})$ by Remark~\ref{rmk:tildes_Lp} and \Cref{lemma:usmooth}, we have that~\eqref{eq:eq_w_adj} is solvable, as claimed in Lemma~\ref{lemma:adj_inj}.

Lemma~\ref{lemma:Abound} follows from the following Kato-Ponce inequality {in \cite{kato1988commutator}}
:
\begin{align}
 \label{eq:li_estimate}
\norm{\jap^{2+s}(fg) - f \jap^{2+s} g }_{L^2} & \leq C (\norm{\nabla f}_{L^{\infty}} \norm{\jap^{s}\nabla g}_{L^{2}} \\ & \quad + \norm{\jap^{1+s}\nabla f}_{L^{2}} \norm{g}_{L^{\infty}}).
\end{align}

\begin{proof}[Proof of Lemma~\ref{lemma:Abound}]
To bound the operator $A$, we first prove that $ [\langle\nabla\rangle^{2+s}, \tilde{u} \cdot]: H^{1+s}(\RR) \rightarrow L^2(\RR)$ is a bounded operator. Indeed, for any $u \in H^{1+s}(\RR)$ we apply the above lemma choosing in \eqref{eq:li_estimate} $f=\tilde u, g = u$, and the  Gagliardo-Nirenberg inequality, to get
\begin{align}
 \norm{[\langle\nabla\rangle^{2+s}, \tilde{u} \cdot]u}_{L^2(\RR)} & \leq C \norm{\nabla \tilde{u}}_{L^\infty(\RR)} \norm{\jap^{s}\nabla u}_{L^2(\RR)}  \\ & \quad + \norm{\jap^{1+s} \nabla \tilde{u}}_{L^2(\RR)} \norm{u}_{L^\infty(\RR)} \nonumber\\
& \leq C \norm{\tilde{u}}_{H^{2+s}(\RR)} \norm{u}_{H^{1+s}(\RR)}.
\end{align}
Moreover, the operator $ \langle\nabla\rangle^{-2-s} \nabla:L^2(\RR) \to H^{1+s}(\RR)$ is bounded. 
Hence, $A =  [\langle\nabla\rangle^{2+s}, \tilde{u} \cdot] \circ \nabla \langle\nabla\rangle^{-2-s}$ is a bounded operator in $L^2(\RR)$, with norm bounded by $C\|\tilde{u} \|_{H^{2+s}(\RR)}$. 
We can similarly prove that the operator $B: L^2(\RR) \rightarrow L^2(\RR)$ is bounded.
We proceed as done for $A$, and choosing in \eqref{eq:li_estimate} {$f=u, g = \nabla \tilde u$}, we have that 
\begin{align}
\|[\langle\nabla\rangle^{2+s}, \nabla \tilde{u} \cdot] \langle\nabla\rangle^{-2-s} u \|_{L^2(\RR)} & \leq C\|\tilde{u}\|_{H^{2+s}(\RR)}\|\langle\nabla\rangle^{-2-s} {u} \|_{H^{2+s}(\RR)} \\ & \leq C\|\tilde{u}\|_{H^{2+s}(\RR)} \|{u}\|_{L^2(\RR)}.
\end{align}
Therefore, using also that $\| \nabla \tilde{u} \cdot \|_{L^2 \to L^2} \leq  \|\nabla \tilde{u}\|_{L^\infty(\RR)} \leq C\|\tilde{u}\|_{H^{2+s}(\RR)}$, we see that $B:L^2(\RR) \to L^2(\RR)$ is a bounded operator.
\end{proof}

\begin{proof}[Proof of \Cref{lemma:adj_inj}]
We will apply Theorem~\ref{thm:ghidaliatheorem} with $\phi = q$, $V= H^1$, $H = L^2$, and $L = -\Delta$. However, we should demonstrate that the solution $w \in C([0,T];L^2) \cap L^2(0,T;H^1)$ additionally satisfies $w \in C([\varepsilon,T];H^1) \cap L^2(\varepsilon,T;H^2)$ for every $\varepsilon \in (0,T)$. 
This follows from smoothing for $e^{t\Delta} w_0$ and maximal regularity, since
\begin{equation}
\begin{aligned}
    \| (\partial_t - \Delta) w\|_{L^2} & =  \|\tilde{u} \cdot \nabla w + A^* w - B^* w\|_{L^2} \\ & \leq C \| \tilde{u} \|_{L^\infty} \| \nabla w \|_{L^2} + C \|\tilde{u} \|_{H^{2+s}} \| w \|_{L^2} \, , \quad \text{ a.e. } t \in (0,T) \, ,
    \end{aligned}
\end{equation}
and the right-hand side belongs to $L^2(0,T)$, thanks to \Cref{lemma:Abound}. This also verifies~\eqref{eq:bound_ghidaglia} with $n(t)= C \| \tilde{u}(\cdot,t) \|_{L^\infty} + C\|\tilde{u}(\cdot,t) \|_{H^{2+s}(\RR)} \in L^2(0,T)$. 



\end{proof}

\begin{proof}[Proof of Theorem~\ref{thm:initiallayer}, \ref{item:initialayer1} (Linear density)]
As observed at the beginning of the section, it is enough to prove that the flow map $\tilde \Phi^T : L^2_{\rm df} \to L^2_{\rm df}$ associated to \eqref{eq:eq_q} has dense image.

In~\Cref{lemma:adj_inj}, we proved that the adjoint flow map associated to~\eqref{eq:eq_q} is injective, that is, $\ker (\tilde \Phi^T)^*  = \{0\}$. By the relationship between kernel and image of a bounded operator on a Hilbert space (see Corollary 2.18 of \cite{MR2759829}), we have 
\begin{equation}
\overline{\im(\tilde \Phi^T)} = (\ker(\tilde \Phi^T)^*)^\perp = L^2_{\rm df} \, .
\end{equation}
\end{proof}

The proof of Theorem~\ref{thm:initiallayer} \ref{item:initialayer2} follows from standard energy estimates and a Gronwall argument.

\begin{proof}[Proof of Theorem~\ref{thm:initiallayer}, \ref{item:initialayer2} (Nonlinear density)]

The short-time existence and uniqueness of a solution to the nonlinear problem \eqref{eq:initialayer2} in $C([0,T];H^{2+s}_{\rm df}(\RR))$ is standard and can be demonstrated by rewriting the equation according to Duhamel's formula and applying a contraction mapping argument. Therefore, we focus only on the quantitative estimates, which can be proved via energy estimates. We define $\psi \coloneqq u - \varepsilon v$; from \eqref{eq:initialayer1} and \eqref{eq:initialayer2}, we get that $\psi$ solves:
\begin{equation}
\label{eq:psi_system}
\begin{cases}
\begin{aligned}
\partial_t \psi &+ \mathbb{P} (\tilde{u} \cdot \nabla \psi + \psi \cdot \nabla \tilde{u} + \varepsilon^2 v \cdot \nabla v \\
&\quad + \psi \cdot \nabla \psi + \varepsilon v \cdot \nabla \psi + \varepsilon \psi \cdot \nabla v) = \Delta \psi,
\end{aligned}\\
\Div \psi = 0,\\
\psi|_{t=0} = {\psi_0}.
\end{cases}
\end{equation}

Applying the operator $\jap^{2+s}$ to the first equation of \eqref{eq:psi_system}, we get:
\begin{align}
    \p_t & \jap^{2+s} \psi -\Delta \jap^{2+s} \psi + \mathbb{P} ((\tilde{u} + \varepsilon v) \cdot \nabla \jap^{2+s} \psi) + \mathbb{P} (\jap^{2+s} \psi \cdot (\nabla \tilde{u} + \varepsilon \nabla v)) \\ & + \mathbb{P}([\jap^{2+s}, (\tilde{u} + \varepsilon v)\cdot] \nabla \psi) +\mathbb{P}([\jap^{2+s}, (\nabla \tilde{u} + \varepsilon \nabla v)\cdot] \psi)  \\ & + \jap^{2+s} \mathbb P (\psi \cdot \nabla \psi) + \varepsilon^2 \jap^{2+s} \mathbb P(v \cdot \nabla v) = 0,
\end{align}
with {$\jap^{2+s} \psi|_{t=0}=\jap^{2+s} \psi_0$}. We proceed by an energy estimate, multiplying the equation by $\jap^{2+s} \psi$ and integrating in space over $\RR$. First of all, since $\mathbb{P} = I - \nabla \Delta^{-1} (\nabla \cdot )$ and $\tilde{u}, v$ and $\psi$ are divergence-free, we have that
\begin{equation}
\int_{\RR} \mathbb{P} ((\tilde{u} + \varepsilon v) \cdot \nabla \jap^{2+s} \psi) \jap^{2+s} \psi = 0.
\end{equation}
Next, we estimate the remaining terms neglecting the Leray Projector, since it is a bounded operator in $L^2(\RR)$, and therefore will preserve the estimates.
By Gagliardo-Nirenberg, we can bound:
\begin{align}
    \| \jap^{2+s} \psi \cdot (\nabla \tilde{u} + \varepsilon \nabla v) \|_{L^2(\RR)} & \leq C \|\psi\|_{H^{2+s}(\RR)} \|\nabla (\tilde{u} + \varepsilon v)\|_{L^\infty(\RR)}, \\
    & \leq C \|\psi\|_{H^{2+s}(\RR)} \|\tilde{u} + \varepsilon v\|_{H^{2+s}(\RR)}. \label{eq:inlayer2_first}
\end{align}
Moreover, from \Cref{lemma:Abound} we have:
\begin{align}
    \|[\jap^{2+s}, (\tilde{u} + \varepsilon v) \cdot] \nabla \psi \|_{L^2(\RR)} & + \| [\jap^{2+s}, \nabla (\tilde{u} + \varepsilon v)\cdot] \psi \|_{L^2(\RR)}\\ & \quad \leq C \|\tilde{u} + \varepsilon v\|_{H^{2+s}(\RR)} \|\psi\|_{H^{2+s}(\RR)} \label{eq:inlayer2_second},
\end{align}
for some $C>0$. 
For the remaining terms, we have that
\begin{align}
         \intR \jap^{2+s} & \Div(\psi \otimes \psi + \varepsilon^2 v \otimes v) \jap^{2+s} \psi \\ & \quad = 
        - \intR \jap^{2+s} (\psi \otimes \psi+ \varepsilon^2 v \otimes v) : \nabla \jap^{2+s} \psi \\ & \quad \leq \| \jap^{2+s} (\psi \otimes \psi + \varepsilon^2 (v \otimes v))\|_{L^2(\RR)} \|\nabla \jap^{2+s} \psi\|_{L^2(\RR)} \\ & \quad \leq C (\|\psi\|^2_{H^{2+s}(\RR)} + \varepsilon^2\|v\|_{H^{2+s}(\RR)}) \|\nabla \jap^{2+s} \psi\|_{L^2(\RR)}\\ & \quad \leq \frac{1}{2} \|\nabla \jap^{2+s} \psi\|^2_{L^2(\RR)} +  \frac{C}{2} (\|\psi\|^4_{H^{2+s}(\RR)} + \varepsilon^4 \|v\|^4_{H^{2+s}(\RR)}). \label{eq:inlayer2_fourth}
\end{align}
We now sum \eqref{eq:inlayer2_first}, \eqref{eq:inlayer2_second} and \eqref{eq:inlayer2_fourth} to conclude the energy estimate; recalling that $\tilde{u} \in L^2(0,T; H^{2+s}(\R^2))$ by \Cref{lemma:usmooth} and $v\in C([0,T]; H^{2+s}(\RR))$ 
and defining $X(t) \coloneqq \|\jap^{2+s} \psi(\cdot, t)\|^2_{L^{2}(\RR)}$, the following inequality holds for any $t \in [0,T]$: 
\begin{equation}
\label{eq:X_inequality}
    \partial_t X(t) \leq C (\|\tilde u + \varepsilon v\|_{H^{2+s}(\RR)} X(t) +  X(t)^2 + \varepsilon^4 \|v\|_{H^{2+s}(\RR)}^4),
    \end{equation}
with ${X(0) = \|\jap^{2+s}\psi_0\|^2_{L^{2}(\RR)}}$. Since {$X(0)$ is small} and $\psi$ is continuous as a function on $[0,T]$ with values in $L^2$,
we call  $T_{\rm max}$ the first time in $t\in [0,T]$ in which $X(t) =1$ and observe that $T_{\rm max}>0$ (or $T_{\rm max}= T$ if such $t$ does not exist). 
Now, since also $\|\tilde u + \varepsilon v\|_{H^{2+s}(\RR)} \leq \|\tilde u\|_{H^{2+s}(\RR)} +\|v\|_{H^{2+s}(\RR)} \leq \|\tilde u\|_{H^{2+s}(\RR)}^2 +C$, where $C$ depends in particular on $v$, 
\eqref{eq:X_inequality} in $(0,T_{\rm max})$ implies that
\begin{equation}
    \partial_t X(t) \leq C (\|\tilde u \|^2_{H^{2+s}(\RR)} + 1) X(t) + C \varepsilon^4 ,
\end{equation}
with ${X(0) = \|\jap^{2+s}\psi_0\|^2_{L^{2}(\RR)}}$ and by Gronwall's lemma we have
\begin{align}
\label{eq:gronwall1}
X(t) & \leq C (\varepsilon^4 +{\|\jap^{2+s}\psi_0\|^2_{L^{2}(\RR)}})  e^{C \int_0^t(1 + \|\tilde u \|^2_{H^{2+s}(\RR)}) }
\\ & \leq 
C (\varepsilon^4 + {\|\jap^{2+s}\psi_0\|^2_{L^{2}(\RR)}})
, \label{eq:gronwall2}
\end{align}
for any $t \in [0,T_{\rm max}]$. 
Finally, choosing any $ \varepsilon, {\|\jap^{2+s}\psi_0\|^2_{L^{2}(\RR)}}\leq C^{-1}
$, where $C$ is the constant appearing in \eqref{eq:gronwall2} we obtain that $T_{\rm max}=T$ and that \eqref{eq:gronwall2} holds in $[0,T]$.

\end{proof}

\section{Nonlinear argument: Proof of Theorem~\ref{thm:longtimebehavior}}
\label{sec:nonlinear}

We now prove Theorem~\ref{thm:longtimebehavior}, namely, that the solution to the forced Navier-Stokes with 
initial datum $\Phi \in H^{2+s}$ containing a substantial component in the unstable directions will exhibit nonlinear instability.


We consider solutions $\tilde{U} + U$ in similarity variables~\eqref{eq:urescalingnotation}. The equation for $U$ is
\begin{equation}
\label{eq:U}
    \begin{cases*}
    \begin{aligned}
    \partial_\tau U + &\left(-1 + \frac{1}{\alpha}\right) U - \frac{1}{\alpha} \xi \cdot \nabla U \\ &+ \mathbb{P}(\tilde{U} \cdot \nabla U + U \cdot \nabla \tilde{U} + U \cdot \nabla U) = e^{-\tau \gamma} \Delta U 
    \end{aligned}\\
    \Div U = 0 \\
    U|_{\tau = \tau_0} = \Phi \, ,
    \end{cases*}
\end{equation}
with the corresponding vorticity equation
\begin{equation}
\label{eq:Omega}
    \begin{cases*}
    \partial_\tau \Omega -\Omega - \frac{1}{\alpha} \xi \cdot \nabla \Omega +\tilde{U} \cdot \nabla \Omega + U \cdot \nabla \tilde{\Omega} + U \cdot \nabla \Omega = e^{-\tau \gamma} \Delta \Omega \\
    \Omega|_{\tau = \tau_0} = \nabla^\perp \cdot \Phi \, .
    \end{cases*}
\end{equation}

\subsection{Functional setup} 
We seek $U$ in a suitable space equipped with the norm
\begin{equation}
\label{eq:normX}
\norm{U}_X \coloneqq \norm{U}_{L^2} + \norm{\Omega}_{L^2} + \norm{\nabla \Omega}_{L^2 \cap L^{2+\sigma}},
\end{equation}
where $\sigma>0$ is given in \Cref{rmk:sigmacond} only in terms of $\alpha$. The $X$ norm is equivalent to the $W^{2,2+\sigma}$ norm, but~\eqref{eq:normX} indicates which energy estimates we will perform. By the Sobolev embedding $W^{2+s, 2}(\RR) \subseteq  W^{2,2+\sigma}(\RR)$ for the aforementioned choice of $s = 1-2/(2+\sigma)$, the $H^{2+s}$ norm controls the $X$ norm.
%
\begin{remark}
The $L^p$ norms of the velocity for any $p \in [2,+\infty]$ are controlled via the Gagliardo-Nirenberg inequality by the $X$ norm. For instance, for $p=+\infty$, there exists $ C>0$ such that
\begin{equation}
\label{bound:uLinf}
    \| U \|_{L^\infty} \leq C \| U \|_{L^2}^{1/2} \| \nabla \Omega \|_{L^2}^{1/2} \leq C \|U\|_X.
\end{equation}
We can also control $\|\Omega\|_{L^\infty}$ with the $X$ norm:
\begin{equation}
\label{eq:omega_Linf}
    \norm{\Omega}_{L^\infty} \leq C \norm{\nabla \Omega}_{L^{2+\sigma}}^{\sigma /(4+3\sigma)} \norm{\nabla U}_{L^2}^{(4+2\sigma)/(4+3\sigma)}\leq C \|U\|_X.
\end{equation}
\end{remark}
\begin{remark}
For 2D $m_0$-fold symmetric functions, the following Hardy-type inequality holds:
\begin{equation}
\label{eq:hardy}
    |U(x)| \leq C \min(|x|,1) \| U \|_X \, , \quad \forall x \in \R^2.
\end{equation}
Indeed, for any $R>0$ the average $U_{\rm avg}$ of $U$ on the circle $\p B_R$ is zero by symmetry, so 
\begin{equation}
    R^{-1} \| U - U_{\rm avg}
     \|_{L^\infty(\p B_R)} \leq C R^{-1}  \| \p_\theta U \|_{L^\infty(\p B_R)} \leq C \| U \|_{{\rm Lip}}.
\end{equation}
\end{remark}
\subsection{Perturbative argument}
We solve \eqref{eq:U} and \eqref{eq:Omega} for $U\coloneqq \Ulin + \Uper$ and $\Omega \coloneqq \Omegalin + \Omegaper$, where $\Ulin$ and $\Omegalin$ solve the linearized Euler equation around $\bar U$ and $\bar \Omega$ with data $P_{\lambda,\bar{\lambda}} \Phi$ and $\nabla^\perp \cdot P_{\lambda,\bar{\lambda}} \Phi$, namely 
\begin{equation}
\label{eq:Ulin}
\begin{cases}
\partial_\tau \Ulin - L_{\rm ss}^u \Ulin = 0 \\
\Div \Ulin = 0 \\
\Ulin|_{\tau = \tau_0} = P_{\lambda,\bar{\lambda}} \Phi,
\end{cases}
\qquad
    \begin{cases*}
    \partial_\tau \Omegalin - L^\omega_{\rm ss} \Omegalin = 0 \\
    \Omegalin|_{\tau = \tau_0} = \nabla^\perp \cdot P_{\lambda,\bar{\lambda}} \Phi,
    \end{cases*}
\end{equation}
with $L_{\rm ss}^u = L_{\rm ss}$ defined in~\eqref{eq:lssudef} and
\begin{equation}
- L^\omega_{\rm ss} \Omegalin = \left(-1- \frac{\xi}{\alpha} \cdot \nabla\right) \Omegalin + \bar{U} \cdot \nabla \Omegalin + \Ulin \cdot \nabla \bar{\Omega} \, .
\end{equation}
From \eqref{eq:Ulin}, it is clear that $\Ulin = e^{(\tau-\tau_0) L^u_{\rm ss}} (P_{\lambda,\bar{\lambda}} \Phi)$ and $\Omegalin = e^{(\tau-\tau_0) L^\omega_{\rm ss}}  (\nabla^\perp \cdot P_{\lambda,\bar{\lambda}} \Phi)$. 
By \Cref{rmk:petabar}, we have the estimate 
\begin{equation}
\label{eq:bound_Ulin}
\| \Omegalin\|_{W^{3,2+\sigma}(\RR)} \leq C\varepsilon e^{a\tau}, \quad \forall \tau \in \mathbb R.
\end{equation}
The perturbations $\Uper$ and $\Omegaper$ solve, respectively
\begin{align}
\label{eq:Uper}
\begin{split}
\partial_\tau \Uper & -L^u_{\rm ss} \Uper = - \mathbb{P}((\tilde{U} - \bar{U}) \cdot \nabla \Uper) - \mathbb{P}((\tilde{U} - \bar{U}) \cdot \nabla \Ulin) \\ 
& \quad -\mathbb{P} (\Uper \cdot \nabla (\tilde{U} - \bar{U}) + \Uper \cdot \nabla \Uper) - \mathbb{P}(\Uper \cdot \nabla \Ulin + \Ulin \cdot \nabla \Uper) \\ 
& \quad -\mathbb{P}(\Ulin \cdot \nabla (\tilde{U}-\bar U) + \Ulin \cdot \nabla \Ulin) + e^{-\tau \gamma} \Delta \Ulin + e^{-\tau \gamma} \Delta \Uper,
\end{split}
\end{align}
with {$\Uper|_{\tau = \tau_0} = \Phi - P_{\lambda,\bar{\lambda}} \Phi$} and 
\begin{align}
\label{eq:Omegaper}
\begin{split}
\partial_\tau \Omegaper & - L_{\rm ss}^\omega\Omegaper = - (\tilde{U} - \bar{U}) \cdot \nabla \Omegaper - (\tilde{U} - \bar{U}) \cdot \nabla \Omegalin \\ & \quad - \Uper \cdot \nabla (\tilde{\Omega} - \bar{\Omega}) - \Ulin \cdot \nabla (\tilde{\Omega} - \bar{\Omega}) - \Uper \cdot \nabla \Omegaper \\ & \quad - \Uper \cdot \nabla \Omegalin - \Ulin \cdot \nabla \Omegaper - \Ulin \cdot \nabla \Omegalin + e^{-\tau \gamma} \Delta \Omegalin + e^{-\tau \gamma} \Delta \Omegaper,
\end{split}
\end{align}
with {$\Omegaper|_{\tau = \tau_0} = \nabla^\perp \cdot (\Phi - P_{\lambda,\bar{\lambda}}\Phi)$}.

The proof of \Cref{thm:longtimebehavior} consists in showing that the bound \eqref{eq:nonlinear_bound} holds; this is rewritten in terms of $U^{\rm per}$ in the following proposition.

\begin{proposition}[Nonlinear estimate]
\label{prop:nonlinear_estimate}Let $\bar U, \bar F, \lambda, \eta, P_{\lambda}, P_{\bar\lambda}$ be as in the statement of Theorem~\ref{thm:longtimebehavior}.
Then, {for every $0 <c<1 $}, there exists $C:= C(data,c)$ (depending on $\bar U, \bar F, \lambda, \eta$...) and $ \tau_0:= \tau_0(data,c )$ 
such that the following holds.


{For any $0 <c<1 $}, there exist $\tau_0 \gg 1$ and $C\geq 1$ depending on $c,\bar U, \eta, a$ such that the following holds. Let {$\Phi \in W^{2,2+\sigma}$}, {$\varepsilon \coloneqq e^{-a\tau_0}\|P_{\lambda,\bar{\lambda}}\Phi\|_X$, $M \coloneqq e^{-a\tau_0}\|\Phi - P_{\lambda,\bar{\lambda}} \Phi\|_X$}, and let $U^{\rm lin}$ solve the linearized Euler equation \eqref{eq:Ulin} around $\bar U$ with initial datum {$P_{\lambda,\bar{\lambda}} \Phi$} . If
\begin{equation}
\label{eq:tau_max_condition}
\tau_{\rm max} := \frac{1}{a} \log \frac{1}{C(\varepsilon + M)} \geq \tau_0 \, ,
\end{equation}
then the solution $\Uper$ to \eqref{eq:Uper} with initial condition $\Phi - P_{\lambda,\bar{\lambda}} \Phi$ satisfies on $[\tau_0,\tau_{\rm max}]$
\begin{equation}
	\label{eq:main_nonlin_estimate}
\|\Uper\|_X \leq c \varepsilon e^{a\tau} + CM e^{a \tau} \, .
\end{equation}
\end{proposition}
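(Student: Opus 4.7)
The plan is a bootstrap argument on the quantity $\|U^{\rm per}(\tau)\|_X$ for $\tau \in [\tau_0,\tau_{\rm max}]$, combining Duhamel's formula on the spectral decomposition of $L_{\rm ss}$ with term-by-term bounds on the right-hand side of~\eqref{eq:Uper}. Concretely, I would assume the bootstrap hypothesis
\begin{equation*}
\|U^{\rm per}(\tau)\|_X \;\leq\; \bar C\, e^{a\tau}\bigl(\varepsilon/k + M e^{-\zeta \tau_0} + M e^{-a_{\rm gap}(\tau-\tau_0)}\bigr) \;=:\; \mathcal{B}(\tau),
\end{equation*}
where $k\gg 1$ and $\tau_0\gg 1$ are to be chosen, $\bar C \gg 1$ is large, and $a_{\rm gap}>0$ is the spectral gap between $a=\Re\lambda$ and $\sup\Re\,\sigma(L_{\rm ss})\setminus\{\lambda,\bar\lambda\}\leq 0$. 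At $\tau=\tau_0$ this holds since $\|U^{\rm per}(\tau_0)\|_X\leq Me^{a\tau_0}\ll\mathcal{B}(\tau_0)$.

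\textbf{Semigroup splitting and Duhamel.} Using semisimplicity of $\lambda,\bar\lambda$ and the Gearhardt–Pr{\"u}ss theorem applied to $L_{\rm ss}$ on $\ker P_{\lambda,\bar\lambda}$, I would establish in the $X$-topology
\begin{equation*}
\|e^{\tau L_{\rm ss}} P_{\lambda,\bar\lambda}\|_{X\to X}\lesssim e^{a\tau},\qquad \|e^{\tau L_{\rm ss}}(I-P_{\lambda,\bar\lambda})\|_{X\to X}\lesssim e^{(a-a_{\rm gap})\tau}.
\end{equation*}
Duhamel's formula then splits $U^{\rm per}=P_{\lambda,\bar\lambda}U^{\rm per}+(I-P_{\lambda,\bar\lambda})U^{\rm per}$; the initial datum contributes only the term $e^{(\tau-\tau_0)L_{\rm ss}}(\Phi-P_{\lambda,\bar\lambda}\Phi)$, bounded by $CMe^{a\tau}e^{-a_{\rm gap}(\tau-\tau_0)}$, while both components receive forcing contributions from the right-hand side $\mathcal R$ of~\eqref{eq:Uper}.

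\textbf{Bounding the right-hand side.} Each of the eight families of terms in~\eqref{eq:Uper} is estimated in $X$ by combining: (i) the convergence $\|\tilde U-\bar U\|_{L^\infty}+\|\nabla(\tilde U-\bar U)\|_{L^\infty}+\|\nabla(\tilde\Omega-\bar\Omega)\|_{L^2\cap L^{2+\sigma}}+\|r\partial_r\nabla(\tilde\Omega-\bar\Omega)\|_{L^2\cap L^{2+\sigma}}\lesssim e^{-\zeta\tau}$ from Lemma~3.3 and Remark~3.4; (ii) the linear growth and smoothness $\|U^{\rm lin}\|_X+\|\Omega^{\rm lin}\|_{W^{3,2+\sigma}}\lesssim \varepsilon e^{a\tau}$ from Remark~2.6; and (iii) the bootstrap $\|U^{\rm per}\|_X\leq\mathcal B(\tau)$. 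The quadratic term $U^{\rm lin}\!\cdot\!\nabla U^{\rm lin}$ produces $O(\varepsilon^2 e^{2a\tau})$, whose Duhamel integral is of order $\varepsilon^2 e^{2a\tau}/a$; on $[\tau_0,\tau_{\rm max}]$, the condition~\eqref{eq:tau_max_condition} converts this into $\leq \varepsilon e^{a\tau}/k$ after choosing $C$ large. The cross terms $U^{\rm lin}\!\cdot\!\nabla U^{\rm per}$ and $U^{\rm per}\!\cdot\!\nabla U^{\rm lin}$ similarly gain a factor $\varepsilon e^{a\tau}$ which is $\lesssim 1/k$ on $[\tau_0,\tau_{\rm max}]$; pure background-error terms inherit the factor $e^{-\zeta\tau}$, giving at worst $e^{-\zeta\tau_0}$ on $[\tau_0,\tau_{\rm max}]$; and the viscous forcings $e^{-\tau\gamma}\Delta U^{\rm lin}$ and $e^{-\tau\gamma}\Delta U^{\rm per}$ carry the factor $e^{-\tau\gamma}\leq e^{-\tau_0\gamma}$, which is absorbed into the small constants. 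Finally, to prevent any $\tau^a e^{a\tau}$ growth when the forcing $P_{\lambda,\bar\lambda}\mathcal R$ is integrated against $e^{a(\tau-s)}$, one exploits semisimplicity of $\lambda,\bar\lambda$ (no Jordan block, hence no polynomial factors).

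\textbf{The main obstacle and closing.} I expect the key difficulty to be the term $U^{\rm per}\!\cdot\!\nabla\nabla(\tilde\Omega-\bar\Omega)$ appearing when one estimates $\|\nabla\Omega^{\rm per}\|_{L^2\cap L^{2+\sigma}}$, since the convergence $\tilde\Omega\to\bar\Omega$ fails in $W^{2,\infty}$. The remedy, as indicated in the introduction, is the Hardy-type bound~\eqref{eq:hardy}, $|U^{\rm per}(x)|\leq C\min(|x|,1)\|U^{\rm per}\|_X$, combined with the companion estimate in Lemma~3.3 on the weighted quantity $\|r\partial_r\nabla(\tilde\Omega-\bar\Omega)\|$: writing $|x|\cdot|\nabla^2(\tilde\Omega-\bar\Omega)|=|r\partial_r\nabla(\tilde\Omega-\bar\Omega)|+\ldots$ effectively transfers one derivative back to $U^{\rm per}$ via the $1/r$ weight. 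A companion difficulty is the derivative-loss term $e^{-\tau\gamma}\Delta U^{\rm per}$ when estimating the $X$-norm; in velocity formulation, one has $\|\Delta U^{\rm per}\|_{L^2}\leq \|\nabla\Omega^{\rm per}\|_{L^2}\leq\|U^{\rm per}\|_X$, so no derivatives are actually lost, and in the vorticity-level estimates a standard energy argument (multiply the $\Omega^{\rm per}$ equation by $-\Delta\Omega^{\rm per}$ and $|\nabla\Omega^{\rm per}|^\sigma\Delta\Omega^{\rm per}$) produces the dissipative contribution $-e^{-\tau\gamma}\|\Delta\Omega^{\rm per}\|^2$ that absorbs any remaining cross terms. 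Summing all contributions improves the bootstrap assumption by a factor $\tfrac{1}{2}$ after choosing $k$ and $\tau_0$ large (depending on $\bar C$), which closes the bootstrap on the whole interval $[\tau_0,\tau_{\rm max}]$; then taking $\bar C(1/k+e^{-\zeta\tau_0})\leq c$ yields~\eqref{eq:main_nonlin_estimate}.
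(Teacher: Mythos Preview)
Your bootstrap-and-Duhamel strategy is on the right track, and you correctly identify the delicate term $U^{\rm per}\cdot\nabla\nabla(\tilde\Omega-\bar\Omega)$ and its resolution via the Hardy bound~\eqref{eq:hardy} and the weighted estimate on $r\partial_r\nabla(\tilde\Omega-\bar\Omega)$. However, there is a genuine gap in your proposed architecture.

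\textbf{The gap: semigroup bounds in $X$.} You assert that Gearhardt--Pr\"uss gives $\|e^{\tau L_{\rm ss}}(I-P_{\lambda,\bar\lambda})\|_{X\to X}\lesssim e^{(a-a_{\rm gap})\tau}$, but the paper only establishes the semigroup bound~\eqref{eq:semigroup_estimate} in $L^2$; no resolvent or semigroup estimates in $W^{2,2+\sigma}$ are available. In fact, because the self-similar transport term $-\xi/\alpha\cdot\nabla$ does not commute with derivatives, passing to $\nabla\Omega$ shifts the effective decay coefficient (this is visible in Step~4 of the paper's proof, where the energy coefficient is $2/(\alpha p)-1-1/\alpha$, negative for $p\geq 2$). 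So the $X\to X$ bound you need is neither proved nor routine, and your Duhamel-in-$X$ scheme cannot close as written.

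\textbf{How the paper actually proceeds.} The paper uses Duhamel with the \emph{full} semigroup bound $\|e^{\tau L_{\rm ss}}\|_{L^2\to L^2}\lesssim e^{a\tau}$ (no projection splitting, no spectral gap term in the bootstrap) \emph{only} for the $L^2$ component of $\|U^{\rm per}\|_X$ (Step~1). The remaining pieces of the $X$-norm are obtained by direct energy estimates on the PDE, in a hierarchy: $\|\Omega^{\rm per}\|_{L^2}$ (Step~2), then the weighted angular derivative $\|wr^{-1}\partial_\theta\Omega^{\rm per}\|_{L^p}$ (Step~3), and finally $\|\nabla\Omega^{\rm per}\|_{L^p}$ (Step~4). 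Each step feeds the improved bound from the previous one into its right-hand side, and Gagliardo--Nirenberg interpolation between the $L^2$ and top-order norms produces the crucial $\bar C^{1/2}$ (rather than $\bar C$) in the output, which is what allows the bootstrap to close. Step~3 is essential and missing from your outline: in Step~4 the commutator term $\partial_i\tilde U\cdot\nabla\Omega^{\rm per}$ involves only the angular derivative (since $\tilde U$ is radial), and one needs an already-improved bound on that piece. Finally, the viscous term $e^{-\tau\gamma}\Delta\partial_i\Omega^{\rm per}$ is handled by integration by parts in the energy estimate (it has a good sign), not by multiplying by $-\Delta\Omega^{\rm per}$ as you propose.
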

We show \eqref{eq:main_nonlin_estimate} with a bootstrap argument.
\begin{proposition}[Bootstrap estimate]
\label{prop:bootstrap}
In the same notation as \Cref{prop:nonlinear_estimate}, let $K, \bar{C} \geq 1$ and $\tau_{\rm max} \geq \tau_0 \gg 1$ such that
\begin{equation}
\label{eq:ass_K}
e^{-\zeta\tau_0} \leq 1/K, \quad 
(\varepsilon + M) e^{a \tau_{\rm max}} \leq 1/K,
\end{equation} 
where $\zeta > 0$ is the constant depending only on $\alpha$ from \Cref{rmk:sigmacond}.
Assume that the solution $\Uper$ to \eqref{eq:Uper} satisfies \begin{equation}
	\label{eq:bootstrapassumption}
\|\Uper\|_X \leq \bar{C} e^{a\tau} \left(\frac{\varepsilon}{K} + M\right)\qquad \mbox{for every }\tau \in [\tau_0,\bar{\tau}]
\end{equation}
for some $\tau_0 < \bar{\tau} \leq \tau_{\rm max}$. Then, there exists $ C = C(\bar U, \eta, a)$ such that:
\begin{equation}
\label{eq:finalimprovedassumption}
\|\Uper\|_X \leq C \bar{C}^{1/2} (1+\bar{C}^2/K) e^{a\tau} (\varepsilon/K + M) \qquad \mbox{for every }\tau \in [\tau_0,\bar{\tau}].
\end{equation}

\end{proposition}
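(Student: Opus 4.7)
The plan is to apply Duhamel's formula to~\eqref{eq:Uper} using the semigroup generated by $L^u_{\rm ss}$, and then estimate each forcing term in the $X$ norm using the bootstrap~\eqref{eq:bootstrapassumption}, the decay of $\tilde U - \bar U$ from Lemma~\ref{lemma:conv_background}, and the bound $\| \Ulin\|_X \lesssim \varepsilon e^{a\tau}$ from~\eqref{eq:bound_Ulin}. Concretely, write
\[
\Uper(\tau) = e^{(\tau - \tau_0) L^u_{\rm ss}}(\Phi - P_{\lambda,\bar\lambda}\Phi) + \int_{\tau_0}^\tau e^{(\tau - s) L^u_{\rm ss}} N(s) \, ds,
\]
where $N(s)$ collects the right-hand side of~\eqref{eq:Uper}. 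The initial datum lies in $\ker P_{\lambda,\bar\lambda}$, so by Proposition~\ref{prop:lin_part} and a Gearhardt--Pr\"uss argument on the spectral complement one obtains the initial contribution $\lesssim M e^{a\tau}$. For the Duhamel integral I would use the full semigroup bound $\|e^{(\tau-s) L^u_{\rm ss}}\|_{X \to X} \lesssim e^{a(\tau-s)}$ and it remains to make $e^{-as}\|N(s)\|_X$ integrable in $s$ against the scale $e^{a\tau}(\varepsilon/K + M)$.

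The forcing terms naturally split into three groups. First, \emph{small drift/source} terms of the form $(\tilde U - \bar U) \cdot \nabla \Uper$, $(\tilde U - \bar U) \cdot \nabla \Ulin$, and $\Uper \cdot \nabla(\tilde U - \bar U)$ are handled by the decay $\|\tilde U - \bar U\|_X \lesssim e^{-\zeta s}$ from~\eqref{eq:ineq_zeta}; after Duhamel each contributes $\lesssim \bar C\, e^{-\zeta \tau_0} e^{a\tau}(\varepsilon/K + M)$, which by the first condition in~\eqref{eq:ass_K} is at most $(\bar C/K) e^{a\tau}(\varepsilon/K + M)$. Second, the \emph{quadratic} term $\Uper \cdot \nabla \Uper$ gives, by the bootstrap, $\|\Uper\|_X^2 \lesssim \bar C^2 e^{2as}(\varepsilon/K + M)^2$, and the Duhamel integral then produces $\lesssim \bar C^2 (\varepsilon + M)\,e^{a\tau}(\varepsilon/K + M) \lesssim (\bar C^2/K)\, e^{a\tau}(\varepsilon/K + M)$ via the second condition in~\eqref{eq:ass_K}; this is precisely the source of the $\bar C^2/K$ factor in~\eqref{eq:finalimprovedassumption}. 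Third, the \emph{mixed and pure linear} terms $\Uper \cdot \nabla \Ulin$, $\Ulin \cdot \nabla \Uper$, $\Ulin \cdot \nabla \Ulin$, and the viscous terms $e^{-\gamma s}\Delta \Ulin$, $e^{-\gamma s}\Delta \Uper$ all produce contributions of the right form provided $\tau_0$ is taken large enough; here it is important that one works at the velocity level, so that the derivative loss from $e^{-\gamma s}\Delta \Uper$ is paid for by the decaying factor $e^{-\gamma s} \leq e^{-\gamma \tau_0}$.

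The hardest piece is the term $\Uper \cdot \nabla(\tilde U - \bar U)$ once we estimate $\|\nabla \Omegaper\|_{L^{2+\sigma}}$: in vorticity form one meets $\Uper \cdot \nabla\nabla(\tilde\Omega - \bar\Omega)$, but $\nabla\nabla(\tilde\Omega - \bar\Omega)$ is controlled only in $L^{2+\sigma}$, and not in $L^\infty$ (as emphasized after~\eqref{eqn:backgrconv}), so a direct H\"older estimate fails. The key trick, as announced in the introduction, is to ``share one derivative with $\Uper$ through the weight $1/r$'': write $\Uper \cdot \nabla\nabla(\tilde\Omega - \bar\Omega) \sim (\Uper/r)\, r\p_r \nabla(\tilde\Omega - \bar\Omega) + (\text{angular})$ and use the Hardy bound~\eqref{eq:hardy} valid for $m_0$-fold radial $\Uper$, coupled with the weighted decay $\|r\p_r \nabla(\tilde\Omega - \bar\Omega)\|_{L^{2+\sigma}} \lesssim e^{-\zeta s}$ from Lemma~\ref{lemma:conv_background}. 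The $\bar C^{1/2}$ prefactor in~\eqref{eq:finalimprovedassumption} should then arise by interpolating the high-regularity piece of the $X$ norm (which after the bootstrap carries a factor $\bar C$) against the $L^2$ piece (which can be propagated with a constant independent of $\bar C$, since the $L^2$ energy estimate benefits from the anti-symmetry of $\tilde U \cdot \nabla$), yielding a Cauchy--Schwarz gain of $\bar C^{1/2}$.

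The main obstacle is therefore the interplay between the weak convergence $\tilde\Omega \to \bar\Omega$ only in $L^{2+\sigma}$, the derivative loss from the viscous term, and the fact that $\lambda$ must be \emph{semisimple} (Proposition~\ref{prop:lin_part}) so that the semigroup bound~\eqref{eq:semigroup_estimate} does not lose a $\tau$-factor from a Jordan block, since such a loss would be incompatible with the $e^{a\tau}$ target rate in~\eqref{eq:finalimprovedassumption}. Once all estimates are assembled and the constants collected, they combine to the bound $C\bar C^{1/2}(1+\bar C^2/K)$, which under the choice $\bar C \gg C^2$ and $K \gg \bar C^2$ made before $\tau_0$ is strictly smaller than $\bar C$, closing the bootstrap.
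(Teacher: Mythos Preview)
Your proposal has a structural gap: you apply Duhamel in the full norm $X \simeq W^{2,2+\sigma}$, invoking a semigroup bound $\|e^{(\tau-s)L^u_{\rm ss}}\|_{X\to X}\lesssim e^{a(\tau-s)}$. No such bound is established in the paper---Proposition~\ref{prop:lin_part} gives only $L^2\to L^2$---and it is doubtful in general because $L^u_{\rm ss}$ contains the dilation drift $-\frac{\xi}{\alpha}\cdot\nabla$, which tends to concentrate mass and is not a bounded perturbation in higher Sobolev norms. Relatedly, your treatment of the viscous term fails: in a Duhamel scheme on $X$ you would need $\|e^{-\gamma s}\Delta\Uper\|_X$, which requires four derivatives of $\Uper$, two more than the bootstrap provides; the decaying prefactor $e^{-\gamma s}$ does not compensate a genuine loss of derivatives.

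The paper's fix is a hybrid scheme. Duhamel with $e^{\tau L^u_{\rm ss}}$ is used \emph{only} for $\|\Uper\|_{L^2}$ (Step~1), where the semigroup bound is available and where $\|\Delta\Uper\|_{L^2}\le\|\nabla\Omegaper\|_{L^2}\le\|\Uper\|_X$ is controlled by the bootstrap without loss. The remaining pieces of the $X$ norm---$\|\Omegaper\|_{L^2}$, then a weighted angular derivative $\|wr^{-1}\p_\theta\Omegaper\|_{L^p}$, then $\|\nabla\Omegaper\|_{L^p}$---are propagated by direct $L^p$ energy estimates (Steps~2--4), exploiting that the transport terms $\tilde U\cdot\nabla$, $\Ulin\cdot\nabla$, $\Uper\cdot\nabla$ integrate to zero and that the Laplacian has a good sign after integration by parts. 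Your identification of the Hardy trick for $\Uper\cdot\nabla\nabla(\tilde\Omega-\bar\Omega)$ and of the interpolation origin of the $\bar C^{1/2}$ factor is correct; but the $\bar C^{1/2}$ actually enters when $\|\Omegaper\|_{L^{2+\sigma}}$ (or $\|\p_i\Uper\|_{L^p}$) is interpolated between the \emph{improved} $L^2$ bound from Steps~1--2 and the bootstrap bound on $\|\nabla\Omegaper\|$, and this improved $L^2$ bound is precisely what the $L^2$-only Duhamel step supplies.
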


\begin{proof}[Proof of \Cref{prop:bootstrap}]
We establish a preliminary improved estimate for each term in the norm $\| \Uper \|_X$:
\medskip

\textbf{Step 1: baseline estimate for $\norm{\Uper}_{L^2}$}

\textit{
    Under the bootstrap assumption \eqref{eq:bootstrapassumption}, it holds:
    \begin{equation}
    \label{eq:improved}
    \norm{\Uper}_{L^2} \leq  C(1+\bar{C}^2/K) e^{a\tau} (\varepsilon/K + M),
    \end{equation}
    for all $\tau \in [\tau_0, \tau_{\rm max}]$.}

We denote by $F$ the right-hand side of \eqref{eq:Uper}; let us now analyze separately the terms in $F$ to which we will apply the semigroup $e^{\tau L_{\rm ss}}$. Indeed, we will get the desired estimate for the solution to \eqref{eq:Uper} via Duhamel's formula: 
\begin{equation}
    U^{\rm per}(\cdot, \tau) = e^{(\tau-\tau_0) L^u_{\rm ss}} \Psi + \int_{\tau_0}^\tau e^{(\tau-s)L^u_{\rm ss}} F(s) ds\, .
\end{equation}

The estimate on $L_{\rm ss}: L^2 \rightarrow L^2$ obtained in \eqref{eq:semigroup_estimate} yields
\begin{align}
\label{eq:Uper_gronwall}
    \| \Uper(\cdot, \tau)\|_{L^2(\RR)} & \leq \| e^{\tau L^u_{\rm ss}} \Psi \|_{L^2(\RR)} +\int_{\tau_0}^\tau \|e^{(\tau-s)L^u_{\rm ss}} F(s)\|_{L^2(\RR)} ds \\ & \leq C M e^{a\tau} + C \int_{\tau_0}^\tau e^{(\tau-s)a} \|F(s)\|_{L^2(\RR)} ds\, .
\end{align}

We are now left to bound $\norm{F}_{L^2}$. First of all, since the Leray projector is a bounded operator in $L^2$, it is enough to bound the $L^2$ norm of the object to which it is applied.

We start by bounding the terms related to the fact that $\Ulin$ solves a linear equation, which is linearized around the background $\bar{U}$ and not $\tilde{U}$ and does not contain a viscous term. We use the quantitative results on the convergence to the background from \Cref{lemma:conv_background} and in particular \Cref{rmk:sigmacond} and the bound on the linear part in \eqref{eq:bound_Ulin}: we conclude that there exists $C = C(a,\eta, \tilde{U})$ such that
\begin{align}
    & \norm{\mathbb{P}(\Ulin \cdot \nabla (\bar{U} - \tilde{U}) + \nabla \Ulin \cdot (\bar{U} - \tilde{U}))}_{L^2}  + \norm{e^{-\tau \gamma} \Delta \Ulin}_{L^2} \\ & \quad \leq \norm{\Ulin}_{L^\infty} \norm{\nabla (\bar{U} - \tilde{U})}_{L^2} + \norm{\nabla \Ulin}_{L^2} \norm{\bar{U} - \tilde{U}}_{L^\infty} + \norm{e^{-\tau \gamma} \Delta \Ulin}_{L^2} \\ & \quad \leq  C \varepsilon e^{(a-\zeta)\tau},
\end{align}
\begin{align}
    \norm{e^{-\tau \gamma} \Delta \Ulin}_{L^2} & \leq C \varepsilon e^{(a-\gamma)\tau} \leq C \varepsilon e^{(a-\zeta)\tau},\\
    \norm{\mathbb{P}(\Ulin \cdot \nabla \Ulin)}_{L^2} & \leq \norm{\nabla \Ulin}_{L^2} \norm{\Ulin}_{L^\infty} \leq C \varepsilon^2 e^{2a\tau}.
\end{align}

Moreover, the diffusion term with $\Uper$ gains smallness with respect to the bootstrap assumption \eqref{eq:bootstrapassumption} thanks to the prefactor $e^{-\tau \gamma}$:
\begin{equation}
    \| e^{-\tau \gamma} \Delta \Uper \|_{L^2} \leq e^{-\tau \gamma} \|\nabla \Omegaper\|_{L^2} \leq e^{-\tau \zeta}  \bar{C} e^{a\tau} (\varepsilon/K + M).
\end{equation}

The remaining terms involving $U^{\rm per}$ and its derivatives are lower order terms, related to discrepancy between the backgrounds $\tilde{U}$ and $\bar{U}$, convection terms interacting with $\Ulin$ and nonlinearity. To bound them, we rely on the bootstrap assumption \eqref{eq:bootstrapassumption} and on the estimates in \Cref{lemma:conv_background}, \Cref{rmk:sigmacond} and \eqref{eq:bound_Ulin}, to conclude that:
\begin{align}
    & \norm{\mathbb{P}((\tilde{U} - \bar{U}) \cdot \nabla \Uper) + \Uper \cdot \nabla(\tilde{U} - \bar{U}))}_{L^2} \\ & \quad \leq \norm{\tilde{U} - \bar{U}}_{L^\infty} \norm{\nabla \Uper}_{L^2} + \norm{\Uper}_{L^\infty} \norm{\nabla(\tilde{U} - \bar{U})}_{L^2} \leq C \bar{C} e^{(a-\zeta)\tau} (\varepsilon/K + M),
\end{align}
\begin{align}
    \norm{\mathbb{P}(\Ulin\cdot \nabla \Uper + \Uper \cdot \nabla \Ulin)}_{L^2} & \leq  \norm{\Ulin}_{L^\infty} \norm{\nabla \Uper}_{L^2} + \norm{\Uper}_{L^2} \norm{\nabla \Ulin}_{L^\infty}\\ & \leq C \bar{C} \varepsilon e^{2a\tau} (\varepsilon/K + M), \\
    \norm{\mathbb{P}(\Uper \cdot \nabla \Uper)}_{L^2} & \leq  \norm{\Uper}_{L^\infty} \norm{\nabla \Uper}_{L^2} \leq C \bar{C}^2 e^{2a\tau} (\varepsilon/K + M)^2.
\end{align}
Therefore, integrating in time, we get 
\begin{align}
      \int_{\tau_0}^{\tau} e^{a(\tau-s)}\norm{F(s)}_{L^2} ds & \leq C \int_{\tau_0}^{\tau} e^{a(\tau-s)} ((\varepsilon  +  \bar C (\varepsilon/K + M))  e^{(a-\zeta)s} \\ & \quad\quad\quad + (\varepsilon^2 + \bar C^2 (\varepsilon/K + M)^2 e^{2as}) ds \label{eq:b1}\\ 
     & \leq C \varepsilon e^{a\tau} e^{-\zeta \tau_0} + C \varepsilon^2 e^{2a\tau} \\ & \quad + C \bar{C} e^{-\zeta \tau_0} e^{a\tau} (\varepsilon/K + M) + C \bar{C}^2 e^{2a\tau} (\varepsilon/K + M )^2.
    \end{align}

Using \eqref{eq:Uper_gronwall} and the preliminary assumptions \eqref{eq:ass_K} on $K$ and $\tau_{\rm max}$, we reduce to:
\begin{align}
   \|\Uper\|_{L^2} & \leq Ce^{a\tau}(\varepsilon/K+M) +  C\bar{C} e^{a\tau}(\varepsilon/K + M)/K \\ & \quad + C\bar{C}^2  e^{a\tau}(\varepsilon/K + M)/K \\
   & \leq C(1+\bar{C}^2/K) e^{a\tau}  (\varepsilon/K + M). \label{eq:improved_ass_U} 
\end{align}


\textbf{Step 2: estimate for $\|\Omegaper\|_{L^2}$}

\textit{Under the bootstrap assumption \eqref{eq:bootstrapassumption}, it holds:
    \begin{equation}
            \| \Omega^{\rm per} \|_{L^2} 
            \leq C(1+\bar{C}^2/K) e^{a\tau} (\varepsilon/K + M),     
    \end{equation}
for all $\tau \in [\tau_0,\tau_{\rm max}]$.}

We provide an estimate for $\|\Omegaper\|_{L^2}$ by energy methods, multiplying \eqref{eq:Omegaper} by $\Omegaper$ and integrating by parts on $\RR$. The term representing diffusion for $\Omegaper$ gives us: 
\begin{equation}
    \intR e^{-\tau \gamma} \Delta \Omegaper \Omegaper =  - e^{-\tau \gamma} \norm{\nabla \Omegaper}_{L^2}^2 \leq 0.
\end{equation}
Moreover, since the velocity fields are divergence free, the contribution from the advection terms is zero:
\begin{align}
    \intR ((\Ulin +\Uper+ \tilde{U}) \cdot \nabla) \Omegaper \Omegaper = 0.
\end{align}
Using the improved assumption \eqref{eq:improved_ass_U} on $\|\Uper\|_{L^2}$ from \textbf{Step 1}, we get the bound
\begin{equation}
    \norm{(\Uper \cdot \nabla) \bar{\Omega}}_{L^2} \leq \norm{\Uper}_{L^2} \norm{\nabla\bar{\Omega}}_{L^\infty} \leq C(1+\bar{C}^2/K) e^{a\tau} (\varepsilon/K + M).
\end{equation}

The remaining terms can be bounded thanks to the convergence of the modified background to Vishik's background (\Cref{lemma:conv_background}) and the bound on the linear part \eqref{eq:bound_Ulin}:
\begin{align}
     \norm{(\Uper + \Ulin) \cdot \nabla \Omegalin}_{L^2} +  \norm{e^{-\tau \gamma} \Delta \Omegalin}_{L^2} & \leq (\norm{\Uper}_{L^2} + \norm{\Ulin}_{L^2}) \norm{\nabla \Omegalin}_{L^\infty} \\ & \quad +  \norm{e^{-\tau \gamma} \Delta \Omegalin}_{L^2} \\
     & \leq C \varepsilon e^{a\tau} \bar{C} e^{a\tau} (\varepsilon/K + M) \\ & \quad + C \varepsilon^2 e^{2a\tau} + C e^{-\zeta \tau_0}\varepsilon e^{a\tau}\\ & \leq C \bar{C} (\varepsilon/K + M)/K + C \varepsilon e^{a\tau}/K, \\
    \norm{(\Ulin \cdot \nabla) (\tilde{\Omega}-\bar{\Omega}) + ((\tilde{U}-\bar{U}) \cdot \nabla) \Omegalin }_{L^2} & \leq \norm{\Ulin}_{L^\infty} \norm{\nabla(\tilde{\Omega}-\bar{\Omega})}_{L^2} \\ & \quad + \norm{\tilde{U}-\bar{U}}_{L^\infty} \norm{\nabla \Omegalin}_{L^2} \\
    & \leq C e^{-\zeta \tau_0} \varepsilon e^{a\tau} \leq C \varepsilon e^{a\tau}/K , \\
    \norm{(\Uper \cdot \nabla) (\tilde{\Omega}-\bar{\Omega})}_{L^2} & \leq \norm{\Uper}_{L^\infty} \norm{\nabla(\tilde{\Omega}-\bar{\Omega})}_{L^2} 
    \\ & \leq C e^{-\zeta \tau_0} \bar{C} e^{a\tau} (\varepsilon/K + M). 
\end{align}
In the end, we obtain:
\begin{align}
    \frac{1}{2} \partial_\tau \|\Omegaper\|_{L^2} + \left(\frac{1}{\alpha} - 1 \right) \|\Omegaper\|_{L^2} & \leq C( 1+ \bar{C}^2/K) e^{a\tau} (\varepsilon /K + M).
\end{align}
Therefore, we can conclude by Gronwall's Lemma that
\begin{equation}
\label{eq:impr_omega_L2}
    \|\Omegaper\|_{L^2} \leq  C( 1+ \bar{C}^2/K) e^{a\tau} (\varepsilon /K + M).
\end{equation}
From \eqref{eq:impr_omega_L2} we can obtain an improved estimate in $L^{2+\sigma}$ both for $\Uper$ and $\Omegaper$ via the Gagliardo-Nirenberg inequality for $\theta = (4+\sigma)/(4+2\sigma) \geq 1/2$:

\begin{align}
    \|\Uper\|_{L^{2+\sigma}} & \leq C \| \Uper \|^{\theta}_{L^2} \| \Omegaper\|^{1-\theta}_{L^2} \\ & \leq C( 1+ \bar{C}^2/K) e^{a\tau} (\varepsilon /K + M), \label{eq:improvedU2+} \\
    \|\Omegaper\|_{L^{2+\sigma}} & \leq C \| \Omegaper \|^{\theta}_{L^2} \| \nabla \Omegaper\|^{1-\theta}_{L^2} \\ & \leq  C \| \Omegaper \|^{1/2}_{L^2} \| \nabla \Omegaper\|^{1/2}_{X} \\ 
    & \leq  C \bar{C}^{1/2} (1+\bar{C}^2/K)^{1/2} e^{a\tau} (\varepsilon/K + M). \label{eq:impr_omega_L2+}
\end{align}

\textbf{Step 3: estimates for $wr^{-1} \partial_\theta \Omegaper$ in $L^2$ and $L^{2+\sigma}$}

\textit{
    Under the bootstrap assumption \eqref{eq:bootstrapassumption}, it holds:
    \begin{equation}
            \| wr^{-1}\partial_\theta\Omega^{\rm per} \|_{L^2 \cap L^{2+\sigma}} \leq C \bar{C}^{1/2}(1+\bar{C}^2/K) e^{a\tau} (\varepsilon/K + M),
    \end{equation}
for all $\tau \in [\tau_0,\tau_{\rm max}]$.}

We start by focusing on the angular derivative, as we will build the estimates for $\nabla \Omegaper$ on top of those. In particular, as we need control on the angular derivative $\partial_\theta$ only around the origin, it is convenient to work with a weighted quantity that behaves like $\partial_\theta$ close to the origin and like the angular component of the gradient in polar coordinates, that is, $r^{-1} \partial_\theta$, far from the origin. To this end, let
\begin{equation}
    w(r) = r \mathbf{1}_{[0,1]} + \mathbf{1}_{(1,+\infty)} \, ,
\end{equation}
consider $wr^{-1} \partial_\theta \Omegaper$
and apply $wr^{-1} \partial_\theta$ to \eqref{eq:Omegaper}, recalling that $(\tilde{U},\tilde{\Omega})$ and $(\bar{U}, \bar{\Omega})$ are radial functions:
\begin{align}
& \partial_\tau \left(\frac{w}{r} \partial_\theta \Omegaper\right) - \frac{w}{r} \partial_\theta \Omegaper - \frac{1}{\alpha} \frac{w}{r} \partial_\theta \left(\xi \cdot \nabla \Omegaper\right) = - (\tilde U + \Ulin + \Uper) \cdot \nabla \frac{w}{r} \partial_\theta \Omegaper  \\ & \quad  - \frac{w}{r} \partial_\theta \Uper \cdot \nabla (\bar{\Omega} + (\tilde \Omega - \bar \Omega)) - \frac{w}{r} \partial_\theta \Ulin \cdot \nabla (\tilde{\Omega} - \bar{\Omega}) \\ & \quad - \frac{w}{r} (\partial_\theta \Uper + \partial_\theta \Ulin) \cdot \nabla \Omegaper  - \frac{w}{r} (\partial_\theta \Uper + \partial_\theta \Ulin) \cdot \nabla \Omegalin  \\ & \quad - (\Uper + \Ulin) \cdot \nabla \left(\frac{w}{r} \partial_\theta \Omegalin \right) +\frac{w}{r} \partial_\theta e^{-\tau \gamma} \Delta (\Omegalin + \Omegaper).
\end{align}
We will perform energy estimates by multiplying by $p|wr^{-1} \partial_\theta \Omegaper|^{p-2} wr^{-1} \partial_\theta \Omegaper$ for $p=2,2+\sigma$ and integrating by parts. 

Observing that $\Div(r^{-p}\xi) = (2-p) r^{-p}$ away from $r=0$, for $p=2,2+\sigma$, it holds:
\begin{align}
& - \frac{p}{\alpha}\intR \frac{w}{r} \partial_\theta (\xi \cdot \nabla \Omegaper) \frac{w}{r} \partial_\theta \Omegaper \left|\frac{w}{r} \partial_\theta \Omegaper\right|^{p-2} \\ & \quad = - \frac{p}{\alpha} \int_{B_1} (\xi \cdot \nabla \partial_\theta \Omegaper) \partial_\theta \Omegaper |\partial_\theta \Omegaper|^{p-2} \\ & \qquad - \frac{p}{\alpha}\int_{B_1^C} \left(\frac{\xi}{r^p} \cdot \nabla \partial_\theta \Omegaper\right) \partial_\theta \Omegaper \left|\partial_\theta \Omegaper\right|^{p-2} \\ 
& \quad = \frac{2}{\alpha} \int_{B_1} |\partial_\theta \Omegaper|^{p} + \frac{2-p}{\alpha} \int_{B_1^C} \left| \frac{1}{r} \partial_\theta \Omegaper\right|^{p}.
\end{align}
Moreover, the commutation between $wr^{-1} \partial_\theta$ and $\Delta$ gives
\begin{align}
    \Delta\left(\frac{w}{r} \partial_\theta \right) & = \frac{w}{r} \partial_\theta(\Delta) - \frac{2w}{r^2} \partial_r\partial_\theta + \frac{w}{r^3} \partial_\theta - \frac{\partial_r w}{r^2} \partial_\theta + \frac{2\partial_r w}{r} \partial_r \partial_\theta \\
    & = \frac{w}{r} \partial_\theta(\Delta) + I_{B_1^C} \left(- \frac{2}{r^2} \partial_r\partial_\theta + \frac{1}{r^3} \partial_\theta \right) \, .
\end{align}
Therefore, there exists $C > 0$ such that
\begin{align}
    \intR & \frac{w}{r} \partial_\theta (\Delta \Omegaper) \frac{w}{r} \partial_\theta \Omegaper \left|\frac{w}{r} \partial_\theta \Omegaper\right|^{p-2}  \\ & \quad = \intR \Delta \left(\frac{w}{r} \partial_\theta \Omegaper \right) \frac{w}{r} \partial_\theta \Omegaper \left|\frac{w}{r} \partial_\theta \Omegaper\right|^{p-2}  \\ & \qquad -  \int_{B^C_1} \left(- \frac{2}{r^2} \partial_r\partial_\theta + \frac{1}{r^3} \partial_\theta \right) \Omegaper \frac{1}{r} \partial_\theta \Omegaper \left|\frac{1}{r} \partial_\theta \Omegaper\right|^{p-2}\\
    & \quad \leq - (p-1) \intR \left| \nabla \left(\frac{w}{r} \partial_\theta \Omegaper \right) \right|^2 \left|\frac{w}{r} \partial_\theta \Omegaper \right|^{p-2} \\ & \qquad + C  \int_{B_1^C} \frac{1}{r^2} \left|\frac{1}{r} \partial_\theta \Omegaper\right|^p \\ & \quad \leq C  \norm{\frac{w}{r} \partial_\theta\Omegaper}^p_{L^{p}},\label{eq:laplacian_wrtheta}
\end{align}
where the last inequality holds thanks to the assumption on $\tau_0$ in \eqref{eq:ass_K} and the bootstrap assumption \eqref{eq:bootstrapassumption}.
Then, thanks to the fact that the velocity fields are divergence-free, we have that 
\begin{align}
    \intR (\Ulin + \Uper+ \tilde{U}) \cdot \nabla (wr^{-1} \partial_\theta \Omegaper) p|wr^{-1} \partial_\theta \Omegaper|^{p-2} wr^{-1} \partial_\theta \Omegaper = 0.
\end{align}

We can bound the diffusive terms recalling the assumptions on $\tau_0$ and $\tau_{\rm max}$ in \eqref{eq:ass_K} and using for the perturbative part the bootstrap \eqref{eq:bootstrapassumption} and \eqref{eq:laplacian_wrtheta} and for the linear part \eqref{eq:bound_Ulin}:
\begin{align}
    \norm{ \frac{w}{r} \partial_\theta e^{-\tau \gamma} \Delta \Omegalin}_{L^p} & \leq C e^{-\zeta \tau_0}\varepsilon e^{a\tau} \leq C\varepsilon e^{a\tau}/K , \\
    e^{-\tau \gamma}\intR \frac{w}{r} \partial_\theta (\Delta \Omegaper) \frac{w}{r} \partial_\theta \Omegaper \left| \frac{w}{r} \partial_\theta \Omegaper\right|^{p-2} & \leq C\bar{C} e^{a\tau} (\varepsilon/K + M) \left\|\frac{w}{r} \partial_\theta\Omegaper\right\|^{p-1}_{L^p} /K, \\
\norm{ \frac{w}{r} \partial_\theta (\Ulin + \Uper) \cdot \nabla \Omegaper}_{L^p} & \leq (\norm{\nabla \Ulin}_{L^\infty} + \norm{\nabla \Uper}_{L^\infty} ) \norm{\nabla \Omegaper}_{L^p} \\ & \leq C \bar{C} \varepsilon e^{2a\tau} (\varepsilon/K + M) \\ & \quad + C\bar{C}^2 e^{a\tau} (\varepsilon/K + M)/K, \\
        \norm{ \frac{w}{r} (\partial_\theta \Ulin + \partial_\theta \Uper) \cdot \nabla \Omegalin}_{L^p} & \leq (\norm{\nabla \Ulin}_{L^p} + \norm{\nabla \Uper}_{L^p} ) \norm{\nabla \Omegalin}_{L^\infty} \\ & \leq C \varepsilon^2 e^{2a\tau} 
        + C \varepsilon e^{a\tau} \bar{C} e^{a\tau} (\varepsilon/K + M)\\
        & \leq C \varepsilon e^{a\tau} /K
        + C\bar{C}  e^{a\tau} (\varepsilon/K + M) /K.
\end{align}
Finally, we can perform the following bounds for $p=2,2+\sigma$ for the remaining forcing terms:
\begin{align}
    \norm{(\tilde{U} - \bar{U}) \cdot \nabla     \left(\frac{w}{r} \partial_\theta \Omegalin \right)}_{L^p} & \leq \norm{\bar{U} - \tilde{U}}_{L^\infty} \norm{\nabla \left(\frac{w}{r} \partial_\theta \Omegalin \right)}_{L^p} \leq C \varepsilon e^{a\tau}/K, \\
    \norm{\left( \frac{w}{r} \partial_\theta \Ulin \cdot \nabla\right)(\bar{\Omega} - \tilde{\Omega})}_{L^p} & \leq \norm{ \frac{w}{r} \partial_\theta \Ulin}_{L^\infty} \norm{\nabla (\tilde{\Omega} - \bar{\Omega})}_{L^p} \leq C \varepsilon e^{a\tau}/K, \\
    \norm{\left( \frac{w}{r} \partial_\theta \Uper \cdot \nabla\right)(\bar{\Omega} - \tilde{\Omega})}_{L^p} & \leq \norm{ \frac{w}{r} \partial_\theta \Uper}_{L^\infty} \norm{\nabla (\tilde{\Omega} - \bar{\Omega})}_{L^p} \\ & \leq C \bar C e^{a\tau} (\varepsilon/K + M)/K, \\
    \norm{(\Ulin + \Uper) \cdot \nabla \left(\frac{w}{r} \partial_\theta \Omegalin \right)}_{L^p} & \leq \norm{\Ulin + \Uper}_{L^p} \norm{\Delta \Omegalin}_{L^\infty} \\
    & \leq C\varepsilon e^{a\tau} /K+ C \bar{C}  e^{a\tau} (\varepsilon/K + M) /K, 
    \end{align}
    and using the improved estimate on $\norm{\Omegaper}_{L^2 \cap L^{2+\sigma}}$ from \eqref{eq:impr_omega_L2} and \eqref{eq:impr_omega_L2+}, we get
    \begin{align}
    \norm{\frac{w}{r} \partial_\theta \Uper \cdot \nabla \bar{\Omega}}_{L^p} \leq \norm{\Omegaper}_{L^p} \norm{\nabla \bar{\Omega}}_{L^\infty} \leq C \bar{C}^{1/2} (1+\bar{C}^2/K)^{1/2} e^{a\tau} (\varepsilon/K + M).
\end{align}
In the end, the estimate we obtain for $p=2,2+\sigma$ is:
\begin{align}
    \partial_\tau \norm{\frac{w}{r} \partial_\theta \Omegaper}_{L^p} & + \left(\frac{1}{\alpha} - 1 \right) \norm{\frac{w}{r} \partial_\theta \Omegaper}_{L^p}  \\ & \leq C e^{a\tau} (\varepsilon/K + M) (1  + \bar{C}^2/K + C^{1/2} \bar{C}^{1/2} (1+\bar{C}^2/K)^{1/2}) \\
    & \leq C \bar{C}^{1/2} (1+\bar{C}^2/K) e^{a\tau} (\varepsilon/K + M).
\end{align}
Analogously to \textbf{Step 2}, we can conclude by Gronwall's lemma that
\begin{equation}
\label{eq:impr_angder}
    \norm{\frac{w}{r} \partial_\theta\Omegaper}_{L^p} \leq C \bar{C}^{1/2} (1+\bar{C}^2/K) e^{a\tau} (\varepsilon/K + M).
\end{equation}

\textbf{Step 4: estimates for $\nabla \Omega$ in $L^2$ and $L^{2+\sigma}$}

\textit{
    Under the bootstrap assumption \eqref{eq:bootstrapassumption}, it holds:
    \begin{equation}
                \| \nabla \Omega^{\rm per} \|_{L^2 \cap L^{2+\sigma}} \leq C \bar{C}^{1/2}(1+\bar{C}^2/K) e^{a\tau} (\varepsilon/K + M),
    \end{equation}
for all $\tau \in [\tau_0,\tau_{\rm max}]$.}

This time, we apply the derivatives in Cartesian coordinates $\partial_i$ to the equation:
\begin{align}
    \partial_\tau \partial_i \Omegaper & + (-1-1/\alpha - \xi \cdot \nabla / \alpha )\partial_i \Omegaper + (\tilde{U} + \Ulin + \Uper) \cdot \nabla \partial_i \Omegaper \\ & \quad + \partial_i (\tilde{U} + \Ulin + \Uper) \cdot \nabla  \Omegaper + (\Ulin + \Uper + \tilde{U} - \bar{U}) \cdot \nabla \partial_i \Omegalin \\ & \quad + \partial_i(\Ulin + \Uper + \tilde{U} - \bar{U}) \cdot \nabla \Omegalin + \partial_i \Uper \cdot \nabla \bar{\Omega} + \Uper \cdot \nabla \partial_i \bar{\Omega} \\ & \quad + \partial_i (\Ulin + \Uper) \cdot \nabla (\tilde{\Omega} - \bar{\Omega}) + (\Ulin + \Uper) \cdot \nabla \partial_i (\tilde{\Omega} - \bar{\Omega}) \\ & \quad = e^{-\tau \gamma} \Delta \partial_i \Omegalin + e^{-\tau \gamma} \Delta \partial_i \Omegaper.
\end{align}
We perform an energy estimate by multiplying by $p |\partial_i \Omegaper|^{p-2}\partial_i \Omegaper$ for $p=2,2+\sigma$ and then integrating by parts. As before, we get
\begin{equation}
\intR ((\tilde{U} + \Ulin + \Uper) \cdot \nabla ) \partial_i \Omegaper  |\partial_i \Omegaper|^{p-2} \partial_i \Omegaper = 0.
\end{equation}
Thanks to the improved estimates \eqref{eq:improved_ass_U} and \eqref{eq:improvedU2+} for $\Uper$ and \eqref{eq:impr_omega_L2} and \eqref{eq:impr_omega_L2+} for $\Omegaper$, we can bound:
\begin{align}
    \|\Uper \cdot \nabla \partial_i \bar{\Omega}\|_{L^p} & \leq \|\Uper \|_{L^p} \| \nabla \partial_i \bar{\Omega}\|_{L^\infty} \leq C( 1+ \bar{C}^2/K) e^{a\tau} (\varepsilon /K + M), \\
        \| \partial_i \Uper \cdot \nabla \bar{\Omega}\|_{L^p} &  \leq \|\partial_i \Uper \|_{L^p} \| \nabla \bar{\Omega}\|_{L^\infty} \leq C \bar{C}^{1/2} (1+\bar{C}^2/K)^{1/2} e^{a\tau} (\varepsilon/K + M).
\end{align}
Using the improved assumption \eqref{eq:impr_angder} on $w r^{-1} \partial_\theta \Omegaper$ in $L^p(\RR)$ and the fact that $\partial_i \tilde U$ is bounded in $L^\infty(\RR)$, thanks to \Cref{rmk:sigmacond} and $\partial_i \bar U \in L^\infty(\RR)$, 
\begin{align}
    \| \partial_i \tilde{U} \cdot \nabla \Omegaper \|_{L^p} \leq \| \partial_i \tilde{U}\|_{L^\infty} \norm{\frac{w}{r} \partial_\theta \Omegaper}_{L^p} \leq C \bar{C}^{1/2} (1+\bar{C}^2/K)^{1/2} e^{a\tau} (\varepsilon/K + M).
\end{align}
Thanks to the Hardy-type inequality \eqref{eq:hardy} and the convergence to the background estimates in \Cref{lemma:conv_background} and \Cref{rmk:sigmacond}, we can also bound:
\begin{align}
    \|\Uper \cdot \nabla \partial_i (\tilde \Omega - \bar{\Omega})\|_{L^p} & \leq \left\|\frac{\Uper}{r}\right\|_{L^\infty} \| r\partial_r \nabla (\tilde \Omega - \bar{\Omega})\|_{L^p} \leq C\bar C e^{a\tau} (\varepsilon/K + M) /K, \\
        \| \partial_i \Uper \cdot \nabla (\tilde \Omega - \bar{\Omega})\|_{L^p} &  \leq \|\Omegaper \|_{L^\infty} \| \nabla (\tilde \Omega - \bar{\Omega})\|_{L^p} \leq C\bar C e^{a\tau} (\varepsilon/K + M) /K.
\end{align}
The terms arising from the discrepancy of the profiles are bounded similarly to \textbf{Step 3}:
\begin{align}
    & \| (\tilde{U} - \bar{U}) \cdot \nabla \partial_i \Omegalin\|_{L^p} + \|\partial_i \Ulin \cdot \nabla (\tilde{\Omega} - \bar{\Omega})\|_{L^p} +  \|\Ulin\cdot \nabla \partial_i (\tilde{\Omega} - \bar{\Omega})\|_{L^p} \\ & \quad \leq \| \tilde{U} - \bar{U}\|_{L^\infty} \|\nabla \partial_i \Omegalin\|_{L^p} + \| \partial_i \Ulin \|_{L^\infty} \|\nabla (\tilde{\Omega} - \bar{\Omega}) \|_{L^p} + \left\| \frac{\Ulin} {r}\right\|_{L^\infty} \| r\partial_r \nabla (\tilde{\Omega} - \bar{\Omega})\|_{L^p} \\ & \quad \leq C \varepsilon e^{a\tau}/K.
\end{align}
Finally,
\begin{align}
    \| \partial_i \Ulin \cdot \nabla (\Omegalin + \Omegaper) \|_{L^p}  & \leq \|\partial_i \Ulin \|_{L^\infty} \| \nabla (\Omegalin + \Omegaper) \|_{L^p}  \\ & \leq C \varepsilon e^{a\tau} /K + C \bar{C} e^{a\tau} (\varepsilon/K + M)/K \\
    \| \partial_i \Uper \cdot \nabla (\Omegalin + \Omegaper) \|_{L^p}  & \leq \|\partial_i \Uper \|_{L^\infty} \| \nabla (\Omegalin + \Omegaper) \|_{L^p}  \\ & \leq  C \bar{C} e^{a\tau} (\varepsilon/K + M) /K 
    +C \bar{C}^2 e^{a\tau}  (\varepsilon/K + M) /K, \\ 
        \|\Uper \cdot \nabla \partial_i \Omegalin\|_{L^p} & \leq \| \Uper \|_{L^\infty} \|\nabla \partial_i \Omegalin\|_{L^p} \leq C\bar{C} e^{a\tau} (\varepsilon/K + M)/K,
        \\
        \norm{\Ulin \cdot \nabla \partial_i \Omegalin}_{L^p} + \norm{e^{-\tau \gamma} \Delta \partial_i \Omegalin}_{L^p} & \leq C \varepsilon e^{a\tau}/K + C e^{-\zeta \tau_0}\varepsilon e^{a\tau} \leq C \varepsilon e^{a\tau}/K.
\end{align}
Hence, we can bound
\begin{align}
    \partial_\tau \| \nabla \Omegaper\|_{L^p} & + \left( \frac{2}{\alpha p } - 1 - \frac{1}{\alpha}\right)\| \nabla \Omegaper\|_{L^p} \\ & \leq C e^{a\tau} (\varepsilon/K + M) (1 + C\bar{C}^2/K + C(1+\bar{C}^2/K) + C \bar{C}^{1/2} (1+\bar{C}^2/K)) \\
    & \leq C \bar{C}^{1/2} (1+\bar{C}^2/K) e^{a\tau} (\varepsilon/K + M), 
\end{align}
and we conclude as in the previous steps.

With the bound in \textbf{Step 1-4}, we have proved that under the bootstrap assumption \eqref{eq:bootstrapassumption}, it holds that:
\begin{equation}
\|\Uper \|_X \leq C \bar{C}^{1/2} (1+\bar{C}^2/K) e^{a\tau} (\varepsilon/K + M).
\end{equation}
\end{proof}
Now that \Cref{prop:bootstrap} is proved, we can prove \Cref{prop:nonlinear_estimate}:
\begin{proof}[Proof of \Cref{prop:nonlinear_estimate}]
Let $K, \bar C$ to be fixed later and let $\tau_0$ and $\tau_{\rm max}$ be chosen to satisfy \eqref{eq:ass_K} with equality. Notice that this choice of $\tau_{\rm max}$ corresponds to \eqref{eq:tau_max_condition}. Let $\bar \tau \in [\tau_0, \tau_{\rm max}]$ be the maximum time such that \eqref{eq:bootstrapassumption} holds in $ [\tau_0, \bar \tau]$. By \Cref{prop:bootstrap}, there exists $C:=C(\bar U, \eta, a) \geq 1$ such that 
\begin{equation}
\|\Uper\|_X \leq C \bar{C}^{1/2} (1+\bar{C}^2/K) e^{a\tau} (\varepsilon/K + M), \, \quad \forall \tau \in [\tau_0,\bar{\tau}].
\end{equation}
Now, choose {$\bar{C} = (2C/c)^2$} and {$K = (2C/c)^4$}. We get in this way an improved estimate of the form:
\begin{equation}
\|\Uper\|_X \leq {c\bar{C}} e^{a\tau} (\varepsilon/K + M), \, \quad \forall \tau \in [\tau_0,\bar{\tau}].
\end{equation}
We can then prove \eqref{eq:main_nonlin_estimate} on $[\tau_0,\bar{\tau}]$. The presence of the factor {$c<1$} ensures the fact that the bound is not saturated at $\bar{\tau}$, hence $\bar \tau = \tau_{\rm max}$. 
\end{proof}

\begin{proof}[Proof of \Cref{thm:longtimebehavior}]
We apply \Cref{prop:nonlinear_estimate} 
with {$\Uper = U - \tilde{U} - (e^{\lambda (\tau-\tau_0)} P_{\lambda} \Phi + e^{\bar\lambda (\tau-\tau_0)} P_{\bar\lambda} \Phi)$}, solving \eqref{eq:Uper} with initial datum {$\Phi-P_{\lambda,\bar{\lambda}} \Phi$}. 
By the definition of $\varepsilon$ and $M$ in \Cref{prop:nonlinear_estimate}, the inequality in \eqref{eq:nonlinear_bound} coincides with the one in \eqref{eq:main_nonlin_estimate}. Moreover, we have
\begin{equation}
    \label{eq:phi_norm}
\|\Phi\|_{W^{2,2+s}} \leq (\varepsilon + M) e^{a\tau_0} \leq C \|\Phi\|_{W^{2,2+s}},
\end{equation}
where the first inequality comes from the triangular inequality and the second one by \Cref{rmk:petabar} applied to $\Phi$ at $\tau = \tau_0$, recalling that, by the definition of $\varepsilon$, 
   $ \varepsilon e^{a\tau_0} = \|P_{\lambda,\bar{\lambda}} \Phi\|_{W^{2,2+\sigma}}$.

The upper time bound in \eqref{eq:tau_max_condition} gives control up to 
\begin{equation}
\frac{1}{a} \log\left(\frac{1}{K(\varepsilon + M)}\right) =  \tau_0 + \frac{1}{a} \log\left(\frac{1}{K\|\Phi\|_X}\right),
\end{equation}
which implies the estimate in \eqref{eq:nonlinear_bound}.
\end{proof}

\section{Proof of \Cref{thm:main}}
\label{sec:final}
\begin{proof}[Proof of \Cref{thm:main}, (\ref{item:parttwotheorem})]
We adopt the notation from Sections~\ref{sec:inviscidnonuniqueness}-\ref{sec:maintheoremrevisited}. We first demonstrate the following auxiliary proposition on the structure of solutions with $\nu=1$:

\textbf{Step 1}: \emph{There exist $T, C > 0, v_0 \in H^{2+s}_{\rm df}(\RR)$ depending on $\bar U, \eta, a$ such that the following holds. For all $\varepsilon \in (0,C^{-1})$ {and for all $\psi_0 \in H^{2+s}_{\rm df}(\RR)$ such that $\|\psi_0\|_{H^{2+s}} \leq C^{-1}\varepsilon$}, there exists a solution $\Ulin$ of the linearized equation in self-similar variables \eqref{eq:Ulin}, suitably close to $\varepsilon \Re( e^{\lambda\tau} \eta)$ in the sense that
\begin{equation}
\label{eq:Ulincloseeps}
\|\Ulin-\varepsilon \Re( e^{\lambda\tau} \eta)\|_{W^{2,2+\sigma}(\RR)} \leq \frac \varepsilon 4 e^{a\tau} 
, \quad \forall \tau \geq \log T =: \tau_0 \, ,
\end{equation}
and the solution $w$ to \eqref{eq:NS_forced} with $\nu = 1$ and initial datum $\varepsilon v_0 + \psi_0$ satisfies
\begin{equation}\label{w-in-ss}
    w(y,s) = s^{\frac{1}{\alpha} - 1} W\left(\frac{y}{s^{1/\alpha}},\log s \right)
\end{equation}
with $W$ suitably close to the profile $\Ulin$ of the linearized solution in the sense that
\begin{equation} 
\label{eq:step1_thesis}
\| W - \tilde{U} - \Ulin \|_{W^{2,2+\sigma}} \leq \frac \varepsilon 8 e^{a\tau}
\, , \quad \forall \tau \in \Big[ \tau_0 , \underbrace{\frac{1}{a} \log \frac{1}{C\varepsilon}}_{=: \tau_{\rm max}} \Big] \, .
\end{equation}  
In particular,  by the triangle inequality on \eqref{eq:Ulincloseeps} and \eqref{eq:step1_thesis},
\begin{equation}
\label{eq:step1_final}
\begin{aligned}
 &\| W - \tilde{U} - \varepsilon \Re( e^{\lambda\tau} \eta)\|_{
    W^{2,2+\sigma}} \leq \frac38 \varepsilon e^{a\tau} 
    \, , \quad \forall \tau \in \left[ \tau_0 , \tau_{\rm max} \right] \, .
\end{aligned} 
\end{equation}
}

\textbf{Proof of Step 1:} Let $\tau_0 > 0$ given by \Cref{thm:longtimebehavior} and set $T=e^{\tau_0}$. Let $\delta>0$ be a small constant to be chosen later in terms the data (including $T$, which is fixed). By the thesis (i) of \Cref{thm:initiallayer}, there exists $v_0 \in H^{2+s}_{\rm df}(\RR)$ such that $v\in C([0,T]; H^{2+s}_{\rm df}(\R^2))$,the  solution to the linearized Navier-Stokes equations around $\tilde u$ with datum $v_0$ \eqref{eq:initialayer1}, satisfies
\begin{equation}
\label{eq:tri1}
 \left\| v(\cdot,T) - T^{\frac{1}{\alpha} - 1} \Re\left( T^{\lambda}\eta \left(\frac{\cdot}{T^{1/\alpha}} \right) \right) \right\|_{H^{2+s}(\RR)} \leq  \delta \, .
\end{equation}
By the thesis (ii) of \Cref{thm:initiallayer}, there exists $C > 0$, depending on the data and $\delta$, such that for all $\varepsilon \in [0, C^{-1}]$ {and for all $\psi_0 \in H^{2+s}_{\rm df}(\RR)$ such that $\|\psi_0\|_{H^{2+s}} \leq C^{-1}$} the solution $w \in C([0,T];H^{2+s}(\RR))$ to the Navier-Stokes equations
\begin{equation}
\left\lbrace
\begin{aligned}
    \partial_t w + \mathbb{P}[\tilde{u} \cdot \nabla w + w \cdot \nabla \tilde{u} + w \cdot \nabla w] &= \Delta w \\
    \div w &= 0 \, ,
\end{aligned}
\right.
\end{equation}
 with datum $\varepsilon v_0${$+\psi_0$} \eqref{eq:initialayer2} satisfies
\begin{equation}
\label{eq:tri2}
    \| w(\cdot,t) - \varepsilon v(\cdot, t) \|_{H^{2+s}(\RR)} \lesssim \varepsilon^2 {+\|\psi_0\|_{H^{2+s}}} \, ,\qquad {\forall t\in [0,T]}.
\end{equation} 
Up to further reducing $\varepsilon$ and imposing $\|\psi_0\|_{H^{2+s}} \ll \varepsilon\delta$, we can assume that the right-hand side is smaller than $\varepsilon\delta$.
Hence, by the triangle inequality on \eqref{eq:tri1} and \eqref{eq:tri2},
\begin{align}
    \left\| w(\cdot,T) - T^{\frac{1}{\alpha} - 1} \Re\left( T^{\lambda}\eta \left(\frac{\cdot}{T^{1/\alpha}} \right) \right) \right\|_{H^{2+s}(\RR)} \leq 2 \varepsilon\delta.
    \label{eq:triang_inlayer}
\end{align}
{Moreover, we have}
\begin{equation}\label{eqn:sizew}
    \| w(\cdot,t)  \|_{H^{2+s}(\RR)} \leq \varepsilon \|v \|_{H^{2+s}(\RR)} +     \| w(\cdot,t) - \varepsilon v(\cdot, t) \|_{H^{2+s}(\RR)} \leq C \varepsilon,
\end{equation}
for all $t\in [0,T]$. We now prepare to apply Theorem~\ref{thm:longtimebehavior}. Define $\Phi \coloneqq T^{1-\frac{1}{\alpha}} w(\cdot T^{\frac{1}{\alpha}}, T)$ and $\Psi \coloneqq \Phi - \Re (\varepsilon T^{\lambda} \eta)$; by \eqref{eq:triang_inlayer}, we have that 
\begin{equation}
\label{eq:bound_psi}
\| \Psi\|_{H^{2+s}} \leq C \varepsilon\delta,
\end{equation}
where the constant $C$ depends only on the data (in fact, on $T$) through the self-similar scaling.
Applying the projector $P_{\lambda, \bar\lambda}$ defined in \Cref{sseq:spectral_projection}, we obtain
\begin{equation}
\label{eq:petaphi}
    P_{\lambda,\bar\lambda} \Phi = \varepsilon \Re(T^\lambda \eta) + P_{\lambda,\bar\lambda} \Psi,
\end{equation}
since by the properties of $P_{\lambda}, P_{\bar\lambda}$ in \eqref{eq:prop_projector}, we have
\begin{align}
(P_{\lambda} + P_{\bar\lambda}) \Re(T^\lambda \eta) =(P_{\lambda} + P_{\bar\lambda}) \frac{T^\lambda \eta + T^{\bar{\lambda}}\bar\eta}{2} = \frac{T^\lambda \eta + T^{\bar{\lambda}}\bar\eta}{2} = \Re(T^\lambda \eta).
\end{align}

%

Let  $\Ulin = e^{(\tau-\tau_0)L_{\rm ss}}(P_{\lambda,\bar\lambda} \Phi)$ be the solution of the linearized Euler equations in self-similar variables  \eqref{eq:Ulin} with datum $P_{\lambda,\bar\lambda} \Phi$ at time $\tau_0$. 
We get the bound \eqref{eq:Ulincloseeps} from \eqref{eq:petaphi}, \eqref{eq:bound_psi}, and \Cref{rmk:petabar} applied to $\Psi$:
\begin{align}
\label{eq:UlinW2+}
\|\Ulin - \varepsilon \Re(e^{\lambda \tau} \eta)
\|_{W^{2,2+\sigma}} & = \| e^{(\tau-\tau_0)L_{\rm ss}} P_{\lambda,\bar\lambda} \Psi\|_{W^{2,2+\sigma}} \\ & \leq Ce^{a(\tau-\tau_0)} \|\Psi\|_{L^2} \leq C e^{a\tau} \varepsilon\delta \, ,
\end{align}
for any $\tau \geq \tau_0$.
Therefore, choosing 
$\delta \leq ({4C})^{-1},$ where $C$ is the constant in the previous formula,
we proved the existence of a solution to \eqref{eq:Ulin} satisfying \eqref{eq:Ulincloseeps}. 
To prove \eqref{eq:step1_thesis}, we apply \Cref{thm:longtimebehavior} for some $c \leq 1$ to be chosen later, to obtain 
\begin{equation}
\label{eq:thm15_thesis}
\begin{aligned}
    &\| W - \tilde{U} - \Ulin \|_{
    W^{2,2+\sigma}} \\ & \quad \leq e^{a(\tau-\tau_0)} \left( c\| P_{\lambda,\bar\lambda} \Phi \|_{W^{2,2+\sigma}} + C \| \Phi -  P_{\lambda,\bar\lambda} \Phi\|_{W^{2,2+\sigma}} \right),
\end{aligned}
\end{equation}
for all $\tau \in [\tau_0, \frac{1}{a} \log \frac{1}{C\varepsilon}]$. We can therefore bound the terms $\norm{P_{\lambda,\bar\lambda} \Phi}_{W^{2,2+\sigma}}$ and $\norm{\Phi - P_{\lambda,\bar\lambda} \Phi}_{W^{2,2+\sigma}}$ on the right-hand side of \eqref{eq:thm15_thesis} as follows: since $P_{\lambda,\bar\lambda} \Phi = \Ulin|_{\tau=\tau_0}$, by \eqref{eq:Ulincloseeps} and a triangle inequality, we have 
\begin{equation}
\label{eq:boundpeta}
\norm{P_{\lambda,\bar\lambda} \Phi}_{W^{2,2+\sigma}} \leq \frac 1 4 \varepsilon e^{a\tau_0} + \varepsilon e^{a\tau_0}
 \|\eta\|_{W^{2,2+\sigma}}
. 
\end{equation}
Moreover, $\Phi - P_{\lambda,\bar\lambda} \Phi = \Psi +  \varepsilon\Re(T^\lambda \eta) - P_{\lambda,\bar\lambda} \Phi$ by the definition of $\Phi$, and by evaluating \eqref{eq:UlinW2+} at time $\tau=\tau_0$, we have
\begin{align}
\norm{\Phi - P_{\lambda,\bar\lambda} \Phi}_{W^{2,2+\sigma}} & \leq \norm{\Psi}_{W^{2,2+\sigma}} + \| P_{\lambda,\bar\lambda}\Phi - \varepsilon\Re(e^{\lambda\tau_0} \eta) \|_{W^{2,2+\sigma}} \\ &\leq C\varepsilon \delta.  \label{eq:boundpeta2}
\end{align}

Therefore, choosing in \eqref{eq:thm15_thesis}
\begin{equation}
c= \frac{1}{16 
 \|\eta\|_{W^{2,2+\sigma}}} \leq 1 
\qquad \mbox{and} \qquad
    \delta \leq \frac {e^{a\tau_0}} {32C} \, ,
\end{equation}
where $C$ is the constant in \eqref{eq:boundpeta2},
from \eqref{eq:thm15_thesis}, \eqref{eq:boundpeta} and \eqref{eq:boundpeta2}, we obtain \eqref{eq:step1_thesis}. 

    \textbf{Step 2:} We prove \Cref{thm:main}, (\ref{item:parttwotheorem}). Given $T, C > 0, v_0 \in H^{2+s}_{\rm df}(\RR)$ from \textbf{Step 1}, we apply \textbf{Step 1} with $\varepsilon_\nu = C^{-1} \nu^{a/\gamma}$, $\psi_0^\nu \in H^{2+s}_{\rm df}(\RR)$ such that $\|\psi_0^\nu\|_{H^{2+s}} \leq C^{-2} \nu^{a/\gamma}$, $c_0 = C^{-1}\|v_0\|_{H^{2+s}}$, to obtain $w^\nu$, the solution of the Navier-Stokes equation with unit viscosity and initial datum $\varepsilon_\nu  v_0 +\psi_0^\nu$ whose profile in self-similar variables $W^\nu$ according to~\eqref{w-in-ss} satisfies~\eqref{eq:step1_final}.
Then we define, in the notation of~\eqref{eq:LnuTnuUnudef},
\begin{equation}
u^{\nu}_{\rm in}(x) = \varepsilon_\nu  U_\nu v_0(y) \, , \text{ where } y = \frac{x}{L_\nu}
\end{equation}
and $u^\nu$ by introducing the rescaling 
\begin{equation}
\label{eq:nuscaling} 
u^\nu(x,t) = U_\nu w^\nu(y,s) \, , \text{ where } y = \frac{x}{L_\nu} \, , \; s=e^\tau= \frac{t}{T_\nu}
\end{equation}
whose initial datum is $u_0^\nu(x) = u^{\nu}_{\rm in}(x) + U_\nu \psi_0^\nu(y)$.
As observed in \Cref{sec:introduction}, $u^\nu$ solves the Navier-Stokes with viscosity $\nu$, and its initial datum $u^\nu_0$ satisfies, by \eqref{eq:norms_relation}, $
\| u_0^\nu\|_{Y_\nu} =  \| w_0^\nu \|_{ H^{2+s}}  =  \| \varepsilon_\nu v_0 {+\psi_0^\nu}\|_{ H^{2+s}} {\leq 2} C^{-1} \nu^{a/\gamma},$ and $\| u_0^\nu - u^{\nu}_{\rm in} \|_{Y_\nu} = \| \psi_0^\nu\|_{H^{2+s}}\leq C^{-2} \nu^{\rm \kappa_{\rm c}}$.
Since 
   $ \frac{y}{s^{1/\alpha}} = \frac{x}{t^{1/\alpha}}$ and $ \log s = \log t - \gamma^{-1}\log \nu,$
we have that the self similar profile associated to $u^\nu$ is a translation in time by $\gamma^{-1}\log \nu $ of $W^\nu$:
\begin{equation}
\label{eq:utildenu}
U^\nu\left(\cdot ,  \tau\right) = W^\nu(\cdot ,\tau - \gamma^{-1}\log \nu ),
\end{equation}
Correspondingly, we define $\tilde{U}^\nu$ to be the translation in time by $\gamma^{-1} \log(\nu)$ of $\tilde U$:
\begin{equation}
\label{eq:rewriting_Utildenu}
\tilde{U}^\nu\left(\cdot, \tau\right) = \tilde{U}\left(\cdot ,\tau - \gamma^{-1}\log \nu \right).
\end{equation}
Its rewriting in standard variables solves the heat equation with initial datum zero, forcing term $\bar{f}$ and viscosity $\nu$.

We now prove equi-boundedness in $\nu$ sufficiently small for the sequence $\{u^\nu\}_{\nu > 0}$ in $L^\infty(0,1; W^{1,p}(\RR))$ for any $p\leq 2/\alpha$, which makes the space we are considering scaling (super)critical.
For the problem with normalized viscosity in the initial layer we can invoke the result from \eqref{eqn:sizew} to conclude that 
\begin{equation}
\| w^\nu \|_{L^\infty(0,T;H^{2+s}(\RR))} \leq C \nu^{a/\gamma},
\end{equation}
and then, by the scaling in \eqref{eq:nuscaling}, we conclude that for all $\nu \leq 1$,
\begin{align}
\| u^\nu \|_{L^\infty(0,T\nu^{1/\gamma}; W^{1,p}(\RR))} & 
\leq C(\nu^{2/(\alpha p) - 1} + \nu^{2/(\alpha p) + 1/\alpha - 1}) \| w^\nu \|_{L^\infty(0,T; H^{2+s}(\RR))} \\ & \leq C. \label{eq:step1inlayer}
\end{align}

We are thus left to prove
 \begin{equation}
 \label{eqn:bohhh}
 \| u^\nu \|_{L^\infty(T\nu^{1/\gamma},1; W^{1,p}(\RR))} \leq C.
 \end{equation}
  To this end, we will bound its self-similar profile in a stronger norm, since
\begin{align}
    \|u^\nu(\cdot,t)\|_{W^{1,p}(\RR)} & \leq (t^{2/(\alpha p) - 1 + 1/\alpha} + t^{2/(\alpha p) - 1}) \|U^\nu (\cdot,\log t) \|_{W^{1,p}(\RR)} \\ & \leq  2 \|U^\nu(\cdot,\log t) \|_{W^{2,2+\sigma}}.
    \label{eq:step1longtime}
\end{align}
We are therefore left to provide a uniform bound on $\|U^\nu(\cdot, \tau) \|_{W^{2,2+\sigma}}$ for $\tau \in (\log T + \gamma^{-1}\log \nu,0)$. We observe that $U^\nu$ in self-similar variables is composed of the sum of $\tilde U^\nu$, the solution of the forced heat equation with viscosity $\nu$ which is suitably close to $\bar U$; the solution of the linearized equation along the unstable eigenfunction $C^{-1}t^a\Re \left((e^{ib( \tau+1/\gamma \log\nu)} \eta\right)$, whose norms grow as $e^{a\tau}$ (notice that $(e^{\tau}{\nu^{1/\gamma}})^{ib}$ is just a phase factor which does not influence relevant norms); and a perturbation which, by \eqref{eq:boundUnu}, has smaller size than the linear part.

More precisely, by \eqref{eq:ineq_zeta} applied at $\tau - \gamma^{-1}\log(\nu)$, we get for every 
$\tau \geq \gamma^{-1}\log \nu$,
\begin{align}
\|\tilde{U}^\nu(\cdot, \tau) - \bar U\|_{W^{2,2+\sigma}} & \leq \|\tilde{U}(\cdot, \tau - \gamma^{-1}\log \nu ) - \bar U\|_{W^{2,2+\sigma}} \\ & \leq Ce^{-\zeta(\tau- \gamma^{-1}\log \nu )}.\label{eq:conv_nu}
\end{align}
In particular, for $\tau \in (\log T + \gamma^{-1}\log \nu,0)$, this quantity is bounded independently of $\nu$.
By the triangle inequality, from \eqref{eq:conv_nu} and the smoothness of $\bar U$, we obtain
\begin{align}
\|\tilde U^\nu(\cdot,\tau)\|_{W^{2,2+\sigma}} & \leq \|\tilde{U}^\nu(\cdot, \tau) - \bar U\|_{W^{2,2+\sigma}} + \|\bar U\|_{W^{2,2+\sigma}}\leq C.\label{eq:boundUtildenu}
\end{align}
For the linear part, we can rewrite $
e^{\lambda\tau}\nu^{-\lambda/\gamma} = e^{a\tau}\nu^{-a/\gamma} z$, with $z \in \mathbb C$, $|z|=1$, and bound
\begin{equation}
\label{eq:bound_lin}
\norm{C^{-1}e^{a\tau}\Re \left(e^{ib( \tau+1/\gamma \log\nu)} \eta\right)}_{W^{2,2+\sigma}} \leq C^{-1} e^{a\tau} \| \Re(z\eta)\|_{W^{2,2+\sigma}}.
\end{equation}

By \textbf{Step 1}, evaluating \eqref{eq:step1_final} at $t = e^{\tau} \nu^{1/\gamma}$, we have that $U^\nu$ satisfies:
\begin{equation} 
\label{eq:boundUnu}
\norm{U^\nu\left(\cdot, \tau\right) - \tilde{U}^\nu\left(\cdot, \tau\right) - 
C^{-1}e^{a\tau}\Re \left(e^{ib( \tau+1/\gamma \log\nu)} \eta\right)
}_{W^{2,2+\sigma}} \leq \frac38 C^{-1} e^{a\tau} ,
\end{equation} 
for all $t \in [T \nu^{1/\gamma}, 1]$. 

By  \eqref{eq:step1longtime}, \eqref{eq:boundUtildenu}, \eqref{eq:bound_lin} and \eqref{eq:boundUnu}, we conclude \eqref{eqn:bohhh}.


Combining \eqref{eqn:bohhh} and \eqref{eq:step1inlayer}, we get equi-boundedness in  $L^\infty(0,1; W^{1,p}(\RR))$, which gives weak convergence to a limit $u^E$ up to a subsequence in this space {for $p \leq 2/\alpha$}. For the $p<2/\alpha$ case, we can upgrade it to strong convergence via \Cref{eq:compactness_lemma}.

To prove that the limit $u^{\rm E}$ is actually not identically $\bar{u}$, let us now analyze the linear part at $t=1$, where $u^\nu(\cdot, 1) = U^\nu(\cdot, 0)$ and $\bar u (\cdot,1) = \bar U$; with the normalization adopted in \Cref{prop:lin_part}, recalling that $\|\Re(z\eta)\|_{L^2(\RR)} = \sqrt{2}/2$ for any $z \in \mathbb C, |z| = 1$ by \Cref{rmk:reeta}, we have
\begin{equation}
\label{eq:size_linear}
\norm{C^{-1}e^{a\tau}\Re \left(e^{ib( \tau+1/\gamma \log\nu)} \eta\right)}_{L^2} = \frac{\sqrt{2}}{2} C^{-1} e^{a\tau}.
\end{equation}
We can conclude that at time $t=1$ the limit of the sequence is different from $\bar{u}$, because, combining \eqref{eq:size_linear}, \eqref{eq:boundUnu}, and~\eqref{eq:conv_nu}, we obtain
\begin{align}
& \| u^\nu(\cdot,1) - \bar{u}(\cdot,1)\|_{L^2(\RR)} = \| U^\nu(\cdot,1) - \bar{U}(\cdot,1)\|_{L^2(\RR)} \\ & \quad \geq  C^{-1}\norm{\Re (e^{ib(1/\gamma \log\nu)} \eta)}_{L^2(\RR)} - \norm{\tilde{U}^\nu - \bar{U}}_{L^2(\RR)} \\ & \qquad - \norm{U^\nu - \tilde{U} - C^{-1} \Re (e^{ib(1/\gamma \log\nu)} \eta)}_{L^2(\RR)} \\
 & \quad \geq \left(\frac{\sqrt 2}{2} - \frac 3 8 \right) \ C^{-1} - C \nu^{\zeta/\gamma}.
\end{align}
For $\nu \to 0$, the limit of $\| u^\nu(\cdot,1) - \bar{u}(\cdot,1)\|_{L^2(\RR)}$ is strictly positive.
\end{proof}

\begin{proof}[Proof of \Cref{thm:main}, (\ref{item:mainthm1})] 
We define $w^\nu(y,s) = U_\nu^{-1} u^\nu(L_\nu y,T_\nu s)$, according to \eqref{eq:nuscaling}, which solves the forced Navier-Stokes equations in $\R^2 \times (0,+\infty)$ with unit viscosity.
We also define $\tilde u^\nu $ to solve the heat equation with initial datum zero, forcing term $\bar{f}$ and viscosity $\nu$, which satisfies $\tilde u^\nu(x,t) = U_\nu \tilde u(x/L_\nu,t/T_\nu)$, whose self-similar profile $\tilde{U}^\nu$ is suitably close to $\bar U$ by~\eqref{eq:conv_nu}.

For every $\nu \leq 1$, by the triangle inequality and a change of variables, we have
\begin{align}
\| (u^\nu - \bar{u})(\cdot,t) \|_{ W^{1,p}(\RR)}& \leq 
\| (u^\nu - \tilde{u}^{\nu})(\cdot,t) \|_{ W^{1,p}(\RR)} +\| (\tilde{u}^\nu - \bar{u})(\cdot,t) \|_{ W^{1,p}
(\RR)} \\ & \leq \nu^{(-1+ 2/(\alpha p))/\gamma}\| (w^\nu-\tilde u)(\cdot, t\nu^{-1/\gamma}) \|_{ W^{1,p}(\RR)} \\ & \qquad +\| (\tilde{u}^\nu - \bar{u})(\cdot,t) \|_{ W^{1,p}(\RR)} , \label{eq:step2_triang}
\end{align}
where the power of $\nu$ is positive since we are working in scaling-supercritical spaces. Let us fix $T_{\rm max}>0$.
We now show that the second term converges to $0$ uniformly in $[0, T_{\rm max}]$ for all $p<2/\alpha$, and it is bounded for $p=2/\alpha$. Indeed,  since for any $p\leq2/\alpha$, $\bar{u} \in L^\infty(0,T; W^{1,p}(\RR))$ by \cite{vishik2018instability1,vishik2018instability2} and $\tilde{u} \in L^\infty(0,T; W^{1,p}(\RR))$ by \Cref{rmk:tildes_Lp}, for every $t \in [0,T\nu^{1/\gamma}]$ we have
\begin{align}
\norm{(\tilde u^\nu - \bar u)(\cdot,t)}_{W^{1,p}(\RR)} 
    & \leq \nu^{(-1+2/(\alpha p))/\gamma} \norm{(\tilde u - \bar u)(\cdot,t\nu^{-1/\gamma})}_{W^{1,p}(\RR)}, \label{eq:backgroundsu} \\
    & \leq C \nu^{(-1+2/(\alpha p))/\gamma}.
\end{align}
Passing to self-similar profiles via a change of variables and by \eqref{eq:conv_nu}, for every $t \in [T\nu^{1/\gamma}, T_{\rm max}]$, we have
\begin{align}
    \| (\tilde u^\nu - \bar u)(\cdot,t)\|_{W^{1,p}(\RR)} \leq t^{2/(\alpha p) -1} \|\tilde U^\nu(\cdot, \log t) - \bar U\|_{W^{1,p}(\RR)} & \leq Ct^{2/(\alpha p) -1-\zeta} \nu^{\zeta/ \gamma}.
\end{align}
If $2/(\alpha p) - 1 - \zeta > 0$, the right-hand side is smaller than
$ CT_{\rm max}^{2/(\alpha p) -1-\zeta} \nu^{\zeta/ \gamma}$ in $[T\nu^{1/\gamma}, T_{\rm max}]$; 
else, it is smaller than
$CT^{2/(\alpha p) -1-\zeta} \nu^{(2/(\alpha p)-1)/\gamma}$. 
In both cases, $\| (\tilde u^\nu - \bar u)(\cdot,t) \|_{W^{1,p}(\RR)}$ goes to zero polynomially in $\nu$, and overall
\begin{equation}
\label{eq:u_nu_zero}
\lim_{\nu \to 0} \| \tilde{u}^\nu - \tilde{u} \|_{L^\infty(0,T_{\rm max}; W^{1,p}(\RR))} =0 \, , \quad \text{ for } p<2/\alpha \, .
\end{equation}
Hence, to bound $\| (u^\nu - \bar{u})(\cdot,t) \|_{ W^{1,p}(\RR)}$ in view of \eqref{eq:step2_triang}, it is enough to prove
\begin{equation}
\label{eq:final1}
 \lim_{\nu \to0 }  \nu^{(-1+ 2/(\alpha p))/\gamma} \| w^\nu - \tilde{u} \|_{L^\infty(0,T_{max}\nu^{-1/\gamma}; W^{1,p}(\RR))} =0
 .
\end{equation}

In the initial layer, for $s \in [0,T]$, this equality follows by Theorem~\eqref{thm:initiallayer}, (ii), applied with initial datum $w^\nu_0 = U_\nu^{-1} u^\nu_0(L_\nu \cdot)$ and $\varepsilon=0$:
\begin{align}
\|(w^\nu -\tilde u)(\cdot,s)\|_{W^{1,p}(\RR)} & \leq \|(w^\nu -\tilde u)(\cdot,s)\|_{H^{2+s}(\RR)} \\ & \leq C \|w_0^\nu\|_{H^{2+s}(\RR)} = C \| u_0^\nu \|_{Y_\nu}. \label{eqn:initiallayervisc1}
\end{align}
The assumption that $\| w_0^\nu\|_{H^{2+s}(\RR)}$ is small is satisfied. 

After the initial layer, we are left to prove that 
\begin{equation}
\label{eqn:lefttodo}
 \lim_{\nu \to0 }  \nu^{(-1+ 2/(\alpha p))/\gamma} \| w^\nu - \tilde{u} \|_{L^\infty(T,T_{\rm max}\nu^{-1/\gamma}; W^{1,p}(\RR))} =0
 .
\end{equation}
To this end, we first pass to self-similar profiles. For every $\tau \in [\log T,\log T_{\rm max}-\gamma^{-1} \log\nu]$,
\begin{align}
& \nu^{(-1+ 2/(\alpha p))/\gamma} \| w^\nu(\cdot, e^\tau) - \tilde{u}(\cdot, e^\tau) \|_{W^{1,p}(\RR)} \\ & \quad \leq C e^{(\tau +\gamma^{-1}\log \nu) (2/(\alpha p) - 1)} \|W^\nu(\cdot, \tau) - \tilde{U}(\cdot, \tau)\|_{W^{1,p}(\RR)} \\ &\quad \leq CT_{\rm max}^{2/(\alpha p) - 1}\|W^\nu(\cdot, \tau) - \tilde{U}(\cdot, \tau) \|_{W^{2, 2+\sigma}(\RR)}.\label{eq:selfsim_profiles}
\end{align}
We apply \Cref{thm:longtimebehavior} to the self-similar profile associated to $w^\nu - \tilde{u}$ {at time $T$}, namely $\Phi := (W^\nu- \tilde U)(\cdot, \log T)=T^{-1/\alpha + 1} (w^\nu - \tilde{u})(\cdot T^{1/\alpha}, T)$, to obtain 
\begin{align}
 & \| W^\nu - \tilde{U} -e^{ (\tau-\tau_0)L_{\rm ss}} P_{\lambda, \bar\lambda} \Phi\|_{W^{2,2+\sigma}} \\ & \quad \leq  e^{a(\tau-\tau_0)} \left( c\| P_{\lambda,\bar\lambda} \Phi \|_{W^{2,2+\sigma}} + C \| \Phi -  P_{\lambda,\bar\lambda} \Phi\|_{W^{2,2+\sigma}} \label{eq:triangular_step1_1}\right),\end{align}
for all $\tau \in [\log(T), -a^{-1}\log(C\|\Phi\|_{W^{2,2+\sigma}})]$.
We observe by \eqref{eqn:initiallayervisc1} and \Cref{rmk:petabar} that
\begin{align}
\label{eq:triangular_step1_2}
\|P_{\lambda,\bar\lambda}\Phi\|_{W^{2,2+\sigma}(\RR)} + \| \Phi -P_{\lambda,\bar\lambda}\Phi\|_{W^{2,2+\sigma}(\RR)} & \leq \| \Phi \|_{W^{2,2+\sigma}(\RR)} \\ & \leq C \| u_0^\nu \|_{Y_\nu},
\end{align}
and that 
\begin{equation}
\label{eq:triangular_step1_3}
    \| e^{ (\tau-\tau_0)L_{\rm ss}} P_{\lambda, \bar\lambda} \Phi\|_{W^{2,2+\sigma}(\RR)} \leq C e^{a\tau}\| u_0^\nu \|_{Y_\nu},
\end{equation}
for a constant $C$ depending on the data (and in particular on $T=e^{\tau_0}$ due to the change to self-similar variables).
By the triangle inequality, \eqref{eq:triangular_step1_1}, \eqref{eq:triangular_step1_2}, and \eqref{eq:triangular_step1_3}, we obtain
\begin{align}
 \| W^\nu - \tilde{U}\|_{W^{2,2+\sigma}(\RR)} &\leq \| W^\nu - \tilde{U} -e^{ (\tau-\tau_0)L_{\rm ss}} P_{\lambda, \bar\lambda} \Phi\|_{W^{2,2+\sigma}(\RR)} \\ & \quad + \| e^{ (\tau-\tau_0)L_{\rm ss}} P_{\lambda, \bar\lambda} \Phi\|_{W^{2,2+\sigma}(\RR)} \\ &  \leq C e^{a\tau}\| u_0^\nu \|_{Y_\nu},
\end{align}
where by the assumption, this estimate is valid in the interval $\tau \in [\log(T), \log T_{max} -\gamma^{-1} \log \nu]$ for $\nu$ sufficiently small.
Hence, in such interval
\begin{align}
 \| W^\nu - \tilde{U}\|_{W^{2,2+\sigma}(\RR)} &\leq  C T_{\rm max}^{a} \nu^{-a/\gamma} \| u_0^\nu \|_{Y_\nu}.
\end{align}
This proves \eqref{eqn:lefttodo}.
By \eqref{eq:step2_triang}, \eqref{eq:final1} and \eqref{eqn:lefttodo}, the claim is proved. 
\end{proof}

\begin{lemma}[Compactness lemma]
\label{eq:compactness_lemma}
Let $p > 2$. Let $T > 0$ and $(u^{(k)})_{k \in \N}$ be a sequence of strong solutions to the Navier-Stokes equations on $\R^2 \times (0,T)$ with viscosity $\nu_k \to 0^+$ and fixed body force $\bar{f} \in L^1_t L^2_x(\R^2 \times (0,T))$ with $\bar{g} := \curl \bar{f} \in L^1_t L^p_x(\R^2 \times (0,T))$. Suppose that $u_0^{(k)} \to 0$ in $L^2(\R^2)$, $\omega_0^{(k)} \to 0$ in $L^p(\R^2)$, and
\begin{equation}
    \label{eq:assumptiononukinlemma6}
\sup_k \| \nabla^2 u^{(k)} \|_{L^\infty_t L^2_x(\R^2 \times (\varepsilon,T))} < +\infty \, , \quad \forall \varepsilon \in (0,T) \, .
\end{equation}
Then there exists a subsequence (not relabeled) such that $u^{(k)}$ converges strongly in $L^\infty_t L^2_x(\R^2 \times (0,T))$ to a weak solution $u$ of the forced 2D Euler equations and $\omega^{(k)}$ converges strongly in $L^\infty_t L^p_x(\R^2 \times (0,T))$.
\end{lemma}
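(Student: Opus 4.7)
The plan is to extract a strongly convergent subsequence via an Aubin--Lions compactness argument and identify the limit as a weak Euler solution, using the interior regularity~\eqref{eq:assumptiononukinlemma6} to upgrade local compactness to the $L^p$ vorticity convergence.

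First, I would collect uniform a priori bounds. The energy identity for \eqref{eq:NS_forced} gives $\| u^{(k)} \|_{L^\infty_t L^2_x} \lesssim \| u_0^{(k)} \|_{L^2} + \| \bar f \|_{L^1_t L^2_x}$. Multiplying the vorticity equation $\partial_t \omega^{(k)} + u^{(k)} \cdot \nabla \omega^{(k)} = \nu_k \Delta \omega^{(k)} + \bar g$ by $|\omega^{(k)}|^{p-2}\omega^{(k)}$ and integrating yields $\| \omega^{(k)} \|_{L^\infty_t L^p_x} \lesssim \| \omega_0^{(k)} \|_{L^p} + \| \bar g \|_{L^1_t L^p_x}$, uniformly in $k$ (transport has zero contribution and dissipation has favorable sign). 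By Calderón--Zygmund, $\nabla u^{(k)}$ is uniformly bounded in $L^\infty_t L^p_x$, and since $p > 2$, Morrey embedding gives $u^{(k)}$ uniformly bounded in $L^\infty_t C^{0,1-2/p}_{\rm loc}$.

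Next, I would establish time equicontinuity. From the momentum equation,
\[
\partial_t u^{(k)} = -\mathbb{P}(u^{(k)} \cdot \nabla u^{(k)}) + \nu_k \Delta u^{(k)} + \bar f,
\]
the convective term is bounded in $L^\infty_t L^{q}_{\rm loc}$ for some $q > 1$ (by Hölder, using the local $L^\infty$ bound on $u^{(k)}$ and the $L^p$ bound on $\nabla u^{(k)}$), the force lies in $L^1_t L^2_x$, and the viscous term is of order $\nu_k \to 0$ in $L^\infty_{[\varepsilon, T]} L^2_x$ by~\eqref{eq:assumptiononukinlemma6}. Hence $\partial_t u^{(k)}$ is uniformly bounded in a suitable space of distributions. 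Combined with the spatial regularity $u^{(k)} \in L^\infty_t W^{1,p}_{\rm loc}$, Aubin--Lions yields a (non-relabelled) subsequence converging strongly in $C([0, T]; L^2_{\rm loc})$. Passing to the limit in the weak formulation, the limit $u$ is a weak solution of the forced 2D Euler equations (the nonlinearity passes by strong $L^2_{\rm loc}$ convergence, and $\nu_k \Delta u^{(k)} \to 0$ distributionally). To upgrade local to global strong $L^\infty_t L^2_x$ convergence, I invoke energy conservation for 2D Euler with $L^p$ vorticity (classical for $p > 2$), which forces $\| u^{(k)}(t) \|_{L^2}^2 \to \| u(t) \|_{L^2}^2$ along the subsequence, and combined with weak $L^2$ convergence this yields strong $L^\infty_t L^2_x$ convergence.

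For the strong convergence of $\omega^{(k)}$ in $L^\infty_t L^p_x$, I would split $[0, T] = [0, \varepsilon] \cup [\varepsilon, T]$. On $[\varepsilon, T]$, the hypothesis~\eqref{eq:assumptiononukinlemma6} upgrades $\nabla \omega^{(k)}$ to a uniform bound in $L^\infty_t L^2_x$; combined with $\partial_t \omega^{(k)}$ control from the vorticity equation, Aubin--Lions yields strong $L^\infty_{[\varepsilon, T]} L^p_{\rm loc}$ convergence, and tail control via the Biot--Savart kernel and the $L^2$ velocity bound extends this to strong convergence in $L^\infty_{[\varepsilon, T]} L^p_x$. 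On $[0, \varepsilon]$, the uniform estimate
\[
\|\omega^{(k)}(t)\|_{L^p} \leq \|\omega_0^{(k)}\|_{L^p} + \int_0^\varepsilon \|\bar g(s)\|_{L^p}\, ds
\]
tends to $0$ as $\varepsilon, k^{-1} \to 0$ by the assumptions $\omega_0^{(k)} \to 0$ in $L^p$ and $\bar g \in L^1_t L^p_x$, making the contribution on this subinterval negligible. Sending $\varepsilon \to 0^+$ concludes. The main technical obstacles are the upgrades from local to global convergence (in $L^\infty_t L^2_x$ and $L^\infty_t L^p_x$), which rely respectively on energy conservation for the limiting Euler solution and on the decay properties of the 2D Biot--Savart kernel applied to $L^p \cap L^1$-type vorticity.
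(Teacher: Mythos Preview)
Your overall architecture (uniform energy/vorticity estimates, Aubin--Lions for local-in-space velocity compactness, identification of the Euler limit, then the split $[0,\varepsilon]\cup[\varepsilon,T]$ for the vorticity with the global $L^p$ estimate handling the initial layer) matches the paper.

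For the upgrade from $L^2_{\rm loc}$ to global $L^\infty_tL^2_x$ velocity convergence, you take a different route: you invoke energy conservation for the limiting Euler solution (classical for $\omega\in L^p$, $p>2$) to match norms, then combine with weak convergence. The paper instead multiplies the local energy equality by a far-field cutoff $\chi_R^2$ and controls the resulting boundary terms using the uniform $L^\infty_tL^2_x$ velocity bound and Calder\'on--Zygmund pressure estimates, showing directly that $\sup_t\int |u^{(k)}|^2\chi_R^2\,dx = o_{k\to\infty}(1)+o_{R\to\infty}(1)$. Your route is legitimate but relies on an external result and requires some care with uniformity in $t$; the paper's is self-contained.

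There is a genuine gap in your vorticity argument on $[\varepsilon,T]$. You write ``tail control via the Biot--Savart kernel and the $L^2$ velocity bound extends this to strong convergence in $L^\infty_{[\varepsilon,T]}L^p_x$.'' But Biot--Savart recovers $u$ from $\omega$, not the reverse; there is no mechanism here to control $\|\omega^{(k)}\|_{L^p(|x|>R)}$ from $\|u^{(k)}\|_{L^2}$. The paper avoids this entirely: once global strong $L^\infty_tL^2_x$ convergence of $u^{(k)}$ is in hand, it simply interpolates (Gagliardo--Nirenberg) between $\|u^{(k)}-u\|_{L^2}\to 0$ and the uniform bound on $\|\nabla^2 u^{(k)}\|_{L^\infty_tL^2_x(\R^2\times(\varepsilon,T))}$ to obtain $\|\omega^{(k)}-\omega\|_{L^\infty_tL^p_x(\R^2\times(\varepsilon,T))}\to 0$ globally in space, with no separate tail argument needed. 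Replacing your Aubin--Lions-plus-tail step with this interpolation closes the proof.
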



\begin{proof}
    First of all, we may assume that $\bar{f}$ is divergence free by incorporating the gradient in its Helmholtz decomposition into the pressure; the new $\bar{f}$ also belongs to $L^1_t L^2_x(\R^2 \times (0,+\infty))$ (actually, its norm does not increase in this procedure).
    
    Under these assumptions, we have the \emph{a priori} global-in-space energy estimates and vorticity estimates:
    \begin{align}
        & \| u^{(k)} \|_{L^\infty_t L^2_x(\R^2 \times (0,T))} + \nu_k^{1/2} \| \nabla u^{(k)} \|_{L^2_{t,x}(\R^2 \times (0,T))} \\ & \quad \lesssim \| u_0^{(k)} \|_{L^2} + \| \bar{f} \|_{L^1_t L^2_x(\R^2 \times (0,T))} \lesssim 1,
    \end{align}
    \begin{align}
        & \| \omega^{(k)} \|_{L^\infty_t L^p_x(\R^2 \times (0,T))} + \nu_k^{1/p} \| \nabla |\omega^{(k)}|^{\frac{p}{2}} \|_{L^2_{t,x}(\R^2 \times (0,T))}^{\frac{2}{p}} \\ & \quad \lesssim \| \omega_0^{(k)} \|_{L^p} + \| \bar{g} \|_{L^1_t L^p_x(\R^2 \times (0,T))} \lesssim 1 \, .        \label{eq:globalvorticityestinmuhlemma}
    \end{align}
    By Sobolev embedding, we have $u^{(k)} \in L^\infty_t C^\alpha_x(\R^2 \times (0,T))$ with uniform-in-$k$ estimates ($\alpha = 1-2/p$). Following the standard techniques for constructing weak solutions of 2D Euler, we estimate the time derivative in a negative Sobolev space and apply the vorticity estimates to obtain strong convergence of the velocity field. Namely, we pass to a subsequence (without relabeling) such that
    \begin{equation}
        u^{(k)} \to u \text{ in } L^\infty_t (L^2 \cap L^\infty)_x(B_R \times (0,T)) \, , \quad \forall R > 0 \, .
    \end{equation}
    Now we control the solution in the far field. We require the local energy equality
    \begin{equation}
        (\p_t - \nu_k \Delta) \frac{|u^{(k)}|^2}{2} + \nu_k |\nabla u^{(k)}|^2 + \div \left[ u^{(k)} \left( \frac{|u^{(k)}|^2}{2} + p^{(k)} \right) \right] = u^{(k)} \cdot \bar{f} \, .
    \end{equation}
    Let $R > 0$. We multiply by an appropriate cut-off function $\chi_R^2(x) = \chi^2(x/R)$ with $\chi \equiv 1$ on $\R^2 \setminus B_2$ and $\chi \equiv 0$ on $B_1$ and integrate over $\R^2 \times (0,t)$:
    \begin{equation}
    \begin{aligned}
        &\frac{1}{2} \int |u^{(k)}|^2 \chi_R^2 \, dx + \nu_k \int_0^t \int |\nabla u^{(k)}|^2 \chi_R^2 \, dx \, ds = \frac{1}{2} \int |u^{(k)}_0|^2 \chi_R^2 \, dx
        \\
        &\quad  + \frac{\nu_k}{2} \int_0^t \int \Delta \chi^2_R |u^{(k)}|^2 \,dx \,ds + \int_0^t \int \left( \frac{|u^{(k)}|^2}{2} + p^{(k)} \right) u^{(k)} \cdot \nabla \chi_R^2 \, dx \, ds \\ & \quad + \int_0^t \int \bar{f} \cdot u^{(k)} \chi_R^2 \, dx \, ds.
    \end{aligned}
    \end{equation}
    Since $p^{(k)} = (-\Delta)^{-1} \div \div u^{(k)} \otimes u^{(k)}$ ($\bar{f}$ is divergence-free), we have by the Calder{\'o}n-Zygmund estimates that $p^{(k)}$ is uniformly bounded in $L^\infty_t L^q_x(\R^2 \times (0,T))$ for all $q \in (1,+\infty)$. We then estimate the ``boundary terms" above using the global \emph{a priori} bounds. Consequently, we have
    \begin{align}
        \sup_{t \in (0,T)} \int |u^{(k)}|^2(x,t) \chi_R^2 \, dx &\leq o_{k \to +\infty}(1) + o_{R \to +\infty}(1) + \int_0^T \int |\bar{f}|^2 \chi_R^2 \, dx \, dt \\ & = o_{k \to +\infty}(1) + o_{R \to +\infty}(1) \, .
    \end{align}
    This allows us to upgrade the local-in-space strong convergence to the global strong convergence $u^{(k)} \to u$ in $L^\infty_t L^2_x(\R^2 \times (0,T))$.
    
    Next, we interpolate between the assumption~\eqref{eq:assumptiononukinlemma6} on $\nabla^2 u^{(k)}$ and the strong convergence of $u^{(k)}$ to obtain
    \begin{equation}
        \label{eq:almostglobalvorticityconvergence}
        \omega^{(k)} \to \omega \text{ in } L^\infty_t L^p_x(\R^2 \times (\varepsilon,T)) \, , \quad \forall \varepsilon \in (0,T) \, .
    \end{equation}
    We must therefore control the vorticity near the initial time. This is done by the global vorticity estimate~\eqref{eq:globalvorticityestinmuhlemma} on $\R^2 \times (0,\varepsilon)$. That is,
    \begin{equation}
        \| \omega^{(k)} \|_{L^\infty_t L^p_x(\R^2 \times (0,\varepsilon))} \lesssim o_{k \to +\infty}(1) + o_{\varepsilon \to 0^+}(1) \, .
    \end{equation}
    Combining this with~\eqref{eq:almostglobalvorticityconvergence} completes the proof.
\end{proof}

A related tightness argument is contained in~\cite[Lemma 2.2]{Wu2021}.

\appendix
\numberwithin{theorem}{section}

\section{Abstract spectral analysis}
\label{sec:spectralperturbationtheory}

In this section, we formalize a spectral theory applicable to certain singular perturbation problems, including those in Section~\ref{sec:spectral_pb}. This theory abstracts and extends arguments in~\cite{vishik2018instability1,vishik2018instability2,albritton2023instability,MR4429263}.

\begin{proposition}
	\label{pro:mainspectrallemma}
Let $X$ be a Banach space. Suppose that $M_k$ ($k \in \N$) and $M_\infty$ are closed, densely defined operators on $X$, and $K_k$ ($k \in \N$) and $K_\infty : X \to X$ are compact. Suppose that $K_k \to K_\infty$ as $k \to +\infty$. Let $L_k := M_k + K_k$ and $L_\infty := M_\infty + K_\infty$.

(i) Suppose that $\lambda \in \rho(L_\infty)$. Suppose that $R(\lambda,M_k)$ ($k \gg 1$) and $R(\lambda,M_\infty)$ exist,
\begin{equation}
	\label{eq:strongopconv}
R(\lambda,M_k) x \to R(\lambda,M_\infty) x \text{ as } k \to +\infty \, , \quad \forall x \in X
\end{equation}
(convergence in the so-called strong operator topology), and
\begin{equation}
	\label{eq:convholdslocallyuniformy}
R(\lambda,M_k) K_\infty \to R(\lambda,M_\infty) K_\infty \text{ in } \mathcal{B}(X) \text{ as } k \to +\infty \, .
\end{equation}
Then
$R(\lambda,L_k)$ exists for $k \gg 1$, and
\begin{equation}
	\label{eq:convholdslocallyuniformy2}
R(\lambda,L_k)x \to R(\lambda,L_\infty)x \text{ as } k \to +\infty \, , \quad \forall x \in X \, .
\end{equation}

(ii) If $E \subset \rho(L_\infty)$ is compact and (i) holds for every $\lambda \in E$, with the convergences~\eqref{eq:strongopconv} and~\eqref{eq:convholdslocallyuniformy} holding uniformly in $\lambda \in E$, then~\eqref{eq:convholdslocallyuniformy2} holds uniformly in $\lambda \in E$.

(iiia) Suppose that $\lambda_\infty$ is an isolated eigenvalue of $L_\infty$ and~\eqref{eq:strongopconv}-\eqref{eq:convholdslocallyuniformy} hold uniformly in $\lambda \in \bar{B}_\delta(\lambda_\infty)$. Let $\varepsilon > 0$. Then, whenever $k \gg_\varepsilon 1$, we have
\begin{equation}
\sigma(L_k) \cap \bar{B}_\delta(\lambda_\infty) \subset B_\varepsilon(\lambda_\infty) \, .
\end{equation}
Additionally, there exists an isolated eigenvalue $\lambda_k \in \sigma(L_k)$ of finite multiplicity satisfying $|\lambda_k - \lambda_\infty| < \varepsilon$.


(iiib) Suppose furthermore that $\lambda_\infty$ is geometrically (resp. algebraically) simple. Then, for $k \gg 1$, every spectral value in $\sigma(L_k) \cap \bar{B}_\delta(\lambda_\infty)$ is a geometrically (resp. algebraically) simple eigenvalue of finite multiplicity.
\end{proposition}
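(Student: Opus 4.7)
The plan is to exploit the factorization $z - L_k = (z - M_k)(I - T_k(z))$ with $T_k(z) := R(z, M_k) K_k$, which reduces the whole analysis to the Fredholm perturbation theory of the compact-plus-identity pencil $I - T_k(z)$.

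For part (i), I would first prove $T_k(z) \to T_\infty(z)$ in operator norm by the decomposition
\begin{equation*}
T_k(z) - T_\infty(z) = R(z,M_k)(K_k - K_\infty) + [R(z,M_k) - R(z,M_\infty)]K_\infty;
\end{equation*}
the first summand tends to zero in norm because $R(z,M_k)$ is uniformly bounded (uniform boundedness principle applied to~\eqref{eq:strongopconv}) and $K_k \to K_\infty$ in norm, while the second summand is exactly~\eqref{eq:convholdslocallyuniformy}. Since $I - T_\infty(z)$ is boundedly invertible when $z \in \rho(L_\infty)$, the stability of bounded invertibility (Lemma~\ref{lem:stabilityofboundedinvertibility}) furnishes $(I - T_k(z))^{-1}$ for $k \gg 1$ with $(I - T_k(z))^{-1} \to (I - T_\infty(z))^{-1}$ in norm. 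The identity $R(z,L_k) = (I - T_k(z))^{-1} R(z,M_k)$ then combines a norm-converging factor with the strongly converging $R(z,M_k)$ to yield~\eqref{eq:convholdslocallyuniformy2}. Part (ii) follows since every step above is manifestly uniform in $\lambda \in E$ when the input hypotheses are uniform: the norm bound on $T_k(z) - T_\infty(z)$, the bounded invertibility argument, and hence the resulting convergence of $(I - T_k(z))^{-1}$.

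For part (iiia), the isolation of $\lambda_\infty$ means $A_\varepsilon := \{z : \varepsilon \leq |z - \lambda_\infty| \leq \delta\} \subset \rho(L_\infty)$. Applying (ii) with the compact set $A_\varepsilon$ produces uniform norm bounds on $(I - T_k(z))^{-1}$ along $A_\varepsilon$, and hence $A_\varepsilon \subset \rho(L_k)$ for $k$ large, i.e.\ $\sigma(L_k) \cap \bar B_\delta(\lambda_\infty) \subset B_\varepsilon(\lambda_\infty)$. To exhibit a genuine eigenvalue, form the Riesz projection $P_k := (2\pi i)^{-1} \oint_{\partial B_\delta(\lambda_\infty)} R(z,L_k)\,dz$, which is well defined for $k \gg 1$ and satisfies $P_k x \to P_\infty x$ for every $x \in X$ by integrating the uniform strong convergence on the compact contour $\partial B_\delta$. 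Since $P_\infty \neq 0$, we get $P_k \neq 0$, and hence $\sigma(L_k) \cap B_\delta(\lambda_\infty) \neq \emptyset$. Any such $\lambda_k$ has finite multiplicity because $I - T_k(\lambda_k)$ is a compact perturbation of the identity, hence Fredholm of index zero with finite-dimensional generalized kernel.

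For part (iiib), I would first upgrade the analysis to $T_k(z) \to T_\infty(z)$ in norm \emph{uniformly} on $\bar B_\delta(\lambda_\infty)$, via the same decomposition as in (i). Geometric simplicity then passes to the limit by upper semicontinuity of the kernel dimension for compact perturbations of the identity: from $T_k(\lambda_k) \to T_\infty(\lambda_\infty)$ in norm (using the uniform norm convergence plus $\lambda_k \to \lambda_\infty$ and analyticity of $T_\infty$), one obtains $\dim \ker(I - T_k(\lambda_k)) \leq \dim \ker(I - T_\infty(\lambda_\infty)) = 1$, with equality forced by $\lambda_k \in \sigma(L_k)$. The main obstacle is algebraic simplicity, since the Riesz projection $P_k$ is only known to converge strongly, not in norm, so the naive ``rank is preserved'' argument is unavailable. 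The correct tool is the Gohberg-Sigal operator-valued generalization of Rouché's theorem: applied to the analytic pencil $I - T_k(z)$ with the perturbation $T_\infty(z) - T_k(z) \to 0$ in norm on $\partial B_\delta$, it guarantees that the total algebraic multiplicity of the zeros inside $B_\delta$ is preserved for $k \gg 1$. If $\lambda_\infty$ is algebraically simple, this total multiplicity equals $1$, so $L_k$ has exactly one eigenvalue in $B_\delta(\lambda_\infty)$ and it is algebraically simple.
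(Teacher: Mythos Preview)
Your proof is correct. Parts (i), (ii), and (iiia) follow essentially the same route as the paper: the same factorization $R(\lambda,L_k) = (I - T_k(\lambda))^{-1} R(\lambda,M_k)$, the same decomposition of $T_k - T_\infty$, the same appeal to the uniform boundedness principle and Lemma~\ref{lem:stabilityofboundedinvertibility}, and the same Riesz-projection argument for existence of a nearby eigenvalue.

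Part (iiib) is where you diverge. The paper argues both cases by a direct compactness extraction: assuming a sequence of eigenvalues $\lambda_k \to \lambda_\infty$ with two linearly separated eigenfunctions (geometric case) or an eigenfunction together with a normalized generalized eigenfunction (algebraic case), it uses the eigenfunction equation $\phi_k = R(\lambda_k,M_k)K_k\phi_k$, the norm convergence $R(\lambda_k,M_k)K_k \to R(\lambda_\infty,M_\infty)K_\infty$, and the compactness of the limit operator to pass to convergent subsequences and contradict simplicity at $\lambda_\infty$. This is entirely self-contained. Your route instead invokes upper semicontinuity of $\dim\ker$ for Fredholm perturbations (geometric case) and the Gohberg--Sigal operator Rouch\'e theorem applied to the analytic pencil $I - T_k(z)$ (algebraic case). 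Both arguments are valid; yours is more structural and in the algebraic case actually yields the stronger conclusion that there is a \emph{unique} eigenvalue in $B_\delta(\lambda_\infty)$, which the paper establishes separately (Corollary~\ref{cor:notwoevals}) by yet another argument. The trade-off is that you import a nontrivial external theorem, whereas the paper's argument needs nothing beyond sequential compactness.
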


\eqref{eq:strongopconv} implies~\eqref{eq:convholdslocallyuniformy} when $K_\infty$ can be approximated by finite rank operators (e.g., in a Hilbert space).

\begin{proof}
(i) First, the uniform boundedness principle yields uniform operator bounds on $\| R(\lambda,M_k) \|_{X \to X}$. We can use this,~\eqref{eq:convholdslocallyuniformy}, and $K_k \to K_\infty$ to prove that $R(\lambda,M_k) K_k \to R(\lambda, M_\infty) K_\infty$. Then, since
\begin{equation}
 R(\lambda,L_k) = [I-R(\lambda,M_k)K_k]^{-1} R(\lambda,M_k) \, ,
 \end{equation}
 Lemma~\ref{lem:stabilityofboundedinvertibility} and the assumptions yield that $R(\lambda,L_k)$ exists and converges.

(ii) The proof is the same but with uniform convergence.

(iiia) For $k \gg_\varepsilon 1$, the spectral projector
\begin{equation}
P_k = \frac{1}{2\pi i} \int_\gamma R(\lambda,L_k) \, d\lambda
\end{equation}
where $\gamma$ is the curve $\p B_\varepsilon(\lambda_\infty)$, oriented counterclockwise, is well defined. By (ii), $P_k \to P_\infty$ in the strong operator topology. Since the limiting projector is non-trivial, so is the projector $P_k$. That the eigenvalue is isolated and of finite multiplicity comes from the structural assumptions on the operator $L_k$ (namely, that $\sigma_{\rm ess}(L_k) = \sigma_{\rm ess}(M_k)$, since $K_k$ is a compact perturbation).

(iiib) First, we collect two facts. If the operator $L_k$ has an eigenfunction $\phi_k$ with eigenvalue $\lambda_k$, then
\begin{equation}
(\lambda_k - M_k - K_k) \phi_k = 0 \, ,
\end{equation}
which we rewrite as
\begin{equation}
	\label{eq:eigenfunctionequation}
\phi_k - R(\lambda_k,M_k) K_k \phi_k = 0 \, .
\end{equation}
Heuristically, since $R(\lambda_k,M_k) K_k$ is compact, the equation~\eqref{eq:eigenfunctionequation} substitutes for the property that the eigenfunctions are ``more regular". Additionally, if $\lambda_k \to \lambda_\infty$, then we can exploit that the eigenvalues are stabilizing and\footnote{$\p_\lambda R(\lambda,M_k) = -R(\lambda,M_k)^2$}
\begin{equation}
	\label{eq:ResolventDifferenceEstHi}
\| R(\lambda,M_k) - R(\tilde{\lambda},M_k) \|_{X \to X} \leq \sup_\mu \| R(\mu,M_k)^2 \|_{X \to X} |\lambda - \tilde{\lambda}| \, 
\end{equation}
(where $\mu$ varies over a line segment from $\lambda$ to $\tilde{\lambda}$) to obtain
\begin{equation}
	\label{eq:convergenceofmkkk}
 R(\lambda_k,M_k) K_k \to R(\lambda_\infty,M_\infty) K_\infty \, .
\end{equation}

(Geometric simplicity) Suppose not, i.e., there exists a sequence of unstable eigenvalues $\lambda_k \to \lambda_\infty$, with corresponding eigenspace of dimension $\geq 2$. Then, by Riesz's lemma, there exist eigenfunctions $\phi_k^{(j)}$, $j=1,2$, $k \in \N$, satisfying
\begin{equation}
	\label{eq:keepme1}
\| \phi_k^{(j)} \|_{X} = 1
\end{equation}
\begin{equation}
	\label{eq:keepme2}
{\rm dist}(\phi_k^{(2)}, {\rm span} \, \phi_k^{(1)}) \geq 1/2 \, .
\end{equation}
The second property substitutes for orthogonality in the Banach space setting.
From the equation~\eqref{eq:eigenfunctionequation}, the convergence~\eqref{eq:convergenceofmkkk}, and the compactness of $R(\lambda_\infty,M_\infty)K_\infty$, there exists a subsequence in $k$ (not relabeled) such that $\phi^{(j)}_k \to \phi_\infty^{(j)}$ in $X$ ($j=1,2$) and $\phi_\infty^{(j)}$ is a limiting eigenfunction. By strong convergence,~\eqref{eq:keepme1}-\eqref{eq:keepme2} are kept with $k=+\infty$. This contradicts the geometric simplicity of $\lambda_\infty$.


(Algebraic simplicity) We have already established geometric simplicity, so we must eliminate possible Jordan blocks. Suppose not, i.e., there exists a sequence of unstable eigenvalues $\lambda_k \to \lambda_\infty$ with normalized eigenfunction $\phi_k$ and (normalized) generalized eigenfunction $\psi_k$, which we may take to satisfy
\begin{equation}
{\rm dist}(\psi_k, {\rm span} \, \phi_k) \geq 1/2 \, .
\end{equation}
 We have already established that $\phi_k \to \phi_\infty$ as $k \to +\infty$ and $\phi_\infty$ is a limiting eigenfunction. The generalized eigenfunction satisfies
\begin{equation}
(\lambda - M_k - K_k) \psi_k = \alpha_k \phi_k \, ,
\end{equation}
for a parameter $\alpha_k \neq 0$. We rewrite the equation as
\begin{equation}
\psi_k - R(\lambda_k,M_k) K_k \psi_k = \alpha_k R(\lambda_k,M_k) K_k \phi_k = \alpha_k \phi_k.
\end{equation}
$\alpha_k$ must remain bounded, since $\phi_k \to \phi_\infty$ with unit norm. Therefore, there exists a subsequence in $k$ (not relabeled) such that $\alpha_k \to \alpha_\infty$ (potentially zero) and  $R(\lambda_k,M_k) K_k \psi_k$ converges by compactness, so $\psi_k \to \psi_\infty$ in $X$ as $k \to +\infty$, and we deduce that $\psi_\infty$ is a (generalized) eigenfunction. This contradicts the algebraic simplicity of $\lambda_\infty$.
\end{proof}

\begin{lemma}[Stability of bounded invertibility]
	\label{lem:stabilityofboundedinvertibility}
Let $X$ be a Banach space, $M$ and $\tilde{M}$ be closed, densely defined operators on $X$, and $K, \tilde{K} : X \to X$ be compact. 

Suppose $\lambda \in \rho(L) \cap \rho(M) \cap \rho(\tilde{M})$. Then
\begin{equation}
[I - R(\lambda,M) K]^{-1} = R(\lambda,L) (\lambda - M) : D(M) \to D(M)
 \end{equation} 
extends to a bounded operator on $X$. Suppose
\begin{equation}
	\label{eq:assumptiononresolv}
	\begin{aligned}
&\| [R(\lambda,\tilde{M}) - R(\lambda,M)] K \|_{X \to X} + \| R(\lambda,\tilde{M}) (\tilde{K} - K) \|_{X \to X} \\
&\quad \leq \frac{1}{2 \| R(\lambda,L) (\lambda - M) \|_{X \to X} } \, .	
	\end{aligned}
\end{equation}
Then $\lambda \in \rho(\tilde{L})$, and
\begin{equation}
	\label{eq:resolventconclusion}
\| R(\lambda,\tilde{L}) \|_{X \to X} \leq 2 \| R(\lambda,L) (\lambda - M) \|_{X \to X} \| R(\lambda,\tilde{M}) \|_{X \to X} \, .
\end{equation}
\end{lemma}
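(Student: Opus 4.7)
The plan is to leverage the factorization $\lambda - L = (\lambda - M)(I - R(\lambda,M)K)$ on $D(M)$, which exhibits the resolvent of $L$ as a Neumann-type modification of the resolvent of the ``unperturbed'' part $M$. The strategy is then to write the analogous factorization for $\tilde L$ and show that the corresponding Neumann factor differs from the one for $L$ by a term that is small relative to the existing inverse, so that bounded invertibility is preserved by a standard perturbation argument.

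First I would verify the opening claim of the lemma: on $D(M)$ the identity $\lambda - L = (\lambda - M)(I - R(\lambda,M)K)$ holds, and since $\lambda \in \rho(L)$, composing on the left with $R(\lambda,L)$ and on the right with nothing shows that $R(\lambda,L)(\lambda - M) : D(M) \to D(M)$ is a two-sided inverse of $I - R(\lambda,M)K$ on $D(M)$. Because it is bounded in the $X$-norm on the dense subspace $D(M)$, it extends uniquely to a bounded operator on $X$ of norm $\|R(\lambda,L)(\lambda - M)\|_{X \to X}$, and this extension still serves as a two-sided inverse of $I - R(\lambda,M)K$ on $X$. The analogous identity for $\tilde L$ reads $\lambda - \tilde L = (\lambda - \tilde M)(I - R(\lambda,\tilde M)\tilde K)$, so it suffices to produce a bounded inverse of $I - R(\lambda,\tilde M)\tilde K$ on $X$ and then set $R(\lambda,\tilde L) := (I - R(\lambda,\tilde M)\tilde K)^{-1} R(\lambda,\tilde M)$.

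Next I would write the telescoping identity
\begin{equation*}
I - R(\lambda,\tilde M)\tilde K = (I - R(\lambda,M)K) + A, \qquad A := [R(\lambda,M) - R(\lambda,\tilde M)]K + R(\lambda,\tilde M)(K - \tilde K),
\end{equation*}
and observe that the hypothesis~\eqref{eq:assumptiononresolv} furnishes the estimate $\|A\|_{X \to X} \leq \tfrac{1}{2} \|R(\lambda,L)(\lambda - M)\|_{X \to X}^{-1}$. Factoring out the invertible piece yields
\begin{equation*}
I - R(\lambda,\tilde M)\tilde K = (I - R(\lambda,M)K)\bigl[I + R(\lambda,L)(\lambda - M)\,A\bigr],
\end{equation*}
and the operator inside brackets has norm at most $1/2$ in $\mathcal{B}(X)$, so it is invertible by Neumann series with inverse of norm at most $2$. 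Composing inverses produces a bounded inverse of $I - R(\lambda,\tilde M)\tilde K$ with norm bounded by $2\|R(\lambda,L)(\lambda - M)\|_{X \to X}$, and multiplying on the right by $R(\lambda,\tilde M)$ delivers $R(\lambda,\tilde L)$ together with the estimate~\eqref{eq:resolventconclusion}.

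The argument is entirely a Neumann-series perturbation once the bookkeeping is in place, so I do not anticipate a genuine obstacle. The only point deserving care is the extension step in the first paragraph: one must check that $R(\lambda,L)(\lambda - M)$, defined \emph{a priori} only on $D(M)$, really extends to the bounded inverse of $I - R(\lambda,M)K$ on all of $X$, and that the extended operator continues to serve as a genuine inverse (not just a one-sided one) when composed with the perturbed operator $I - R(\lambda,\tilde M)\tilde K$. Once this extension is justified by density and closedness, the Neumann factorization above is a bounded-operator identity on $X$ and the resolvent bound follows mechanically.
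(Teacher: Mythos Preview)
Your approach is correct and essentially identical to the paper's: both use the factorization $\lambda - L = (\lambda - M)(I - R(\lambda,M)K)$, compare $I - R(\lambda,\tilde M)\tilde K$ to $I - R(\lambda,M)K$, and invoke a Neumann-series perturbation of the latter's known inverse. The paper is simply terser about the Neumann step, writing only that the assumption ``yields that $I - R(\lambda,\tilde{M})\tilde{K}$ is invertible'' with the stated bound.

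One small remark on the extension step you flag at the end: ``density and closedness'' alone is not quite the right phrase, since $R(\lambda,L)(\lambda-M)$ is not obviously closable on $X$. The paper handles this via the Fredholm alternative (compactness of $K$ makes $I - R(\lambda,M)K$ Fredholm of index zero, and any kernel element lies in $D(M)$, contradicting $\lambda\in\rho(L)$). Your route can be completed without compactness: injectivity of $T := I - R(\lambda,M)K$ on $X$ follows since $Tx=0$ forces $x=R(\lambda,M)Kx\in D(M)$, hence $(\lambda-L)x=0$; surjectivity follows by writing $x = y + w$ and solving $Tw = R(\lambda,M)Ky \in D(M)$ using the already-established bijectivity of $T|_{D(M)}$. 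Then $T^{-1}$ is bounded by the open mapping theorem and agrees with $R(\lambda,L)(\lambda-M)$ on $D(M)$. Either way, the rest of your argument goes through unchanged.
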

\begin{proof}
We utilize the decomposition
\begin{equation}
\lambda - L = \lambda - M - K =  (\lambda - M) (I - R(\lambda,M) K) \, .
\end{equation}
By the assumption, $I - R(\lambda,M) K : D(M) \to D(M)$ is invertible (hence, invertible $X \to X$).\footnote{Since $K$ is compact, $I - R(\lambda,M) K$ is Fredholm of index zero, and any element of $\phi$ of the kernel would necessarily satisfy $\phi = R(\lambda,M) K \in D(M)$.} The analogous decomposition with tildes is
\begin{equation}
	\label{eq:equationforinverse}
\lambda - \tilde{L} = \lambda - \tilde{M} - \tilde{K} = (\lambda - \tilde{M}) (I - R(\lambda,\tilde{M}) \tilde{K}) \, .
\end{equation}
Since
\begin{equation}
\begin{aligned}
&\| (I - R(\lambda,\tilde{M}) \tilde{K}) - (I - R(\lambda,M) K) \|_{X \to X} \leq\\
&\quad \| [R(\lambda,\tilde{M}) - R(\lambda,M)] K \|_{X \to X} + \| R(\lambda,\tilde{M}) (\tilde{K} - K) \|_{X \to X}
\end{aligned}
\end{equation}
our assumption~\eqref{eq:assumptiononresolv} yields that $I - R(\lambda,\tilde{M}) \tilde{K} : X \to X$ is invertible (hence, invertible $D(\tilde{M}) \to D(\tilde{M})$), and
\begin{equation}
	\label{eq:neededforcontrol}
\| (I - R(\lambda,\tilde{M}) \tilde{K})^{-1} \|_{X \to X} \leq C \| R(\lambda,L) (\lambda - M) \|_{X \to X} \, .
\end{equation}
 We therefore control $R(\lambda,\tilde{L})$ using~\eqref{eq:equationforinverse} and \eqref{eq:neededforcontrol} to obtain~\eqref{eq:resolventconclusion}.
\end{proof}



We will now prove a perturbation result for an (algebraically simple) eigenvalue-eigenfunction pair $(\lambda_0,\phi_0)$. For this, it is somewhat more convenient to work with a continuous family of operators.

For the remainder of the section, we make the following standing assumptions: Let $H$ be a Hilbert space. Let $M : H \to H$ be bounded and $K : H \to H$ be compact. Let $A : D(A) \subset H \to H$ be closed and densely defined. Let
\begin{equation}
L_\varepsilon := M + K + \varepsilon A \, .
\end{equation}
That is, $L_0 : H \to H$ is bounded, whereas $L_\varepsilon : D(A) \subset H \to H$ is unbounded. We assume that there exist $\varepsilon_0 > 0$ and $\Lambda \in \R$ satisfying
\begin{equation}
R(\lambda,M+\varepsilon A) \text{ exists and } \| R(\lambda,M+\varepsilon A) \|_{H \to H} \lesssim 1 \, , \quad \forall \Re \lambda \geq \Lambda \, , \varepsilon \in (0,\varepsilon_0] \, .
\end{equation}

Under the above assumptions, the essential spectrum of $L_\varepsilon$ belongs to $\{ \Re \lambda < \Lambda \}$. The remainder of the spectrum is discrete, consisting of isolated eigenvalues of finite algebraic multiplicity, which may accumulate only on the essential spectrum. For any $\varepsilon'$ away from zero, $(\varepsilon'/\varepsilon) L_\varepsilon$, $|\varepsilon - \varepsilon'| \ll 1$, may be considered an analytic perturbation of $L_{\varepsilon'}$. Therefore, away from $\varepsilon = 0$, the discrete spectrum varies continuously and group eigenspaces vary analytically, as in Chapter 3 of \cite{kato2013perturbation}.

Finally, we assume
\begin{equation}
\begin{aligned}
&\forall x \in H , \;  \varepsilon \mapsto R(\lambda,M+\varepsilon A) x \text{ is continuous in } \varepsilon , \\
&\quad \text{ locally uniformly in } \lambda \in \{ \Re \lambda \geq \Lambda \} \, .
\end{aligned}
\end{equation}
(This is only a new assumption at $\varepsilon = 0$.)

Our primary interest is in the behavior of the spectrum at $\varepsilon = 0$. We suppose that $\lambda_0$ with $\Re \lambda_0 > \Lambda$ is an algebraically simple eigenvalue of $L_0$ and a normalized eigenfunction $\phi_0$. Let $P_{\lambda_0}$ be the associated spectral projection.

Below, we consider the perturbation theory of $(\lambda_0,\phi_0)$.

\begin{proposition}[Eigenvalue perturbation]
	\label{pro:eigenvalueperturbation}
Under the above assumptions, there exist $\varepsilon_0 > 0$ and a $\mathbb{C} \times H$-neighborhood $U$ of $(\lambda_0,\phi_0)$ such that, whenever $\varepsilon \in [0,\varepsilon_1]$, there exists a unique solution $(\lambda_\varepsilon,\phi_\varepsilon) \in U$ to the equations
\begin{equation}
	\label{eq:eigenfunctionequationphi}
(I - R(\lambda,M+\varepsilon A) K) \phi = 0
\end{equation}
\begin{equation}
	\label{eq:normalizationequation}
\langle P_{\lambda_0} \phi, \phi_0 \rangle = 1 \, .
\end{equation}
The map
\begin{equation}
\varepsilon \mapsto (\lambda_\varepsilon,\phi_\varepsilon) : [0,\varepsilon_1] \to \mathbb{C} \times H
\end{equation}
is continuous.
\end{proposition}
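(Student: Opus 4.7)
The plan is to carry out a Lyapunov--Schmidt reduction centered at $(\lambda_0,\phi_0)$. Set
\begin{equation}
T_\varepsilon(\lambda) := R(\lambda, M+\varepsilon A)\,K,
\end{equation}
so the eigenfunction equation~\eqref{eq:eigenfunctionequationphi} reads $(I - T_\varepsilon(\lambda))\phi = 0$. Since $K$ is compact and $R(\lambda,M+\varepsilon A)$ is uniformly bounded with $R(\lambda,M+\varepsilon A)x \to R(\lambda,M)x$ strongly as $\varepsilon \to 0$ (locally uniformly in $\lambda$), the composition $T_\varepsilon(\lambda)$ converges in \emph{operator norm} to $T_0(\lambda)$, uniformly on any compact $\lambda$-set in $\{\Re\lambda\geq\Lambda\}\cap\rho(M)$. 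For each fixed $\varepsilon$, $T_\varepsilon(\lambda)$ is holomorphic in $\lambda$ on the resolvent set.

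I would next exploit the algebraic simplicity of $\lambda_0$. Decomposing $H = \mathbb{C}\phi_0 \oplus H_0$ with $H_0 := \ker P_{\lambda_0}$, algebraic simplicity is equivalent to $\ker(I-T_0(\lambda_0)) = \mathbb{C}\phi_0$ together with $\mathrm{Ran}(I-T_0(\lambda_0)) = H_0$. Let $Q_0 := I-P_{\lambda_0}$. Then $Q_0(I-T_0(\lambda_0))|_{H_0}:H_0\to H_0$ is a bounded isomorphism, and by norm continuity of $T_\varepsilon(\lambda)$ it remains invertible, with bounded inverse varying continuously in $(\varepsilon,\lambda)$ and holomorphically in $\lambda$ for fixed $\varepsilon$, throughout a small neighborhood $V$ of $(0,\lambda_0)$. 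The normalization~\eqref{eq:normalizationequation} fixes the $\phi_0$-component of $\phi$ to be $1$, so I write $\phi = \phi_0 + \tilde\phi$ with $\tilde\phi \in H_0$. Projecting~\eqref{eq:eigenfunctionequationphi} by $Q_0$ gives
\begin{equation}
Q_0(I-T_\varepsilon(\lambda))|_{H_0}\,\tilde\phi \;=\; Q_0 T_\varepsilon(\lambda)\phi_0,
\end{equation}
which is uniquely solvable on $V$ for a continuous map $(\varepsilon,\lambda)\mapsto \tilde\phi(\varepsilon,\lambda)\in H_0$, with $\tilde\phi(0,\lambda_0)=0$ and $\tilde\phi(0,\cdot)$ holomorphic near $\lambda_0$.

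Substituting back, the $P_{\lambda_0}$-projection reduces to the scalar equation
\begin{equation}
\Delta(\varepsilon,\lambda) := \bigl\langle (I-T_\varepsilon(\lambda))(\phi_0 + \tilde\phi(\varepsilon,\lambda)),\, \phi_0^*\bigr\rangle = 0,
\end{equation}
where $\phi_0^*$ is the adjoint eigenfunction normalized so $P_{\lambda_0}\psi = \langle\psi,\phi_0^*\rangle\phi_0$. The function $\Delta$ is continuous in $\varepsilon$, holomorphic in $\lambda$ for fixed $\varepsilon$, and satisfies $\Delta(0,\lambda_0)=0$. A direct computation using $\partial_\lambda T_0(\lambda_0) = -R(\lambda_0,M)^2 K$ and $\tilde\phi(0,\lambda_0)=0$ yields
\begin{equation}
\partial_\lambda \Delta(0,\lambda_0) \;=\; -\bigl\langle R(\lambda_0,M)^2 K\phi_0,\,\phi_0^*\bigr\rangle,
\end{equation}
which is the classical expression whose nonvanishing is equivalent to the algebraic simplicity of $\lambda_0$ (it is precisely the condition excluding a generalized eigenfunction). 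Hence $\Delta(0,\cdot)$ has a simple zero at $\lambda_0$, and Rouch\'e's theorem applied on $\partial B_r(\lambda_0)$ (permissible by the uniform convergence $\Delta(\varepsilon,\cdot)\to\Delta(0,\cdot)$ on $\overline{B_r(\lambda_0)}$) yields, for each sufficiently small $\varepsilon\geq 0$, a unique zero $\lambda_\varepsilon\in B_r(\lambda_0)$. Continuity $\varepsilon\mapsto\lambda_\varepsilon$ follows by shrinking $r$, and $\phi_\varepsilon := \phi_0 + \tilde\phi(\varepsilon,\lambda_\varepsilon)$ is the advertised eigenfunction; uniqueness on the neighborhood $U$ follows from the uniqueness at each stage of the reduction.

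The main technical point will be justifying $\partial_\lambda\Delta(0,\lambda_0)\neq 0$ cleanly from the algebraic simplicity hypothesis in this abstract setting: I would argue this by contradiction, showing that vanishing of the pairing above produces a generalized eigenfunction $\psi$ for $L_0$ at $\lambda_0$ by setting $\psi := -R(\lambda_0,M)^2 K\phi_0$ (modulo a scalar multiple of $\phi_0$) and checking $(L_0-\lambda_0)\psi = \phi_0$, contradicting algebraic simplicity. A secondary subtlety is that $R(\lambda,M+\varepsilon A)$ is only strongly continuous in $\varepsilon$, not norm-continuous; however, the only places where norm continuity is needed are after composition with $K$, where it follows from compactness, so the argument goes through without extra hypotheses.
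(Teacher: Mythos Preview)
Your Lyapunov--Schmidt reduction has a genuine gap at the step where you assert that $Q_0(I-T_0(\lambda_0))|_{H_0}:H_0\to H_0$ is an isomorphism, equivalently that $\mathrm{Ran}(I-T_0(\lambda_0))=H_0$. This identification of the range with $\ker P_{\lambda_0}$ is \emph{not} a consequence of algebraic simplicity: one has $I-T_0(\lambda_0)=R(\lambda_0,M)(\lambda_0-L_0)$, so $\mathrm{Ran}(I-T_0(\lambda_0))=R(\lambda_0,M)H_0$, and there is no reason for $H_0$ to be invariant under $R(\lambda_0,M)$. A $2\times2$ counterexample: take $M=\begin{pmatrix}1&1\\1&0\end{pmatrix}$, $K=\begin{pmatrix}0&-1\\-1&2\end{pmatrix}$, so $L_0=\mathrm{diag}(1,2)$, $\lambda_0=1$, $\phi_0=e_1$, $H_0=\mathbb{C}e_2$. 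Then $I-T_0(\lambda_0)=\begin{pmatrix}0&1\\0&0\end{pmatrix}$ has range $\mathbb{C}e_1$, so $Q_0(I-T_0(\lambda_0))|_{H_0}=0$ and your equation for $\tilde\phi$ cannot be solved. The same issue reappears in your computation of $\partial_\lambda\Delta(0,\lambda_0)$: you drop the term $\langle (I-T_0(\lambda_0))\partial_\lambda\tilde\phi,\phi_0^*\rangle$, which vanishes only if $(I-T_0(\lambda_0))H_0\subset H_0$.

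The paper sidesteps this entirely by applying the implicit function theorem directly to the pair of equations in $(\phi,\lambda)$ and then \emph{post-composing the linearization with} $\begin{pmatrix}\lambda_0-M & 0\\ 0 & 1\end{pmatrix}$. This converts the top-left block $I-R(\lambda_0,M)K$ into $\lambda_0-L_0$ and the $\lambda$-column into $\phi_0$, yielding the bordered operator $\begin{pmatrix}\lambda_0-L_0 & \phi_0\\ \langle P_{\lambda_0}\cdot,\phi_0\rangle & 0\end{pmatrix}$, which \emph{is} adapted to the splitting $H=\mathbb{C}\phi_0\oplus H_0$ and is invertible precisely because $(\lambda_0-L_0)|_{H_0}$ is. Your reduction can be repaired the same way: multiply the eigenfunction equation by $(\lambda_0-M)$ before projecting, or else choose the Lyapunov--Schmidt projection onto $R(\lambda_0,M)H_0$ rather than $H_0$.
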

\begin{proof}
Solutions to the equations~\eqref{eq:eigenfunctionequationphi}-\eqref{eq:normalizationequation} are precisely zeros of the map
\begin{equation}
	\label{eq:maptolinearize}
(\phi,\lambda,\varepsilon) \mapsto \begin{bmatrix}
(I - R(\lambda,M+\varepsilon A) K) \phi \\
\langle P_{\lambda_0} \phi, \phi_0 \rangle - 1
\end{bmatrix} \in H \times \mathbb{C} \, .
\end{equation}
Then the conclusion will follow from the assumptions and Proposition~\ref{pro:IFT} (Implicit function theorem) once we verify that the linearization of~\eqref{eq:maptolinearize} at $(\phi_0,\lambda_0,0)$,
\begin{equation}
\begin{bmatrix}
I - R(\lambda_0,M) K & - D_\lambda|_{\lambda=\lambda_0} R(\lambda,M) K \phi_0 \\
\langle P_{\lambda_0} \cdot, \phi_0 \rangle & 0
\end{bmatrix} \, ,
\end{equation}
is invertible on $H \times \mathbb{C}$. Observe
\begin{equation}
- D_\lambda|_{\lambda=\lambda_0} (\lambda - M)^{-1} K \phi_0 = (\lambda_0 - M)^{-2} K \phi_0 = (\lambda_0 - M)^{-1} \phi_0 \, ,
\end{equation}
since $\phi_0 = (\lambda_0 - M)^{-1} K \phi_0$. To prove invertibility, it is more convenient to post-compose with the invertible operator\begin{equation}
\begin{bmatrix}
\lambda_0 - M & \\
& 1
\end{bmatrix} \, .
\end{equation}
The result is
\begin{equation}
\begin{bmatrix}
\lambda_0 - M - K & \phi_0 \\
\langle P_{\lambda_0} \cdot, \phi_0 \rangle & 0
\end{bmatrix} \, ,
\end{equation}
which is invertible on $H \times \mathbb{C}$, since $\lambda_0 - M - K : \ker P_{\lambda_0} \to \ker P_{\lambda_0}$ is invertible.
\end{proof}









\begin{corollary}[No two eigenvalues]
	\label{cor:notwoevals}
	Under the above assumptions, fix $0 < \delta \ll 1$ such that $\overline{B_\delta(\lambda_0)} \setminus \{ \lambda_0 \} \subset \varrho(L)$. Then, for $0 < \varepsilon \ll 1$, $L_\varepsilon$ has only the single eigenvalue $\lambda_\varepsilon$ in $B_\delta(\lambda_0)$.
\end{corollary}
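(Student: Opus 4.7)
The plan is to argue by compactness and contradiction, leveraging \Cref{pro:eigenvalueperturbation} and its local uniqueness statement. Suppose the conclusion fails; then there exist a subsequence $\varepsilon_k \to 0^+$ and eigenvalues $\tilde \lambda_k \in B_\delta(\lambda_0)$ of $L_{\varepsilon_k}$ with $\tilde \lambda_k \neq \lambda_{\varepsilon_k}$, where $\lambda_{\varepsilon_k}$ denotes the eigenvalue produced by \Cref{pro:eigenvalueperturbation}. Choose corresponding eigenfunctions $\tilde \phi_k \in H$ with $\|\tilde \phi_k\|_H = 1$. Since $\Re \tilde \lambda_k > \Lambda$ for small $\delta$, we may rewrite the eigenvalue problem in the compact form
\begin{equation}
    \tilde \phi_k = R(\tilde \lambda_k, M + \varepsilon_k A) K \tilde \phi_k \, .
\end{equation}

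Next, I extract a convergent subsequence. Up to relabelling, $\tilde \lambda_k \to \tilde \lambda_\star$ for some $\tilde \lambda_\star \in \overline{B_\delta(\lambda_0)}$. By compactness of $K$, a further subsequence gives $K \tilde \phi_k \to \psi$ strongly in $H$. Using the uniform boundedness of $R(\lambda, M + \varepsilon A)$ near $(\tilde \lambda_\star, 0)$, the joint continuity hypothesis for $\varepsilon \mapsto R(\lambda, M+\varepsilon A) x$ and the resolvent difference bound \eqref{eq:ResolventDifferenceEstHi} (applied to control the $\lambda$ variation), one deduces
\begin{equation}
    \tilde \phi_k = R(\tilde \lambda_k, M+\varepsilon_k A) K \tilde \phi_k \longrightarrow R(\tilde \lambda_\star, M) \psi =: \tilde \phi_\star \text{ in } H \, ,
\end{equation}
with $\|\tilde \phi_\star\|_H = 1$ and $\tilde \phi_\star = R(\tilde \lambda_\star, M) K \tilde \phi_\star$, i.e., $\tilde \phi_\star$ is an eigenfunction of $L_0$ with eigenvalue $\tilde \lambda_\star$. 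The hypothesis $\overline{B_\delta(\lambda_0)} \setminus \{\lambda_0\} \subset \varrho(L_0)$ then forces $\tilde \lambda_\star = \lambda_0$, and since $\lambda_0$ is algebraically (hence geometrically) simple, $\tilde \phi_\star = c\,\phi_0$ for some $c \in \mathbb C \setminus \{0\}$.

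To conclude, I renormalize: set $\alpha_k := 1/\langle P_{\lambda_0} \tilde \phi_k, \phi_0 \rangle$, which is well-defined for $k \gg 1$ because $\langle P_{\lambda_0} \tilde \phi_k, \phi_0 \rangle \to c\, \|\phi_0\|_H^2 \neq 0$. Then $(\tilde \lambda_k, \alpha_k \tilde \phi_k)$ satisfies both \eqref{eq:eigenfunctionequationphi} and the normalization \eqref{eq:normalizationequation}, and converges in $\mathbb C \times H$ to $(\lambda_0, \phi_0)$. For $k \gg 1$ it therefore lies in the neighborhood $U$ from \Cref{pro:eigenvalueperturbation}, and the uniqueness statement there forces $(\tilde \lambda_k, \alpha_k \tilde \phi_k) = (\lambda_{\varepsilon_k}, \phi_{\varepsilon_k})$. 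In particular $\tilde \lambda_k = \lambda_{\varepsilon_k}$, contradicting the choice of $\tilde \lambda_k$. The only subtle point is the passage to the limit inside the resolvent, where one must combine strong convergence of $K \tilde \phi_k$ (from compactness) with continuity of $R(\lambda, M + \varepsilon A)$ jointly in $\lambda$ and $\varepsilon$; all the ingredients for this have already been imposed in the standing hypotheses preceding \Cref{pro:eigenvalueperturbation}.
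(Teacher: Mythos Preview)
Your proof is correct and follows essentially the same route as the paper's: contradiction via a sequence of eigenvalue--eigenfunction pairs, compactness to force convergence to $(\lambda_0,c\phi_0)$, renormalization to meet \eqref{eq:normalizationequation}, and then the local uniqueness in \Cref{pro:eigenvalueperturbation} to close the contradiction. Your version is in fact more explicit than the paper's, which simply cites the compactness argument from \Cref{pro:mainspectrallemma}~(iiib) rather than reproducing it; in particular you spell out why $\tilde\lambda_\star = \lambda_0$ and how the resolvent convergence works, which the paper leaves implicit.
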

\begin{proof}
Let $\lambda_\varepsilon \to \lambda_0$ be any sequence (we avoid writing $\varepsilon_k$) of eigenvalues in $B_\delta(\lambda_0)$. We normalize the corresponding eigenfunctions $\psi_\varepsilon$ to have unit norm. By the analysis in Proposition~\ref{pro:mainspectrallemma} (iiib), we must have that, along a subsequence, $\psi_\varepsilon \to c \phi_0$, $|c|=1$, as $\varepsilon \to 0^+$. We may further normalize $\psi_\varepsilon$ such that $c=1$. Then, for $\varepsilon$ sufficiently small, Proposition~\ref{pro:eigenvalueperturbation} ensures that $\psi_\varepsilon = \phi_\varepsilon$ and $\lambda_\varepsilon$ is the (unique) eigenvalue. Therefore, there cannot be two distinct eigenvalues in $B_\delta(\lambda_0)$.
\end{proof}

\begin{remark}[Variants]
One can shift $\Lambda$ (for example, to $\Lambda = 0$) by replacing $L$ by $L + \kappa I$. One can consider eigenvalues in a left half-plane $\{ \Re \lambda \leq \Lambda \}$ by replacing $L$ by~$-L$. 
One can consider unbounded $M$, as is done in~\cite{vishik2018instability1,albritton2023instability}

 
\end{remark}

Finally, we recall the implicit function theorem with only continuity required in the parameter (often, $C^1$ is required in all variables).\footnote{\cite{IFTMathoverflow} informed the preparation of this statement.}

\begin{proposition}[Implicit function theorem]
	\label{pro:IFT}
Let $X$, $Y$ be Banach spaces. Let $D$ be an open neighborhood of the origin in $X$. Let $U \subset \R^N$, $N \geq 1$, containing the origin. Suppose that $F(x,\varepsilon) : D \times U \to Y$ is continuous and
\begin{equation}
D_x F : D \times U \to \mathcal{B}(X,Y) \text{ exists and is continuous} \, ,
\end{equation}
\begin{equation}
F(0,0) = 0 \, , \quad D_x F(0,0) \text{ is invertible} \, .
\end{equation}
Then there exists an open neighborhood $D'$ of the origin in $X$ and $U' = U \cap B_\delta$ such that for every $\varepsilon \in U'$, there exists a unique solution $x(\varepsilon) \in D'$ to the equation
\begin{equation}
F(x,\varepsilon) = 0 \, .
\end{equation}
Furthermore,
\begin{equation}
\varepsilon \mapsto x(\varepsilon) : U' \to D' \text{ is continuous} \, .
\end{equation}
\end{proposition}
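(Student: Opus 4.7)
The plan is to reduce the problem to the Banach fixed point theorem in the $x$-variable, with $\varepsilon$ treated as a parameter. Set $T := D_x F(0,0) \in \mathcal{B}(X,Y)$, which is invertible by hypothesis, and define
\begin{equation}
    \Phi(x,\varepsilon) := x - T^{-1} F(x,\varepsilon) \, .
\end{equation}
Clearly $F(x,\varepsilon)=0$ iff $\Phi(x,\varepsilon)=x$, so it suffices to find $x(\varepsilon)$ as a fixed point of $\Phi(\cdot,\varepsilon)$.

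First I would establish a contraction estimate. Since $D_x \Phi(x,\varepsilon) = I - T^{-1} D_x F(x,\varepsilon)$ and $D_xF$ is continuous with $D_x F(0,0) = T$, there exist $r_0>0$ and $\delta_0 > 0$ such that
\begin{equation}
    \| D_x \Phi(x,\varepsilon) \|_{\mathcal{B}(X)} \leq \tfrac{1}{2} \quad \text{whenever } \|x\|_X \leq r_0 , \, |\varepsilon| \leq \delta_0 \, .
\end{equation}
By the mean value inequality in Banach spaces (integrating $D_x\Phi$ along the segment, which only requires the $x$-derivative to exist and be continuous), this yields
\begin{equation}
    \| \Phi(x_1,\varepsilon) - \Phi(x_2,\varepsilon) \|_X \leq \tfrac{1}{2} \| x_1-x_2\|_X
\end{equation}
for $x_1,x_2 \in \bar B_{r_0}(0)$ and $|\varepsilon|\leq \delta_0$.

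Next I would arrange the self-mapping property. Using continuity of $F$ at $(0,0)$ together with $F(0,0)=0$, shrink $\delta \in (0,\delta_0]$ so that
\begin{equation}
    \| \Phi(0,\varepsilon)\|_X = \| T^{-1} F(0,\varepsilon)\|_X \leq r_0/2 \quad \text{for } |\varepsilon|\leq \delta \, .
\end{equation}
Combined with the contraction estimate, this gives $\|\Phi(x,\varepsilon)\|_X \leq r_0$ on $\bar B_{r_0}(0) \times (U\cap B_\delta)$, so $\Phi(\cdot,\varepsilon)$ maps the complete metric space $\bar B_{r_0}(0)$ into itself as a $\tfrac{1}{2}$-contraction. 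Banach's theorem then produces, for each $\varepsilon \in U':=U\cap B_\delta$, a unique fixed point $x(\varepsilon)\in \bar B_{r_0}(0)$; set $D' := B_{r_0}(0)$ (uniqueness in the open ball follows since any fixed point in $D'$ lies in $\bar B_{r_0}$).

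The continuity of $\varepsilon\mapsto x(\varepsilon)$ is the standard consequence of uniform contraction: for $\varepsilon,\varepsilon'\in U'$,
\begin{equation}
    \| x(\varepsilon) - x(\varepsilon')\|_X \leq \| \Phi(x(\varepsilon),\varepsilon) - \Phi(x(\varepsilon),\varepsilon')\|_X + \tfrac12 \| x(\varepsilon) - x(\varepsilon')\|_X \, ,
\end{equation}
so $\| x(\varepsilon) - x(\varepsilon')\|_X \leq 2 \| T^{-1}[F(x(\varepsilon),\varepsilon) - F(x(\varepsilon),\varepsilon')]\|_X$, which tends to $0$ as $\varepsilon'\to\varepsilon$ by continuity of $F$ in the second variable. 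The only mildly delicate point is that we have not assumed joint $C^1$ regularity of $F$, only continuity plus continuity of the partial derivative $D_xF$; but since the contraction step uses only $D_xF$ and the self-mapping step uses only continuity of $F(0,\cdot)$, this is precisely the right trade-off and the proof goes through unchanged.
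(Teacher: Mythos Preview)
Your proof is correct and follows essentially the same approach as the paper: both recast $F(x,\varepsilon)=0$ as a fixed-point problem for $\Phi(x,\varepsilon)=x-T^{-1}F(x,\varepsilon)$ (equivalently $-T^{-1}N(x,\varepsilon)$ in the paper's notation), use continuity of $D_xF$ for the contraction constant and continuity of $F(0,\cdot)$ for the self-mapping, and conclude via Banach's theorem. Your continuity argument via the uniform-contraction estimate is slightly more direct than the paper's (which shrinks $\delta$ as $\varepsilon\to0$ and shifts base points), but the substance is the same.
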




\begin{proof}
The equation to solve is
\begin{equation}
F(x,\varepsilon) =: D_x F(0,0) x + N(x,\varepsilon) = 0 \, .
\end{equation}
Notably, $N(0,0) = 0$ and $D_x N(0,0) = 0$. We rewrite the equation as
\begin{equation}
x + D_x F(0,0)^{-1} N(x,\varepsilon) = 0 \, .
\end{equation}
That is, our goal is to identify a fixed point for the mapping
\begin{equation}
\Phi_\varepsilon(x) = - D_x F(0,0)^{-1} N(x,\varepsilon) \, .
\end{equation}
Let $C_0 = \| D_x F(0,0)^{-1} \|_{Y \to X}$. First, we restrict $x$ and $\varepsilon$ to ensure that
\begin{equation}
\| D_x N(x,\varepsilon) \|_{X \to Y} \leq \frac{1}{2C_0} \, ,
\end{equation}
yielding, in particular, the contractivity $\| \Phi_\varepsilon(x) - \Phi_\varepsilon(x') \|_X \leq \| x-x'\|_X/2$. Then we observe
\begin{align}
\| N(x,\varepsilon) \|_X & \leq \| N(x,\varepsilon) - N(0,\varepsilon) \|_X + \| N(0,\varepsilon) \|_X \\ & \leq \frac{1}{2C_0 }\| x \|_X + o_{\varepsilon \to 0}(1)	\label{eq:Nthing}
\end{align}
yielding that $\Phi_\varepsilon$ stabilizes the ball $\bar{B}_\delta$ provided that $0 < \varepsilon,\delta \ll 1$ and $2 C_0 \delta \geq {o_\varepsilon \to 0}(1)$ in~\eqref{eq:Nthing}. The Banach fixed point theorem guarantees the existence and uniqueness of a fixed point $x(\varepsilon)$ in $\bar{B}_\delta$. Since we may choose $\delta \to 0^+$ as $\varepsilon \to 0$, the construction additionally guarantees the continuity of $x(\varepsilon)$ at $x=0$. For continuity at $\varepsilon_0$, we may apply the theorem after shifting $x(\varepsilon_0)$ and $\varepsilon_0$ to zero.
\end{proof}

\section{Force perturbations}
\label{sec:forceperturbations}

Consider the $\varepsilon,\nu \to 0^+$ selection problem with $\nu$-dependent perturbations to the force. The topology in which the forces converge matters. Examples with quite low regularity forces can be exhibited by $2\frac{1}{2}$-dimensional flows~\cite{MR4595604,MR4799447} or convex integration.

\emph{There exists two families of forces $(f^\nu_{\pm})_{\nu \in (0,\nu_0)}$, converging strongly in supercritical spaces (weakly in critical spaces, with respect to the $L^\alpha = T$ scaling) to a single force $f^{\rm E}$, and whose solutions converge to non-unique Euler solutions $u^{\rm E}_{\pm}$, respectively.}

Let
\begin{equation}
    \tilde{U}(\xi,\tau) := \phi(\tau) \bar{U}(\xi) \, ,
\end{equation}
where $\phi$ is a smooth cut-off function satisfying $\phi \equiv 1$ on $[1,+\infty)$ and $\phi \equiv 0$ on $(-\infty,-1]$. We plug $\tilde{u}$ into the Navier-Stokes equations with unit viscosity:
\begin{equation}
    \p_t \tilde{u} - \Delta \tilde{u} =: \tilde{f} \, ,
\end{equation}
differing from $\p_t \bar{u} =: \bar{f}$ by $O(1)$ in the norm $\| f \|_{\rm crit} := \sup_{t \in \R_+} t \| \curl f \|_{L^{2/\alpha}}$, which is critical. However, the rescaled forces $\tilde{f}^\nu = U_\nu T_\nu^{-1} \tilde{f}(x/L_\nu,t/T_\nu)$ converge to $\bar{f}$ in supercritical spaces $L^1_t W^{1,p}_x(\R^2 \times (0,1))$, $p < 2/\alpha$, as $\nu \to 0^+$.




Define
\begin{equation}
    u^\nu_{\pm} = \tilde{u}^\nu \pm u^{\rm lin} \, ,
\end{equation}
according to the so-called Golovkin trick \cite{golovkin1964nonuniqueness, golovkin1965examples}. Then $u^\nu_\pm$ solves the Navier-Stokes equations with the force
\begin{equation}
    f^\nu_\pm := \tilde{f}^\nu - u^{\rm lin} \cdot \nabla u^{\rm lin} \pm (\tilde{u}^\nu - \bar{u}) \cdot  u^{\rm lin} \pm  u^{\rm lin} \cdot \nabla (\tilde{u}^\nu - \bar{u}) \pm \nu \Delta u^{\rm lin} \, .
\end{equation}
We have the flexibility to choose the growth rate $a \gg 1/\alpha$, so that the terms containing $u^{\rm lin}$ are quite smooth and converge to zero in high regularity spaces (except for the term $-u^{\rm lin} \cdot \nabla u^{\rm lin}$, which we build into the inviscid force). Then
\begin{equation}
    u^{\nu}_\pm \to u_\pm^{\rm E} := \bar{u} \pm  u^{\rm lin}
\end{equation}
\begin{equation}
    f^{\nu}_\pm \to f^{\rm E} := \bar{f} - u^{\rm lin} \cdot \nabla u^{\rm lin} \, ,
\end{equation}
and $u_\pm^{\rm E}$ solves the Euler equations with the force $f^{\rm E}$. The convergence is strong in supercritical spaces and weak in critical spaces (with respect to the $L^\alpha = T$ scaling).

%


\subsubsection*{Acknowledgments} DA was supported by NSF Grant No. 2406947 and the Office of the Vice Chancellor
for Research and Graduate Education at the University of Wisconsin–Madison with funding from the Wisconsin Alumni Research Foundation. The research of MC and GM was supported by the Swiss State Secretariat for Education, Research and Innovation (SERI) under contract number MB22.00034 through the project TENSE. 


\bibliographystyle{alpha}
\bibliography{bibliography}

\end{document}